\definecolor{ao(english)}{rgb}{0.0, 0.5, 0.0}
\newmdenv[topline=false,rightline=false]{leftbot}
\newtheorem{Th}{Theorem}[section]
\newtheorem{Lemma}[Th]{Lemma}
\newtheorem{Cor}[Th]{Corollary}
\newtheorem{Prop}[Th]{Proposition}
\newtheorem{Def}[Th]{Definition}
\newtheorem{Not}[Th]{Notation}
\newtheorem{Rem}[Th]{Remark}
\newtheorem{Fact}[Th]{Fact}
\newtheorem{Conjecture}[Th]{Conjecture}
\newtheorem{?}[Th]{Problem}
\newtheorem{Ex}[Th]{Example}
\crefname{Def}{Definition}{Definitions}
\crefname{Rem}{Remark}{Remarks}
\crefname{Th}{Theorem}{Theorems}
\newenvironment{proof}
{\ \\
\noindent \textit{Proof.}
}
{
$\hfill\blacksquare$}
\newenvironment{sketch}
{\ \\\noindent \textit{Sketch of proof.}
}
{
$\hfill\blacksquare$ \\ }
\def\GD{\operatorname{GD}}
\def\HB{\operatorname{HB}}
\def\NAG{\operatorname{NAG}}
\newcommand{\HBgb}{\eqref{eq:hb}$\,_{\gamma, \beta}~$}
\DeclareMathOperator*{\argmin}{arg\,min}
\DeclareMathOperator*{\conv}{{convh}}
\newcommand{\Sp}{\mathrm{Sp}}
\newcommand{\Id}{\mathrm{I}}
\newcommand{\Tr}{\mathrm{Tr}}
\newcommand{\op}{\mathrm{op}}
\renewcommand{\leq}{\leqslant}
\renewcommand{\geq}{\geqslant}
\renewcommand{\le}{\leqslant}
\renewcommand{\epsilon}{\varepsilon}
\renewcommand{\mod}[1]{\;(\mathrm{mod}\; #1)}
\newcommand{\floor}[1]{\lfloor #1 \rfloor}
\newcommand{\eqdef}{:=}
\newcommand{\range}[2]{\llbracket #1, #2 \rrbracket}
\newcommand{\R}{\mathbb{R}}
\newcommand{\fs}{f_{\star}}
\newcommand{\xs}{x_{\star}}
\newcommand{\F}{\mathcal{F}}
\newcommand{\Qml}{\mathcal{Q}_{\mu, L}}
\newcommand{\Fml}{\mathcal{F}_{\mu, L}}
\newcommand{\Ftml}{\mathcal{F}^{\tau}_{\mu, L}}
\def\Ghadimi{\mathrm{Ghad.}}
\def\TVL{\mathrm{TVL}}
\newcommand{\Oghml}{\Omega_{\Ghadimi}(\Fml)}
\newcommand{\Otaml}{\Omega_{\TVL}(\Fml)}
\newcommand{\Oqml}{\Omega_{\mathrm{cv}}(\Qml)}
\newcommand{\Ocvml}{\Omega_{\mathrm{cv}}(\Fml)}
\newcommand{\OcvF}{\Omega_{\mathrm{cv}}(\mathcal{F})}
\newcommand{\cycle}{\mathrm{Cycle}}
\newcommand{\Ocyml}{\Omega_{\cycle}(\Fml)}
\newcommand{\Ocytml}{\Omega_{\cycle}(\Ftml)}
\newcommand{\Orouml}{\Omega_{\circ\text{-}\cycle}(\Fml)}
\newcommand{\Oroutml}{\Omega_{\circ\text{-}\cycle}(\Ftml)}
\newcommand{\OKrouml}{\Omega_{K\text{-}\circ\text{-}\cycle}(\Fml)}
\newcommand{\OmaxKrouml}{\Omega_{\leq K\text{-}\circ\text{-}\cycle}(\Fml)}
\newcommand{\ml}{\mu,L}
\newcommand{\gb}{\gamma,\beta}
\newcommand{\gbqml}{\gamma^\star(\Qml),\beta^\star(\Qml)}
\newcommand{\rhogbQml}{\rho_{\gb}(\Qml)}
\newcommand{\psigbml}{\psi_{\gb,\ml}^K}
\newcommand{\phigbmleps}{\varphi_{\epsilon,\gb,\ml}^K}
\newcommand{\phigbml}{\varphi_{\gb,\ml}^K}
\newcommand{\rK}{\range{0}{K-1}}
\newcommand{\Vk}{\mathcal{V}_k}
\newcommand\TikCircle[1][2.5]{\tikz[baseline=-#1]{\draw[thick](0,0)circle[radius=1.6mm];}}
\newcommand{\LS}{\mathrm{LS}}
\newcommand{\SLS}{\mathrm{SLS}}
\journalname{Mathematical Programming B}
\title{Provable non-accelerations of the heavy-ball method}
\titlerunning{Provable non-accelerations of the heavy-ball method}
\author{Baptiste Goujaud \and Adrien Taylor \and Aymeric Dieuleveut}
\authorrunning{B. Goujaud \and A. Taylor \and A. Dieuleveut}
\institute{B. Goujaud \at CMAP, École Polytechnique, Institut Polytechnique de Paris. \\ \email{baptiste.goujaud@polytechnique.edu.}
\and
A. Taylor \at INRIA, École Normale Supérieure, CNRS, PSL Research University, Paris. \\ \email{adrien.taylor@inria.fr.}
\and
A. Dieuleveut \at CMAP, École Polytechnique, Institut Polytechnique de Paris. \\ \email{aymeric.dieuleveut@polytechnique.edu.}
}
\date{Received: date / Accepted: date}
\begin{document}
\maketitle
\addtocontents{toc}{\protect\setcounter{tocdepth}{0}}

\begin{abstract}
    In this work, we show that the heavy-ball ($\HB$) method provably does not reach an accelerated convergence rate on smooth strongly convex problems. More specifically, we show that for any condition number and any choice of algorithmic parameters, either the worst-case convergence rate of $\HB$ on the class of $L$-smooth and $\mu$-strongly convex \textit{quadratic} functions is not accelerated (that is, slower than $1 - \mathcal{O}(\kappa)$), or there exists an $L$-smooth $\mu$-strongly convex function and an initialization such that the method does not converge.
    
    To the best of our knowledge, this result closes a simple yet open question on one of the most used and iconic first-order optimization technique.
    
    Our approach builds on finding functions for which $\HB$ fails to converge and instead cycles over finitely many iterates. We analytically describe all parametrizations of $\HB$ that exhibit this cycling behavior on a particular cycle shape, whose choice is supported by a systematic and constructive approach to the study of cycling behaviors of first-order methods. We show the robustness of our results to perturbations of the cycle, and  extend them to class of functions that also satisfy higher-order regularity conditions.
\end{abstract}

\keywords{Convex optimization - Accelerated convergence - Heavy-ball dynamics}

\section{Introduction}\label{sec:introduction}
    In this paper, we consider the unconstrained minimization of a convex function $f: \R^d \to \R$:
    \begin{equation}
        x_\star \eqdef \arg\min_{x\in \mathbb R^d} f(x), \tag{OPT}\label{eq:opt}
    \end{equation}
    where $f$ belongs to a given class of functions $\mathcal F$ (e.g.,~the set of convex quadratic functions, or the set of strongly convex and smooth functions). 
    In this context, first-order optimization methods have recently attracted a lot of attention due to their generally low cost per iteration and their practical success in many applications.
    They are particularly relevant in applications not requiring very accurate solutions, such as in machine learning (see, e.g.,~\cite{bottou2007tradeoffs}).
    
    A major weakness of those methods is that their convergence speed is typically slow, and crucially affected by the so-called \emph{conditioning} of the function to be minimized (more in the sequel). In this context, the theoretical foundations for first-order methods played a crucial role in their success, among others by enabling the development of momentum-type methods for mitigating the impact of the conditioning on the convergence rates. That is, momentum-type methods usually behave much better both theoretically and practically as compared to the vanilla \emph{gradient descent}~(GD), arguably the most well-known and iconic first-order method. Momentum-type methods are usually classified in two categories depending on how the momentum appears in the iterative update equations. As a reference, the update rule for the vanilla \emph{gradient descent}~(GD) method for solving~\eqref{eq:opt} is
    \begin{equation}
        x_{t+1}=x_t-\gamma \nabla f(x_t), \tag{GD} \label{eq:gd}
    \end{equation}
    for some step-size $\gamma$. In~\citep{polyak1964some}, Polyak introduced the celebrated \emph{heavy-ball}~($\HB$) update rule:
    \begin{equation}
        x_{t+1} = x_t - \gamma \nabla f(x_t) + \beta (x_t - x_{t-1}).
        \label{eq:hb} \tag{HB}
    \end{equation}
    $\HB$ is notorious for being, among others, an \emph{optimal} method for minimizing convex quadratic functions, as its computational complexity matches that of the corresponding lower complexity bounds~\citep{nemirovskinotes1995}. A few years later, Nesterov introduced the~\emph{accelerated gradient} method~\citep{nesterov1983method} which consists in iterating
    \begin{equation}
        x_{t+1} = x_t - \gamma \nabla f(x_t + \beta (x_t - x_{t-1})) + \beta (x_t - x_{t-1}),
        \label{eq:nag} \tag{NAG}
    \end{equation}
    which is often referred to as Nesterov's accelerated gradient (NAG).
    
    In words, the gradient descent update is complemented with a momentum term $\beta (x_t - x_{t-1})$ being either applied after the gradient evaluation (for $\HB$) or before it (for NAG) at each iteration. As compared to the $\HB$ method, NAG is an \emph{optimal} algorithm on the class of smooth strongly convex functions~$f$ (i.e.,~beyond quadratics). Yet, $\HB$ is known to be asymptotically twice faster than Nesterov on the class of quadratic functions~(\Cref{tab:rates_fml}). Moreover, $\HB$ is among the most prevalent practical first-order optimization paradigms used in optimization software (e.g.~for machine learning. See e.g. the \href{https://docs.pytorch.org/docs/stable/generated/torch.optim.SGD.html#sgd}{Pytorch documention of Heavy-ball's practical implemention}): it works well in many situations, though the question of its theoretical convergence speed is still open beyond quadratics.
    
    \paragraph{Conditioning and its effects.} This work primarily focuses on a particular classical set of functions, namely the set of \emph{smooth and strongly convex functions} (additional higher-order regularity assumptions are considered later in~\Cref{sec:HL}). This set is very standard in the first-order optimization literature; see, e.g.,~\cite{Book:polyak1987,nemirovskinotes1995,Nest03a}.
    
    \begin{Def}[Set $\Fml$]
        Let $0<\mu\leq L<\infty$. A continuously differentiable function $f:\R^d\mapsto\R$ is $L$-smooth and $\mu$-strongly convex (notation $f\in\Fml$) if:
        \begin{itemize}
            \item ($L$-smoothness) for all $x,y\in\R^d$, it holds that
            \[ f(x)\leq f(y)+\langle \nabla f(y),\, x-y\rangle +\frac{L}{2}\|x-y\|^2,\]
            \item ($\mu$-strong convexity) for all $x,y\in\R^d$, it holds that
            \[ f(x)\geq f(y)+\langle \nabla f(y),\, x-y\rangle +\frac{\mu}{2}\|x-y\|^2.\]
        \end{itemize}
    \end{Def}
    
    In particular, twice differentiable $L$-smooth $\mu$-strongly convex functions corresponds to functions whose Hessian have bounded eigenvalues between $\mu$ and $L$ (i.e.,~$\mu\, I \preccurlyeq \nabla^2 f(x) \preccurlyeq L\, I$, or $\Sp\left(\nabla^2 f(x)\right)\subseteq [\mu,\, L]$, for all $x\in\R^d$). A particular subclass of $\Fml$ is that of quadratic functions (i.e.,~with constant Hessian).
    \begin{Def}
        Let $0\leq\mu\leq L<\infty$. A continuously differentiable function $f(x)=\langle x,\, H x\rangle + \langle b,\, x\rangle + c$ is an $L$-smooth $\mu$-strongly convex quadratic function (notation $f\in\Qml$) if and only if $\Sp\left(H\right)\subseteq [\mu,\, L]$.
    \end{Def}
    
    Those sets of functions are very standard for the analyses of first-order methods and are characterized by the (inverse) condition number $\kappa \eqdef \tfrac{\mu}{L}$. In this context, it is known that GD converges exponentially fast both for minimizing functions in $\Qml$ and $\Fml$. More precisely, $\|x_t-x_\star\|\leq \left(\frac{1-\kappa}{1+\kappa}\right)^{t} \|x_0-x_\star\|$ for suitable choices of the step-size $\gamma$. On $\Qml$, an optimally-tuned $\HB$ has $\|x_t-x_\star\|\leq C \left(\frac{1-\sqrt{\kappa}}{1+\sqrt{\kappa}}\right)^{t}\|x_0-x_\star\|$ (for some $C>0$) and is therefore (much) faster for small values of $\kappa$. However, the optimal behavior of $\HB$ beyond quadratics, on $\Fml$, is unknown to the best of our knowledge. Finally, NAG has $\|x_t-x_\star\|\leq C \left({1-\sqrt{\kappa}}\right)^{t/2}\|x_0-x_\star\|$ on $\Fml$ but is suboptimal compared to $\HB$ when working on $\Qml$, though much faster than GD.
    
    In short, albeit generally good practical performances, it remains unclear for which choices of $(\gb)$ (if any), $\HB$ allows obtaining \emph{fast} convergence speeds on the standard class of $L$-smooth $\mu$-strongly convex problems. In this work, we answer this question by proving that it is suboptimal for minimizing smooth strongly convex functions. More precisely, we show that the only choices of $(\gb)$ for which $\HB$ is guaranteed to converge on $\Fml$ only marginally improve upon the convergence speed of gradient descent.

\subsection{Related works}

    As $\HB$ is one of the most widely used methods, understanding its worst-case convergence rate on larger class of functions is a natural problem.
    
    \paragraph{Behavior of HB on quadratics.} The $\HB$ method was originally coined for optimizing quadratic functions~\cite{polyak1964some}. A classical approach to the analysis of~\eqref{eq:hb} on $\mathcal{Q}_{\mu, L}$ consists in exploiting links between first-order methods and polynomials (see e.g.~\citep{fischer2011polynomial,nemirovskinotes1995} or~\citep[Chapter 2][]{d2021acceleration} for a recent introduction--for more recent exploitation of those links, see, e.g.,~\citep{berthier2020accelerated,pedregosa2020acceleration,goujaud2022super,goujaud2022quadratic,kim2022extragradient}). In this context, a standard choice of $\HB$ parameters is $(\gbqml)= \left((\frac{2}{\sqrt{L}+\sqrt{\mu}})^2,(\frac{\sqrt{L}-\sqrt{\mu}}{\sqrt{L}+\sqrt{\mu}})^2\right)$ and is often referred to as the \emph{optimal tuning} for quadratics.

    \paragraph{Behavior of HB beyond quadratics.} The class of $L$-smooth and $\mu$-strongly convex functions (notation $\Fml$) is extensively studied in the first-order optimization literature (see, e.g.,~\citep{Book:polyak1987,Nest03a,bubeck2015convex}), and is a natural candidate for exploring the performance of $\HB$ beyond quadratics. Notable results on $\HB$ beyond quadratics include the work of~\citet{ghadimi2015global} who provide non-accelerated convergence results of $\HB$ for a large set of parameters $(\gb)$ (more details in the subsequent sections). Also,~\citet{lessard2016analysis} provide a non-convergence result of $\HB$ (with the optimal parameter choice for quadratics) on $\Fml$, based on a counter-example, i.e.,~some $f\in\Fml$ for which given a particular initialization, the~$\HB$ algorithm cycles over a finite number of values, without ever approaching the set of minimizers of~$f$. Moreover, other attempts were made to obtain an accelerated rate on~$\HB$ on the class~$\Fml$, without definitive results~\cite{dobson2023connections}; or on even larger class of functions~\citep{goujaud2022optimal}.

    As a conclusion, despite the existence of both positive and negative convergence results for $\HB$ on $\Fml$, the optimal tuning of $\HB$ on this specific class as well as its worst-case convergence rate, remain unknown.
    
    \paragraph{Erroneous convergence results on HB.} A few recent works either claim or use the fact that $\HB$ does converge with an accelerated convergence rate beyond quadratics.
    For instance,~\citet{wang2022provable} prove convergence of $\HB$ by implicitly assuming co-diagonalisation. As a consequence, their results do not hold in dimension larger than one. Moreover, we conjecture (see~\citet{goujaud2025open}), that the result also does not hold in dimension 1.
    This result is thereby complemented by this work, which shows we cannot achieve acceleration as soon as the dimension is at least two. Another example is that of~\citet[Corollary 22]{gupta2021path} whose proof relies on an hypothetical accelerated convergence rate of $\HB$.

    \paragraph{Optimal methods on $\Fml$.} Whereas convergence rates for $\HB$ were unclear on $\Fml$, there exist alternative optimal methods (which are, however, slower than $\HB$ on $\Qml$) on this class, commonly referred to as \emph{accelerated gradient methods}, such as~\eqref{eq:nag}, see~\citep{nesterov1983method,Nest03a}. Whereas the dependency w.r.t. $\kappa$ is essentially optimal for NAG (whose convergence rate is $(1-\sqrt{\kappa})^{1/2}$), it can be improved to $(1-\sqrt{\kappa})$ (see~\citep{taylor2023optimal} for the optimal algorithm and~\citep{van2017fastest} for its stationary version) thereby matching the exact lower complexity bound for $\Fml$~\citep{drori2022oracle} (which is more technical than that for $\Qml$, whose lower bound on the rate is $\left(\frac{1-\sqrt{\kappa}}{1+\sqrt{\kappa}}\right)$ which is thereby more often used).

    \subsection{Contributions} In this work, we show that the heavy-ball~($\HB$) cannot attain an accelerated (worst-case) convergence rate on the standard class of smooth strongly convex problems in general. We further show that this result is stable to additional assumptions. In particular, it remains true even under higher-order regularity assumptions, and for perturbed initial conditions and gradient computations. 

    More precisely, in \Cref{sec:preliminary-materials}, we recall a few known results on~$\HB$, and set up a few key concepts for building up the following sections, including those of \emph{cycling behaviors}. Next, in \Cref{sec:noaccel}, we show our main result on $\HB$, namely that we cannot obtain accelerated convergence guarantees for $\HB$ on standard classes of problems beyond quadratics. To obtain that result, we analyze parameters resulting in a simple cycle shape. 
    Additionally to this paper's figures, a 3D graph of our counter-example can be found by running the code available in the GitHub repository \href{https://github.com/bgoujaud/Heavy-ball_does_not_accelerate}{Heavy-ball\_does\_not\_accelerate}.

    Then, in \Cref{sec:robustness} we show that our non-acceleration results are stable to small perturbations of the initial iterates, of the parameters and of the gradients.
    Then, in \Cref{sec:HL}, we also show that adding additional natural regularity assumptions does not result in acceleration either.
    Finally, in \Cref{sec:cycles}, we detail a constructive approach for finding counter-examples to the convergence of $\HB$, and more generally of any \emph{stationary} first-order method. We demonstrate that if a parametrization results in a cycle, then it also results in a cycle having a very specific shape, and that numerically, this can be solved as a linear problem.

    \subsection{Key concepts}
    The following definition introduces the concept of \emph{asymptotic} convergence rate and of convergence rate of a method over a class of functions. Formally, this definition is necessary because we are looking for negative results on the value of $\rho$ throughout the paper. Furthermore, momentum-type methods such as $\HB$ or $\NAG$ (contrary to $\GD$) are not monotone method (i.e.,~$(\|x_t - x_\star\|)_t$ is not a decreasing sequence in general).

    \begin{Def}[(Asymptotic) convergence rate]\label{def:awcc}
        Let $\mathcal{F}$ be a class of functions. We say that a given first-order method has a worst-case asymptotic convergence rate $\rho$ over $\F$ if for all $ \varepsilon>0,$ there  exists $  T_0 $ such that for all $ f \in \F, x_0 $ and $ T\geq T_0$, for $(x_t)_{t\geq0}$ the sequence of iterates generated from $x_0$ by running the method on the function $f$, we have 
        \begin{equation*}
            \|x_T - \xs\| \leq (\rho + \varepsilon)^T \|x_0 - x_\star\|.
        \end{equation*}
    \end{Def}
    Informally, this means that $\|x_T-\xs\|$ is (almost) of the order of $\rho^T$ uniformly over the class $\F$. In the sequel, we may refer to the worst-case asymptotic rate as the \textit{rate}. Our interest is to understand the impact of parameters $(\gb)$ in $\HB$. For clarity, when necessary, we denote $\eqref{eq:hb}_{\gb}$ the heavy-ball method  with coefficients $(\gb)$, and $\eqref{eq:hb}_{\gb}(f)$ the heavy-ball method  with coefficients $(\gb)$ applied to the function $f$. We then introduce the following two definitions.

    \begin{Def}[Rate $\rho_{\gb}(\F)$]\label{def:rho_gam_bet}
        For any class $\F$ and any $(\gb) \in \R \times \R$, we denote $\rho_{\gb}(\F)$ the smallest worst-case asymptotic rate of $\eqref{eq:hb}_{\gb}$ on $\F$.
    \end{Def}

    \begin{Def}[Convergence region  $\OcvF$]\label{def:Omega_cv}
        We denote  $\OcvF$ the set of parameters $(\gb) \in \R\times \R$ for which $\eqref{eq:hb}_{\gb}$ has a worst-case asymptotic convergence rate  $\rho_{\gb}(\F)$ strictly below $1$.
    \end{Def}

    In the sequel, if $(\gb)\in \OcvF$, we may abusively say that $\eqref{eq:hb}_{\gb}$ \textit{converges} on $\F$ (instead of ``has a worst-case asymptotic convergence rate  $\rho_{\gb}(\F)$ strictly below $1$''), and conversely, that $\eqref{eq:hb}_{\gb}$ \textit{does not converge} if $(\gb)\notin \OcvF$ (i.e.,~there exists~$f\in\mathcal{F}$ on which $\HB_{\gb}$ does not converge).
    
    Before giving preliminary results on \eqref{eq:hb}, let us recall a few notations.
    
    \paragraph{Notation.} For $x\in \R^d$ and $r>0$,  $B(x, r)$  is the Euclidean ball with center $x$ and radius $r$, and $\|x\|$ the Euclidean norm of $x$. We denote $\conv(x_i, i\in I)$ the convex hull of a family of points $(x_i)_{i\in I}$ indexed by a set $I$. We denote $\mathrm{Int}(A)$ the interior of a set $A\subseteq \R^d$. For vectors $x, y\in \R^d$, $\langle x,y\rangle =x^\top y$ is the Euclidean inner product.
    
    We denote $\Id_d$ the identity  matrix in dimension $d$. We denote $\mathcal S_K(\R)$ the set of symmetric matrices in dimension $K\in \mathbb N$,  and $\mathcal S^+_K(\R)$ the set of positive semi-definite matrices. For $M,N$ in $\mathcal S_K(\R)$,  we denote $M\preccurlyeq N$ if $N-M\in \mathcal S_K^+(\R)$.  For a matrices $M, N\in \R^{K\times K'}$, we denote $\langle M,N\rangle =\Tr(M^\top N)$ the standard inner product, with $\Tr$ the trace operator. We denote $\|M\|_{\op}$ the operator norm of the matrix $M$, and $\mathrm{Sp}(M)$ the spectrum of $M$.
    
    We denote $\mathcal C^k (\R^d)$ the set of $k$ times continuously differentiable functions from $\R^d \to \R$. We denote $f^*$ the Fenchel-transform of a function~$f$. Note that all classes of functions considered hereafter belong to the set of closed proper convex functions, and hence satisfy $f=f^{**}$.
    
     Finally, for any integer $K\geq 2$, we denote by $\theta_K$ the angle $\frac{2\pi}{K}$. By convention, and to avoid unnecessary case disjunctions, for $\beta<0$, we use the convention $\sqrt{\beta}=\mathrm{NAN}$ (Not A Number), and $\min(a,\mathrm{NAN})=\max(a, \mathrm{NAN})=a$.

\section{Preliminary results on heavy-ball}\label{sec:preliminary-materials}

    This section summarizes a few well-known results and open questions regarding the heavy-ball method. We start by describing a link between convergence guarantees and the choice of parameters $(\gb)$ on the class of quadratic functions $\mathcal Q_{\mu, L}$. Those results are well-known nowadays and date back to at least~\cite{polyak1964some}. We leverage them in~\Cref{sec:cycles}.
    
    \subsection{Known behavior of the heavy-ball method on quadratics \texorpdfstring{($\mathcal{Q}_{\mu, L}$)}{(QmL)}}\label{subsec:hb_on_quad}
    
        In this section, we consider a function $f_H\in \Qml$ parameterized by its Hessian matrix~$H$, i.e.,~such that $f(x)-\fs =\frac{1}{2} (x-\xs)^\top H (x-\xs)$ with $\mu \Id \preccurlyeq H\preccurlyeq L \Id$---or equivalently $\Sp(H)\in[\mu, L]$. The heavy-ball update is:
        \begin{align}\label{eq:HB-quad}
            x_{t+1} & ~ = x_t - \gamma \nabla f(x_t) + \beta (x_t - x_{t-1})  = x_t - \gamma H(x_t - x_\star) + \beta (x_t - x_{t-1}). \tag{HB-Q}
        \end{align}
        The worst-case asymptotic convergence rate (see~\Cref{def:awcc}) of~\eqref{eq:HB-quad} is provided by the following.

        \begin{restatable}{Prop}{quadrates}
        \textbf{\emph{(\citet{polyak1964some})}} \label{prop:conv_quad}
            Consider $\beta \in \mathbb R$ and $\gamma \in \mathbb R $.
            The worst-case asymptotic convergence rate $\rhogbQml$ of  $\eqref{eq:HB-quad}_{\gb}$, over the class $\Qml$ is:
            \begin{enumerate}[leftmargin=*]
                \item \textbf{Lazy region}: If $0 < \gamma \leq \min\left(\frac{2(1+\beta)}{L+\mu}, \frac{\left(1 - \sqrt{\beta}\right)^2}{\mu}\right) $, then $\rhogbQml = \frac{1 + \beta - \mu\gamma}{2} + \sqrt{\left(\frac{1 + \beta - \mu\gamma}{2}\right)^2-\beta}$.
                \item \textbf{Robust region}: If $\beta\geq 0$, and  $\frac{\left(1 - \sqrt{\beta}\right)^2}{\mu}\leq\gamma\leq\frac{\left(1 + \sqrt{\beta}\right)^2}{L}$,  then $ \rhogbQml = \sqrt{\beta}$.
                \item \textbf{Knife's edge}: If $\max\left(\frac{2(1+\beta)}{L+\mu}, \frac{\left(1 + \sqrt{\beta}\right)^2}{L}\right) \leq \gamma < \frac{2(1+\beta)}{L}$ , then  $ \rhogbQml =\frac{L\gamma - (1 + \beta)}{2} + \sqrt{\left(\frac{L\gamma - (1 + \beta)}{2}\right)^2-\beta}$.
                \item \textbf{No convergence}: if $\tfrac{\gamma}{1 + \beta} \geq \frac{2}{L}$ or $\gamma \le 0$, then $\rhogbQml >1$.
            \end{enumerate}
        \end{restatable}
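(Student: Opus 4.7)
The plan is to reduce the analysis to a scalar recursion by diagonalizing $H$, then to compute the asymptotic rate of the resulting linear two-step recurrence as a function of the Hessian eigenvalue, and finally to take the worst case over $[\mu,L]$. Since $H$ is symmetric, write $H=U^\top D U$ with $D$ diagonal. Setting $y_t=U(x_t-\xs)$ and using $\nabla f(x_t)=H(x_t-\xs)$, the heavy-ball recursion decouples into $d$ scalar recursions, one per eigenvalue $\lambda_i\in\Sp(H)\subseteq[\mu,L]$, namely $y^{(i)}_{t+1}=(1+\beta-\gamma\lambda_i)\,y^{(i)}_t-\beta\,y^{(i)}_{t-1}$. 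The worst-case asymptotic rate over $\Qml$ therefore equals $\sup_{\lambda\in[\mu,L]}r(\lambda)$, where $r(\lambda)$ is the spectral radius of the companion matrix of $P_\lambda(z)=z^2-(1+\beta-\gamma\lambda)z+\beta$: one direction is a term-by-term upper bound, the other is obtained by taking $H=\lambda\,\Id_d$ for the worst $\lambda$ and initializing along a single eigenvector.

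Next I would analyze $r(\lambda)$ through the discriminant $\Delta(\lambda)=(1+\beta-\gamma\lambda)^2-4\beta$, which vanishes exactly at $\gamma\lambda=(1\pm\sqrt{\beta})^2$. This defines (when $\beta\geq 0$) two critical eigenvalues $\lambda^{\pm}=(1\pm\sqrt{\beta})^2/\gamma$ separating two regimes. For $\lambda\notin[\lambda^-,\lambda^+]$ the two roots of $P_\lambda$ are real and $r(\lambda)=\tfrac{1}{2}\bigl(|1+\beta-\gamma\lambda|+\sqrt{\Delta(\lambda)}\bigr)$, a strictly increasing function of $|1+\beta-\gamma\lambda|$; for $\lambda\in[\lambda^-,\lambda^+]$ the roots are complex conjugate of modulus $\sqrt{\beta}$, so $r(\lambda)=\sqrt{\beta}$. (For $\beta<0$, $\Delta(\lambda)>0$ always, and the paper's convention $\sqrt{\beta}=\text{NAN}$ makes the robust-region condition vacuous, so only the real branch needs consideration.) Because $r$ is convex on each real branch and constant on the complex one, the supremum over $[\mu,L]$ is always attained at $\mu$, at $L$, or on $[\mu,L]\cap[\lambda^-,\lambda^+]$.

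The three regions of the proposition are then obtained by a case analysis on which of $r(\mu)$, $r(L)$, $\sqrt{\beta}$ dominates: the lazy region corresponds to $\mu\leq\lambda^-$ (equivalently $\gamma\leq(1-\sqrt{\beta})^2/\mu$) together with $r(\mu)\geq r(L)$, giving the stated formula at $\lambda=\mu$; the robust region is $[\mu,L]\subseteq[\lambda^-,\lambda^+]$, giving the constant $\sqrt{\beta}$; the knife's edge region is $L\geq\lambda^+$ together with $r(L)\geq r(\mu)$, giving the stated formula at $\lambda=L$ (with $|1+\beta-\gamma L|=\gamma L-(1+\beta)$). The boundary $\gamma=2(1+\beta)/(L+\mu)$ between the lazy and knife's edge regions is the symmetric configuration $1+\beta-\gamma\mu=-(1+\beta-\gamma L)$ at which $r(\mu)=r(L)$. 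Finally, case 4 follows by evaluating $r$ at an endpoint: if $\gamma/(1+\beta)\geq 2/L$ then $|1+\beta-\gamma L|\geq 1+\beta$, hence $r(L)>1$ strictly; the case $\gamma\leq 0$ is handled similarly at $\lambda=\mu$.

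The argument is classical (going back to \citep{polyak1964some}) and uses no deep idea beyond companion-matrix spectral analysis; the main technical burden is the bookkeeping in the case analysis, i.e.\ tracking the sign of $1+\beta-\gamma\lambda$, separating $\beta\geq 0$ from $\beta<0$, and verifying that the three formulas glue continuously across the shared borders (where they all collapse to $\sqrt{\beta}$). Getting the exact parameter set right—rather than an approximation—is where most of the care is needed.
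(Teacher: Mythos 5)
Your proposal follows essentially the same route as the paper's proof in Appendix A: reduce \eqref{eq:HB-quad} to a two-step linear recursion, block-diagonalize over the eigenspaces of $H$ so the rate becomes $\max_{\lambda\in[\mu,L]}\rho(P_{\beta,\lambda,\gamma})$ for the companion matrix $P_{\beta,\lambda,\gamma}$, and then case-split on the sign of the discriminant (complex conjugate roots of modulus $\sqrt{\beta}$ versus real roots) and on whether the maximum is attained at $\mu$ or $L$. The argument is correct and the case bookkeeping you describe matches the paper's.
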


        \begin{figure}[t]
          \begin{subfigure}[t]{.49\linewidth}
            \centering
            \includegraphics[width=\linewidth]{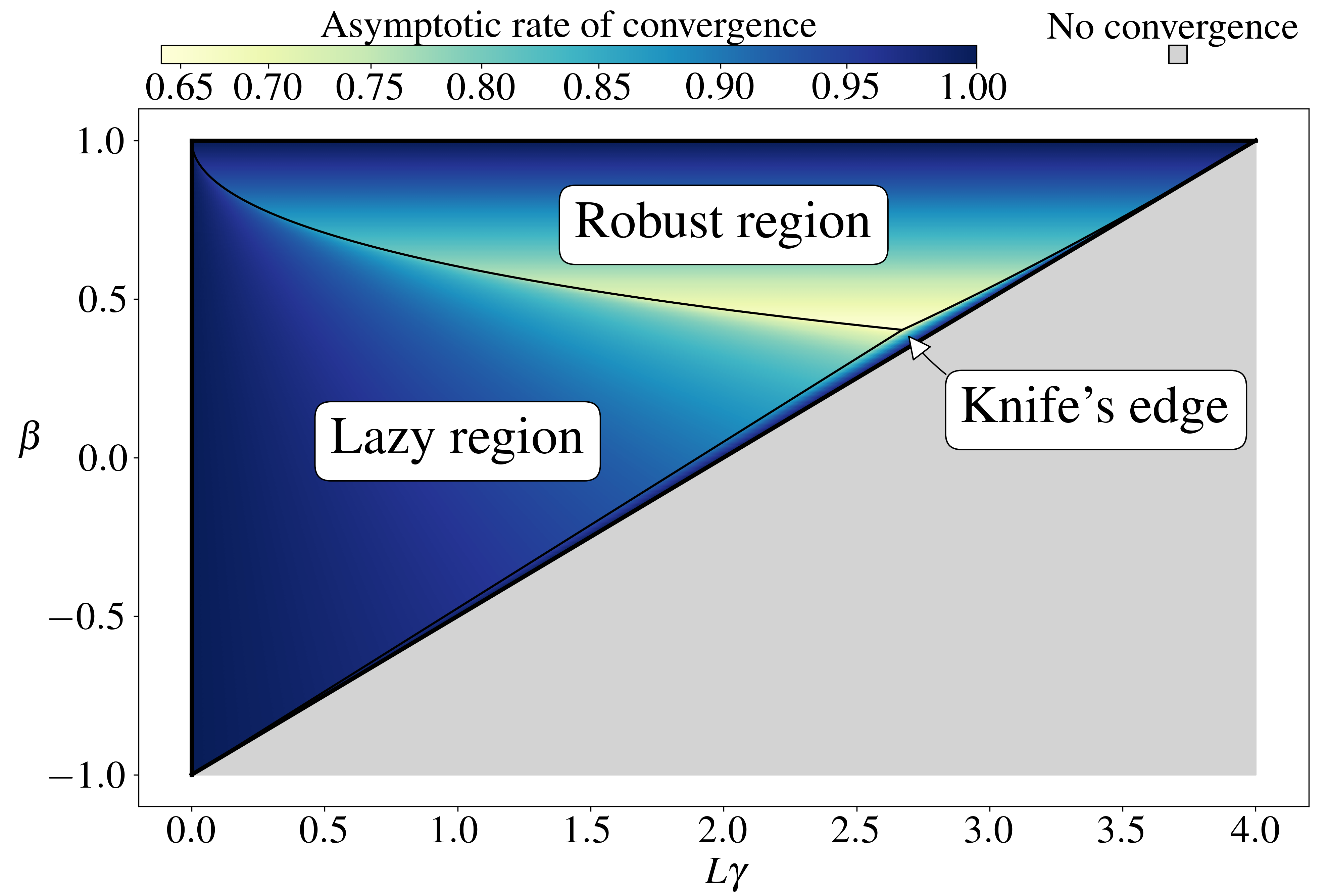}
        
            \caption{Schematic view of the three convergence regions described by \Cref{prop:conv_quad} together with the asymptotic worst-case convergence rate $\rho_{\gb}(\Qml)$ as a color-scale, with respect to $\gamma$ and $\beta$.}
            \label{fig:1a}
          \end{subfigure}
          \hfill
          \begin{subfigure}[t]{.49\linewidth}
            \centering
            \includegraphics[width=\linewidth]{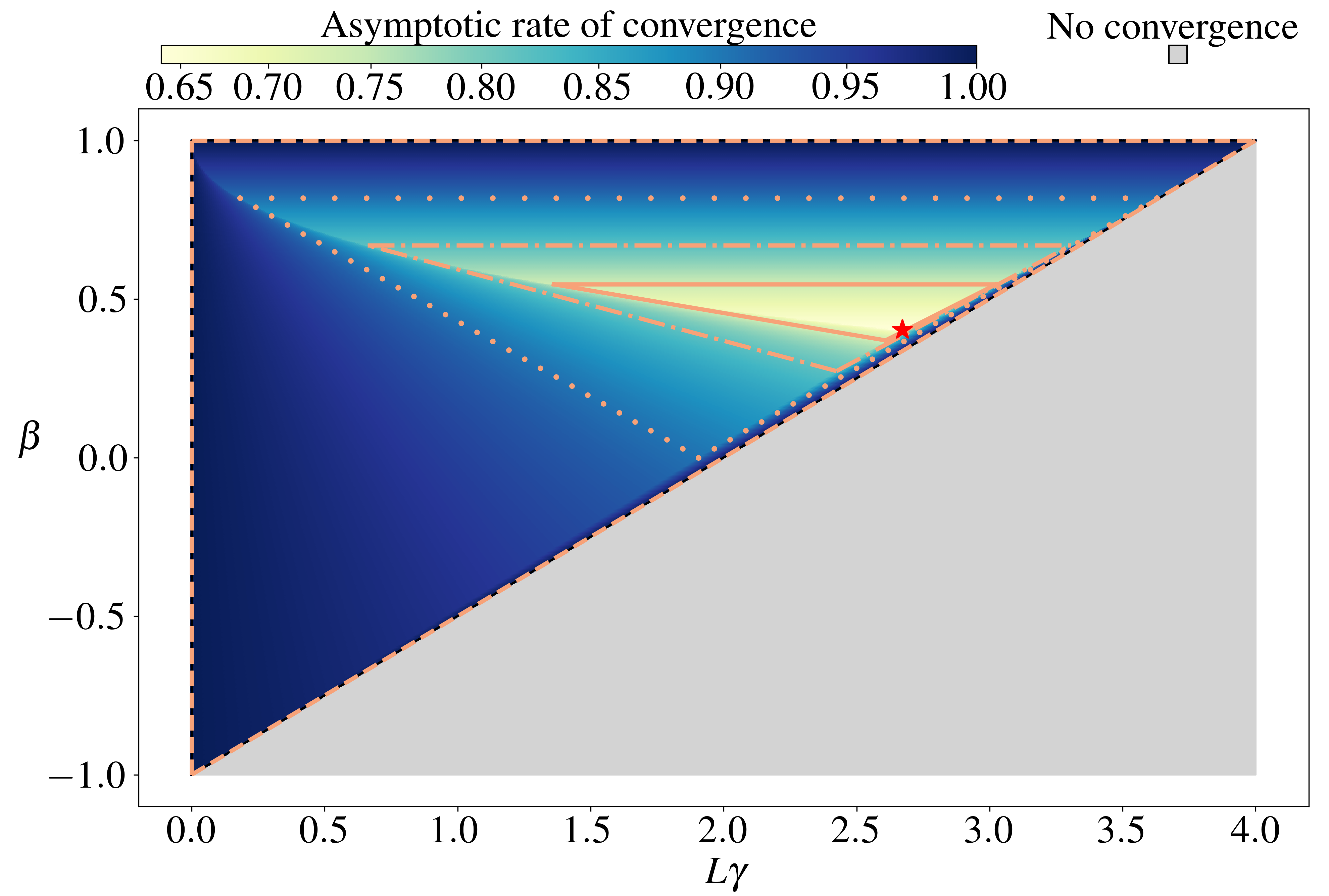}
            % \vspace{-1ex}
            \subcaption{Level sets $\LS_{\ml}(\rho) $ of  $\gb \mapsto \rho_{\gb}(\Qml)$ as orange triangles. Levels $\rho$ correspond to $\rho=1$ {(\textcolor{orange}{-\,-})}, ${\rho=\frac{1-\kappa}{1+\kappa}}$~(\textcolor{orange}{$\cdots$}) corresponding to the rate of $\GD$, then ${\rho=\frac{1-2\kappa}{1+2\kappa}}$ (\textcolor{orange}{$- \cdot -$}), and   ${\rho=\frac{1-3\kappa}{1+3\kappa}}$ (\textcolor{orange}{---}), and finally ${\rho=\frac{1-\sqrt{\kappa}}{1+\sqrt{\kappa}}}$ (\textcolor{red}{$\star$}), i.e.,~$\rho^\star(\Qml)$.}
            \label{fig:hb_heatmap}
            \label{fig:1b}
          \end{subfigure}
          \caption{Asymptotic convergence rate $\rho_{\gb}(\Qml)$ of \eqref{eq:HB-quad}, for $\kappa = 1/20$.}
          \label{fig:1}
        \end{figure}

        \begin{sketch}
            The complete proof is provided in~\Cref{app:proofquad}. In short, we rewrite \eqref{eq:HB-quad} as a linear system and decompose it over the eigenspaces of~$H$.  Ultimately, the asymptotic convergence rate is given by  $\max_{\lambda \in [\mu, L]} \bar \rho (P_{\beta, \lambda,\gamma})$ with $ P_{\beta, \lambda,\gamma}\eqdef
            \begin{pmatrix}
                1 + \beta - \gamma \lambda & - \beta \\
                1 & 0
            \end{pmatrix}.$
            The four cases arise from the nature of the eigenvalues of $P_{\beta, \lambda,\gamma}$, that can either be two complex conjugates number with modulus $\sqrt{\det(P_{\beta, \lambda,\gamma})}=\sqrt{\beta}$ or two real numbers, and the fact the $\max$ may be attained on either $\mu $ or $L$. 
        \end{sketch}

        \paragraph{Comments on \Cref{prop:conv_quad}.}  \Cref{fig:hb_heatmap} illustrates the asymptotic rate of the heavy-ball method for each value of the parameters $\gb$.  It shows the three parameter regions resulting in convergence, as  given by \Cref{prop:conv_quad}. 
        First, the {\bfseries left} region, called the {\bfseries lazy region} where the step-size is small, thus the rate is driven by the convergence of the iterates' component aligned with the eigenvector of $H$ associated with $\mu$.
        Second, the {\bfseries right} region, called the {\bfseries knife's edge} where the step-size is large and  the rate is driven by the oscillations of the iterates' component aligned with the eigenvector of $H$ associated with $L$.
        Third, the {\bfseries upper} region, called  the {\bfseries robust region} where the step-size does not impact the convergence rate. 

        In particular, \Cref{prop:conv_quad} enables to define the set of parameters for which \eqref{eq:HB-quad} converges. Following \Cref{def:Omega_cv}, we denote by~$\Oqml$ the set of parameters $(\gb)$ for which~\eqref{eq:HB-quad} has a worst-case asymptotic convergence rate strictly below $1$ (i.e.,~for which $\HB$ converges on any function of $\mathcal{Q}_{\ml}$). This set is naturally the union of three regions of convergence provided by~\Cref{prop:conv_quad}.

        \begin{Cor}
            By \Cref{prop:conv_quad}, we have $$\Oqml = \left\{(\gb)\in \R\times \R \text{ s.t. } \beta\in (-1;1),  0< {\gamma}{} < \frac{2}{L}(1 + \beta) \right\}.$$
        \end{Cor}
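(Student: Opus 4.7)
The plan is to derive the corollary directly from the four cases of \Cref{prop:conv_quad}, by establishing the two inclusions.

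The ``easy'' direction $\Oqml \subseteq \{(\gb) : \beta \in (-1,1),\, 0 < \gamma < \frac{2(1+\beta)}{L}\}$ follows immediately from case 4 of the proposition: any $(\gb)$ with $\gamma \leq 0$ or $\gamma \geq \frac{2(1+\beta)}{L}$ has $\rhogbQml > 1$, and thus does not belong to $\Oqml$. The lower bound $\gamma > 0$ together with $\gamma < \frac{2(1+\beta)}{L}$ forces $1+\beta > 0$, i.e.,~$\beta > -1$. The bound $\beta < 1$ is also forced: in the robust region the rate is $\sqrt{\beta}$, which is $< 1$ only when $\beta < 1$; and a short computation shows that in the lazy and knife regions, the rate $a + \sqrt{a^2-\beta} < 1$ reduces to $a < \min(1,\frac{1+\beta}{2})$, which on the respective domains again enforces $\beta < 1$.

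For the reverse inclusion, I take $(\gb)$ in the right-hand set and show it lies in one of the three convergence regions of cases 1--3 with associated rate strictly below $1$. The rate checks boil down to elementary algebra: writing $a = \frac{1+\beta-\mu\gamma}{2}$ in the lazy region and $a = \frac{L\gamma-(1+\beta)}{2}$ in the knife's edge, the inequality $a + \sqrt{a^2-\beta} < 1$ rearranges respectively to $\gamma > 0$ and $\gamma < \frac{2(1+\beta)}{L}$, both of which are assumed; the robust region contributes $\sqrt\beta < 1 \iff \beta < 1$, again assumed.

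The only step that truly requires care is verifying that the three regions jointly cover the whole interval $\bigl(0, \frac{2(1+\beta)}{L}\bigr)$ for each fixed $\beta \in (-1, 1)$, and this is where I expect the bulk of the bookkeeping. For $\beta < 0$, the convention $\sqrt\beta = \mathrm{NAN}$ collapses the $\min$ and $\max$ appearing in the proposition to $T_0 := \frac{2(1+\beta)}{L+\mu}$, and the lazy interval $(0, T_0]$ and knife interval $[T_0, \frac{2(1+\beta)}{L})$ glue at $T_0$. For $\beta \in [0,1)$, setting $T_L := \frac{(1-\sqrt\beta)^2}{\mu}$ and $T_R := \frac{(1+\sqrt\beta)^2}{L}$, the covering reduces to the elementary identity that $T_L \leq T_0 \leq T_R$ iff $\sqrt\beta \geq \frac{\sqrt L - \sqrt\mu}{\sqrt L + \sqrt\mu}$, which is exactly the non-emptiness condition for the robust region (provable by cross-multiplying and taking square roots, since all quantities involved are nonnegative). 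When robust is non-empty, the three endpoints chain as $T_L \to T_R$; when robust is empty, the $\min$ in the lazy and the $\max$ in the knife both collapse to $T_0$, and the two intervals meet there. In both subcases the union is exactly $\bigl(0, \frac{2(1+\beta)}{L}\bigr)$, which closes the argument.
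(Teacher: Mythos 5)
Your proof is correct and follows the same route the paper intends: the paper states the corollary without proof as an immediate consequence of \Cref{prop:conv_quad}, and your argument is precisely the detailed verification of that deduction (rate $<1$ in each of the three regions, plus the interval-covering bookkeeping showing the lazy, robust and knife regions glue to exactly $\bigl(0,\tfrac{2(1+\beta)}{L}\bigr)$ for each $\beta\in(-1,1)$).
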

        Furthermore, the optimal parameter choice is achieved at the intersection of the three regions (or equivalently at point of the robust region with the smallest $\beta$), as provided by the following result.
        
        \begin{Cor}\label{cor:HB-quad-opt-tun}
            The optimal worst-case asymptotic rate of \eqref{eq:HB-quad} on $\Qml$, for parameters  $(\gb) \in \Oqml$ is
            \[\rho^\star(\Qml) \eqdef \min_{(\gb)\in \Oqml } \rho_{\gb}(\Qml) = \sqrt{\beta^\star(\Qml)}=\frac{1-\sqrt{\kappa}}{1 + \sqrt{\kappa}}, \] 
            which is achieved for $\beta^\star(\Qml) \eqdef  \left(\frac{1-\sqrt{\kappa}}{1 + \sqrt{\kappa}}\right)^2$, $\gamma^\star(\Qml) \eqdef\frac{2}{L+\mu}  (1+\beta^\star(\Qml))$.
        \end{Cor}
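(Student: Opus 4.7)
The plan is to minimize the piecewise formula for $\rho_{\gb}(\Qml)$ given by \Cref{prop:conv_quad} over the convergence region $\Oqml$, and show that the infimum is attained at the unique point where the three sub-regions meet.

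First, I would work in the \emph{robust region}, where the rate simplifies to $\rho = \sqrt{\beta}$. Thus on that region the minimization reduces to finding the smallest $\beta \geq 0$ such that the interval $\left[\frac{(1-\sqrt{\beta})^2}{\mu},\frac{(1+\sqrt{\beta})^2}{L}\right]$ is non-empty. The non-emptiness condition reads $L(1-\sqrt{\beta})^2 \leq \mu(1+\sqrt{\beta})^2$, and taking square roots it rearranges into $\sqrt{\beta} \geq \frac{\sqrt{L}-\sqrt{\mu}}{\sqrt{L}+\sqrt{\mu}} = \frac{1-\sqrt{\kappa}}{1+\sqrt{\kappa}}$. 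Hence the minimum over the robust region is $\sqrt{\beta^\star(\Qml)} = \frac{1-\sqrt{\kappa}}{1+\sqrt{\kappa}}$, attained at $\beta^\star(\Qml)=\left(\frac{1-\sqrt{\kappa}}{1+\sqrt{\kappa}}\right)^2$, and at that point the robust interval collapses to the single point $\gamma = \frac{(1-\sqrt{\beta^\star})^2}{\mu} = \frac{(1+\sqrt{\beta^\star})^2}{L}$. A short computation checks that this common value equals $\frac{2}{L+\mu}(1+\beta^\star(\Qml))$, which yields the announced $\gamma^\star(\Qml)$.

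Next, I would rule out the other two regions. The key observation, which will be the main lever, is that the eigenvalues of the companion matrix $P_{\beta,\lambda,\gamma}$ appearing in the sketch of \Cref{prop:conv_quad} have product equal to $\beta$, so the spectral radius always satisfies $\bar\rho(P_{\beta,\lambda,\gamma}) \geq \sqrt{|\beta|}$, giving $\rho_{\gb}(\Qml) \geq \sqrt{|\beta|}$ on all of $\Oqml$. For $|\beta| \geq \beta^\star(\Qml)$ this immediately implies $\rho_{\gb}(\Qml) \geq \frac{1-\sqrt{\kappa}}{1+\sqrt{\kappa}}$. For $|\beta| < \beta^\star(\Qml)$, the robust region is empty by the first step, so $(\gb) \in \Oqml$ forces it to lie in the lazy or knife's edge region. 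In the lazy region with $\beta \geq 0$, writing $u = \frac{1+\beta-\mu\gamma}{2}$, the constraint $\gamma \leq \frac{(1-\sqrt{\beta})^2}{\mu}$ gives $u \geq \sqrt{\beta}$, and then $\rho = u + \sqrt{u^2 - \beta}$ is an increasing function of $u$ bounded below by $\sqrt{\beta}$; since the region is now empty of robust overlap, one checks that the infimum over $(\gb)$ with $|\beta| < \beta^\star(\Qml)$ is strictly larger than $\frac{1-\sqrt{\kappa}}{1+\sqrt{\kappa}}$ (and symmetrically in the knife's edge by the analogous substitution $v = \frac{L\gamma-(1+\beta)}{2}$). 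The case $\beta < 0$ follows from the lower bound $\rho \geq \sqrt{|\beta|}$ together with a direct check that the lazy-region formula exceeds $\frac{1-\sqrt{\kappa}}{1+\sqrt{\kappa}}$ whenever $\beta < 0$ and $\gamma \leq \frac{2(1+\beta)}{L+\mu}$.

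The main technical obstacle I anticipate is precisely this last step: ensuring that no point of the lazy or knife's edge regions lying \emph{outside} the closure of the robust region beats the robust-region optimum, particularly when $\beta$ is negative or close to zero. The cleanest way to handle this is probably to exploit continuity of $\rho_{\gb}(\Qml)$ across the region boundaries (the three formulas agree on the shared boundaries, since both square roots vanish there), reducing the problem to a monotonicity check of the single-variable functions $u \mapsto u+\sqrt{u^2-\beta}$ and $v \mapsto v+\sqrt{v^2-\beta}$ away from $u,v = \sqrt{\beta}$. Once this monotonicity argument is in place, the minimum is necessarily attained at the triple boundary point identified in the first step, which concludes the proof.
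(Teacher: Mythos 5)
Your proposal is correct and follows essentially the same route as the paper's justification: minimize $\sqrt{\beta}$ over the robust region (where non-emptiness of $\bigl[\tfrac{(1-\sqrt{\beta})^2}{\mu},\tfrac{(1+\sqrt{\beta})^2}{L}\bigr]$ forces $\sqrt{\beta}\geq\tfrac{1-\sqrt{\kappa}}{1+\sqrt{\kappa}}$, with the interval collapsing to $\gamma^\star=\tfrac{2}{L+\mu}(1+\beta^\star)$), then rule out the lazy and knife's-edge regions via the determinant bound $\rho\geq\sqrt{|\beta|}$ and monotonicity of $u\mapsto u+\sqrt{u^2-\beta}$ toward the triple junction. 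The one step you leave at sketch level (that for $\beta<\beta^\star$ the best rate along $\gamma=\tfrac{2(1+\beta)}{L+\mu}$ still exceeds $\tfrac{1-\sqrt{\kappa}}{1+\sqrt{\kappa}}$) is asserted at the same level of detail in the paper's own discussion, and it does go through.
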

        In a nutshell, on the one hand, for $\beta \leq \left(\tfrac{1 - \sqrt{\kappa}}{1 + \sqrt{\kappa}}\right)^2$, the optimal rate is achieved for a single value of~$\gamma$, such that $\gamma = \frac{2}{L+\mu} (1+\beta)$, that corresponds to the limit between the lazy region and  the  knife's edge. In this region $\beta \in \left[0; \left(\tfrac{1 - \sqrt{\kappa}}{1 + \sqrt{\kappa}}\right)^2\right]$, the rate decreases (improves) as $\beta$ increases. 
        On the other hand, in the robust region,  i.e.,~when~$\beta > \left(\tfrac{1 - \sqrt{\kappa}}{1 + \sqrt{\kappa}}\right)^2$, the asymptotic rate $\sqrt{\beta}$ is achieved for any $\gamma\in\left[ \frac{\left(1 - \sqrt{\beta}\right)^2}{\mu}, \frac{\left(1 + \sqrt{\beta}\right)^2}{L} \right]$ (which allows to use any value in this set). In this region $\beta \in \left[ \left(\tfrac{1 - \sqrt{\kappa}}{1 + \sqrt{\kappa}}\right)^2; 1 \right]$, the rate increases (degrades) with $\beta$. The optimal rate is thus achieved at the limit, $\beta = \left(\tfrac{1 - \sqrt{\kappa}}{1 + \sqrt{\kappa}}\right)^2$. 

        This asymptotic rate matches the lower complexity bound provided by~\cite{nemirovskij1983problem}.
        As~$\kappa\to 0$, $\rho^\star(\Qml) \sim 1-2\sqrt{\kappa}$ which is commonly referred to as an accelerated convergence rate, as compared to that of the classical gradient descent algorithm, obtained with $(\gamma=2/(L+\mu),\beta=0)$, whose rate is $\rho_{\gamma=2/(L+\mu),\beta=0}(\Qml) \sim 1-2\kappa$ (as $\kappa\to 0$).
        
        The level sets of the asymptotic convergence rate of \eqref{eq:hb} are triangles, as stated in the following lemma and illustrated on \Cref{fig:hb_heatmap}. This property will be used in the proof of our main result, in \Cref{sec:noaccel}. 
        \begin{Lemma}[Sublevel set $\SLS_{\ml}(\rho)$ of the heavy-ball convergence rates]
            \label{lem:level_sets}
            Let $0<\mu \le L$. The level sets of $\gb\mapsto\rhogbQml$ are triangles.
            More precisely, for any $\rho \in [\rho^\star(\Qml), 1]$, the set of parameters $\gb$ for which~$\eqref{eq:hb}_{\gb}$ has rate $\rho$ is the union of the three segments parametrized by:
            \begin{itemize}
                \item Segment in the Lazy Region: $\beta \in \left[\frac{\frac{1-\kappa}{1+\kappa}-\rho}{\frac{1}{\rho} - \frac{1-\kappa}{1+\kappa}}, \rho^2\right]$ and $\gamma = \frac{(1-\rho)(1-\beta / \rho)}{\mu}$  .
                \item Segment in the Robust Region: $\gamma \in \left[\frac{\left(1 - \rho\right)^2}{\mu}, \frac{\left(1 + \rho\right)^2}{L}\right]$ and $\beta = \rho^2$.
                \item Segment in the Knife Edge: $ \beta \in \left[\frac{\frac{1-\kappa}{1+\kappa}-\rho}{\frac{1}{\rho} - \frac{1-\kappa}{1+\kappa}}, \rho^2 \right]$ and $\gamma = \frac{(1+\rho)(1+\beta / \rho)}{L}$.
            \end{itemize}        
            We denote this level set by $\LS_{\ml}(\rho)$ (which is a triangle), and the corresponding sublevel set (that is extensively used in the sequel) by $ \SLS_{\ml}(\rho) =\cup_{\rho'\leq\rho} \LS_{\ml}(\rho')$.
        \end{Lemma}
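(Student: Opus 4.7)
The plan is to invert the piecewise rate formula from \Cref{prop:conv_quad} region by region, obtaining one segment per region, and then show that the three segments join at common endpoints (hence form a triangle).

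First I would handle the \textbf{robust region}, which is the easy case: since $\rho_{\gb}(\Qml)=\sqrt{\beta}$ there, the level set $\{\rho_{\gb}(\Qml)=\rho\}$ is simply the horizontal segment $\beta=\rho^{2}$, with $\gamma$ ranging over the interval $\bigl[(1-\rho)^{2}/\mu,\,(1+\rho)^{2}/L\bigr]$ prescribed by the region's definition. This directly matches the second bullet of the lemma.

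Next I would treat the \textbf{lazy region}. Writing $a=\tfrac{1+\beta-\mu\gamma}{2}$, the formula becomes $\rho=a+\sqrt{a^{2}-\beta}$. Isolating the square root and squaring yields $a=\tfrac{\rho^{2}+\beta}{2\rho}$, and substituting back gives
\[
\gamma=\frac{1+\beta-2a}{\mu}=\frac{(1-\rho)(1-\beta/\rho)}{\mu},
\]
which is the claimed parametrization. An entirely analogous computation for the \textbf{knife's edge} region (replacing $\mu$ by $L$ and $a$ by $b=\tfrac{L\gamma-(1+\beta)}{2}$) yields $\gamma=\tfrac{(1+\rho)(1+\beta/\rho)}{L}$.

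It then remains to determine the range of $\beta$ on the lazy and knife's edge segments. The upper bound is obtained by requiring the segment to meet the robust region: both formulas plug $\beta=\rho^{2}$ into $\gamma=(1-\rho)^{2}/\mu$ and $\gamma=(1+\rho)^{2}/L$ respectively, exactly matching the endpoints of the robust segment. For the lower bound, I would compute the intersection of the lazy and knife's edge segments by equating the two formulas for $\gamma$; introducing $q=\tfrac{1-\kappa}{1+\kappa}$ and solving the resulting linear equation in $u=\beta/\rho$ gives $u=\tfrac{q-\rho}{1-q\rho}$, i.e.\ $\beta=\tfrac{q-\rho}{1/\rho-q}$, the common lower endpoint claimed in the statement. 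Checking that this common endpoint also lies on the boundary $\gamma=\tfrac{2(1+\beta)}{L+\mu}$ separating the lazy and knife's edge regions in \Cref{prop:conv_quad} (which is a straightforward algebraic verification using the symmetry of the two formulas at this boundary) confirms that each segment stays inside its defining region.

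The main obstacle is purely bookkeeping: one must track which endpoint of each segment corresponds to which region boundary, and verify that the domain of validity given by \Cref{prop:conv_quad} (in particular the constraint $\gamma\leq \tfrac{2(1+\beta)}{L+\mu}$ in the lazy region and its mirror in the knife's edge) is exactly saturated at the computed endpoints. Once this is done, the three segments share pairwise common endpoints and therefore form a triangle, and $\SLS_{\ml}(\rho)$ is obtained as the union over $\rho'\leq\rho$ by the definition.
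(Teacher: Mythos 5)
Your proposal is correct and follows the natural route: the paper states this lemma as a direct consequence of \Cref{prop:conv_quad} without a separate written proof, and the intended derivation is exactly your region-by-region inversion of the rate formulas, with the endpoints obtained by matching the segments at the region boundaries. Your verification that the lazy and knife's-edge parametrizations sum to $(L+\mu)\gamma = 2(1+\beta)$ at their intersection is the right way to confirm the lower endpoint $\beta = \tfrac{\frac{1-\kappa}{1+\kappa}-\rho}{\frac{1}{\rho}-\frac{1-\kappa}{1+\kappa}}$ lies on the shared boundary.
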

        As a summary, this section provided a complete picture of the behavior of \eqref{eq:hb} over~$\Qml$. As the convergence rate for $\beta<0$ is never better than the one for $\beta=0, \gamma=\frac{2}{L+\mu}$, we restrict the analysis to $\beta\geq 0$ in the following. Next, we move to existing results on the class $\Fml$.

    \subsection{Known behaviors of the heavy-ball method on \texorpdfstring{$\mathcal{F}_{\mu, L}$}{Fml}}\label{subsec:convergence_hb}

        Convergence of \eqref{eq:hb} on the set of $L$-smooth and $\mu$-strongly convex functions $\Fml$ has attracted a lot of attention over the last decade. First,
        recall that we denote  $\Ocvml$ the set of parameters $(\gb)$ for which \eqref{eq:hb} has a worst-case asymptotic convergence rate strictly below 1 on $\Fml$ (see~\Cref{def:Omega_cv}).

        \subsubsection{Convergence results on \texorpdfstring{$\Fml$}{Fml}}\label{subsubsec:convergence_results}
            \citet{ghadimi2015global} establish that \eqref{eq:hb} converges on $\Fml$ when
            \[\gamma\in(0, \tfrac{2}{L}) \text{ and } 0\leq \beta < \tfrac{1}{2}\left(\frac{\mu\gamma}{2} + \sqrt{\left(\frac{\mu\gamma}{2}\right)^2 + 4(1 - \tfrac{L\gamma}{2})}\right),\] see~\citep[][Theorem.4]{ghadimi2015global}. For comparison purposes, we denote this set of parameters $\Oghml$ in what follows.
            Unfortunately, this result does not lead to an acceleration of \eqref{eq:hb} on $\mathcal F_{\ml}$.
            Indeed, the following lemma shows that the best rate for $(\gb)\in \Oghml$ parameters is not accelerated, \textit{even on $\Qml$}.
            \begin{Lemma}[Optimal asymptotic rate of \eqref{eq:HB-quad} on $\Qml$ for $ (\gb) \in \Oghml$]\label{lem:ghad}
                The optimal worst-case asymptotic rate of \eqref{eq:HB-quad} on $\Qml$, for parameters  $(\gb) \in \Oghml$ is
                \[\rho^\star_{\Ghadimi}(\Qml) \eqdef \min_{(\gb)\in \Oghml } \rho_{\gb}(\Qml) = \sqrt{\beta^\star_{\Ghadimi} (\Qml)}\underset{\kappa \rightarrow 0}{=} 1 - 8\kappa + o(\kappa),\] 
                which is achieved for
                \begin{align*}
                    \sqrt{\beta^\star_{\Ghadimi}(\Qml)} &= (\kappa^{-1}-1)^{1/3}\left[ \left(\sqrt{\frac{\kappa^{-1}+26}{27}}+1\right)^{1/3} - \left(\sqrt{\frac{\kappa^{-1}+26}{27}}-1\right)^{1/3} \right] - 1, \\
                    \gamma^\star_{\Ghadimi}(\Qml) &= \frac{2(1+\beta^\star_{\Ghadimi}(\Qml))}{L+\mu} .
                \end{align*}
            \end{Lemma}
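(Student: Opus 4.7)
The plan is to combine the triangular level sets of $\rho_{\gb}(\Qml)$ established in \Cref{lem:level_sets} with a clean description of the boundary of $\Oghml$, reducing the optimization to a cubic equation in $\sqrt\beta$ that one then solves by Cardano's formula. First, I would rewrite Ghadimi's condition as a one-sided constraint in the $(\gb)$-plane: moving $\tfrac{\mu\gamma}{4}$ across in the defining inequality, squaring, and simplifying, one obtains that for $\beta \in [\kappa, 1)$ the condition $(\gb)\in\Oghml$ is equivalent to $0 < \gamma < g(\beta)$ with $g(\beta) \eqdef \frac{2(1-\beta^2)}{L - \beta\mu}$ (while for $\beta \in [0,\kappa)$ the binding constraint is simply $\gamma < 2/L$). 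Since $(\gamma^\star(\Qml), \beta^\star(\Qml)) \notin \overline{\Oghml}$ as soon as $\kappa < 1$ (the inclusion would require $\beta^\star(\Qml) \leq \kappa$, which fails), we have $\rho^\star_{\Ghadimi}(\Qml) > \rho^\star(\Qml)$ and the infimum must be attained where the curve $\gamma = g(\beta)$ is tangent to some level-set triangle $\LS_{\ml}(\rho)$.

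Next, I would pinpoint which side of the triangle is tangent to the Ghadimi boundary. The monotonicity of the rate along each side (decreasing in $\gamma$ on the lazy side, equal to $\sqrt\beta$ on the robust top, increasing in $\gamma$ on the knife's-edge side), combined with the monotonicity of $g$ on $[\kappa,1)$, show that as $\rho$ grows from $\rho^\star(\Qml)$ the first contact happens at the left endpoint of the robust segment, i.e., where the lazy side meets the robust top. This yields the critical equation
\begin{equation*}
\frac{(1-\sqrt\beta)^2}{\mu} \;=\; \frac{2(1-\beta^2)}{L - \beta\mu},
\end{equation*}
and $\rho^\star_{\Ghadimi}(\Qml) = \sqrt{\beta^\star_{\Ghadimi}(\Qml)}$ at its solution. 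Setting $b \eqdef \sqrt\beta$, factoring $1-\beta^2 = (1-b)(1+b)(1+b^2)$, cancelling $(1-b)$ and dividing by $\mu$, the equation reduces to the cubic
\begin{equation*}
b^3 + 3 b^2 + (2 + \kappa^{-1})\, b + (2 - \kappa^{-1}) = 0.
\end{equation*}

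I would then solve this cubic in closed form and extract the asymptotics. The depressing substitution $b = y-1$ transforms it into $y^3 + (\kappa^{-1}-1)y - 2(\kappa^{-1}-1) = 0$; with $c \eqdef \kappa^{-1}-1$, Cardano's discriminant simplifies as $q^2/4 + p^3/27 = c^2(\kappa^{-1}+26)/27$, and the unique real root recovers exactly the stated closed form for $\sqrt{\beta^\star_{\Ghadimi}(\Qml)} = y-1$, the companion $\gamma^\star_{\Ghadimi}(\Qml)$ following by substitution into the corresponding boundary relation. For the asymptotic expansion, setting $b = 1 - \delta$ in the cubic yields $8 - (11 + \kappa^{-1})\delta + 6\delta^2 - \delta^3 = 0$; balancing the two dominant terms $8 \sim \kappa^{-1}\delta$ gives $\delta = 8\kappa + O(\kappa^2)$, hence $\sqrt{\beta^\star_{\Ghadimi}(\Qml)} = 1 - 8\kappa + o(\kappa)$ as $\kappa \to 0$.

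The main obstacle is the geometric tangency step: one must rigorously verify that no other portion of the level-set triangle touches the Ghadimi curve before the left corner of its robust segment does --- in particular, the bottom vertex of $\LS_{\ml}(\rho)$ must remain to the right of $\gamma = g(\beta)$ for every $\rho \in [\rho^\star(\Qml), \sqrt{\beta^\star_{\Ghadimi}(\Qml)}]$. This can be checked using the explicit parametrization of the triangle vertices from \Cref{lem:level_sets} together with the fact that the bottom-vertex coordinate $\beta_c(\rho)$ stays substantially larger than $\kappa$ throughout the relevant range, so that the corresponding point lies in the region $\gamma_2(\beta) > g(\beta)$.
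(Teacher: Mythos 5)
The paper itself contains no real proof of this lemma---only a one-sentence remark that Ghadimi et al.'s region arises from a particular Lyapunov function---so your proposal is effectively supplying the missing argument rather than competing with one. Your route is almost certainly the intended derivation. The reduction of Ghadimi's condition to $0<\gamma<g(\beta)$ with $g(\beta)=\tfrac{2(1-\beta^2)}{L-\beta\mu}$ for $\beta>\kappa$ (and $\gamma<2/L$ for $\beta\le\kappa$) is correct; the critical equation $\tfrac{(1-\sqrt\beta)^2}{\mu}=g(\beta)$, after setting $b=\sqrt\beta$ and cancelling $1-b$, does reduce to $b^3+3b^2+(2+\kappa^{-1})b+(2-\kappa^{-1})=0$; and your Cardano computation reproduces the closed form in the statement exactly, as does the expansion $\delta=8\kappa+o(\kappa)$. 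A useful byproduct of your derivation is that the optimal $\gamma$ is (in the limit) $g(\beta^\star_{\Ghadimi})=\tfrac{(1-\sqrt{\beta^\star_{\Ghadimi}}\,)^2}{\mu}$, the left corner of the robust segment, \emph{not} $\tfrac{2(1+\beta^\star_{\Ghadimi})}{L+\mu}$ as the lemma states: the latter exceeds $2/L$ whenever $\beta^\star_{\Ghadimi}>\kappa$ and therefore does not even lie in $\overline{\Oghml}$. The $\gamma^\star_{\Ghadimi}$ formula in the statement appears to be erroneous, and your derivation makes that visible.

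Two gaps remain. The substantive one is the tangency step you flag yourself, and the verification you propose is not sufficient. Since the rate is non-increasing in $\gamma$ up to the right end of the robust segment, and $g(\beta)<2/L<\tfrac{(1+\sqrt\beta)^2}{L}$ in the relevant range of $\beta$, the per-$\beta$ infimum over $\Oghml$ is the limit as $\gamma\uparrow g(\beta)$: it equals $\sqrt\beta$ for $\beta>\beta^\star_{\Ghadimi}$, but for $\kappa<\beta<\beta^\star_{\Ghadimi}$ it is the lazy-region rate at $(g(\beta),\beta)$, and you must show this never drops below $\sqrt{\beta^\star_{\Ghadimi}}$. Checking vertices of $\LS_{\ml}(\rho)$ does not settle this: $g$ is concave in $\beta$, so $\{(\beta,\gamma):\gamma\le g(\beta)\}$ is convex, while the lazy side of the triangle is a straight segment in the $(\beta,\gamma)$-plane---a segment can cross a convex region even when both its endpoints lie outside it. You need a pointwise comparison of $\gamma=\tfrac{(1-\rho)(1-\beta/\rho)}{\mu}$ with $g(\beta)$ along the whole lazy side, or equivalently a monotonicity argument for $\beta\mapsto\rho_{\mathrm{lazy}}(g(\beta),\beta)$ on $(\kappa,\beta^\star_{\Ghadimi})$. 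The minor gap: your claim that $(\gamma^\star(\Qml),\beta^\star(\Qml))\notin\overline{\Oghml}$ ``as soon as $\kappa<1$'' rests on $\beta^\star(\Qml)>\kappa$, which fails for $\kappa\ge 3-2\sqrt2$; this does not affect the asymptotic statement but the argument should be restricted to small $\kappa$ or justified differently.
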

            
            \begin{sketch}
                The result of \citet{ghadimi2015global} corresponds to using a Lyapunov function of the form $V_t = f(x_t) - f_\star + A(f(x_{t-1}) - f_\star) + B\| x_t - x_{t-1} \|^2$ with $A, B \geq 0$.
            \end{sketch}

            \begin{figure}[t]
                \begin{subfigure}[t]{.3\linewidth}
                    \centering
                    \includegraphics[width=\linewidth]{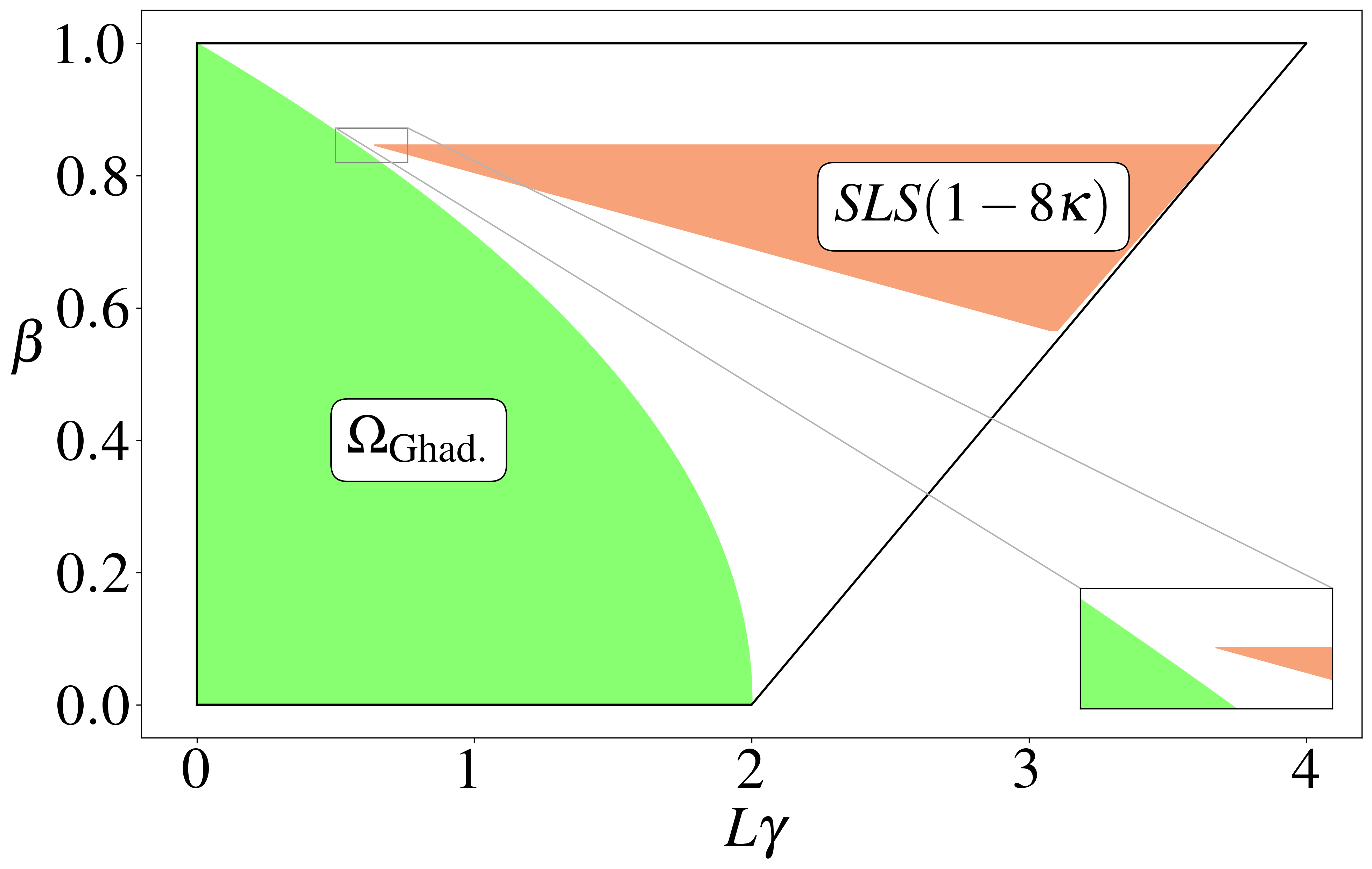}
                    \caption{$\kappa=0.01$}
                    \label{fig:ghad_a}
                \end{subfigure}
                \hfill
                \begin{subfigure}[t]{.3\linewidth}
                    \centering
                    \includegraphics[width=\linewidth]{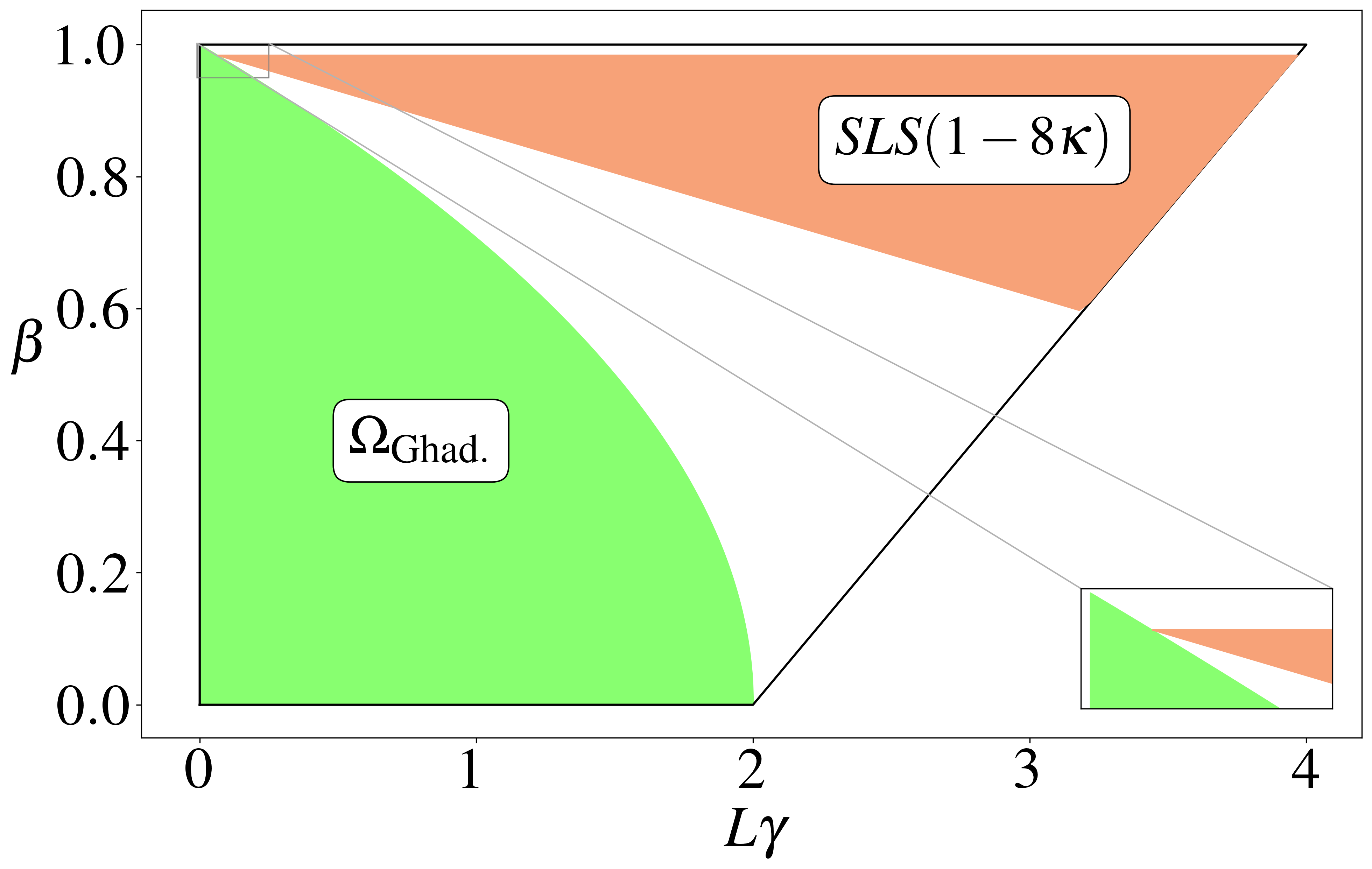}
                    \caption{$\kappa=0.001$}
                    \label{fig:ghad_b}
                \end{subfigure}
                \hfill
                \begin{subfigure}[t]{.3\linewidth}
                    \centering
                    \includegraphics[width=\linewidth]{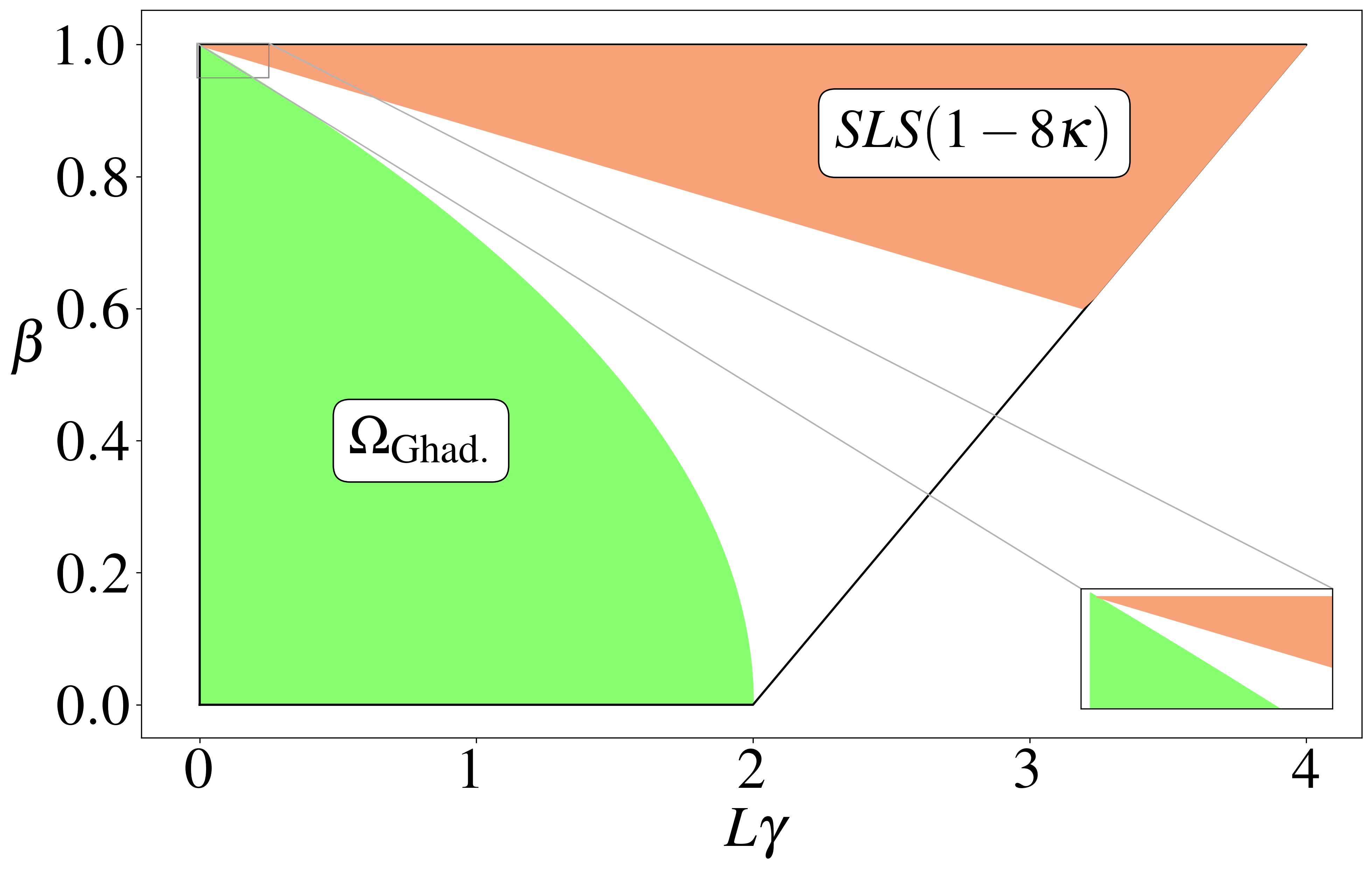}
                    \caption{$\kappa=0.0001$}
                    \label{fig:ghad_c}
                \end{subfigure}
                \caption{Illustration of \Cref{lem:ghad}. Region of parameters $\Oghml$ and sublevel set $\SLS_{\ml}\left({1-8\kappa}\right)$ for three values of $\kappa$: the rate of $\rho_{\gb}(\Qml)$ on  $\Oghml$ is at best $\frac{1-4\kappa}{1+4\kappa}$.}
                \label{fig:ghad}
            \end{figure}

            \vspace{1em} \noindent
            In short, the set $\Oghml$ of parameters covered from~\citep{ghadimi2015global} is not large enough to guarantee acceleration of \eqref{eq:hb}: indeed for any $C>8$, there exists $\kappa_0$ such that for all $\kappa<\kappa_0$, $\Oghml$ does not intersect the sublevel set $\SLS_{\ml}(1-C\kappa)$ given by \Cref{lem:level_sets}. This is illustrated in \Cref{fig:ghad}, with $C=8$ and for 3 different~$\kappa$. Therefore, a natural question is that of properly identifying the set $\Ocvml$. There exist a few approaches for trying to get better approximations to this set; see, e.g.,~the work~\citep{taylor2018lyapunov} which provides a tool to numerically identify valid Lyapunov functions---see~\cite{goujaud2023counter} for a detailed treatment of $\HB$ with this technique.

        \subsubsection{Non-convergence results on \texorpdfstring{$\Fml$}{Fml}}
            In parallel to the positive results of~\Cref{subsubsec:convergence_results},
            that establish that some points provably belong to $\Ocvml$,
            another line of work has focused on demonstrating that some points provably do not belong to $\Ocvml$.
            As we will see later on, this is the approach we will build upon.
            Two main results have to be mentioned in that direction:~\citep{lessard2016analysis} (expliciting a counter-example for the single tuning corresponding to the optimal one on the class of quadratic functions),
            and~\citep{goujaud2023counter} (automating the search for counter-examples on any tunings for which there exists one, but only numerically), that we review hereafter.

            \paragraph{Non-convergence for the optimal tuning $(\gamma^\star(\Qml), \beta^\star(\Qml))$ on quadratics.}
            
            \citet{lessard2016analysis} prove that for the optimal tuning $(\gamma^\star(\Qml), \beta^\star(\Qml))$ on $\Qml$ given by \Cref{cor:HB-quad-opt-tun}, for $\kappa=1/25$, there exist an $L=25$-smooth and $\mu=1$-strongly convex function such that if $x_0$ is in a specific neighborhood, then the iterates generated by~\eqref{eq:hb} oscillate between the neighborhoods of three values, and thereby never converge towards~$\xs$.
            
            At this stage, it is important to note that this counter-example does not exclude the existence of another tuning~$(\gb)$ for which an accelerated convergence rate is achieved. Indeed, there is no reason for the optimal tuning on~$\Fml$ to  correspond to that on~$\Qml$.
            
            \paragraph{Non-convergence on multiple tunings.}
            Recently, \citet{goujaud2023counter} proposed a numerical approach to compute cyclic trajectories of various first-order methods that include $\HB$. For~\eqref{eq:hb}, given the period $K\geq 2$ of the cycles, this technique consists in solving the following optimization problem which can be cast and solved as an SDP:
            \begin{equation}
                \left|
                \begin{array}{cc}
                    \underset{\substack{d \geq 1, f \in\Fml \\ (x_t)_{t}\text{ is generated by } \eqref{eq:hb}}}{\text{minimize }} &  \|x_0 - x_{K}\|^2 + \|x_1 - x_{K+1}\|^2 \\
                    \text{subject to } & 
                    \|x_1 - x_0\|^2 \geq 1.
                \end{array}
                \right.
                \tag{$\mathcal{P}$} \label{eq:cycle_search_problem}
            \end{equation}
            \citet{goujaud2023counter} prove that the value of the optimization problem \eqref{eq:cycle_search_problem} is 0 if and only if there exists a function $f \in {\Fml} $ and an initialization $(x_0, x_{1}) \in  (\mathcal X)^2$ such that the method $\eqref{eq:hb}(f)$ initialized at $(x_0, x_1)$ cycles on $K$ values, i.e.~that the sequence of iterates generated is $(x_0, \dots, x_{K-1}, x_0, \dots, x_{K-1}, x_0, \dots)$.

            Although these negative results are limited to either a single $\kappa$ and tuning $(\gamma^\star(\Qml), \beta^\star(\Qml))$ for \citep{lessard2016analysis}, or only numerical in \citep{goujaud2023counter}, analyzing the parameter choices for which the worst-case uniform convergence of \eqref{eq:hb} on $\Fml$ can be disproved by establishing the existence of a cycle appears to be a promising direction.

        \subsection{Our approach to comprehensive behaviors of heavy-ball}

            In this section we therefore introduce $\Ocyml$ as the set of parameter values $(\gb)$ for which \eqref{eq:hb} cycles on a function of $\Fml$.

            \begin{Def}[Cycles]\label{def:cycle}
                Let $(\gb)\in \Oqml$, and $\mathcal{F}$ a class of functions.
                \begin{enumerate}[leftmargin=*]
                  \item  \label{item:1}
                For $K$ a positive integer, referred to as the \textit{period}, $(x_{t})_{t\in\range{0}{K-1}} \neq (x_0, \dots, x_0)$ a family of $K$ points not all-equal, and $f$ a function, we say that 
                $$\eqref{eq:hb}_{\gb}(f)  \text{ cycles on } (x_{t})_{t\in\range{0}{K-1}}$$ 
                if the sequence $(z_t)_{t\in\mathbb{N}}$ generated by \eqref{eq:hb} applied on $f$ with initial points $z_0=x_0$ and $z_1=x_1$ cycles on $(x_{t})_{t\in\range{0}{K-1}}$, i.e.,~verifies $\forall t\geq 0, z_t = x_{t  \mod{K}}$.
                \item \label{item:2}  Moreover, for such a $K$ and family of $K$ points $(x_{t})_{t\in\range{0}{K-1}} \neq (x_0, \dots, x_0)$,
                we say that  $$\eqref{eq:hb}_{\gb} \text{ cycles on } (x_{t})_{t\in\range{0}{K-1}} \text{ on } \mathcal F$$ 
                if there  exists an  $f\in \mathcal{F}$, for which 
                $\eqref{eq:hb}_{\gb}(f)$ cycles on  $(x_{t})_{t\in\range{0}{K-1}}$.
                \item \label{item:3} Finally, we say that  $$\eqref{eq:hb}_{\gb}  \text{ has a cycle on } \mathcal F$$ 
                if there  exist such a period  $K$, and cycle $(x_{t})_{t\in\range{0}{K-1}}\neq (x_0, \dots, x_0)$, and  $f\in \mathcal{F}$, for which 
                $\eqref{eq:hb}_{\gb}(f)$ cycles on  $(x_{t})_{t\in\range{0}{K-1}}$.
                \end{enumerate}  
            \end{Def}
            Note that we exclude the constant cycle $ (x_0, \dots, x_0)$, that would correspond to a trivial cycle $(x_\star, \dots, x_\star)$ of~$\eqref{eq:hb}(f)$ for any function $f$ such that $x_0  =x_\star = \argmin f$.
            This corresponds to non-problematic situations as the algorithm already converged. 
            We underline the  following equivalent point of  view on a cycle.
            \begin{Rem}\label{rem:equiv_pov_cycle}
                $\eqref{eq:hb}_{\gb}(f)$ cycles on $(x_{t})_{t\in\range{0}{K-1}}$ if and only if for any $s\in \range{0}{K-1}$,  $x_{s+1} = x_s - \gamma \nabla f(x_s) + \beta (x_s - x_{s-1})$, where the sequence $(x_t)_t$ is extended $K$-periodically to $t\in \mathbb Z$  as $(x_t)_{t\in \mathbb Z} \eqdef (x_{t\mod{K}})_{t\in  \mathbb Z}$ (in particular as $x_K\eqdef x_0$ and $x_{-1}\eqdef  x_{K-1}$).
            \end{Rem} 
            From \Cref{def:cycle}, we define the region $\Ocyml$.
            \begin{Def}[Cycling region $\Ocyml$]\label{def:cycle_region}
                 For any $0<\mu\le L $, we denote:
                 \begin{enumerate}[leftmargin=*]
                    \item  $ \Ocyml$ the subset of $\Oqml$ for which $\eqref{eq:hb}$ has a cycle on $\Fml$.
                    \item $ \Ocyml^c$ the complementary of $\Ocyml$ in $\Oqml$.
                \end{enumerate}
            \end{Def}
            
            In the following, we leverage that for any parameters $(\gb)$ for which $\eqref{eq:hb}_{\gb}$  has a cycle on  $\Fml$, then the method does not converge. 
            \begin{Fact}\label{fact:Cv_in_cycle_compl}
                The set of parameters for which \eqref{eq:hb} has a worst-case (asymptotic) convergence rate (strictly below $1$) on $\Fml$ is included in $\Ocyml^c$:
                $$\Ocvml\subseteq\Ocyml^c$$
            \end{Fact}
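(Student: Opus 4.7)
The plan is to argue by contrapositive: I will show that if $(\gb)\in\Ocyml$, then $\eqref{eq:hb}_{\gb}$ cannot have a worst-case asymptotic convergence rate strictly below $1$ on $\Fml$. Combined with the easy inclusion $\Ocvml\subseteq\Oqml$ (which follows from $\Qml\subseteq\Fml$: if HB has a rate below $1$ uniformly over $\Fml$, the same bound holds over the sub-class of quadratics), this yields the claim, since by \Cref{def:cycle_region} the set $\Ocyml^c$ is the complement of $\Ocyml$ taken inside $\Oqml$.

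So, fix $(\gb)\in\Ocyml$. By \Cref{def:cycle} and \Cref{def:cycle_region} there exist $f\in\Fml$ with minimizer $\xs$, an integer $K\geq 1$, and a non-constant tuple $(x_t)_{t\in\range{0}{K-1}}$ on which $\eqref{eq:hb}_{\gb}(f)$ cycles. Since this tuple is not the constant tuple $(x_0,\ldots,x_0)$ and $\xs$ is a single point, at least one index $t^\star$ satisfies $x_{t^\star}\neq\xs$; after a cyclic shift of indices, I may assume without loss of generality that $x_0\neq\xs$. The infinite sequence $(z_t)_{t\geq 0}$ generated by $\eqref{eq:hb}_{\gb}(f)$ from initializations $(z_0,z_1)=(x_0,x_1)$ is then $K$-periodic, with $z_t=x_{t \mod{K}}$ for every $t\geq 0$ (using \Cref{rem:equiv_pov_cycle}).

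To contradict $(\gb)\in\Ocvml$, consider any candidate asymptotic rate $\rho<1$ and set $\epsilon=(1-\rho)/2>0$, so that $\rho+\epsilon<1$. Along the sub-sequence of times $T=nK$, $n\in\mathbb{N}$, the iterate $z_T=x_0$ satisfies $\|z_T-\xs\|=\|x_0-\xs\|$, a strictly positive constant independent of $T$, while $(\rho+\epsilon)^T\|z_0-\xs\|\to 0$ as $T\to\infty$. Hence for every threshold $T_0$ there exists $T=nK\geq T_0$ for which the inequality required by \Cref{def:awcc} is violated, which disqualifies $\rho$ as an asymptotic rate. Since this holds for every $\rho<1$, we conclude $(\gb)\notin\Ocvml$, as desired.

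The argument is essentially definitional and I do not expect any real obstacle: it only combines (i) the fact that a non-constant periodic sequence cannot converge, and (ii) the built-in exclusion in \Cref{def:cycle} of the trivial all-equal tuple, which guarantees the existence of at least one cycle point distinct from $\xs$. The substantive content in the remainder of the paper will be in the converse direction, that is in actually exhibiting non-trivial cycles of $\eqref{eq:hb}_{\gb}$ on $\Fml$ for the parameter regimes of interest, thereby populating $\Ocyml$ (and hence shrinking $\Ocvml$) as much as possible.
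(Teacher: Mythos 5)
Your proof is correct and is exactly the argument the paper intends: the Fact is stated without proof precisely because a non-constant $K$-periodic orbit returns infinitely often to a point $x_{t^\star}\neq\xs$, so $\|z_T-\xs\|$ cannot be dominated by $(\rho+\epsilon)^T\|z_0-\xs\|\to 0$ for any $\rho<1$. Your additional care in checking $\Ocvml\subseteq\Oqml$ (needed since $\Ocyml^c$ is the complement taken inside $\Oqml$) and in using the non-all-equal condition to produce a cycle point distinct from $\xs$ is exactly the right bookkeeping.
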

            
            In \Cref{sec:noaccel}, we demonstrate that \eqref{eq:hb} cannot accelerate by focusing on a particular cycle shape. We study the set of parameters such that there exists a function in $\Fml $ that cycles over that particular set of iterates. \Cref{sec:cycles} explains why such a choice of a cycle is in fact natural.

    \section{Non-acceleration of heavy-ball on \texorpdfstring{$\mathcal F _{\mu,L}$}{Fml} via two-dimensional cycles}
    \label{sec:noaccel}

        In this section, we demonstrate the main result of the paper, which is that $\HB$ method does not accelerate on the class $\Fml$. 
        To obtain this result, we introduce  in \Cref{subsec:ROU_cycles} a simple two-dimensional cycle of length $K$ and study the set of $(\gb)$ such that there exists a function in $\Fml$ that cycles over those specific iterates. Then,  in \Cref{subsec:noaccel_main}, for some appropriate $C>0$, we show that the sublevel set of level $1-C\kappa$ of $\eqref{eq:hb}$ on the set of quadratic function is excluded from the parameters that do not have such a cycle.

    \begin{table}[b]
        \caption{Summary of parameter regions for which convergence is established or disproved.      \label{tab:not_regions}}
        \centering
        \begin{tabular}{ll}
            \toprule
            Notation & Region \\
            \midrule
            $\Oqml $ & Convergence on $\Qml$.\\
            $\Ocvml$ & Convergence on $\Fml$ \\
            $\Oghml$ & Convergence is established by \citet{ghadimi2015global}\\
            $\Ocyml$ & Subset of $\Oqml $ where \eqref{eq:hb} has a cycle on $\Fml$\\
            $\Orouml$ & Subset of $\Oqml $ where \eqref{eq:hb} has a roots-of-unity cycle. \\
            \bottomrule
        \end{tabular}
    \end{table}
    
\subsection{Studying a specific type of cycling behavior}\label{subsec:ROU_cycles}
We focus on the cycles that are supported by the $K$-th roots of unity.
\begin{Def}[Roots-of-unity cycle]
\label{def:rou_cycle}
    For $K\in \mathbb N$, and $\theta_K\eqdef \tfrac{2\pi}{K}$, we define the roots-of-unit cycle as $$\TikCircle_K = (x_0^\circ, x_1^\circ,\dots, x_t^\circ \dots, x_{K-1}^\circ) \eqdef \left(\begin{pmatrix}
        1 \\ 0
    \end{pmatrix},
    \begin{pmatrix}
        \cos{\theta_K} \\ \sin{\theta_K}
    \end{pmatrix},
    \dots,
    \begin{pmatrix}
        \cos{t\theta_K} \\ \sin{t\theta_K}
    \end{pmatrix},
    \dots,
    \begin{pmatrix}
        \cos{(K-1)\theta_K} \\ \sin{(K-1)\theta_K}
    \end{pmatrix} \right).
    $$ 
    We introduce the rotation operator $R = \begin{pmatrix}
          \cos{\theta_K} & -\sin{\theta_K} \\ 
          \sin{\theta_K} & \cos{\theta_K}
    \end{pmatrix}$ such that for any $t\in \llbracket {1};K-1\rrbracket$, $x_t^\circ = R  x_{t-1}^\circ =  R^t  x_{0}^\circ $ and $R^K=\Id$.
\end{Def}
This corresponds to a completely symmetrical cycle shape. Such a cycle is pictured in \Cref{fig:cycle}. We now introduce the set of parameters $(\gb)$ for which~\eqref{eq:hb} results in such a cycle on at least one function in $\Fml$.

\begin{Def}[Roots-of-unity cycling region $\Orouml$]\label{def:rou_cycle_region}
    For any $0<\mu\le L,$ we define
    \begin{enumerate}[leftmargin=*]
        \item  for any  $ K\in \mathbb N$,
        $ \OKrouml $ the subset of $\Oqml$ for which $\eqref{eq:hb}$  cycles on $\TikCircle_K$ on $\Fml$ (in the sense of \Cref{def:cycle}, item \ref{item:2}).
        
        \item $\displaystyle \Orouml = \bigcup_{K=2}^\infty \OKrouml$.
        
        \item $\displaystyle  (\Orouml)^c = \Oqml \backslash \Orouml$ the complementary  of $\Orouml $ in $\Oqml$.
    \end{enumerate}
\end{Def}
\begin{figure}[b]
    \centering
    \begin{minipage}[t]{0.48\textwidth}
        \centering
        \includegraphics[width=0.9\linewidth]
        {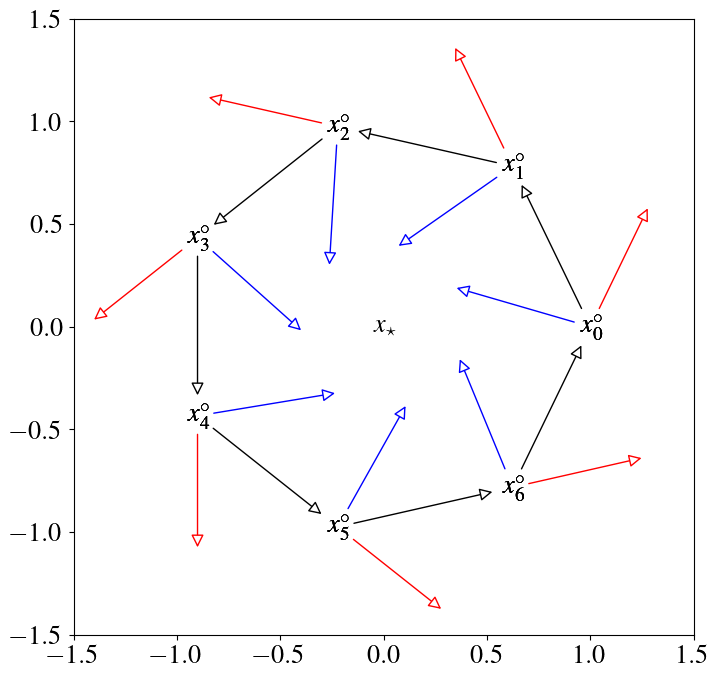}
        \caption{\label{fig:cycle} Cycle $ \TikCircle_7= (x^\circ_0, \dots, x^\circ_{6})$ of the $K=7^{\text{th}}$-roots-of-unity.
        For~$(\gb) \in {\Omega}_{7\text{-}\circ\text{-}\cycle}(\Fml)$, the red  arrows~\textcolor{red}{$(\rightarrow)$} correspond momentum component of~$\eqref{eq:hb}_{\gb}$ and the blue  arrows~\textcolor{blue}{$(\rightarrow)$} to the gradients of~$\psi^K_{\gb,\ml}$ such the~$\eqref{eq:hb}_{\gb}(\psi^K_{\gb,\ml})$ cycles over~$\TikCircle_7$.
        Here, $L=1$, $\mu=0.005$, $\gamma=3.5$ and $\beta=0.75$.
        }
    \end{minipage}
    \hfill
    \begin{minipage}[t]{0.48\textwidth}
        \centering
        \includegraphics[width=0.9\linewidth]{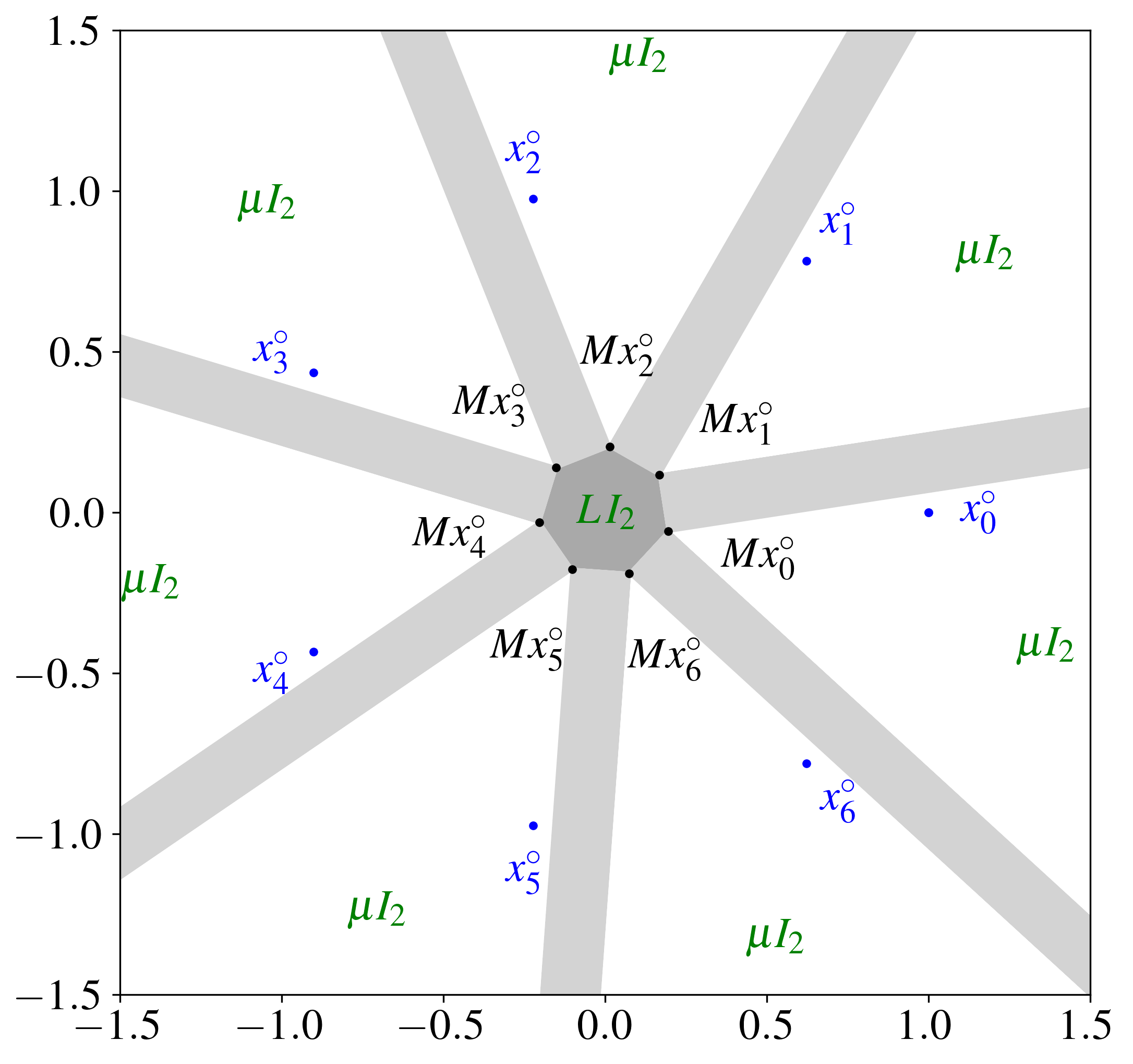}
        \caption{Shape of the counter-example function~$\psi^K_{\gb,\ml}$ given by~\eqref{eq:def_function_qui_tue}, for~$(\mu,L )= (0.005, 1)$ and~$(\gb)=(3.3, 0.75) \in {\Omega}_{7\text{-}\circ\text{-}\cycle}(\Fml)$. The function is locally quadratic, with Hessian~${\color{green!50!black} L \Id_2}$ inside~$\conv\left\{M x_t^\circ, t\in\range{0}{6}\right\}$ (gray background),~${\color{green!50!black}\mu \Id_2}$ in the white-background around~$\TikCircle_7$ and quadratic with Hessian spectrum~$\{\mu,L\}$ in the light gray area. \label{fig:psi}}
    \end{minipage}
\end{figure}
In other words, $\Orouml$ is a subset of $\Ocyml$ for which we limit the cycles to be (i) in dimension~$d=2$ (there was no restriction on the dimension earlier on) and (ii) with a specific shape (cycling over the roots of unity). The fact that limiting ourselves to $\Orouml$ is a reasonable restriction will be discussed in \Cref{sec:cycles}.
For clarity, notations of the various regions are summarized in \Cref{tab:not_regions}.  Putting things together and leveraging \Cref{fact:Cv_in_cycle_compl}, we have the following fact.
\begin{Fact}For any $0,\mu\le L$: 
\begin{equation*}
     \Oghml \subseteq 
     \Ocvml \subseteq 
     (\Ocyml)^c \subseteq 
     (\Orouml)^c 
     \subseteq \Oqml.
\end{equation*}
\end{Fact}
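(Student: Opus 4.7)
The plan is to verify the four inclusions in the displayed chain one at a time. None of them requires new machinery: three are immediate from the definitions introduced in \Cref{def:Omega_cv}, \Cref{def:cycle_region}, and \Cref{def:rou_cycle_region}, and one has already been stated as \Cref{fact:Cv_in_cycle_compl}.

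First, for $\Oghml\subseteq\Ocvml$, I would simply unfold the definition of $\Ocvml$ in \Cref{def:Omega_cv} and apply Theorem 4 of \citet{ghadimi2015global} restated just above: for any $(\gb)\in\Oghml$ the iterates of $\eqref{eq:hb}_{\gb}$ converge exponentially on every $f\in\Fml$, so the worst-case asymptotic rate is strictly below $1$, which is the defining property of $\Ocvml$.

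Second, the inclusion $\Ocvml\subseteq(\Ocyml)^c$ is exactly \Cref{fact:Cv_in_cycle_compl}; nothing else is needed. The underlying reasoning is that a cycle on a non-constant $K$-tuple $(x_0,\dots,x_{K-1})$ prevents $\|x_T-\xs\|$ from going to $0$, so a parameter that guarantees a rate $<1$ uniformly on $\Fml$ cannot belong to $\Ocyml$.

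Third, for $(\Ocyml)^c\subseteq(\Orouml)^c$ it suffices to show the reverse inclusion $\Orouml\subseteq\Ocyml$ and then take complements inside $\Oqml$. Given $(\gb)\in\OKrouml$ for some $K\geq 2$, by \Cref{def:rou_cycle_region} there exists $f\in\Fml$ such that $\eqref{eq:hb}_{\gb}(f)$ cycles on $\TikCircle_K$; but $\TikCircle_K$ is a non-constant $K$-periodic sequence in the sense of \Cref{def:cycle}, so $(\gb)\in\Ocyml$ by item \ref{item:3} of that definition. Taking the union over $K\geq 2$ yields $\Orouml\subseteq\Ocyml$, and complements in $\Oqml$ flip the inclusion. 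The final inclusion $(\Orouml)^c\subseteq\Oqml$ is tautological since $(\Orouml)^c$ is defined as the complement of $\Orouml$ taken inside $\Oqml$.

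I do not expect any real obstacle here: the whole statement is a pure chain-of-definitions argument, with the only substantive input being the already-quoted Ghadimi convergence result and the already-stated \Cref{fact:Cv_in_cycle_compl}. The only point worth stating carefully is that the ``non-constant cycle'' clause excluded in \Cref{def:cycle} precisely matches the non-constancy of $\TikCircle_K$ (which is clear since $x_0^\circ\neq x_1^\circ$ whenever $K\geq 2$), so no degenerate case needs to be treated separately.
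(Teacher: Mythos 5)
Your proposal is correct and follows exactly the route the paper intends: the paper states this Fact without proof, as an immediate consequence of the definitions of the regions, the quoted Ghadimi convergence result, and \Cref{fact:Cv_in_cycle_compl}, which is precisely the chain of observations you spell out. Nothing is missing.
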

Next,  we observe that  $\eqref{eq:hb}(f)$ cycles over $\TikCircle_K$, if and only if we have a simple expression for $ (\nabla f(x_t^\circ))_{t\in \llbracket 0, K-1 \rrbracket}$. Indeed, given the iterates in \eqref{eq:hb}$(f)$, the value of the gradients to obtain those iterates are uniquely obtained. 
\begin{Lemma}
    \label{lem:gt_def}
    Let $K \geq 2$ an integer and $\theta_K\eqdef \tfrac{2\pi}{K}$. Let $(x_t^\circ)_{t \in \llbracket 0, K-1 \rrbracket} = \TikCircle_K$ be the roots-of-unity cycle of length~$K$.
    Let $\gb\in \Oqml$. For any differentiable function $f$, $\HB_{\gb}(f)$ cycles on $\TikCircle_K$ if and only if
    \begin{equation} \label{eq:cns_gradients_cycle}
        \forall t \in \llbracket 0, K-1 \rrbracket, ~ \nabla f(x_t^\circ) = g_t \eqdef \frac{(1+\beta) I_2 - R - \beta R^{-1}}{\gamma} x_t^\circ.
    \end{equation}
\end{Lemma}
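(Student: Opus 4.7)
The plan is to rearrange the heavy-ball update rule so as to solve for $\nabla f(x_t)$ in terms of three consecutive iterates, and then plug in the parametric form $x_t^\circ = R^t x_0^\circ$ of the roots-of-unity cycle. Concretely, the update $x_{t+1} = x_t - \gamma \nabla f(x_t) + \beta (x_t - x_{t-1})$ rearranges into
\[
    \gamma \nabla f(x_t) \;=\; (1+\beta)\, x_t - x_{t+1} - \beta\, x_{t-1}.
\]
This identity is the single computational fact on which everything rests; both directions of the equivalence will follow from it.

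For the \emph{forward} direction, assume $\HB_{\gb}(f)$ cycles on $\TikCircle_K$. Using \Cref{rem:equiv_pov_cycle}, the sequence extends $K$-periodically, so for each $t \in \range{0}{K-1}$ one has $x_{t+1}^\circ = R\, x_t^\circ$ and $x_{t-1}^\circ = R^{-1}\, x_t^\circ$ (the cases $t+1 = K$ and $t = 0$ are handled by $R^K = \Id$). Substituting into the rearranged update gives
\[
    \gamma \nabla f(x_t^\circ) \;=\; \bigl[(1+\beta)\Id_2 - R - \beta R^{-1}\bigr] x_t^\circ,
\]
which is exactly the definition of $g_t$.

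For the \emph{backward} direction, assume $\nabla f(x_t^\circ) = g_t$ for all $t \in \range{0}{K-1}$. Let $(z_t)_{t \geq 0}$ be the sequence produced by $\HB_{\gb}(f)$ initialized at $z_0 = x_0^\circ$ and $z_1 = x_1^\circ$. I will show by induction on $t \geq 1$ that $z_t = x_{t \mod K}^\circ$. The base case $t = 1$ holds by initialization. For the inductive step, assuming $z_t = x_{t \mod K}^\circ$ and $z_{t-1} = x_{(t-1) \mod K}^\circ$, a direct calculation gives
\[
    z_{t+1} \;=\; z_t - \gamma \nabla f(z_t) + \beta (z_t - z_{t-1}) \;=\; x_t^\circ - [(1+\beta)\Id_2 - R - \beta R^{-1}] x_t^\circ + \beta (x_t^\circ - R^{-1} x_t^\circ),
\]
where the indices are taken mod $K$. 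The right-hand side simplifies to $R\, x_t^\circ = x_{(t+1) \mod K}^\circ$, completing the induction. Hence $(z_t)$ cycles on $\TikCircle_K$.

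There is no serious obstacle here: the argument is a one-line algebraic rearrangement plus an induction, and the only mild care required is handling the periodic boundary indices $t = 0$ and $t = K-1$ via the identity $R^K = \Id$. Note that the hypothesis $(\gb) \in \Oqml$ is not used explicitly in the proof itself; it serves only to ensure that $\gamma \neq 0$ so that the formula for $g_t$ is well-defined when dividing by $\gamma$.
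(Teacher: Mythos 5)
Your proof is correct and follows essentially the same route as the paper: invert the heavy-ball recursion (using $\gamma\neq 0$) to solve for the gradients, then substitute $x_{t+1}^\circ = R x_t^\circ$ and $x_{t-1}^\circ = R^{-1} x_t^\circ$ with periodic index handling via $R^K = \Id$. The only difference is cosmetic — you spell out the backward direction as an explicit induction, whereas the paper treats both directions at once as an equivalence of the recursion system via \Cref{rem:equiv_pov_cycle}.
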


\begin{proof}{}
    Let $f$ any differentiable function. By \Cref{def:cycle} and \Cref{rem:equiv_pov_cycle}, $\eqref{eq:hb}_{\gb}(f)$ cycles on $(x_t^\circ)_{t \in \llbracket 0, K-1 \rrbracket}$ if and only  if for any 
    $t \in \llbracket 0, K-1 \rrbracket $,  $ x_{t+1}^\circ = x_t^\circ - \gamma \nabla f(x_t^\circ) + \beta (x_t^\circ - x_{t-1}^\circ)$, with $x_{t}^\circ$ extended $K$-periodically (i.e.,~$x^\circ_{-1}\eqdef x^\circ_{K-1}$ and  $x^\circ_{K}\eqdef x^\circ_{0}$).
    Since $\gamma \neq 0$, this  system is equivalently written as, for any  
    $ t \in \llbracket 0, K-1 \rrbracket,$ 
    \begin{equation}\label{eq:hb_inv}
    ~ \nabla f(x_t^\circ) = g_t \eqdef \frac{(1+\beta)x_t^\circ - x_{t+1}^\circ - \beta x_{t-1}^\circ}{\gamma}.   
    \end{equation}
    Replacing the expressions $x^\circ_{t+1} = R x^\circ_t$ and $x^\circ_{t-1} = R^{-1} x^\circ_t$, we obtain the desired result.
\end{proof}

\vspace{1em}
\noindent
The values of the  gradients at points $(x_t^\circ)_{t\in \llbracket 0, K-1 \rrbracket}$ are depicted as red arrows on \Cref{fig:cycle}.
We now use \Cref{lem:gt_def} to obtain an analytical form of $\Orouml$.

\begin{restatable}{Th}{roucyclesregion}\textbf{\emph{(Analytical form of Roots-of-unity cycle region)}}
\label{thm:analytical_ROU_region}
    For any $K\geq 2$, the $K^{\text{th}}$-roots-of-unity cycling region is, for $\theta_K = \frac{2\pi}{K}$:
    \begin{align*}
        \OKrouml = & \left\{(\gb)\in\Oqml ~|~ \right.\\
        &\left. (\mu\gamma)^2 - 2\left[\beta - \cos\theta_K + \kappa(1 - \beta\cos\theta_K)\right](\mu\gamma) + 2\kappa(1 - \cos\theta_K)(1 + \beta^2 - 2\beta\cos\theta_K) \leq 0.
        \right\}
    \end{align*}
    Moreover for any $K\geq 2$, and any $(\gb) \in \OKrouml$,
    \begin{equation}\label{eq:def_function_qui_tue}
        \psi^K_{\gb,\ml}: x \mapsto \frac{L}{2}\|x\|^2 - \frac{L-\mu}{2}d(x, \conv\left\{M  x_t^\circ, t\in\range{0}{K-1}\right\})^2
    \end{equation}
    is a function such that \eqref{eq:hb}$\,_{\gb}(\psi^K_{\gb,\ml})$ cycles on $\TikCircle_K$, with $M $ the linear operator $M \eqdef \frac{(1+\beta - \mu\gamma) I_2 - R - \beta R^{-1}}{(L - \mu)\gamma}$.
\end{restatable}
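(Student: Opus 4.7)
The plan is to combine three ingredients: the gradient constraints from \Cref{lem:gt_def}, the $\Fml$-interpolation conditions of Taylor, Hendrickx and Glineur for the direction ``$(\gb)\in \OKrouml\Rightarrow$ inequality'', and an explicit Moreau-envelope-type construction for the converse direction (which also certifies the witness $\psi^K_{\gb,\ml}$). The two-dimensional ambient space and the full rotational symmetry of $\TikCircle_K$ are the two features that make everything tractable.

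For the necessity direction, \Cref{lem:gt_def} already forces the gradients at $(x_t^\circ)_t$ to equal the prescribed $(g_t)_t$. By the $\Fml$-interpolation theorem, the existence of some $f\in\Fml$ realizing this data is equivalent to the existence of scalars $(f_t)$ such that every ordered pair $(s,t)$ satisfies the standard quadratic interpolation inequality in $(f_s-f_t,\, x_s^\circ-x_t^\circ,\, g_s-g_t)$. The rotational symmetries $x_{t+1}^\circ=R x_t^\circ$ and $g_{t+1}=R g_t$ (the latter because $R$ commutes with every polynomial in $R$, hence with the linear map defining $g_t$) let me impose the natural ansatz $f_t\equiv f_0$ and reduce the system to $K-1$ scalar inequalities indexed by $k=s-t\in\range{1}{K-1}$. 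Expanding each one using $\langle R^k x_0^\circ, x_0^\circ\rangle=\cos(k\theta_K)$ and $\|x_0^\circ\|=1$ produces a scalar polynomial in $\cos(k\theta_K)$, and a monotonicity check singles out $k=1$ as the binding index (the case $k=K-1$ is identified with $k=1$ by reflection). Clearing denominators in this binding inequality yields exactly the quadratic-in-$\mu\gamma$ constraint of the theorem.

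For the sufficiency direction I exhibit $\psi=\psi^K_{\gb,\ml}$ explicitly. A standard Moreau-envelope fact: for any closed convex $C\subseteq \R^2$ and $0<\mu\leq L$, the map $x\mapsto \tfrac{L}{2}\|x\|^2-\tfrac{L-\mu}{2}d(x,C)^2$ lies in $\Fml$ with gradient $\nabla\psi(x)=\mu x+(L-\mu)\pi_C(x)$, where $\pi_C$ is the Euclidean projection on $C$. Hence $\psi\in\Fml$ for free. By the choice of $M$, the identity $\nabla\psi(x_t^\circ)=g_t$ reduces to $\pi_C(x_t^\circ)=M x_t^\circ$; rotational symmetry lets me fix $t=0$, and $\pi_C(x_0^\circ)=Mx_0^\circ$ then amounts to checking that $x_0^\circ-M x_0^\circ$ lies in the outward normal cone of the convex $K$-gon $C$ at its vertex $M x_0^\circ$. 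In $\R^2$ this is two inner-product inequalities against the adjacent edges $M(R^{\pm 1}-I)x_0^\circ$, both of which collapse by reflection to the same scalar inequality — identical to the one obtained from the necessity direction.

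The main technical obstacle I anticipate is the monotonicity step that isolates $k=1$ as the binding interpolation index: it requires showing that a certain polynomial in $\cos(k\theta_K)$ has the right sign structure uniformly over $(\gb)\in\Oqml$, which is essentially trigonometric bookkeeping but may hide subtleties at the boundary of $\Oqml$. A secondary concern is to keep track that $\psi$ is globally in $\Fml$ (not merely at cycle points) and that the convex hull $C$ is nondegenerate so that ``vertex'' and ``normal cone'' make literal sense; both follow from the Moreau-envelope framework together with the explicit formula for $M$ on $\Oqml$.
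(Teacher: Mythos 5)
Your overall route is the same as the paper's: identify the forced gradients via \Cref{lem:gt_def}, characterize the existence of an interpolating function in $\Fml$ by the Taylor--Hendrickx--Glineur conditions, eliminate the function values by symmetry, isolate $\Delta=1$ as the binding constraint, and certify sufficiency with the Moreau-envelope function whose gradient is $\mu x+(L-\mu)\pi_C(x)$. Two minor differences: the paper first normalizes and Fenchel-conjugates so that it only needs the \emph{convex} ($\mathcal F_{0,\infty}$) interpolation conditions $\hat f_i\geq \hat f_j+\langle (I-M)x_j^\circ, M(x_i^\circ-x_j^\circ)\rangle$, which makes the algebra much lighter than applying the full $\Fml$ conditions with their two quadratic correction terms as you propose (both must land on the same quadratic in $\mu\gamma$, since the reductions are equivalences); and the paper eliminates the $f_t$'s by summing each constraint over a cyclic orbit rather than by your averaging ansatz $f_t\equiv f_0$ — both are valid. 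Your normal-cone-at-a-vertex argument for the projection identity is also fine once you note that $M$, being a polynomial in $R$, is a similarity, so $\{Mx_t^\circ\}$ is a regular $K$-gon in convex position and the tangent cone at $Mx_0^\circ$ is generated by the two adjacent edges.

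There is, however, one concrete error: the constraints for $\Delta=1$ and $\Delta=K-1$ do \emph{not} ``collapse by reflection to the same scalar inequality.'' The configuration is chiral: the reflection $S=\mathrm{diag}(1,-1)$ sends $x_t^\circ\mapsto x_{-t}^\circ$ but conjugates $R$ to $R^{-1}$, hence $M$ to $M^T\neq M$, so it does not preserve the gradient data. Explicitly, the $\Delta$-th inequality expands as
\begin{equation*}
 \Bigl(\cdots\Bigr)\bigl(\cos\Delta\theta_K-1\bigr)+\frac{(1-\beta)\sin\theta_K}{(L-\mu)\gamma}\,\sin\Delta\theta_K\;\leq\;0,
\end{equation*}
and replacing $\Delta$ by $K-\Delta$ flips the sign of the second term only. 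The two adjacent-edge inequalities are therefore genuinely distinct; what saves the argument is that the $\Delta=K-1$ inequality is \emph{implied} by the $\Delta=1$ one, because the antisymmetric contribution $\frac{(1-\beta)\sin\theta_K}{(L-\mu)\gamma}\sin\Delta\theta_K$ is nonnegative at $\Delta=1$ and nonpositive at $\Delta=K-1$ (using $\beta\leq 1$, $\gamma>0$, $K\geq 3$). The same sign observation is what makes your ``monotonicity check'' work: after dividing by $1-\cos\Delta\theta_K>0$ the $\Delta$-dependence sits in $\cot(\Delta\theta_K/2)$ times that nonnegative coefficient, which is maximized at $\Delta=1$. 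So the gap is fixable in one line, but as written your sufficiency step verifies only one of the two normal-cone inequalities.
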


For given $K, \ml$, \Cref{thm:analytical_ROU_region} provides a second-order equation on $(\gb)$, such that~$\eqref{eq:hb}_{\gb}$ cycles over~$\TikCircle_K$ on $\Fml$. We use this formula to plot the regions~$\OKrouml$ in \Cref{fig:hb_cycles_vs_lyapunov}, for increasing cycle length $K$, for two values of $\kappa$. \Cref{eq:def_function_qui_tue} gives an explicit formula for the function that realizes the cycle: this function is a quadratic by part: its shape is described in \Cref{fig:psi}.

\begin{sketch}
    By \Cref{lem:gt_def}, $(\gb) \in \OKrouml$ if and only if, there exists a function $f\in \Fml$ such that \eqref{eq:cns_gradients_cycle} holds. Establishing the \textit{existence} of a function in the class $\Fml$ having specific gradient values at a finite  number  of specific points can be cast as verifying a finite number of simple inequalities. Those conditions, often referred to as~\textit{interpolation conditions} (see, e.g.,~\citep{taylor2017smooth}) come along with a systematic construction of the given function as a Moreau envelope (similar in spirit with the proof of~\citep[Theorem 4]{taylor2017smooth}). The complete proof is given in \Cref{app:proof_of_analytical_ROU_region}.
\end{sketch}

\noindent
\Cref{thm:analytical_ROU_region} shows that, $(\gb)$ is in $\OKrouml$ if and only if
\begin{align}
    \gamma \in & \left[ \gamma_{-}(\beta,K,\ml), \gamma_{+}(\beta,K,\ml)\right],
    \label{eq:gamma_bound}
\end{align}
with $(\gamma_{-}(\beta,K,\ml), \gamma_{+}(\beta,K,\ml))$ obtained as the roots of the second order polynomial given in  \Cref{thm:analytical_ROU_region}, i.e.,~$\gamma \mapsto (\mu\gamma)^2 - 2\left[\beta - \cos\theta_K + \kappa(1 - \beta\cos\theta_K)\right](\mu\gamma) + 2\kappa(1 - \cos\theta_K)(1 + \beta^2 - 2\beta\cos\theta_K)$ -- and the set is empty if this polynomial is always positive.
Hence, for any $\beta$, the set of all $\gamma$ such that $(\gb)\in\Orouml$ is the union (over $K\geq 2$) of intervals given by \eqref{eq:gamma_bound}, that are not necessarily connected. 
In the proof of the next result, we will rely on the fact that for  $\kappa\le \left(\frac{3-\sqrt{5}}{4}\right)^2$, this union actually is a \textit{unique} interval of the form~$[\gamma_{\min}(\beta,\ml), \frac{2(1+\beta)}{L}]$, as 
illustrated in \Cref{fig:hb_cycles_vs_lyapunov1}. 
For larger values of  $\kappa$, the  union is not a single interval, which can be expected as the region shrinks as $\kappa$ approaches~1. Such a behavior  is 
illustrated in \Cref{fig:hb_cycles_vs_lyapunov2}. However, large values of $\kappa$ are not problematic as for those, the difference between $\sqrt{\kappa}$ and $\kappa$ is not significant. 
This is made formal in \Cref{thm:analytical_ROU_region_bis}, stated in \Cref{apx:convenient_expression_of_omega_cycle} is essential for the next section.

\begin{figure}[t]
  \begin{subfigure}[t]{.49\linewidth}
    \centering
    \includegraphics[width=\linewidth]{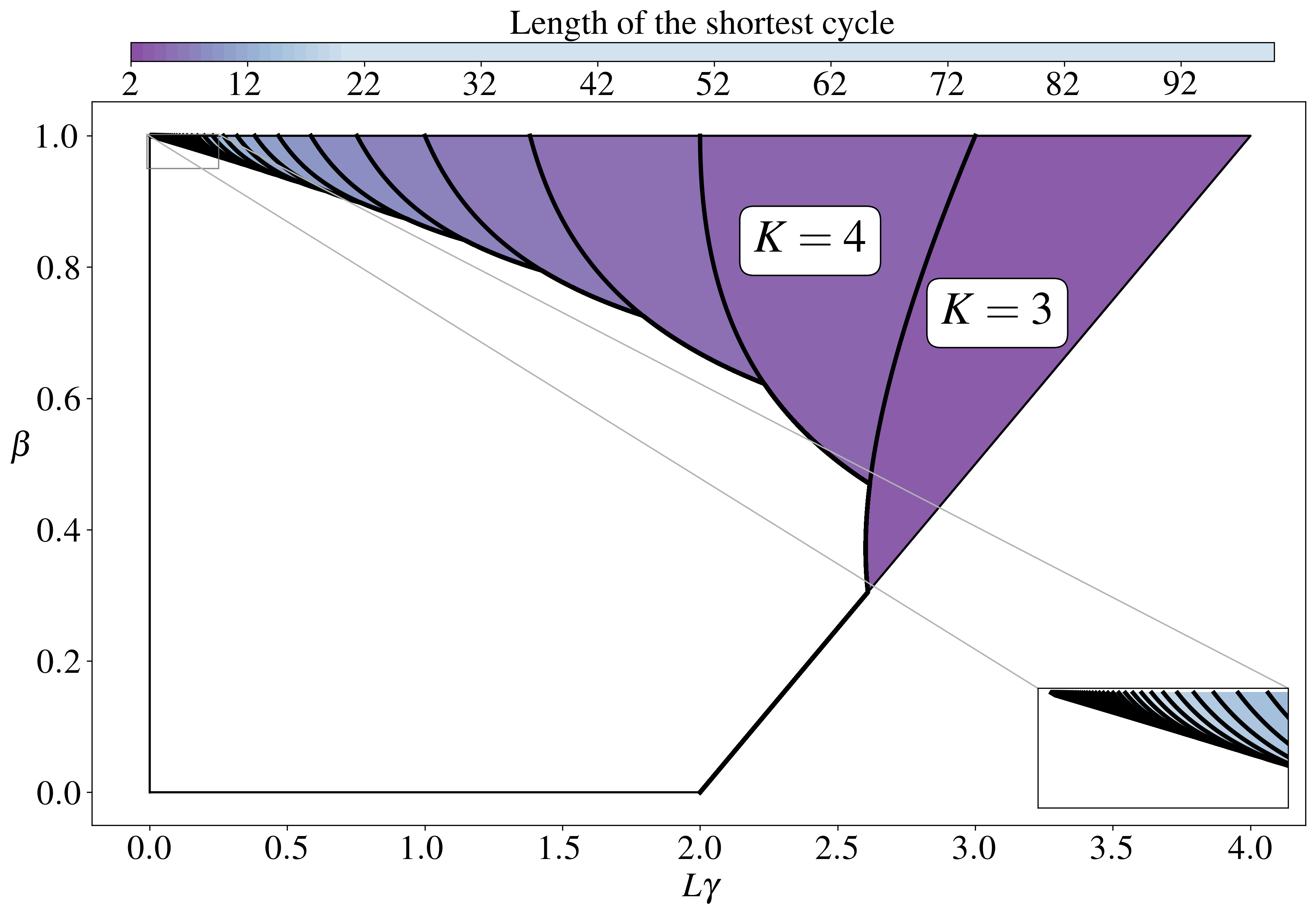}
    \caption{$\kappa=0.01$. Typical region  shape for small $\kappa$}
    \label{fig:hb_cycles_vs_lyapunov1}
  \end{subfigure}
  \hfill
  \begin{subfigure}[t]{.49\linewidth}
    \centering
     \includegraphics[width=\linewidth]{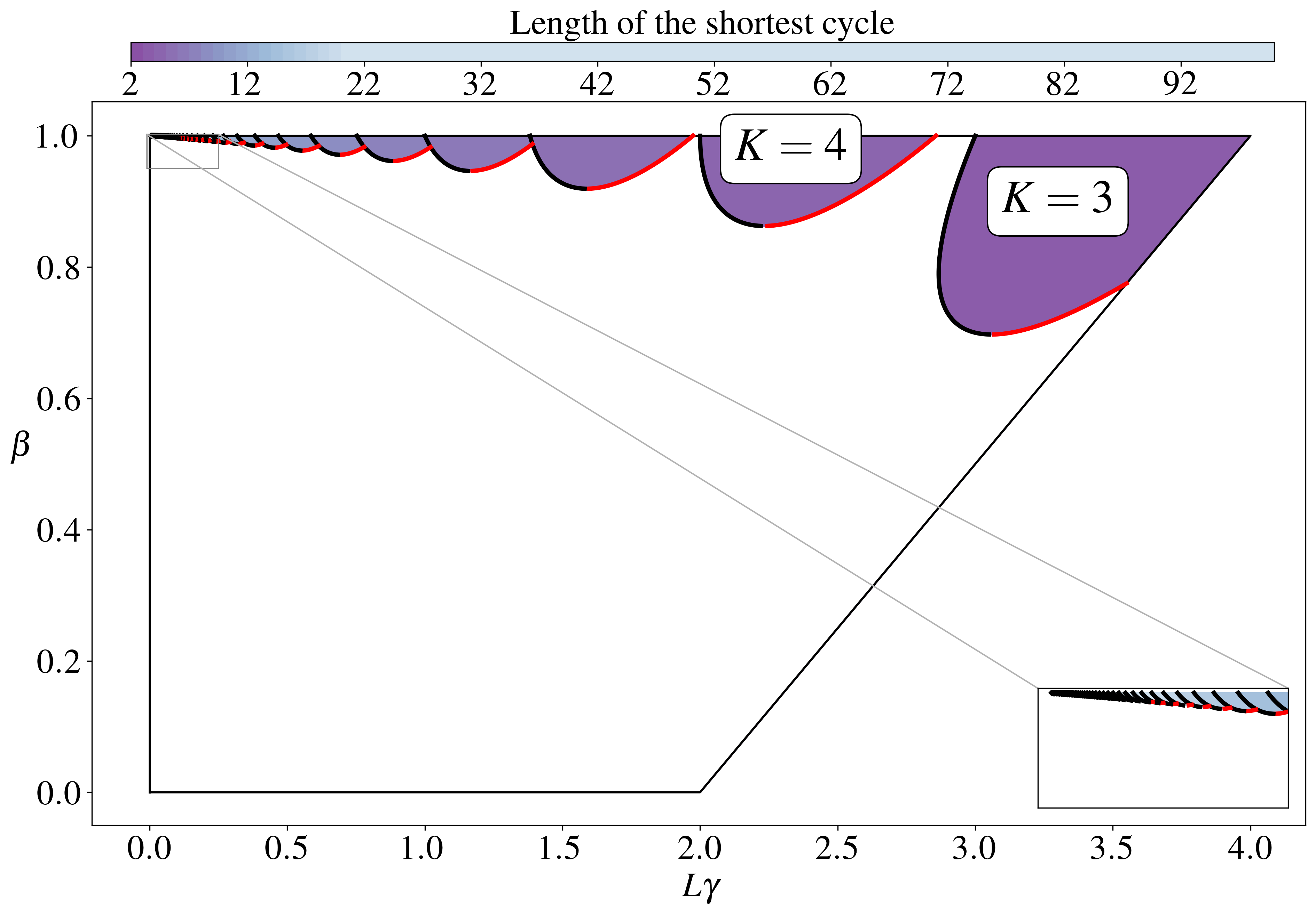}
    \subcaption{$\kappa=0.7$. Typical region  shape  shape for  $\kappa\simeq 1$.}
    \label{fig:hb_cycles_vs_lyapunov2}
  \end{subfigure}
  \caption{Regions $\OKrouml$ for increasing values of $K$ between 2 and 100. The limit of the regions is obtained from the  analytical formula given by \Cref{thm:analytical_ROU_region}. }
  \label{fig:hb_cycles_vs_lyapunov}
\end{figure}

\subsection{Non-acceleration on \texorpdfstring{$\Fml$}{Fml}}\label{subsec:noaccel_main}

In this section, we finally obtain the main non-acceleration result of the paper, by comparing $\OKrouml$ to the sublevel sets of the convergence rate of \eqref{eq:hb} on $\Qml$.

\begin{restatable}{Th}{incompatibility}
    \label{thm:incompatibility}
    There exists an absolute constant $C>0$ (any $C > \tfrac{50}{3}$), such that for any $0 < \mu < L$, we have:
    \begin{equation}
    (\Orouml)^c \cap \SLS_{\ml}\left(\frac{1 - C\kappa}{1 + C\kappa}\right) = \emptyset.
    \end{equation}
\end{restatable}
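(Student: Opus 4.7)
The plan is to establish the equivalent inclusion $\SLS_{\ml}(\rho) \subseteq \Orouml$ for $\rho = \tfrac{1-C\kappa}{1+C\kappa}$, $C > 50/3$, which is exactly the stated emptiness of the intersection. Both sides have explicit descriptions: by \Cref{lem:level_sets}, $\SLS_{\ml}(\rho)$ is a triangle whose three edges parameterize the lazy, robust, and knife's-edge regimes; by \Cref{thm:analytical_ROU_region}, each $\OKrouml$ is the sublevel set of a quadratic polynomial in $\mu\gamma$ whose coefficients depend on $\beta$, $\kappa$, and $c = \cos\theta_K$. The task thus reduces to assigning, to each $(\gamma,\beta) \in \SLS_{\ml}(\rho)$, an integer $K \geq 2$ for which this polynomial is non-positive. (Restricting to $\kappa$ small enough to have $\rho \geq \rho^\star(\Qml)$ is harmless: outside this range the sublevel set is empty and the claim is vacuous.)

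The key geometric idea is to choose $K$ from the natural oscillation frequency of \eqref{eq:HB-quad} on the $\mu$-eigenspace: in the robust regime the iteration matrix $P_{\beta,\mu,\gamma}$ has complex-conjugate eigenvalues $\sqrt{\beta}\,e^{\pm i\phi}$ with $\cos\phi = \tfrac{1+\beta-\mu\gamma}{2\sqrt{\beta}}$, and a genuine period-$K$ cycle corresponds exactly to $\phi = \theta_K$. I would take $K$ to be the integer nearest $2\pi/\phi$, which makes $|\cos\theta_K - \cos\phi|$ of order $\phi^2 \sim \kappa$. The $\gamma$-width of $\OKrouml$ at this $\beta$, of order $\sqrt{\kappa}\,\theta_K$, should then dominate the rounding error once $C$ is chosen large enough.

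The verification itself would use the more convenient, essentially Taylor-expanded, form of $\OKrouml$ prepared in \Cref{apx:convenient_expression_of_omega_cycle}, which rewrites the defining polynomial around the ideal value $\mu\gamma = 2\sqrt{\beta}\cos\phi$. Parameterizing each edge of $\SLS_{\ml}(\rho)$ in terms of $(\phi,\rho)$ via \Cref{lem:level_sets} and translating the rate constraint $\rho_{\gb}(\Qml) \geq \rho$ into a single inequality coupling $\phi$ and $\kappa$ reduces the target to a two-variable polynomial inequality. Carefully tracking constants through this reduction, and optimizing over the rounding of $2\pi/\phi$, should yield the explicit threshold $C > 50/3$.

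The main obstacle, and what makes $50/3$ appear rather than a smaller number, is uniformity across the three edges of the sublevel triangle: the oscillation angle $\phi$ is only well-defined on the robust edge and degenerates to real eigenvalues on the lazy and knife's-edge segments, so the choice of $K$ above is not directly meaningful there. I would handle these edges by different natural choices --- $K=2$ near the knife's-edge extreme (where $x_{t+1} \approx -x_t$ is the natural cycle) and a large-$K$ limiting argument near the lazy boundary --- and then verify that these regimes glue continuously to the robust interior via monotonicity of the defining polynomial in $c$. Managing this case split, together with the integer quantization error, is what ultimately pins down the numerical value of the threshold constant.
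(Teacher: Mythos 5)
Your overall direction (use the explicit descriptions of both sets and assign to each parameter pair an adapted period $K$) is the right one, but the proposal has concrete gaps. First, your plan for the non-oscillatory regimes fails: taking $K=2$ near the knife's edge gives nothing, since plugging $\theta_2=\pi$ into the characterization of \Cref{thm:analytical_ROU_region} yields $\gamma\geq 2(1+\beta)/L$, so that $\Omega_{2\text{-}\circ\text{-}\cycle}(\Fml)\cap\Oqml=\emptyset$ (this is \Cref{rem:k2} in the appendix); and a ``large-$K$ limiting argument'' cannot establish membership in $\Orouml=\bigcup_{K\ge 2}\OKrouml$, which requires exhibiting a finite $K$ --- moreover, for fixed $\beta<1$ one has $\beta_{-}(K,\ml)\to 1$ and the admissible $\gamma$-interval degenerates to nonpositive values as $K\to\infty$, so large $K$ is actively useless near the lazy segment. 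Second, the quantitative core --- verifying the two-sided containment $\gamma_{-}(\beta,K,\ml)\le\gamma\le\gamma_{+}(\beta,K,\ml)$ for your rounded $K\approx 2\pi/\phi$ uniformly over the filled sublevel triangle, and extracting $50/3$ from it --- is exactly the hard part and is left unexecuted; your heuristic comparison (width of order $\sqrt{\kappa}\,\theta_K$ versus rounding error of order $\kappa$) gives quantities of the same order on the critical lazy segment, where the constant is actually determined, and the width estimate itself is not uniform (it vanishes at $\beta=\beta_{-}(K,\ml)$), so nothing ``dominates'' without tracking constants.

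The paper's route sidesteps the two-sided verification entirely. It argues by contraposition: for $(\gb)\in(\Orouml)^c$ it picks $K_\beta$ depending on $\beta$ only, so that $\tfrac{2}{3}\le\tfrac{\beta-\cos\theta_{K_\beta}}{1-\beta}\le\tfrac{3}{2}$ (\Cref{lem:techn2,lem:techn3}), and invokes \Cref{thm:analytical_ROU_region_bis} --- the statement that for $\kappa\le\left(\tfrac{3-\sqrt{5}}{4}\right)^2$ the union over $K$ of the intervals $[\gamma_{-},\gamma_{+}]$ is a single interval with no active upper bound inside $\Oqml$, proved via the overlap $\gamma_{-}(\beta,K,\ml)\le\gamma_{+}(\beta,K+1,\ml)$ of \Cref{lem:AB_sort}. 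Non-membership then yields the one-sided bound $\mu\gamma\le\mu\,\gamma_{-}(\beta,K_\beta,\ml)\le\tfrac{50}{3}\kappa(1-\beta)$, which \Cref{lem:level_sets} excludes from $\SLS_{\ml}\left(\tfrac{1-C\kappa}{1+C\kappa}\right)$. If you pursue your direct-inclusion variant, you must either prove this single-interval property anyway or carry out the two-sided check with your $\gamma$-dependent choice of $K$; in either case the missing ingredient is a genuine lemma, not bookkeeping.
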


\begin{sketch}
    The complete proof is provided in~\Cref{apx:main-result}. In short, we show that, if $(\gb) \in (\Orouml)^c$, then using \Cref{thm:analytical_ROU_region_bis} for any $\kappa\leq\left(\frac{3-\sqrt{5}}{4}\right)^2$, necessarily, $\mu < C\kappa(1-\beta)$ for any constant $C> \tfrac{50}{3}$ which is excluded from $\SLS_{\ml}\left(\frac{1 - C\kappa}{1 + C\kappa}\right)$ by \Cref{lem:level_sets}.
    For $\kappa\geq \left(\frac{3-\sqrt{5}}{4}\right)^2$, we have $\sqrt{\kappa}\leq (3 + \sqrt{5})\kappa\leq C\kappa$ for any $C>\frac{50}{3}$, hence the result.
\end{sketch}

\vspace{1em} \noindent
\Cref{thm:incompatibility} is illustrated on \Cref{fig:triangles_and_WCAR}: we represent, for three values of $\kappa$ in decreasing order, the set $ (\Orouml)^c$ and the set $\SLS_{\ml}\left(\frac{1 - C\kappa}{1 + C\kappa}\right)$. 
This means  that for any $(\gb)$ such that \eqref{eq:hb} does not cycle over a roots-of-unity cycle on $\Fml$, the asymptotic convergence rate of \eqref{eq:HB-quad} over $\Qml$, is worse (i.e.,~larger) than $\frac{1 - C\kappa}{1 + C\kappa}$.
Formally, we have the following corollary.

\begin{Cor}\label{cor:noaccel}
    \eqref{eq:hb} does not accelerate over on the class $(\Fml)_{0 < \mu < L}$.

\vspace{0.4em}
    \noindent
    More precisely, there exists a constant $C$ such that for all $0<\mu<L$, for all $(\gb)\in \R\times \R $ the  worst-case asymptotic convergence rate \eqref{eq:hb} over $\Fml$ is lower bounded by $\tfrac{1 - C\kappa}{1 + C\kappa}$:
\[\forall 0<\mu<L, \rho^\star(\Fml) \eqdef \min_{(\gb)\in \R\times \R } \rho_{\gb}(\Fml) \geq \tfrac{1 - C\kappa}{1 + C\kappa},\] 
\end{Cor}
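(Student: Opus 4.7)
My plan is to derive Corollary~\ref{cor:noaccel} as an essentially immediate consequence of Theorem~\ref{thm:incompatibility}, the chain of inclusions
\[\Ocvml \subseteq (\Ocyml)^c \subseteq (\Orouml)^c,\]
collected from Fact~\ref{fact:Cv_in_cycle_compl} and Definitions~\ref{def:cycle_region} and~\ref{def:rou_cycle_region}, together with the trivial inclusion $\Qml \subseteq \Fml$ (which implies $\rho_{\gb}(\Fml) \geq \rho_{\gb}(\Qml)$ since the worst-case is taken over a larger class). Let $C>\tfrac{50}{3}$ be the absolute constant furnished by Theorem~\ref{thm:incompatibility}. I claim this same $C$ works for the corollary.

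The plan is a case split on whether $(\gb)$ makes HB converge on $\Fml$. First, suppose $(\gb)\in \Ocvml$. Then by the chain of inclusions, $(\gb)\in (\Orouml)^c$, so Theorem~\ref{thm:incompatibility} gives $(\gb)\notin \SLS_{\ml}\bigl(\tfrac{1-C\kappa}{1+C\kappa}\bigr)$. By the definition of the sublevel set combined with Lemma~\ref{lem:level_sets}, this exactly means $\rho_{\gb}(\Qml) > \tfrac{1-C\kappa}{1+C\kappa}$. Monotonicity of worst-case rates in the function class then yields
\[\rho_{\gb}(\Fml) \geq \rho_{\gb}(\Qml) > \tfrac{1-C\kappa}{1+C\kappa}.\]
Second, if $(\gb)\notin \Ocvml$, then by Definition~\ref{def:Omega_cv}, $\rho_{\gb}(\Fml) \geq 1 > \tfrac{1-C\kappa}{1+C\kappa}$. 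In either case, the bound holds pointwise in $(\gb)$, and taking the infimum over $(\gb)\in \R\times\R$ delivers the claim.

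The main obstacle is genuinely absent at this stage: all of the delicate work (the analytical form of $\OKrouml$ in Theorem~\ref{thm:analytical_ROU_region}, the quadratic-inequality analysis yielding the constant $\tfrac{50}{3}$, and the two-regime argument on $\kappa$ relative to $\bigl(\tfrac{3-\sqrt{5}}{4}\bigr)^2$) is packaged inside Theorem~\ref{thm:incompatibility}. The corollary is then a one-line repackaging of that disjointness as a lower bound on the rate, mediated by Fact~\ref{fact:Cv_in_cycle_compl} and the embedding $\Qml \subseteq \Fml$. The only subtle point to be careful about is handling the non-convergent regime $(\gb)\notin \Ocvml$ separately, so that the statement covers all pairs in $\R\times\R$ rather than just those that produce a converging scheme.
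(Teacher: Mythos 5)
Your proposal is correct and is essentially the paper's own argument: the paper performs the same case split in contrapositive form (on membership in $\SLS_{\ml}\bigl(\tfrac{1-C\kappa}{1+C\kappa}\bigr)$ rather than in $\Ocvml$), invoking exactly the same ingredients — Theorem~\ref{thm:incompatibility}, the inclusion chain through $(\Orouml)^c$, and the embedding $\Qml\subseteq\Fml$. Your handling of the non-convergent regime via Definition~\ref{def:Omega_cv} is the same observation the paper makes implicitly, so there is no substantive difference.
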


\begin{proof}[\Cref{cor:noaccel}]
    Let $(\gb) \in \Oqml$.
    Possibilities are twofold:
    \begin{itemize}[leftmargin=*]
        \item if $(\gb) \in \SLS_{\ml}\left(\frac{1 - C\kappa}{1 + C\kappa}\right)$, we know from \Cref{thm:incompatibility}, that $(\gb) \in \SLS_{\ml}\left(\frac{1 - C\kappa}{1 + C\kappa}\right) \subseteq \Orouml \subseteq \Ocyml \subseteq \Ocvml^c$, i.e.~there exists a function $f\in\Fml$ such that~$\eqref{eq:hb}_{\gb}(f)$ does not converge.
        \item if $(\gb) \in \SLS_{\ml}\left(\frac{1 - C\kappa}{1 + C\kappa}\right)^c$, then by definition, there exists a function $f\in\Qml\subseteq\Fml$ such that~$\eqref{eq:hb}_{\gb}(f)$ achieves an asymptotic rate larger that $\frac{1 - C\kappa}{1 + C\kappa}$.
    \end{itemize}
\end{proof}

\begin{figure}[t]
  \begin{subfigure}[t]{.3\linewidth}
    \centering
    \includegraphics[width=\linewidth]{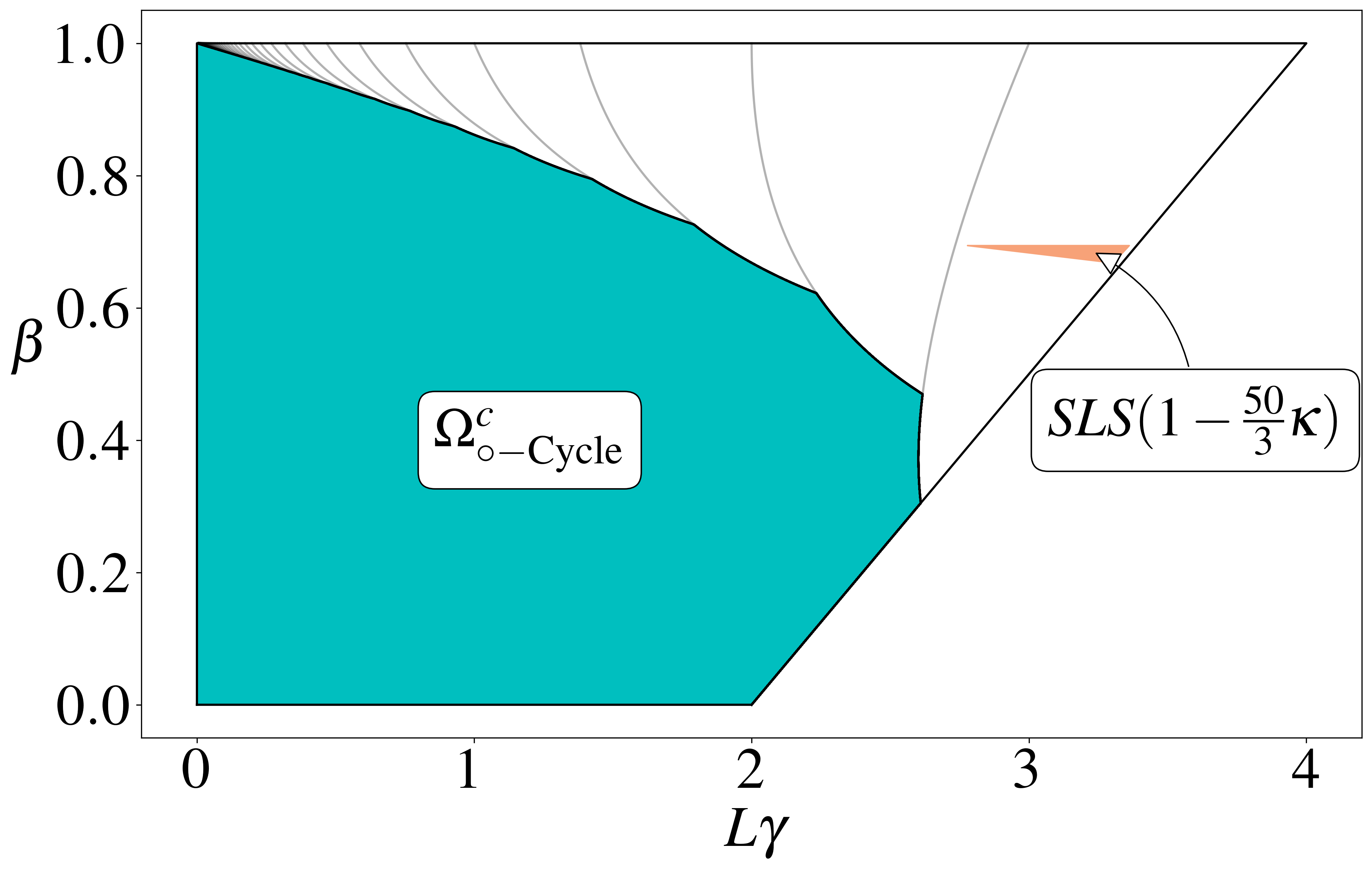}
    \caption{$\kappa=0.01$}
    \label{fig:rou_cycles_negative_with_sublevel_sets_a}
  \end{subfigure}
  \hfill
   \begin{subfigure}[t]{.3\linewidth}
    \centering
    \includegraphics[width=\linewidth]{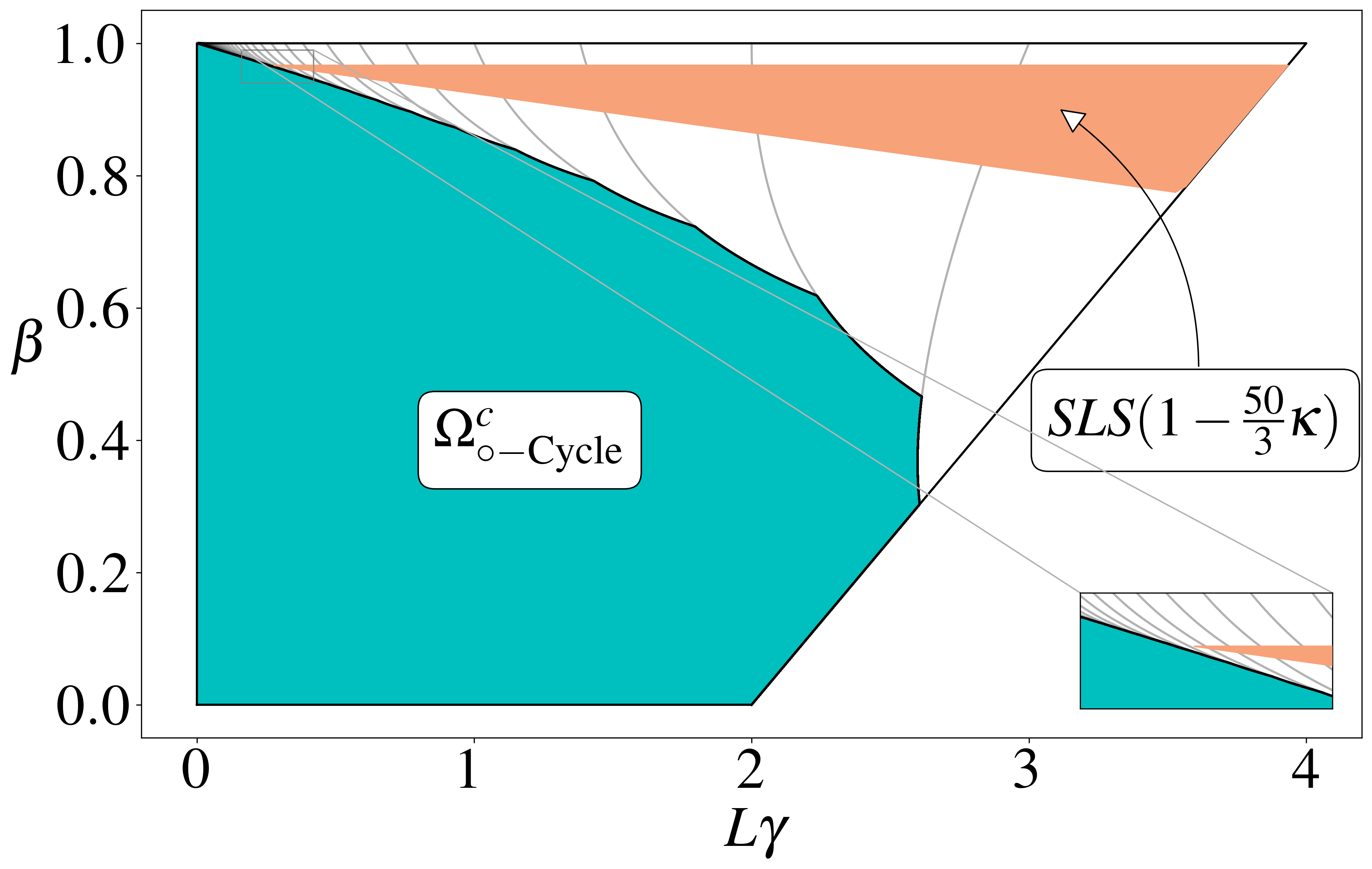}
    \caption{$\kappa=0.001$}
    \label{fig:rou_cycles_negative_with_sublevel_sets_b}
  \end{subfigure}
    \hfill
   \begin{subfigure}[t]{.3\linewidth}
    \centering
    \includegraphics[width=\linewidth]{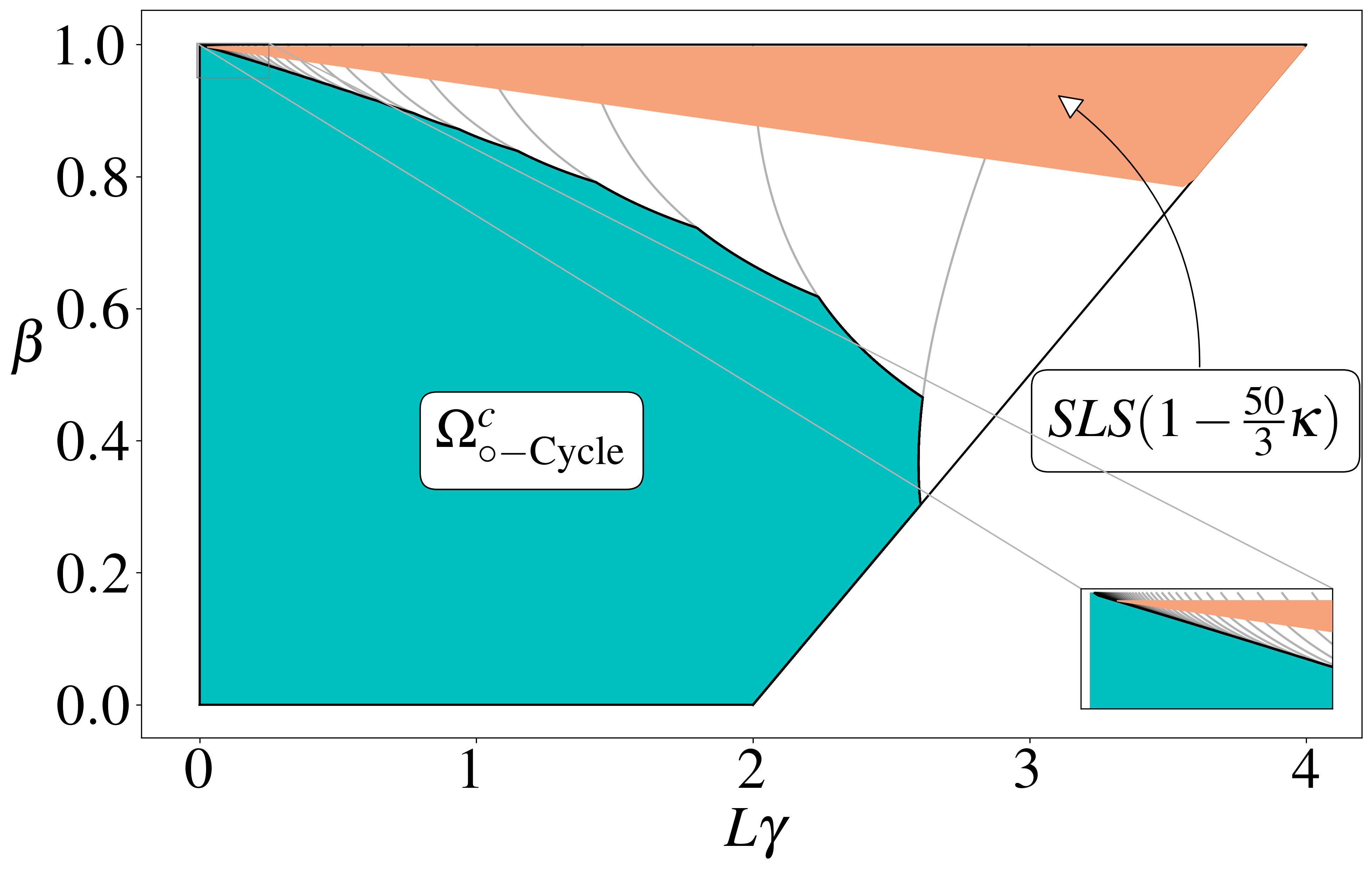}
    \caption{$\kappa=0.0001$}
    \label{fig:rou_cycles_negative_with_sublevel_sets_c}
  \end{subfigure}
  \caption{Illustration of the incompatibility  result \Cref{thm:incompatibility}. For three values of $\kappa$, we represent the sublevel set $\SLS_{\mu,L}\left(\frac{1 - C\kappa}{1 + C\kappa}\right)$ and the region $\left(\Orouml\right)^c$ and notice that their intersections are empty.}
  \label{fig:triangles_and_WCAR}
\end{figure}

\vspace{1em} \noindent 
This result concludes the first part of our study of \eqref{eq:hb}, as \Cref{cor:noaccel} closes a long-standing open question on the behavior of \eqref{eq:hb}.

In the next section, we demonstrate the cycles obtained in this section are robust to a perturbation and to small variations of $\gb$, thereby naturally strengthening our results.

\section{Robustness of the roots-of-unity cycle}
\label{sec:robustness}

\begin{figure}[t]
   \begin{subfigure}[t]{.49\linewidth}
    \centering
    \includegraphics[width=\linewidth]{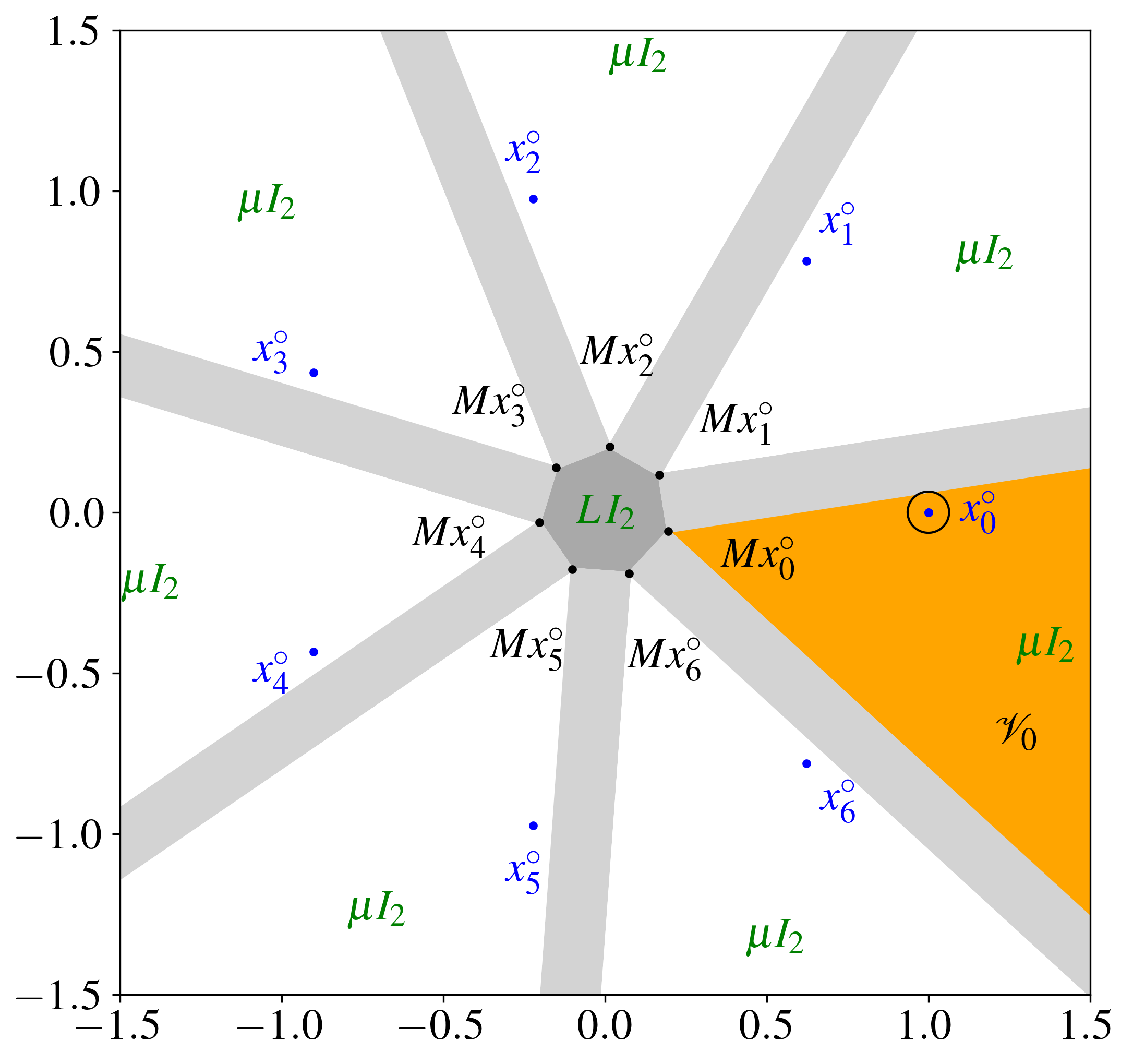}
    \caption{$(\gb) =(3.3, 0.75)$, $(\ml)=(.005, 1)$, $K=7$ }
    \label{fig:psi_good_gammabeta}
  \end{subfigure}
\hfill
 \begin{subfigure}[t]{.49\linewidth}
    \centering
    \includegraphics[width=\linewidth]{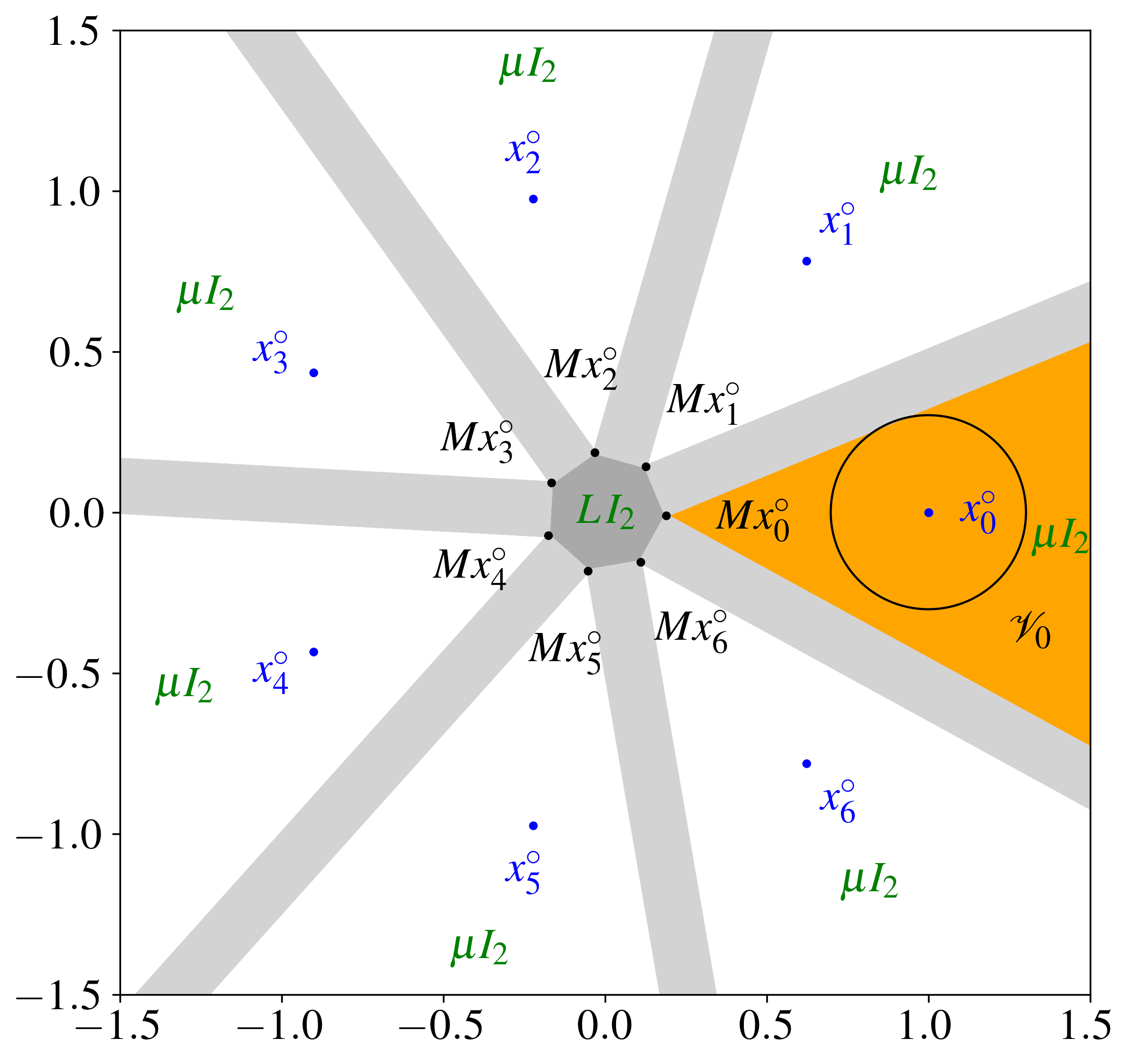}
    \caption{$(\gb) =(3.8, 0.95)$, $(\ml)=(.005, 1)$, $K=7$}
    \label{fig:psi_bad_gammabeta}
  \end{subfigure}
    \caption{Shape of the counter-example function  $\psi^K_{\gb,\ml}$ (See \Cref{fig:psi}), locally quadratic neighborhood $(\Vk)_{k\in \rK}$ (white background, highlighted in orange for $\mathcal V_0$), and ball  $B(x_0^\circ,r_{\max})$.}
    \label{fig:psi_and_neighb}
\end{figure}

In \Cref{thm:analytical_ROU_region} we proved that, for any~$K \geq 2$, for any $(\gb) \in \OKrouml$,~$\eqref{eq:hb}_{\gb}(\psigbml)$ cycles over $\TikCircle_K$, where~$\psigbml$ is defined in~\eqref{eq:def_function_qui_tue} as 
\[\psigbml(x) = \frac{L}{2}\|x\|^2 - \frac{L-\mu}{2}d(x, \conv\left\{Mx^\circ_t, t\in\range{0}{K-1}\right\})^2.\]
A natural concern is the robustness of this result to an initial perturbation of the starting points, or a random or adversarial perturbations of the gradients or the hyperparameters. In this section, building on the properties of~$\psigbml$, we establish that the cycle is indeed robust to an initial perturbation of~$(x_0, x_1)$ in a neighborhood, to small (random or adversarial) variations of the parameters~$\gamma$ and $\beta$ at each iteration, and to a small (random or adversarial) noise on the gradient. We explicitly quantify neighborhoods providing such stability.

To establish stability properties, we leverage the fact that the function $\psigbml$ is locally quadratic around the iterates of~$\TikCircle_K$. In the rest of the section and the proofs, we extend again~$(x^\circ_k)$ to~$k\in \mathbb Z$ by $K$-periodicity, as~$(x^\circ_k)= (x^\circ_{k\mod{K}})$. For~$k\in \range{0}{K-1}$, we introduce the neighborhood~$\mathcal V_k$ of $x_k^\circ$ as follows.
\begin{Def}[Locally quadratic neighborhood $(\mathcal  V_k)_{k\in \rK}$]
    For any $k\in \rK$, we denote $\Vk$ the largest neighborhood of $x_k^\circ$ over which $\psigbml$ is quadratic with Hessian $\mu \Id_2$.
\end{Def}
This neighborhood is represented on \Cref{fig:psi_and_neighb} as the white area surrounding~$x^\circ_k$, highlighted in orange for $\mathcal V_0$. Formally, $\mathcal V_k$ is composed of all points that have the same projection on~$\conv\left\{M x^\circ_t, t\in\range{0}{K-1}\right\}$ as~$x^\circ_k$. Moreover, if  we define~$r_{\max} = -\left<(I-M)x^\circ_0, \frac{M(x^\circ_1 -x^\circ_0)}{\|M(x^\circ_1 - x^\circ_0)\|}\right>$, as the distance between~$x_0^\circ$ and the light gray area, as represented in~\Cref{fig:psi_and_neighb}, we have that for any~$k$,~$B(x_k^\circ, r_{\max}) \subseteq \Vk$. Moreover, as the function is locally quadratic, for any~$z\in \Vk$, 
\begin{alignat}{3}
&\psigbml(z) &&= \frac{L}{2}\|z\|^2 - \frac{L-\mu}{2}\|z - Mx_k^\circ \|^2 &&=  \frac{\mu}{2}\|z\|^2 + (L-\mu)\langle
Mx_k^\circ,  z  \rangle - \frac{L-\mu}{2}\|Mx_k^\circ \|^2 \nonumber\\    
 &   \nabla \psigbml(z) &&= L z - (L-\mu) (z- Mx_k^\circ) &&= \nabla \psigbml(x_k^\circ) + \mu (z-x_k^\circ).
\label{eq:grad}
\end{alignat}
We first consider the case of a perturbation of the initial point, for which we prove the following result:
\begin{Th}\label{thm:stab_init_pert}
Consider $0<\mu<L$,  $K\geq 2$, the roots-of-unity cycle $ (x_k^\circ)_{k\in\range{0}{K-1}} $ and $(\gb)\in \OKrouml$. 
Let   $(z_t)_{t\in \mathbb{N}}$  be the sequence generated by running $\eqref{eq:hb}_{\gb}$ initialized at 
$(z_0,z_1)= (x^\circ_0 + \delta_{0}, x^\circ_1 +  \delta_{1})$.
There exists $\kappa_P$ such that if $\sqrt{\|\delta_{0}\|^2 + \|\delta_{1}\|^2} \leq \kappa_P r_{\max}$, the following properties hold:
\begin{enumerate}
    \item \label{item:1page18} for all $t\in \mathbb N$, $z_t \in \mathcal V_{t\mod{K}}$,
    \item \label{item:2page18} the sequence $\delta_t \eqdef z_t - x^\circ_{t}$ follows the dynamic of $\eqref{eq:hb}_{\gb}(x\mapsto \frac{\mu}{2}\|x\|^2)$, initialized at $(\delta_{0}, \delta_{1})$,
    \item \label{item:3page18} consequently, $\|z_t - x^\circ_{t} \| $ converges to 0 as $t\to \infty$, at rate $\rho_{\gb}(\mathcal Q_{\mu,L=\mu})$.
\end{enumerate}
\end{Th}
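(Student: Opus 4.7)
The plan is to exploit that $\psigbml$ is purely quadratic with Hessian $\mu\Id_2$ inside each $\Vk$, so that locally \eqref{eq:hb} becomes a linear recurrence on $\delta_t \eqdef z_t - x^\circ_t$, and then argue inductively that $\delta_t$ remains small enough to keep the trajectory in $\bigcup_k B(x^\circ_k,r_{\max}) \subseteq \bigcup_k \Vk$.

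First I would derive the linear dynamic for $\delta_t$ assuming, inductively, that $z_t \in \Vk$ and $z_{t-1}\in \mathcal{V}_{k-1}$ with $k=t\mod K$. Plugging \eqref{eq:grad} into the heavy-ball update gives
\[ z_{t+1} = z_t - \gamma\bigl(\nabla\psigbml(x^\circ_k) + \mu(z_t-x^\circ_k)\bigr) + \beta(z_t-z_{t-1}),\]
and subtracting the identical expression that $(x^\circ_t)$ satisfies (since it is the cycle, by \Cref{lem:gt_def}) yields
\[\delta_{t+1} = (1+\beta-\mu\gamma)\,\delta_t - \beta\,\delta_{t-1}.\]
This is precisely $\eqref{eq:hb}_{\gb}$ applied to $x\mapsto \tfrac{\mu}{2}\|x\|^2$, so item~\ref{item:2page18} will follow.

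Second, I would uniformly bound $\|\delta_t\|$. The recurrence is governed by the companion matrix $P_{\beta,\mu,\gamma}$ from the sketch of \Cref{prop:conv_quad}, whose spectral radius equals $\rho_{\gb}(\mathcal{Q}_{\mu,\mu})$. Since $(\gb)\in\OKrouml\subseteq \Oqml$, \Cref{prop:conv_quad} gives $\rho_{\gb}(\mathcal Q_{\mu,\mu}) <1$, hence by Gelfand's formula (or an explicit computation using the eigenvalues of $P_{\beta,\mu,\gamma}$ to account for possible transient amplification when the eigenvalues are complex) there exists a finite constant $C_{\gb}>0$ with
\[\sup_{t\ge 0}\|\delta_t\| \le C_{\gb}\sqrt{\|\delta_0\|^2+\|\delta_1\|^2}.\]
Setting $\kappa_P \eqdef 1/C_{\gb}$, the hypothesis $\sqrt{\|\delta_0\|^2+\|\delta_1\|^2}\le \kappa_P r_{\max}$ enforces $\|\delta_t\|\le r_{\max}$ for all $t$.

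Third, I would close the induction for item~\ref{item:1page18}: the base case holds by assumption, and the inductive step combines the linear recurrence derivation (valid under the induction hypothesis) with the uniform bound $\|\delta_{t+1}\|\le r_{\max}$, giving $z_{t+1}\in B(x^\circ_{t+1},r_{\max})\subseteq \mathcal{V}_{(t+1)\mod K}$. Item~\ref{item:3page18} is then the standard asymptotic consequence of the spectral radius being $\rho_{\gb}(\mathcal Q_{\mu,\mu})$.

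The main obstacle is the uniform (rather than merely asymptotic) bound on $\|\delta_t\|$: one needs to control the possibly non-monotone transient of the two-dimensional linear system, especially in the parameter regime where $P_{\beta,\mu,\gamma}$ has complex conjugate eigenvalues of modulus $\sqrt{\beta}$ close to $1$. This is not deep — it is handled by diagonalizing (or Jordan-reducing) $P_{\beta,\mu,\gamma}$ and bounding the change-of-basis condition number — but it is the only place where real estimation, as opposed to algebraic manipulation, is needed, and it is what determines the explicit value of $\kappa_P$.
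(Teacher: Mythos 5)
Your proposal is correct and follows essentially the same route as the paper: the paper also derives the linear recurrence $\delta_{t+1}=(1+\beta-\mu\gamma)\delta_t-\beta\delta_{t-1}$ under the induction hypothesis $z_t\in\mathcal V_{t\mod K}$, and controls the transient via a reduction $PDP^{-1}$ of the companion matrix with $\|D\|_{\op}<1$, taking $\kappa_P=1/(\|P\|_{\op}\|P^{-1}\|_{\op})$ — exactly the change-of-basis condition number you identify. The only cosmetic difference is that the paper closes the induction by showing the single quantity $\|P^{-1}(\delta_t,\delta_{t-1})^\top\|$ is non-increasing, rather than invoking a uniform-in-time bound on the ideal linear trajectory.
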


The first two points are proved simultaneously, and the third point is a consequence of the second. In words, we show that at all iterations, the iterate $z_t$ remains in the locally quadratic neighborhood of $x_{t}^\circ$, and that remarkably, the dynamic of the residual $(\delta_t)$ is then precisely the one of a $\eqref{eq:hb}$ dynamic \textit{on an isotropic (i.e.,~with~$\kappa=1$) quadratic function}. Indeed, if $z_{t}  \in \mathcal V_{t\mod{K}}$, we have that:
\begin{align*}
    z_{t+1} &\overset{\eqref{eq:hb}}{=} z_t - \gamma \nabla \psigbml (z_t) +  \beta (z_t-z_{t-1}).
\end{align*}
And as $( z_{t})_t=(x_t^\circ+  \delta_t)_t $, using the formula for the gradient \eqref{eq:grad}, we get 
\begin{align*}
    x_{t+1}^\circ+  \delta_{t+1}   &\overset{\eqref{eq:grad}}{=} x_t^\circ + \delta_t - \gamma \nabla \psigbml (x_t^\circ ) - \gamma \mu \delta_t +  \beta (x^\circ_t-x^\circ_{t-1})+\beta (\delta_t-\delta_{t-1}). 
\end{align*}
Moreover, as $\eqref{eq:hb}_{\gb}(\psigbml)$ cycles over $\TikCircle_K$, $x_{t+1}^\circ {=} x_t^\circ - \gamma \nabla \psigbml (x_t^\circ )  +\beta (x^\circ_t-x^\circ_{t-1})$   thus
\begin{align}\label{eq:dyn_res_HB}
 \delta_{t+1}   &{=}   \delta_t - \gamma \mu \delta_t  +\beta (\delta_t-\delta_{t-1}).
\end{align}
Which means that   as long as $z_{t}  \in \mathcal V_{t\mod{K}}$,  $ \delta_{t+1}$ is obtained by the dynamic of \eqref{eq:hb} on the quadratic isotropic function $x\mapsto \frac{\mu}{2}\|x\|^2$. We now give the complete proof.

\begin{proof}
    We introduce matrices $P$ and $D$ verifying $PDP^{-1} = \begin{pmatrix}
        (1+\beta)\Id_2 - \mu\gamma\Id_2 & -\beta\Id_2 \\ \Id_2 & 0
    \end{pmatrix}$, and such that  the operator norm   $\rho_D$ of the matrix $D$, $\rho_D = \|D\|_{\text{op}}$ is smaller than 1, and set $\kappa_P = \frac{1}{\|P\|_{\text{op}}\|P^{-1}\|_{\text{op}}} \leq 1$.  The existence of such matrices is guaranteed as $(\gb)\in \Oqml$.  
To ensure that $z_t\in \mathcal V_{t\mod{K}}$ (point \ref{item:1page18} in \Cref{thm:stab_init_pert}), we prove that  $\forall t\geq 1, \left\|P^{-1}\begin{pmatrix}
        \delta_{t} \\ \delta_{t-1}
    \end{pmatrix}\right\| \leq \frac{r_{\max}}{\|P\|_{\text{op}}}$.
Indeed, this is a stronger statement, as  it implies that $\forall t\geq 1, \left\| \begin{pmatrix}
        \delta_{t} \\ \delta_{t-1}
    \end{pmatrix}\right\| \leq {r_{\max}}$, thus $z_t\in B(x^\circ_{t\mod{K}}, r_{\max}) \subseteq \mathcal V_{t\mod{K}}$.

    \noindent
    We prove simultaneously by induction  point \ref{item:2page18} of \Cref{thm:stab_init_pert} and the condition $\forall t\geq 1, \left\|P^{-1}\begin{pmatrix}
        \delta_{t} \\ \delta_{t-1}
    \end{pmatrix}\right\| \leq \frac{r_{\max}}{\|P\|_{\text{op}}}$. \\

    \noindent
 \textbf{Initialization}: $\left\|\begin{pmatrix}
        \delta_{1} \\ \delta_{0}
    \end{pmatrix}\right\| = \sqrt{\|\delta_{0}\|^2 + \|\delta_{1}\|^2} \leq \kappa_P r_{\max}= \frac{r_{\max}}{\|P\|_{\text{op}}\|P^{-1}\|_{\text{op}}}$   implies $\left\|P^{-1}\begin{pmatrix}
        \delta_{1} \\ \delta_{0}
    \end{pmatrix}\right\| \leq \frac{r_{\max}}{\|P\|_{\text{op}}}$. \\

 \noindent
 \textbf{Induction}:  By induction hypothesis $z_t\in \mathcal V_{t\mod{K}}$, thus by \eqref{eq:dyn_res_HB}, $\delta_{t+1}$ is obtained by $\eqref{eq:hb}$:
 \begin{align*}
        \begin{pmatrix}
            \delta_{t+1} \\ \delta_{t}
        \end{pmatrix}
        & =
        \begin{pmatrix}
        (1+\beta)\Id_2 - \mu\gamma\Id_2 & -\beta\Id_2 \\ \Id_2 & 0
    \end{pmatrix}
        \begin{pmatrix}
            \delta_{t} \\ \delta_{t-1}
        \end{pmatrix}  =
        PDP^{-1}
        \begin{pmatrix}
            \delta_{t} \\ \delta_{t-1}
        \end{pmatrix}\\
 \Rightarrow \ \ \   \left  \|   P^{-1}
        \begin{pmatrix}
            \delta_{t+1} \\ \delta_{t}
        \end{pmatrix} \right\|
        & = 
     \left   \| DP^{-1}
        \begin{pmatrix}
            \delta_{t} \\ \delta_{t-1}
        \end{pmatrix} \right\|
         \overset{\rho_D = \|D\|_{\text{op}}}{\leq}
        \rho_D \left\|
        P^{-1}
        \begin{pmatrix}
            \delta_{t} \\ \delta_{t-1}
        \end{pmatrix}
        \right\| \overset{\text{by induction}}{\leq} \rho_D\frac{r_{\max}}{\|P\|_{\text{op}}} \le \frac{r_{\max}}{\|P\|_{\text{op}}}.
    \end{align*}

\noindent
This concludes the induction and proves  the first two items of \Cref{thm:stab_init_pert}. Finally, Point~\ref{item:3page18} of \Cref{thm:stab_init_pert} is thus a consequence of point~\ref{item:2page18} and \Cref{prop:conv_quad}.
    
\end{proof}  

\vspace{1em} \noindent
We now give a more general theorem, that provides a stability result w.r.t.~the initial points and (potentially adversarial) small perturbations of $\gb$ at each iteration, and of the gradient oracles. Furthermore, we give explicit neighborhoods preserving the non-convergence property of \eqref{eq:hb} on $\psigbml$. As in \Cref{thm:stab_init_pert}, we consider that the process starts at a perturbed point $(x^\circ_0 + \delta_{0}, x^\circ_1 + \delta_{1})$. Moreover, instead of applying parameters $(\gb)$ at each step, we consider that step $t$ is performed with parameters $(\gamma_t, \beta_t)=(\gamma + \delta_{\gamma_t}, \beta + \delta_{\beta_t})$, and rely on  perturbed (or noised) gradients $\hat g_t (z) = \nabla \psigbml(z)+\delta_{g_t}$.

\begin{restatable}{Th}{robutness}
    \label{thm:robustness}
    Consider $0<\mu<L$,  $K\geq 2$, the roots-of-unity cycle $ (x_k^\circ)_{k\in\range{0}{K-1}} $, and $(\gb)\in \OKrouml$. 
    Let $(z_t)_{t\in \mathbb{N}}$  be the sequence generated by running $\eqref{eq:hb}_{\gamma_t,\beta_t}$, with time varying parameters $\gamma_t,\beta_t$, initialized at 
    $(z_0,z_1)= (x^\circ_0 + \delta_{0}, x^\circ_1 + \delta_{1})$, with perturbed  gradients $\hat g_t (z) = \nabla \psigbml(z)+\delta_{g_t}$.
    There exist $\kappa_P$ and $\rho_D<1$ (made explicit in the proof) such that if the following three conditions hold, 
    \begin{enumerate}
        \item $\sqrt{\|\delta_{0}\|^2 + \|\delta_{1}\|^2} \leq \kappa_P r_{\max}$, 
        \item for all $t\in \mathbb N$, 
            $\left(\frac{4}{\gamma} + \mu\kappa_P r_{\max}\right)|\delta_{\gamma_t}|
            +
            \left({2} + 2\kappa_P r_{\max}\right) |\delta_{\beta_t}|
            \leq \frac{1}{2}(1-\rho_D)\kappa_P r_{\max}$,
        \item for all $t\in \mathbb N$,   $\frac{4}{L} \|\delta_{g_t}\|\le \frac{1}{2}  (1-\rho_D)\kappa_P r_{\max}$,
    \end{enumerate}
    then, $(z_t)_{t \geq 0}$ keeps cycling in a neighborhood of $\TikCircle_K$: $\| z_t - x^\circ_{t\mod{K}} \| \leq r_{\max}$  (and thus $z_t\in \mathcal{V}_{t\mod{K}}$).
\end{restatable}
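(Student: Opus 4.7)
The plan is to extend the argument of \Cref{thm:stab_init_pert} by tracking the residual $\delta_t \eqdef z_t - x_t^\circ$ subject to an additional forcing term that arises from the three sources of perturbation, while reusing the companion-matrix decomposition $P D P^{-1}$ of the unperturbed linear system, with $\rho_D = \|D\|_{\op} < 1$ and $\kappa_P = 1/(\|P\|_{\op}\|P^{-1}\|_{\op})$ as defined in the proof of \Cref{thm:stab_init_pert}. The overall scheme is a standard perturbed-contraction induction, where the perturbation is small enough to be absorbed by the contraction slack $1-\rho_D$.

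First, I expand the perturbed update $z_{t+1} = z_t - (\gamma+\delta_{\gamma_t})\bigl(\nabla\psigbml(z_t)+\delta_{g_t}\bigr) + (\beta+\delta_{\beta_t})(z_t-z_{t-1})$, invoke the locally-quadratic identity $\nabla\psigbml(z_t) = \nabla\psigbml(x_t^\circ) + \mu\delta_t$ that holds as long as $z_t \in \mathcal V_{t\mod K}$, and subtract the unperturbed cycle equation $x_{t+1}^\circ = x_t^\circ - \gamma\nabla\psigbml(x_t^\circ) + \beta(x_t^\circ - x_{t-1}^\circ)$. After simplification this yields
\begin{equation*}
\delta_{t+1} = (1+\beta-\mu\gamma)\,\delta_t - \beta\,\delta_{t-1} + \epsilon_t,
\end{equation*}
where the forcing term collects all cross contributions,
\begin{equation*}
\epsilon_t = -\gamma\delta_{g_t} - \delta_{\gamma_t}\bigl(\nabla\psigbml(x_t^\circ) + \mu\delta_t + \delta_{g_t}\bigr) + \delta_{\beta_t}\bigl((x_t^\circ - x_{t-1}^\circ) + (\delta_t - \delta_{t-1})\bigr).
\end{equation*}

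Second, I bound $\|\epsilon_t\|$ using three elementary structural estimates: $\|\nabla\psigbml(x_t^\circ)\| = \|g_t\| \leq \tfrac{2(1+\beta)}{\gamma} \leq \tfrac{4}{\gamma}$, obtained from \Cref{lem:gt_def} together with $\|x_t^\circ\|=1$; the bound $\|x_t^\circ - x_{t-1}^\circ\| \leq 2$; and $\gamma \leq \tfrac{2(1+\beta)}{L} \leq \tfrac{4}{L}$ from $(\gb) \in \Oqml$. Combining these with the induction hypothesis bounding $\|\delta_t\|$ and $\|\delta_{t-1}\|$, the three assumptions of the theorem are calibrated so that $\|P^{-1}\|_{\op}\|\epsilon_t\| \leq (1-\rho_D)\,r_{\max}/\|P\|_{\op}$: the two halves of the slack $(1-\rho_D)\kappa_P r_{\max}$ respectively absorb the gradient-noise term $\gamma\|\delta_{g_t}\| \leq \tfrac{4}{L}\|\delta_{g_t}\|$ and the parameter-drift terms indexed by $|\delta_{\gamma_t}|$ and $|\delta_{\beta_t}|$. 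Setting then $u_t \eqdef P^{-1}(\delta_{t+1}, \delta_t)^\top$, the vector recursion becomes $u_t = D u_{t-1} + P^{-1}(\epsilon_t, 0)^\top$, from which $\|u_t\| \leq \rho_D \|u_{t-1}\| + \|P^{-1}\|_{\op}\|\epsilon_t\|$. Initialization from condition~1 gives $\|u_0\|\leq r_{\max}/\|P\|_{\op}$, and the induction closes: if $\|u_{t-1}\| \leq r_{\max}/\|P\|_{\op}$, then $\|u_t\| \leq \rho_D\,r_{\max}/\|P\|_{\op} + (1-\rho_D)r_{\max}/\|P\|_{\op} = r_{\max}/\|P\|_{\op}$, whence $\|(\delta_{t+1},\delta_t)\| \leq r_{\max}$. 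In particular $z_{t+1} \in B(x^\circ_{(t+1)\mod K}, r_{\max}) \subseteq \mathcal V_{(t+1)\mod K}$, so the local-quadratic identity remains valid at the next iteration.

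The main obstacle is purely quantitative bookkeeping: matching the upper bound on $\|\epsilon_t\|$ against the contraction slack $(1-\rho_D)\kappa_P r_{\max}$ requires routing each piece of $\epsilon_t$ to exactly one of the three assumptions, and using the inductive bounds on $\|\delta_t\|,\|\delta_{t-1}\|$ as well as the structural bounds on $\|\nabla\psigbml(x_t^\circ)\|$, $\|x_t^\circ - x_{t-1}^\circ\|$, and $\gamma$. Nothing conceptually new beyond \Cref{thm:stab_init_pert} is required; the proof is the same forward-invariance induction augmented with the forcing term~$\epsilon_t$.
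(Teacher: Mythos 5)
Your proposal is correct and follows essentially the same route as the paper's proof: the same $PDP^{-1}$ reduction with $\rho_D=\|D\|_{\op}<1$ and $\kappa_P=1/(\|P\|_{\op}\|P^{-1}\|_{\op})$, the same perturbed residual recursion $\delta_{t+1}=(1+\beta-\mu\gamma)\delta_t-\beta\delta_{t-1}+\epsilon_t$ with the identical forcing term (the paper merely groups $-(\gamma+\delta_{\gamma_t})\delta_{g_t}$ as $-\gamma_t\delta_{g_t}$), the same structural bounds $\|\nabla\psigbml(x_t^\circ)\|\le 4/\gamma$, $\|x_t^\circ-x_{t-1}^\circ\|\le 2$, $\gamma\le 4/L$, and the same forward-invariance induction on $\|P^{-1}(\delta_{t+1},\delta_t)^\top\|\le r_{\max}/\|P\|_{\op}$ absorbing the perturbation into the slack $1-\rho_D$.
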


The proof extends the one of  \Cref{thm:stab_init_pert} and is  postponed to \Cref{app:robustess}. The  constants $\kappa_P, \rho_D$ are identical to the ones exhibited in the proof of \Cref{thm:stab_init_pert}.  Overall, we conclude that, as soon as $r_{\max} > 0$, i.e.~as soon as $(\gb)$ belongs to the interior $\mathrm{Int}(\Orouml)$ of $\Orouml$, the cycle $\TikCircle$ is attractive and robust to small variations of the initialization, parameters and gradient oracles. These robustness results ensure that the counter-examples we provide cannot be circumvented by adding small perturbations or stochasticity, and therefore strongly reinforce our conclusions.

In the next section, we explore a different direction, which is the question of acceleration of $\eqref{eq:hb}$ if we restrict the class $\Fml$ to functions that have a Lipschitz-continuous Hessian (i.e.,~under higher-order regularity assumptions), or even any higher order regularity.

\section{No acceleration of \texorpdfstring{\eqref{eq:hb}}{(HB)} under higher-order regularity assumptions}
\label{sec:HL}
A natural way to tentatively extend the proof of \eqref{eq:hb}'s acceleration beyond quadratics, consists in assuming \textit{Hessian Lipschitz continuity}~\citep{wang2022provable}. Indeed, for $0<\mu<L$, $K\geq2$ and $(\gb)\in \OKrouml$ the counter-example function $\psigbml$  used in \Cref{sec:noaccel} is quadratic by part, i.e.,~its hessian is constant by part. As its Hessian is not constant everywhere, $\psigbml$ is not even 3 times differentiable, so our counter-example approach seemingly leaves room for improvement under a restricted class of more regular functions. We first focus on the class of Hessian-Lipschitz functions, and will extend the argument to higher-order regularity.

\begin{Def}
    Let $\Ftml$ be the set of functions $f$ in $\Fml$ that are three times differentiable, with $\tau$-Lipschitz Hessian:
    \[\forall z,w , \|\nabla^2f (z)- \nabla^2f (w)\| \le \tau \|z-w\|. \]
\end{Def}
For any $\ml$ and $\tau$, we have $\Qml\subseteq\Ftml\subset\Fml$.
In the rest of the section we consider  fixed $0<\mu<L$, $K\geq2$ and $(\gb)\in \mathrm{Int}(\OKrouml)$. We thus omit the indices on the notation of $\psi$~(see eq.~\ref{eq:def_function_qui_tue}) and simply use:
\[ \psi := \psigbml.\]
In this section, we prove that \eqref{eq:hb} \textit{does not accelerate} on~$\Ftml$.

\renewcommand{\psigbml}{\psi}
\renewcommand{\phigbmleps}{\varphi_{\epsilon}}
\renewcommand{\phigbml}{\varphi}

\begin{Th}\label{thm:noacc_hess_lip}
For $0<\mu<L$, $K\geq2$ and $(\gb)\in \mathrm{Int}(\OKrouml)$,
\begin{enumerate}
    \item \label{item:point1HL} there exists $\tau$ and a function $\phigbml \in \Ftml$ such that $\eqref{eq:hb}_{\gb}(\phigbml)$ cycles over $\TikCircle_K$,
    \item \label{item:point2HL} moreover, for any $\tau>0$, $\eqref{eq:hb}_{\gb}$ has a cycle on $\Ftml$. 
\end{enumerate}
\end{Th}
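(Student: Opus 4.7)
The plan is to build $\varphi = \phigbml$ as a mollification of $\psigbml$, exploiting the local quadraticity of $\psigbml$ near each cycle point to preserve the gradient (and hence the cycle) while gaining $\mathcal{C}^\infty$ smoothness, and then to cover arbitrary $\tau$ by a scaling argument.

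Concretely, fix a smooth, nonnegative, radially symmetric mollifier $\eta \in \mathcal{C}_c^\infty(\R^2)$ with $\mathrm{supp}(\eta) \subset B(0,1)$ and $\int \eta = 1$. For $\epsilon > 0$, set $\eta_\epsilon(y) = \epsilon^{-2}\eta(y/\epsilon)$ and define $\varphi_\epsilon \eqdef \psi * \eta_\epsilon$. I would then proceed in four steps: (i) verify $\varphi_\epsilon \in \Fml$ by writing $\psi - \tfrac{\mu}{2}\|\cdot\|^2$ and $\tfrac{L}{2}\|\cdot\|^2 - \psi$ as convex functions, using that convolution with a nonnegative probability measure preserves convexity, together with the fact that convolving $\tfrac{1}{2}\|\cdot\|^2$ with a symmetric probability measure only adds a constant; (ii) show $\varphi_\epsilon \in \mathcal{C}^\infty$ with Hessian--Lipschitz constant bounded by $\tau_\epsilon \leq L\,\|\nabla \eta_\epsilon\|_{L^1} = c\, L/\epsilon$ for some absolute $c>0$, via $\nabla^3 \varphi_\epsilon = (\nabla^2 \psi) * \nabla \eta_\epsilon$ and $\|\nabla^2\psi\|_{\mathrm{op}} \le L$ a.e.; (iii) since $(\gb) \in \mathrm{Int}(\OKrouml)$ yields $r_{\max} > 0$, choosing $\epsilon \in (0, r_{\max})$ guarantees $B(x_k^\circ, \epsilon) \subset \Vk$, and because $\psi$ is quadratic on $\Vk$ and $\eta_\epsilon$ is symmetric, convolution preserves the gradient there: $\nabla \varphi_\epsilon(x_k^\circ) = \nabla \psi(x_k^\circ) = g_k$; (iv) by \Cref{lem:gt_def}, this identifies $\eqref{eq:hb}_{\gb}(\varphi_\epsilon)$ as cycling on $\TikCircle_K$. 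Setting $\varphi \eqdef \varphi_\epsilon$ and $\tau \eqdef \tau_\epsilon$ then proves item \ref{item:point1HL}.

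For item \ref{item:point2HL}, given any $\tau > 0$, if $\tau \geq \tau_\epsilon$ the construction above already lies in $\Ftml$ (since $\mathcal{F}^{\tau'}_{\mu,L} \subseteq \mathcal{F}^{\tau}_{\mu,L}$ whenever $\tau' \le \tau$). Otherwise, for $\lambda \eqdef \tau_\epsilon / \tau > 1$, define the rescaling $\tilde\varphi(x) \eqdef \lambda^2 \varphi(x/\lambda)$. A direct computation gives $\nabla^2 \tilde\varphi(x) = \nabla^2 \varphi(x/\lambda)$, so $\tilde\varphi \in \Fml$ and its Hessian--Lipschitz constant equals $\tau_\epsilon/\lambda = \tau$. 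The scaling identity $\nabla \tilde\varphi(\lambda x) = \lambda \nabla \varphi(x)$ then shows by substitution into \eqref{eq:hb} that $\eqref{eq:hb}_{\gb}(\tilde\varphi)$ cycles on the rescaled cycle $(\lambda x_k^\circ)_{k\in \rK}$, proving item~\ref{item:point2HL}.

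The technical heart of the argument is steps (i)--(ii): ensuring that mollification preserves the \emph{exact} constants $\mu, L$ (not degraded ones) while producing a useful quantitative bound on $\tau_\epsilon$. The key use of the hypothesis $(\gb)\in\mathrm{Int}(\OKrouml)$ is in step (iii): it is precisely the condition $r_{\max}>0$ that allows choosing the mollification scale $\epsilon$ strictly smaller than the size of the quadratic neighborhoods $\Vk$, which in turn is what preserves the cycle-inducing gradients $g_k$. This is also why the statement of item~\ref{item:point2HL} (and item~\ref{item:point1HL}) is confined to the interior of the cycling region rather than its closure.
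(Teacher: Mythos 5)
Your proposal is correct and follows essentially the same route as the paper: mollify $\psi^K_{\gb,\ml}$ with a compactly supported, symmetric (zero-mean) kernel of radius $\epsilon<r_{\max}$ so that local quadraticity on the neighborhoods $\mathcal V_k$ preserves the cycle-inducing gradients, then rescale $x\mapsto \lambda^2\varphi(x/\lambda)$ to reach any prescribed $\tau$. Your steps (i)--(ii) even supply a quantitative bound $\tau_\epsilon \lesssim L/\epsilon$ that the paper leaves implicit.
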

The second point means that the set of parameters $(\gb)$ for which $\eqref{eq:hb}_{\gb}$ cycles over $\Ftml$ contains the interior of the set of parameters for which $\eqref{eq:hb}_{\gb}$ cycles on a roots-of-unity cycle over $\Fml$:
\begin{equation}\label{eq:cycleTML_vs_cylce_ML_rou}
    \forall \tau>0, \Ocytml \supseteq \mathrm{Int}(\Orouml), 
\end{equation}
and a direct consequence is that the non-acceleration result \Cref{cor:noaccel} extends to $\Ftml$: $\forall \tau>0, \forall 0<\mu<L, $
\begin{alignat*}{3}
& \rho^\star (\Ftml) &&\eqdef 
    \argmin_{(\gb)\in \R \times \R} \rho_{\gb} (\Ftml)  
     \geq \argmin_{(\gb)\in (\Ocytml)^c}  \rho_{\gb} (\Ftml) && \quad \text{ as } \Omega_{\text{cv}}(\Ftml) \subseteq (\Ocytml)^c \\
    & &&  \geq \argmin_{(\gb)\in (\Ocytml)^c}  \rho_{\gb} (\Qml)&&  \quad \text{ as } \Qml \subseteq \Ftml \\
    & &&\geq \argmin_{(\gb)\in (\Orouml)^c} \rho_{\gb} (\Qml) &&\quad  \text{ by \Cref{eq:cycleTML_vs_cylce_ML_rou} }\\ 
    & &&\geq  \frac{1-C\kappa}{1+C\kappa} && \quad \text{ by \Cref{thm:incompatibility}},
\end{alignat*}
for a constant $C$ (in fact, any $C\geq 50/3$). The rate $\rho^\star (\Ftml)$ is thus lower bounded, independently of $\tau$, by a non~accelerated rate, which proves that Hessian regularity does not help to obtain acceleration.

\paragraph{Interpretation.} This result is surprising as a discontinuity appears in $\tau=0$. Indeed, for $\tau=0$, $\Ftml=\Qml$, and acceleration is obtained. The mapping $\tau\mapsto \rho^\star (\Ftml)$ is thus not continuous in $\tau =0$. A significant nuance to help grasp this phenomenon is that we do \textit{not} prove that for all~$\tau>0$,~$\Oroutml \supset \Orouml$, that is that the existence of a roots-of-unity  cycle on~$\Fml$ implies  the existence of a roots-of-unity cycle on~$\Ftml$ for any $\tau>0$. Such a property is in fact not true. On the other hand,~\Cref{item:point1HL} in \Cref{thm:noacc_hess_lip} means that $\cup_{\tau>0 } \Oroutml \supset \Orouml$. To obtain this result, we show that \textit{there exists} a $\tau$ such that we can ``regularize''  the counter example $\psigbml$ to obtain a function in~$\Ftml$ that cycles over the roots-of-unity cycle. Then~\Cref{item:point2HL} in~\Cref{thm:noacc_hess_lip} establishes that there exist a cycle \textit{for all} $\tau$. But this cycle is not necessarily~$\TikCircle_K$: on the contrary, the cycle corresponds to a dilated version of~$\TikCircle_K$, with  a radius that diverges as~$\tau\to 0$. Such a cycle thus does not have a limit as~$\tau \to 0$, which explains the discontinuity of~$\tau\mapsto \rho^\star (\Ftml)$.

In order to prove \Cref{thm:noacc_hess_lip}, we establish the following two lemmas, that respectively result in \Cref{item:point1HL,item:point2HL} in \Cref{thm:noacc_hess_lip}, and are given in the next two sections.

\subsection{Proof of \texorpdfstring{\Cref{item:point1HL}}{Item 1} of \texorpdfstring{\Cref{thm:noacc_hess_lip}}{Theorem 5.2}}

First, in order to obtain a more regular function from $\psigbml$, we use convolutions. We consider a infinitely differentiable convolution kernel $u_\epsilon$ with bounded support $B(0,\epsilon)$, such that $u_\epsilon$ is the probability density function of a zero centered random variable. For any $\epsilon\geq 0$, let 
\begin{equation*}
\phigbmleps\eqdef u_\epsilon\ast \psigbml.    
\end{equation*}
We obtain the following lemma.
\begin{Lemma}\label{lem:psi_convolee}
    Let $0<\mu<L$, $K\geq2$ and $(\gb)\in \mathrm{Int}(\OKrouml)$. Then we have: 
    \begin{enumerate}
        \item  $\phigbmleps$ is infinitely differentiable and there exists $\tau$  such that $\phigbmleps \in \Ftml$.
        \item For any $\epsilon  \le r_{\max}$  (with $ r_{\max}$ defined in \Cref{sec:robustness}), the gradients of $\phigbmleps $ and $\psigbml$ coincide on $\TikCircle_K$,  that is for all $k\in \rK$, \[\nabla \phigbmleps (x^\circ_k)= \nabla \psigbml (x^\circ_k).\]
    \end{enumerate}
\end{Lemma}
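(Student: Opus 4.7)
\textbf{Proof proposal for \Cref{lem:psi_convolee}.}

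The plan is to verify both claims by entirely standard mollification arguments, leveraging the piecewise-quadratic structure of $\psi$ together with the two defining properties of the kernel: compact support in $B(0,\epsilon)$ and zero mean.

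For the first claim, the function $\phigbmleps = u_\epsilon \ast \psi$ is $C^\infty$ because for every multi-index $\alpha$, the identity $\partial^\alpha \phigbmleps = (\partial^\alpha u_\epsilon) \ast \psi$ puts all the differentiation on the smooth compactly supported kernel, and the integral is absolutely convergent since $\psi$ grows at most quadratically at infinity. Membership in $\Fml$ follows from the representation $\phigbmleps(x) = \mathbb{E}_{Y\sim u_\epsilon}[\psi(x-Y)]$: since $\psi\in\Fml$ implies $\mu I_2 \preccurlyeq \nabla^2 \psi \preccurlyeq L I_2$ almost everywhere, the formula
\[\nabla^2 \phigbmleps(x) = \int u_\epsilon(y)\,\nabla^2 \psi(x-y)\,dy\]
expresses $\nabla^2 \phigbmleps(x)$ as a convex combination of symmetric matrices in $[\mu I_2, L I_2]$, hence it lies in the same interval. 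For the Hessian-Lipschitz property, I would change variables $z=x-y$ and then differentiate the formula $\nabla^2 \phigbmleps(x) = \int u_\epsilon(x-z)\,\nabla^2\psi(z)\,dz$ in $x$, moving the derivative onto the smooth kernel to obtain
\[\nabla^3 \phigbmleps(x) = \int \nabla u_\epsilon(y)\otimes \nabla^2 \psi(x-y)\,dy,\]
whose operator norm is uniformly bounded by $\tau \eqdef L\,\|\nabla u_\epsilon\|_{L^1}$ independently of $x$. This yields $\phigbmleps \in \Ftml$ for this $\tau$.

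For the second claim, I would exploit locality together with \eqref{eq:grad}: since $u_\epsilon$ is supported in $B(0,\epsilon)$ with $\epsilon \le r_{\max}$, the integrand in $\nabla \phigbmleps(x^\circ_k) = \int u_\epsilon(y)\,\nabla \psi(x^\circ_k - y)\,dy$ only samples $\nabla\psi$ on $B(x^\circ_k,\epsilon)\subseteq B(x^\circ_k, r_{\max})\subseteq \Vk$, where $\psi$ is exactly quadratic with Hessian $\mu I_2$. The local formula \eqref{eq:grad} gives $\nabla \psi(x^\circ_k - y) = \nabla \psi(x^\circ_k) - \mu y$, so
\[\nabla \phigbmleps(x^\circ_k) = \Big(\int u_\epsilon(y)\,dy\Big)\nabla \psi(x^\circ_k) - \mu \int y\,u_\epsilon(y)\,dy = \nabla \psi(x^\circ_k),\]
using that $u_\epsilon$ integrates to one and has zero mean by hypothesis.

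The only mildly delicate step is the identity for $\nabla^3 \phigbmleps$, since $\nabla^2 \psi$ exists only almost everywhere ($\psi$ is piecewise $C^2$ but globally only $C^{1,1}$, owing to the non-smoothness of the squared distance to $\conv\{M x^\circ_t\}$ on the boundary of that set). I expect this to be the main point to write carefully, but it is standard: either via a change of variables moving all differentiation onto $u_\epsilon$ before converting back, or by approximating $\psi$ by smooth functions with Hessians a.e.\ pointwise bounded by $L$. Everything else reduces to locality of the support of $u_\epsilon$ and its zero-mean property.
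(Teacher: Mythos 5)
Your proposal is correct and follows essentially the same route as the paper: point 1 via standard mollification properties (the paper defers these to an auxiliary lemma in its appendix, which you in fact flesh out in more detail, including the only delicate issue that $\nabla^2\psi$ exists merely almost everywhere), and point 2 via locality of the support of $u_\epsilon$, the local formula \eqref{eq:grad}, and the zero-mean property of the kernel. No gaps.
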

 
\begin{proof}
The first point results from properties of convolution against probability density functions, that are recalled in  \Cref{lem:prop_varphi} in \Cref{app:proofLem1_HL}. The second point  is a consequence of the fact that $\psigbml$ is locally quadratic (thus its gradient $\nabla \psigbml$ is locally linear). Since $\psigbml$ is differentiable, we have for all $z\in \mathbb R^2$
\begin{align*}
    \nabla \phigbmleps (z) =    \nabla \left(\psigbml \ast u_\varepsilon \right)(z) 
        & = \left((\nabla \psigbml) \ast u_\varepsilon\right)(z)  = \int_{y\in B(0,\epsilon)} \nabla \psigbml (z-y) ~ u_\varepsilon(y) ~ \mathrm{dy},
\end{align*}
where  the last step uses the fact that  $u_\varepsilon$ has support $B(0,\epsilon)$. Then, for $k\in \rK$ and $z=x^\circ_k$ a point of $\TikCircle_K$, we have that for any $y\in B(0,\epsilon)$, if $\epsilon  \le r_{\max}$,   $x^\circ_k-y \in \Vk$ and by \eqref{eq:grad}, $\nabla \psigbml(x^\circ_k-y ) = \nabla \psigbml(x_k^\circ) - \mu y$. Thus 
\begin{align*}
    \nabla \phigbmleps (x_k^\circ)  = \nabla \psigbml(x_k^\circ)  - \mu \int_{y\in B(0,\epsilon)} y  u_\varepsilon(y)   \mathrm{dy} = \nabla \psigbml(x_k^\circ).
    \end{align*}
\end{proof}

\paragraph{Proof of~\Cref{thm:noacc_hess_lip} (\Cref{item:point1HL}).}\Cref{item:point1HL} of \Cref{thm:noacc_hess_lip} is a direct consequence of \Cref{lem:psi_convolee}: we use $\phigbml \eqdef \phigbmleps$ for any $\epsilon \le r_{\max}$, as $r_{\max}>0$ for $(\gb)\in \mathrm{Int}(\OKrouml)$.\hfill $\blacksquare$

\subsection{Proof of \texorpdfstring{\Cref{item:point2HL}}{Item 2} of \texorpdfstring{\Cref{thm:noacc_hess_lip}}{Theorem 5.2}}

As for~\Cref{item:point2HL} of~\Cref{thm:noacc_hess_lip}, it can be obtained from a scaling argument provided by the following lemma.
\begin{Lemma}\label{lem:psi_etendue}
      Let $0<\mu<L$, $K\geq2$ and $(\gb)\in \mathrm{Int}(\OKrouml)$. Let $\phigbml \in \Ftml$ be a function that cycles over $\TikCircle_K$ obtained by \Cref{thm:noacc_hess_lip}. Define $\phigbmleps^{(\lambda)}$ as
    \begin{equation*}
        \phigbmleps^{(\lambda)}(x) \eqdef \lambda^2 \phigbmleps\left(\frac{1}{\lambda}x\right).
    \end{equation*}
    Then:
    \begin{enumerate}
        \item $\phigbmleps^{(\lambda)}\in C^{\infty}$ and $\phigbmleps^{(\lambda)}\in \mathcal F_{\mu,L}^{\tau/\lambda}$.
        \item  $\eqref{eq:hb}_{\gb}(\phigbmleps^{(\lambda)})$ cycles on the scaled roots of unity cycle $\lambda \times  \TikCircle_K$.
    \end{enumerate}
\end{Lemma}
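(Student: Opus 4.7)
The plan is to verify both items by a direct scaling computation; there is no real obstacle, since the definition $\phigbmleps^{(\lambda)}(x) = \lambda^2 \phigbmleps(x/\lambda)$ is engineered precisely so that derivatives transform in a controlled way. By the chain rule,
\begin{equation*}
\nabla \phigbmleps^{(\lambda)}(x) = \lambda\, \nabla \phigbmleps(x/\lambda), \qquad \nabla^2 \phigbmleps^{(\lambda)}(x) = \nabla^2 \phigbmleps(x/\lambda).
\end{equation*}
I would record these two identities first, then handle each item in turn.

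For item 1, smoothness of $\phigbmleps^{(\lambda)}$ follows immediately from that of $\phigbmleps$ (composition with the linear map $x \mapsto x/\lambda$ preserves $C^\infty$). Because $\nabla^2 \phigbmleps^{(\lambda)}(x)$ is literally $\nabla^2 \phigbmleps$ evaluated at a point of $\mathbb{R}^2$, and $\phigbmleps \in \Ftml$ satisfies $\mu I \preccurlyeq \nabla^2 \phigbmleps \preccurlyeq L I$ pointwise, the same pointwise bound holds for $\nabla^2 \phigbmleps^{(\lambda)}$, giving $L$-smoothness and $\mu$-strong convexity. For the Hessian Lipschitz constant, using the chain-rule identity above,
\begin{equation*}
\|\nabla^2 \phigbmleps^{(\lambda)}(x) - \nabla^2 \phigbmleps^{(\lambda)}(y)\| = \|\nabla^2 \phigbmleps(x/\lambda) - \nabla^2 \phigbmleps(y/\lambda)\| \leq \tau \, \|x/\lambda - y/\lambda\| = \tfrac{\tau}{\lambda}\|x-y\|,
\end{equation*}
so $\phigbmleps^{(\lambda)} \in \mathcal{F}_{\mu,L}^{\tau/\lambda}$ as claimed.

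For item 2, I would plug the candidate cycle $z_t := \lambda\, x_{t \bmod K}^\circ$ into the $\eqref{eq:hb}_{\gb}$ recursion applied to $\phigbmleps^{(\lambda)}$ and verify it exactly. Using the gradient identity, $\nabla \phigbmleps^{(\lambda)}(z_t) = \lambda\, \nabla \phigbml(x_t^\circ)$, whence
\begin{equation*}
z_{t+1} - z_t + \gamma\, \nabla \phigbmleps^{(\lambda)}(z_t) - \beta(z_t - z_{t-1}) = \lambda \bigl[x_{t+1}^\circ - x_t^\circ + \gamma \nabla \phigbml(x_t^\circ) - \beta(x_t^\circ - x_{t-1}^\circ)\bigr] = 0,
\end{equation*}
where the last equality uses \Cref{rem:equiv_pov_cycle} together with the hypothesis that $\eqref{eq:hb}_{\gb}(\phigbml)$ cycles on $\TikCircle_K$. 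This shows $\eqref{eq:hb}_{\gb}(\phigbmleps^{(\lambda)})$ cycles on $\lambda\, \TikCircle_K$. Finally, to conclude \Cref{item:point2HL} of \Cref{thm:noacc_hess_lip}, one chooses, for an arbitrary target Hessian-Lipschitz level $\tau'>0$, a scaling $\lambda = \tau/\tau'$ where $\tau$ is the Hessian-Lipschitz constant of $\phigbml$ from \Cref{item:point1HL}; then $\phigbmleps^{(\lambda)} \in \mathcal{F}_{\mu,L}^{\tau'}$ and exhibits a (dilated) roots-of-unity cycle, as desired.
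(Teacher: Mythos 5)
Your proof is correct and follows essentially the same route as the paper's: record the scaling identities for the gradient and Hessian, deduce membership in $C^\infty$ and $\mathcal F_{\mu,L}^{\tau/\lambda}$, and verify the dilated cycle by multiplying the $\eqref{eq:hb}$ recursion for $\phigbml$ on $\TikCircle_K$ by $\lambda$. Your direct Lipschitz estimate on the Hessian is a slightly more explicit version of the paper's observation that $\nabla^r$ scales by $\lambda^{2-r}$, and your closing remark on choosing $\lambda$ matches how the paper deduces \Cref{item:point2HL} of \Cref{thm:noacc_hess_lip}.
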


The function $\phigbmleps^{(\lambda)}$ is chosen such that the gradients scale proportionally with $\lambda$, (as the cycle $\lambda \times  \TikCircle_K$), the Hessian is un-scaled, and the third-order derivative scales with $1/\lambda$.
\begin{proof}
   First, $\phigbmleps\in C^{\infty}$, $\phigbmleps^{(\lambda)}\in C^{\infty}$. Second, $\phigbmleps^{(\lambda)} \in \Fml$ as $\nabla^2 \phigbmleps^{(\lambda)}(x) = \nabla^2 \phigbmleps\left(\frac{1}{\lambda}x\right)$ and $\phigbmleps\in \Fml$. Third, for any $r\geq 3$, $\nabla^r \phigbmleps^{(\lambda)}(x) = \frac{1}{\lambda^{r-2}}\nabla^r \phigbmleps\left(\frac{1}{\lambda}x\right)$.
   
   Furthermore, $\nabla \phigbmleps^{(\lambda)}(x) = \lambda \nabla \phigbmleps\left(\frac{1}{\lambda}x\right)$, hence $\nabla \phigbmleps^{(\lambda)}(\lambda x) = \lambda \nabla \phigbmleps\left(x\right)$. 
   Therefore, \HBgb$(\phigbmleps^{(\lambda)})$ cycles on $\lambda \TikCircle_K$. 
   Indeed, \HBgb$(\phigbmleps)$ cycles on $\TikCircle_K$, 
   i.e.~$\forall t\in\range{0}{K-1}, x_{t+1}^\circ = x_{t}^\circ - \gamma \nabla \phigbmleps (x_{t}^\circ) + \beta (x_{t}^\circ - x_{t-1}^\circ)$. 
   Multiplying by $\lambda$, we get $\forall t\in\range{0}{K-1}, \lambda x_{t+1}^\circ = \lambda x_{t}^\circ - \gamma \lambda \nabla \phigbmleps (x_{t}^\circ) + \beta (\lambda x_{t}^\circ - \lambda x_{t-1}^\circ) = \lambda x_{t}^\circ - \gamma \nabla \phigbmleps^{(\lambda)} (\lambda x_{t}^\circ) + \beta (\lambda x_{t}^\circ - \lambda x_{t-1}^\circ)$, that is \HBgb$(\phigbmleps^{(\lambda)})$ cycles on $\lambda \TikCircle_K$.
\end{proof}

\paragraph{Proof of~\Cref{thm:noacc_hess_lip} (\Cref{item:point2HL}).} It is a direct consequence of \Cref{lem:psi_etendue}, by choosing $\lambda $ large enough, it shows that for all $\tau> 0$, $\eqref{eq:hb}_{\gb}$ has a cycle on $\Ftml$. \hfill$\blacksquare$

\subsection{Beyond third-order regularity}

As a conclusion of this section, we showed that Hessian-Lipschitz continuity does not help to obtain acceleration of~\eqref{eq:hb}. It turns out that the arguments work beyond third-order regularity, and that no Lipschitz argument on higher-order derivative can help improving the situation.

In short, the arguments used for controlling the third-order derivative in this section can be extended to impose any bound on higher-order derivatives of the function. In particular, the scaling argument made in \Cref{lem:psi_etendue} enables to arbitrarily reduce all derivatives of order higher than  2 of the $\mathcal C^{\infty}$ function obtained in \Cref{lem:psi_convolee}, whose derivatives are all uniformly bounded. This extension is thus for free: we choose to focus on the third-order derivative in this section for simplicity of exposition, but the generalization follows naturally.

In the last section, we provide an in-depth analysis of the structure of the cycles, beyond dimension 2, that supports the seemingly arbitrary choice of roots-of-unity cycles made in \Cref{sec:noaccel}.

\section{General study of cycles for stationary first-order methods}
\label{sec:cycles}
In this section, we investigate the set $\Ocyml$ of $(\gb)$ for which $\eqref{eq:hb}_{\gb}$ has  a cycle (in the sense of \Cref{def:cycle}), without specifically focusing on roots-of-unity cycles $\TikCircle_K$. We explain why this particular cycle shape, that led to the main result in  \Cref{sec:noaccel}, arises as a natural candidate when studying cycles for stationary first-order methods. Informally, we show that: \textit{if $\eqref{eq:hb}_{\gb}$ has a cycle, then $\eqref{eq:hb}_{\gb}$ has a symmetric cycle}.

\begin{Rem}
    The arguments made in this section directly apply to any \emph{stationary} first-order method, that is, to any method whose dynamic does not change along the iterations \citep[see~Definition 2.1,][]{goujaud2023counter}. For simplicity and coherence, we only instantiate the construction on~\eqref{eq:hb}.
\end{Rem}

Our approach is decomposed into three steps. First, in \Cref{subsec:sdplift}, we show that the existence of a cycle can be cast as a SDP: to that end, we follow the classical  \textit{performance estimation} approach~\citep{drori2014performance,taylor2017smooth}.
Second, in \Cref{subsec:cyc_sym}, we leverage the convexity of the problem in its SDP form and  the structure of the cycle-existence problem to obtain a solution that admits particular symmetries: first on the space of Gram matrices in $\mathcal S_K^+$ in \Cref{subsubsec:circulant}, then to  decompose the cycle onto low-dimensional subspaces in \Cref{subsubsec:cycles_decomp}, and ultimately, to rewrite the problem of finding a cycle as a linear problem in \Cref{subsubsec:LP}. Finally, in \Cref{sec:num}, we establish numerically that, for \eqref{eq:hb}, the set of parameters for which there exists a cycle in dimension 2 is the same as the one for which there exists any cycle.

\subsection{Casting the existence of a cycle as a convex feasibility problem}
\label{subsec:sdplift}
Let $(\gb) \in \Oqml$. In this section we approach cycles from an optimization point of view, similar in spirit with~\citet{goujaud2023counter}, see~\eqref{eq:cycle_search_problem}. However, we here choose to directly write the existence of a cycle with period~$K$ as the following feasibility problem:
\begin{equation}
~ \exists (x_0, \dots, x_{K-1})\neq(\bar x, \dots, \bar x),  ~ \exists f\in\mathcal{F}_{\mu, L} | ~ \HB_{\gb}(f) \text{ cycles on } (x_{t})_{0\leq t \leq K-1},  \tag{$\mathcal{P}'_K$} \label{eq:cycle_feasibility}
\end{equation}
where $\bar x \eqdef \sum_{i=0}^{K-1} x_i /K$ is used in place of $x_0$ to avoid constant cycles,  while preserving symmetry. By \Cref{def:cycle} and \Cref{def:cycle_region}, $(\gb)\in \Ocyml$ if and only if there exists $K\geq 2$ such that $\eqref{eq:cycle_feasibility}$ holds.

We now fix $(\gb) \in \Oqml$, and $K \geq 2$ in the rest of the section.
 We denote $\mathcal S_K^+(\mathbb R)$ the cone of symmetric positive semi-definite (p.s.d.) matrices. We prove the following result.
\begin{Th}[Cycle as an SDP]\label{thm:cycle_as_SDP}
The feasibility problem \eqref{eq:cycle_feasibility} is equivalent to the following feasibility problem:
\begin{equation}
   \exists G \in \mathcal{S}_K^+(\mathbb R) , G \neq \mathbf{0}_K,  G\mathbf{1}_K = \mathbf{0}_K ,~ \exists F\in \mathbb R^K ~|~ \forall i, j  \in \llbracket0, K-1\rrbracket, \left<F, e_i - e_j\right> \geq \left<G, M_{i, j}\right> \label{eq:feasibility_problem_lifted} \tag{$\mathcal{P}_{K-\mathrm{SDP}}$}
\end{equation}
where $(e_i)$ corresponds to the $(i+1)^{\text{th}}$ canonical vector in $\mathbb R^K$ and the matrices $M_{i, j}$ are fixed.
\end{Th}
In words, the feasibility problem is equivalent  to the existence of a p.s.d.~matrix $G$ and  a vector $F$ satisfying a list of linear inequalities, which is usually referred to as a semi-definite program (SDP) \citep{vandenberghe1996semidefinite}.
The proof, which is given below,  is decomposed into three steps. In short, first, we  give a necessary and sufficient condition on the gradients of $f$ for $\eqref{eq:hb}_{\gb}(f)$ to cycle over a $(x_0, \dots x_{K-1})$. Second, we characterize by a list of inequalities the \textit{existence} of a function $f$ in the class $\Fml$ that admits those specific gradients. Those inequalities are referred to as \textit{interpolation conditions}. Third, we rewrite the feasibility problem in terms of the p.s.d.~Gram matrix of the translated iterates $(x_0- \bar x, \dots x_{K-1}-\bar x)$ and the vector of function values $(f(x_0), \dots f(x_{K-1}))$: all constraints are then linear, and the problem writes as an~SDP.

\begin{Rem}[Link with performance estimation problems (PEPs)]
This approach is essentially the one systematically used in \textit{performance estimation} (see, e.g.~\citep{drori2014performance,taylor2017exact,taylor2017smooth} for details), to cast the derivation of worst-case guarantees of first-order optimization methods as SDPs. As proposed by~\citep{goujaud2023counter}, formulating and solving~\eqref{eq:cycle_search_problem} through SDP formulations can be done numerically with appropriate PEP software~\citep{goujaud2022pepit,taylor2017performance} and SDP software~\citep{mosek}.
\end{Rem}

\begin{proof}
We fix $(\gb) \in \Oqml$, and $K \geq 2$ in the proof.
We first observe that for any cycle under consideration $  (x_0, \dots, x_{K-1})$, and any function $ f\in\mathcal{F}_{\mu, L} $,  $ ~ \HB_{\gb}(f) $ cycles on $ (x_{t})_{0\leq t \leq K-1}$ if an only if,  for any $t\in \llbracket0; K-1\rrbracket$
\begin{equation}
    \nabla f(x_t) = \frac{(1 + \beta)x_t - x_{t+1} - \beta x_{t-1}}{\gamma}, \label{eq:hb_inverse}
\end{equation}
where the sequence $(x_t)_{t}$ is extended to $t\in \mathbb Z$  in this proof by $K$-periodicity as $x_t\eqdef  x_{t\mod{K}}$ (in fact, \eqref{eq:hb_inverse} only requires to introduce $x_{-1}$ and $x_{K}$). This directly follows from inverting \eqref{eq:hb} recursion to obtain the unique value of the gradients that result in a particular cycle.\footnote{This derivation generalizes the one of \eqref{eq:hb_inv} in the proof of \Cref{lem:gt_def} to any cycle.}

Therefore, for $(x_0, \dots, x_{K-1})\neq (\bar x, \dots, \bar x)$ there exists a function $f\in\Fml$ such that $\eqref{eq:hb}_{\gb}(f)$ cycles over $(x_0, \dots, x_{K-1})$ if and only if  there exists a function $f\in\Fml$ verifying \eqref{eq:hb_inverse}. This problem is known as \emph{interpolation problem} of the class $\Fml$ and a necessary and sufficient condition is given by~\citep[Theorem 4]{taylor2017smooth}, which is recalled below.

\begin{Lemma}\label{lem:interpol}
[$\Fml$-interpolation, see~\citep{taylor2017smooth}]
    Let $\mathcal{I}$ a set of indices and $(x_i, g_i, f_i)_{i\in\mathcal{I}}$ a family of triplets. There exists a function $f\in\Fml$ verifying $\forall i \in \mathcal{I}, ~ f(x_i) = f_i, $ and $ \nabla f(x_i) = g_i$,
    if and only
    \begin{equation*}
        \forall i, j \in \mathcal{I}, ~ f_i - f_j \geq  \left<g_j, x_i - x_j\right> + \frac{1}{2L}\|g_i - g_j\|^2 + \frac{\mu}{2(1 - \kappa)}\|x_i - \tfrac{1}{L}g_i - x_j + \tfrac{1}{L}g_j\|^2.
    \end{equation*}
\end{Lemma}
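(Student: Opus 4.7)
My plan is to prove the two directions separately, using the reduction $f \mapsto f - \tfrac{\mu}{2}\|\cdot\|^2$ which bijectively sends $\Fml$ onto $\mathcal{F}_{0, L-\mu}$ (convex and $(L-\mu)$-smooth functions). Concretely, set $\tilde g_i = g_i - \mu x_i$ and $\tilde f_i = f_i - \tfrac{\mu}{2}\|x_i\|^2$. A direct algebraic expansion shows that the stated interpolation inequality for $(x_i, g_i, f_i)$ is equivalent to the plain convex-smooth interpolation inequality
\[
\tilde f_i - \tilde f_j \;\geq\; \langle \tilde g_j,\, x_i - x_j\rangle + \frac{1}{2(L-\mu)}\|\tilde g_i - \tilde g_j\|^2
\]
for the shifted triplets $(x_i, \tilde g_i, \tilde f_i)$; in particular, the $\frac{\mu}{2(1-\kappa)}$-term in the original inequality is exactly what is produced by expanding $\|x_i - \tfrac{1}{L}g_i - x_j + \tfrac{1}{L}g_j\|^2$ and absorbing the strongly convex terms. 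So the whole statement reduces to the analogous interpolation theorem for $\mathcal{F}_{0, L-\mu}$, which I will treat as the core lemma.

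For \emph{necessity} in the reduced convex-smooth case, I would invoke the classical co-coercivity of gradients: if $\tilde f \in \mathcal{F}_{0,M}$ with $M = L-\mu$, then for all $x,y$, $\tilde f(x) - \tilde f(y) \geq \langle \nabla \tilde f(y), x-y\rangle + \tfrac{1}{2M}\|\nabla \tilde f(x) - \nabla \tilde f(y)\|^2$. Specializing to $x=x_i$, $y=x_j$ delivers the inequality; undoing the quadratic shift reintroduces the $\tfrac{\mu}{2(1-\kappa)}$ term after collecting.

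For \emph{sufficiency}, I need to exhibit a function in $\mathcal{F}_{0,L-\mu}$ hitting the prescribed values $(\tilde g_i, \tilde f_i)$ at each $x_i$. The natural construction is dual: build a $\tfrac{1}{L-\mu}$-strongly convex finite function $\phi$ with $\phi(\tilde g_i) = \langle x_i,\tilde g_i\rangle - \tilde f_i$ and $\nabla \phi(\tilde g_i) = x_i$, then take $\tilde f = \phi^{*}$. A convenient explicit form is
\[
\tilde f(x) \;=\; \sup_{y} \Bigl\{\, \langle x, y\rangle - \max_{i}\{\langle y, x_i\rangle - \tilde f_i\} - \tfrac{1}{2(L-\mu)}\|y\|^2 \Bigr\},
\]
which is automatically convex and $(L-\mu)$-smooth (as an inf-convolution of the convex hull of the affine pieces with $\tfrac{L-\mu}{2}\|\cdot\|^2$, a Moreau-envelope-style object). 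Finally, setting $f = \tilde f + \tfrac{\mu}{2}\|\cdot\|^2$ lifts the interpolant back to $\Fml$ with the prescribed $f(x_i)=f_i$ and $\nabla f(x_i) = g_i$.

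The main obstacle is the last verification in sufficiency: showing that the dual-side construction does not ``smooth away'' the prescribed data, i.e., that the interpolation is truly achieved at each $x_i$ with the correct gradient. This is where the precise quantitative form of the hypothesized inequality matters — one must check that the pairwise inequalities imply $\tilde f(x_i) = \tilde f_i$ and $\nabla \tilde f(x_i) = \tilde g_i$, rather than merely $\tilde f(x_i) \leq \tilde f_i$ with a compatible subgradient. Any weaker bound would fail to deliver exact interpolation. Everything else (co-coercivity, the quadratic shift, and adding back $\tfrac{\mu}{2}\|\cdot\|^2$ at the end) is routine.
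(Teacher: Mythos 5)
The paper does not actually prove this lemma: it is imported verbatim from \citet{taylor2017smooth} (their Theorem~4), so the relevant benchmark is that reference's argument. Your overall strategy --- subtract $\tfrac{\mu}{2}\|\cdot\|^2$ to reduce to interpolation in $\mathcal{F}_{0,L-\mu}$, prove necessity via the functional co-coercivity inequality, prove sufficiency via a dual piecewise-affine-plus-quadratic (Moreau-envelope) construction --- is exactly the strategy of that reference, and also the one the present paper re-deploys in its Appendix~B.1. The reduction itself is correct: expanding $\tfrac{1}{2L}\|g_i-g_j\|^2+\tfrac{\mu}{2(1-\kappa)}\|x_i-x_j-\tfrac1L(g_i-g_j)\|^2$ does reproduce $\tilde f_i-\tilde f_j\ge\langle\tilde g_j,x_i-x_j\rangle+\tfrac{1}{2(L-\mu)}\|\tilde g_i-\tilde g_j\|^2$, and the necessity direction is standard.

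The genuine gap is in the sufficiency direction, and it is not only the unperformed verification you flag at the end --- the explicit construction you propose fails that verification. For $\tilde f=\phi^*$ to satisfy $\tilde f(x_i)=\tilde f_i$ and $\nabla\tilde f(x_i)=\tilde g_i$, Fenchel--Young equality forces $\phi(\tilde g_i)=\langle x_i,\tilde g_i\rangle-\tilde f_i$ and $x_i\in\partial\phi(\tilde g_i)$. With your choice $\phi(y)=\max_j\{\langle y,x_j\rangle-\tilde f_j\}+\tfrac{1}{2(L-\mu)}\|y\|^2$, the term $j=i$ alone already gives $\phi(\tilde g_i)\ge\langle x_i,\tilde g_i\rangle-\tilde f_i+\tfrac{1}{2(L-\mu)}\|\tilde g_i\|^2$, strictly too large whenever $\tilde g_i\neq0$, so this $\phi^*$ does not interpolate the data. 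The correct dual object must interpolate the \emph{conjugate} data of the shifted function: writing $M=L-\mu$, $s_j=x_j-\tfrac1M\tilde g_j$ and $h_j=\langle\tilde g_j,x_j\rangle-\tilde f_j-\tfrac{1}{2M}\|\tilde g_j\|^2$, one should take $h(y)=\max_j\{h_j+\langle s_j,y-\tilde g_j\rangle\}$ and $\tilde f=\bigl(h+\tfrac{1}{2M}\|\cdot\|^2\bigr)^*$. The pairwise inequalities you assume are precisely equivalent to $h_i\ge h_j+\langle s_j,\tilde g_i-\tilde g_j\rangle$ for all $i,j$, which is what guarantees that the max is attained at $j=i$ when $y=\tilde g_i$, hence $h(\tilde g_i)=h_i$ and $s_i\in\partial h(\tilde g_i)$, hence exact attainment of the prescribed values and gradients after conjugation. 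Without this correction of the affine pieces (slopes $s_j$ rather than $x_j$, intercepts $h_j$ rather than $-\tilde f_j$) and without carrying out this attainment check, the sufficiency direction is not established.
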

Consequently, using \eqref{eq:hb_inverse} in \Cref{lem:interpol}, we obtain that \eqref{eq:cycle_feasibility} is equivalent to
\begin{align}
\exists (x_0, \dots, x_{K-1}) & \neq (\bar x, \dots, \bar x), ~ \exists (f_0, \dots, f_{K-1}) ~|~ \forall i, j \in \llbracket 0, K-1\rrbracket, \nonumber \\
    f_i - f_j \geq  &   \left<\frac{(1 + \beta)x_{j} - x_{j+1\; } - \beta x_{j-1\; }}{\gamma}, x_i - x_j\right> \label{eq:ineg_ij} \tag{$\mathrm{IC}_{i,j}$} \\
    & + \frac{1}{2L}\left\|\frac{(1 + \beta)x_{i} - x_{i+1\; } - \beta x_{i-1\; }}{\gamma} - \frac{(1 + \beta)x_{j} - x_{j+1\; } - \beta x_{j-1\; }}{\gamma}\right\|^2 \nonumber \\
    & + \frac{\mu}{2(1 - \kappa)}\left\|\frac{(1 + \beta - L\gamma)x_{i} - x_{i+1\; } - \beta x_{i-1\; }}{L\gamma} - \frac{(1 + \beta - L\gamma)x_{j} - x_{j+1\; } - \beta x_{j-1\; }}{L\gamma}\right\|^2. \nonumber
\end{align}
Under this form, this feasibility problem is not convex due to quadratic terms in $(x_t)_{0\leq t\leq K-1}$. However, all terms involving $(x_t)_{0\leq t\leq K-1}$ are exactly quadratic. We therefore introduce the Gram matrix $G$ of vectors $(x_t - \bar x)_{0\leq t\leq K-1}$, and the vector~$F = (f_0, \dots, f_{K-1})^T$:
\begin{equation*}
    G = \begin{pmatrix}
    x_0 - \bar x, \dots, x_{K-1} - \bar x
    \end{pmatrix}^T \begin{pmatrix}
    x_0 - \bar x, \dots, x_{K-1} - \bar x
    \end{pmatrix} = \big(\left<x_i - \bar x, x_j - \bar x\right>\big)_{0 \leq i, j \leq K-1}.
\end{equation*}
The matrix $G$ is symmetric positive semi-definite. Moreover, for any $(i,j)$, (1) the right hand side of \eqref{eq:ineg_ij} can  be written as~$\left<G, M_{i, j}\right>$, that is, as a linear combination of the coefficients of~$G$, for a matrix~$M_{ij}$ obtained from \eqref{eq:ineg_ij}; and (2)  the  left hand side of  \eqref{eq:ineg_ij} can  be written as~$\left<F, e_{i}- e_{ j}\right>$, where $e_i$ denotes the $(i+1)^{\text{th}}$ vector of the canonical basis. This method, referred to as \emph{SDP lifting} thus linearizes the above problem as
\begin{equation*}
    \exists G \in \mathcal{S}_K^+(\mathbb R) , G \neq \mathbf{0}_K,  G\mathbf{1}_K = \mathbf{0}_K, ~ \exists F\in \mathbb R^K |~ \forall i, j \in \llbracket0, K-1\rrbracket, \left<F, e_i - e_j\right> \geq \left<G, M_{i, j}\right>
\end{equation*}
for some matrices $(M_{i, j})_{i, j}$ independent of the variables of the problem (in particular independent of the cycle itself).  Note that the constraint $ G \neq \mathbf{0}_K$ comes from the condition $(x_0, \dots, x_{K-1})  \neq (\bar x, \dots, \bar x)$ and the constraint $G\mathbf{1}_K = \mathbf{0}_K$ comes from the fact that $\frac{1}{K}\sum_{k=0}^{K-1} (x_k - \bar{x}) = 0$. Overall, this corresponds to the feasibility problem given as \eqref{eq:feasibility_problem_lifted}.

\end{proof}

\vspace{1em}
\noindent
This proof also provides a slightly stronger result, that is leveraged in the following.
\begin{Th}\label{thm:from_gram_to_cycle}
    The feasibility problem \eqref{eq:cycle_feasibility} is equivalent to~\eqref{eq:feasibility_problem_lifted}. Moreover,
    \begin{enumerate}[leftmargin=*]

        \item \label{item:1page12} \begin{enumerate}
            \item  \label{item:1a} For any solution $(F,G)\in \R^K \times \mathcal S_K^+(\R)$ of \eqref{eq:feasibility_problem_lifted}, there exist points $(x_0, \dots , x_{K-1})$ in dimension at most $K-1$ such that $G$ is the Gram matrix of $(x_0, \dots , x_{K-1})$.
            \item \label{item:1b} Moreover for all such $(x_0, \dots , x_{K-1})$, there exists a function $f\in \Fml$ such that $\eqref{eq:hb}_{\gb}(f)$ cycles over $(x_0, \dots , x_{K-1})$, $F=(f(x_k))_{k\in \range{0}{K-1}}$ and $G$ is the Gram matrix of the vectors $(x_0, \dots , x_{K-1})$.  \end{enumerate}
        \item For any points $(x_0,  \dots , x_{K-1})$ and  function $f\in \Fml$ solution of \eqref{eq:cycle_feasibility}
        then  with $F=(f(x_k))_{k\in \range{0}{K-1}}$ and $G$  the Gram matrix of the vectors $(x_0, \dots , x_{K-1})$, we have that $(F,G)$ is a solution of \eqref{eq:feasibility_problem_lifted},
    \end{enumerate}
\end{Th}
This results thus links the solution of the two problems.

\subsection{Building a symmetric feasible point from a given feasible point}
\label{subsec:cyc_sym}

\subsubsection{Circulant solution to \texorpdfstring{\eqref{eq:feasibility_problem_lifted}}{PKSDP}}
\label{subsubsec:circulant}
In this section, we leverage simultaneously the existence of a cycle under the  SDP form given by \Cref{thm:cycle_as_SDP} and the initial form of the problem as the existence of a cycle for a \textit{stationary} first-order method -- which is the case for \eqref{eq:hb}. We consider $K, \gb$ to be fixed in what follows.

In short, the proof builds upon the intuition that all iterates within the  cycle play a symmetric role. As a consequence, from a given cycle $\mathcal C_0 = (x_0, x_1, \dots, x_{K-1})$ with $G_0$ the Gram matrix of $(x_k)_{k\in \range{0}{K-1}}$, we have access to $K-1$ other cycles $\mathcal C_1 \eqdef (x_1, x_2, \dots, x_{K-1}, x_0)$, $\dots, \mathcal C_s = (x_s, x_{s+1} \dots, x_{s-1}), \dots$, for which the Gram matrix $(G_s)_{s\in \range{0}{K-1}}$ is obtained by  applying a circular permutation to the rows and  columns of $G_0$. We then average $G_0, \dots, G_{K-1}$: $\bar G=\frac{1}{K} \sum_{s=0}^{K-1} G_s$ is a solution to the problem, and a circulant matrix.

\begin{Def}[Circulant matrix]
    We denote $J_K\eqdef (\delta_{i+1-j \mod{K}})_{1\leq i, j\leq K} =
  { \footnotesize \begin{pmatrix}
        0 & 1 & 0 & \dots & 0 \\
        0 & 0 & 1 & \dots & 0 \\
        \vdots  &     &     & \ddots & \vdots  \\
        0 & & &        & 1 \\
        1 & 0 & 0 & \dots  & 0
    \end{pmatrix}}$. A~matrix $M$ of dimension $(K \times K)$ is said to be \textit{circulant} if it is equal to a polynomial in $J_K$, i.e.~there exist $ (c_0, c_1, \dots, c_{d-1})$ such that
    \begin{equation*}
         M = c_0 I_d + c_1 J + \dots + c_{K-1} J^{K-1} =
      { \footnotesize  \begin{pmatrix}
            c_0     & c_1 & c_2 & \dots  & c_{K-1}     \\
            c_{K-1}     & c_0 & c_1 &        & c_{K-2} \\
            c_{K-2} & c_{K-1} & c_0 &        & c_{K-3} \\
            \vdots  &     &     & \ddots & \vdots  \\
            c_1     & c_2 & c_3 & \dots & c_0
        \end{pmatrix}}.
    \end{equation*}
\end{Def}
As Gram matrices are also symmetric, we will have an additional constraint that $c_1=c_{K-1}$, $c_2 =c_{K-2}$, etc. We establish the following result describing symmetric solutions to~\eqref{eq:feasibility_problem_lifted}.
\begin{Th}[Symmetries of the cycle]\label{thm:circulant_sol}
    If \eqref{eq:feasibility_problem_lifted} admits a solution, then \eqref{eq:feasibility_problem_lifted} admits a solution $(\bar F, \bar G)$ with $\bar F=\mathbf{0}_K$ and $\bar G$ a (symmetric PSD) circulant matrix.
\end{Th}

In short, from a solution to \eqref{eq:feasibility_problem_lifted}, we build $K-1$ other solutions by performing a circular permutation of the elements of the cycle. This corresponds to applying a circular permutation matrix to the Gram matrix of the iterates. We can then \textit{average} the $K$ solutions of the problem: by convexity of the set of solutions of the SDP, the resulting Gram matrix is still a solution to the problem. We make this argument precise in the following proof.

\begin{proof}{}
    Assume there exists a solution $(F_0, G_0)\in\mathbb{R}^K \times \mathcal{S}_K^+(\mathbb R)$ of \eqref{eq:feasibility_problem_lifted}, i.e.,~that~$(F_0, G_0)$ are such that~$G_0 \neq 0_K$, $G\mathbf{1}_K = 0_K$ and for all $ i, j \in \llbracket0, K-1\rrbracket, \left<F_0, e_i - e_j\right> \geq \left<G_0, M_{i, j}\right>$. From \Cref{thm:from_gram_to_cycle}-\ref{item:1page12}
    there exists a cycle $ \mathcal C_0 = (x_0, \dots, x_{K-1})$ and a function $f\in\mathcal{F}_{\mu, L} $ such that $\HB_{\gb}(f) $ cycles on $ \mathcal C_0 $, and $G_0 = (\left<x_i ,  x_j \right>)_{0\leq i, j \leq K-1}$   is the Gram matrix of the iterates and $F_0 = (f(x_i))_{0\leq  i\leq K-1}$ the vector of function values.

    Furthermore, for a stationary method, cycling over $\mathcal C_0 = (x_0, \dots, x_{K-1})$ is equivalent to cycling over $\mathcal C_s = (x_s, x_{s+1} \dots, x_{s-1})$ for any $s\in\range{0}{K-1}$. This  leads to $K-1$ other solutions to \eqref{eq:feasibility_problem_lifted}: \[F_s \eqdef  (f(x_{i+s}))_{0\leq i \leq K-1},  \quad G_s\eqdef (\left<x_{i+s},  x_{j+s} \right>)_{0\leq i, j \leq K-1}.\]
    Remarkably, the solution $(F_s,G_s)$ is obtained from $F_0,G_0$ by cyclically permuting the elements of $F_0$ as well as the rows and columns of $G_0$.
    Indeed,  for $F_s$, we start with the $(s+1)^{\mathrm{th}}$ element of $F$, and end with its $s^{\mathrm{th}}$ element. Similarly, we start in $G_s$ with the $(s+1)^{\mathrm{th}}$ row and column of $G$ and end with its $s^{\mathrm{th}}$ row and column. Mathematically, for all $ s\in \range{0}{K-1}$, we have that  $F_s = J_K^{-s} F_0$ and $G_s= J_K^{-s} G_0 J_K^s$. And $(F_s ,G_s)$ is a solution to \eqref{eq:feasibility_problem_lifted}.
    By convexity of the set of solutions to~\eqref{eq:feasibility_problem_lifted},
    \[(\bar F, \bar G)\eqdef \left(\frac{1}{K} \sum_{s=0}^{K-1} F_s, \frac{1}{K} \sum_{s=0}^{K-1} G_s\right)=\left(\frac{1}{K} \sum_{s=0}^{K-1} J_K^{-s} F_0, \frac{1}{K} \sum_{s=0}^{K-1} J_K^{-s} G_0 J_K^s\right)\] is also solution to~\eqref{eq:feasibility_problem_lifted}.
    Moreover, note that the vector $\bar F$ is colinear with $\mathbf{1}_K$ and that it only appears in~\eqref{eq:feasibility_problem_lifted} via inner products with vectors orthogonal to $\mathbf{1}_K$ (only differences between 2 components matters).
    Therefore, $(\mathbf{0}_K, \tfrac{1}{K} \sum_{s=0}^{K-1} J_K^{-s} G J_K^s)$ is also solution to~\eqref{eq:feasibility_problem_lifted}. We use the following fact to conclude.
    \begin{Fact}
        A matrix $M$ is circulant if and only if $M=J_K^{-1} M J_K$.
    \end{Fact}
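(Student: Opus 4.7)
The plan is to reduce the characterization to the commutation relation $J_K M = M J_K$ and then prove both directions separately. Since $J_K$ is a permutation matrix with $J_K^K = I_K$, it is invertible, and multiplying $M = J_K^{-1} M J_K$ on the left by $J_K$ shows the statement is equivalent to $J_K M = M J_K$. Thus I only need to check that the matrices commuting with $J_K$ are exactly the circulant ones.

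For the easy direction, suppose $M$ is circulant, i.e.\ $M = \sum_{k=0}^{K-1} c_k J_K^k$ for some real coefficients $(c_k)$. Then $J_K$ commutes with each power $J_K^k$, hence by linearity $J_K M = M J_K$, which is the claim. This uses nothing beyond the definitions.

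For the converse, the cleanest route is a direct entrywise calculation. Using $(J_K)_{i,j} = \delta_{j \equiv i+1 \pmod{K}}$, I would compute
\[
    (J_K M)_{i,j} = M_{i+1 \bmod K,\, j}, \qquad (M J_K)_{i,j} = M_{i,\, j-1 \bmod K}.
\]
Therefore $J_K M = M J_K$ forces $M_{i+1,j} = M_{i,j-1}$ for all $i,j$ (indices taken mod $K$), which inductively means $M_{i,j}$ depends only on $j-i \bmod K$. Setting $c_k \eqdef M_{0,k}$ then yields $M = \sum_{k=0}^{K-1} c_k J_K^k$, i.e.\ $M$ is circulant.

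There is no real obstacle; the only thing to be careful with is the index convention for $J_K$ (which direction the shift goes) and the modular arithmetic, both of which are fixed by the explicit matrix displayed in the definition. An alternative, slightly more abstract plan would be to observe that $J_K$ is diagonalizable over $\mathbb{C}$ with the $K$ distinct $K$-th roots of unity as eigenvalues, so its commutant has dimension exactly $K$ and coincides with $\mathbb{R}[J_K]$; but the entrywise argument is more elementary and self-contained, so I would prefer it here.
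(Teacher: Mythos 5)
Your argument is correct and complete: the reduction of $M=J_K^{-1}MJ_K$ to the commutation relation $J_KM=MJ_K$, the easy direction via $M=\sum_k c_kJ_K^k$, and the entrywise computation $(J_KM)_{i,j}=M_{i+1,j}$, $(MJ_K)_{i,j}=M_{i,j-1}$ forcing $M_{i,j}$ to depend only on $j-i \bmod K$ together give exactly the stated equivalence (just mind that the paper indexes entries from $1$ to $K$, so the coefficients are $c_k=M_{1,1+k}$). The paper states this Fact without proof, treating it as standard, so there is nothing to compare against; your elementary verification is the natural one and fills that gap.
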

 Thus $\bar G= \tfrac{1}{K} \sum_{s=0}^{K-1} J_K^{-s} G J_K^s$ is a circulant matrix, as \[J_K^{-1}\bar G J_K= J_K^{-1} \left(\frac{1}{K} \sum_{s=0}^{K-1} J_K^{-s} G J_K^s \right) J_K =  \frac{1}{K} \sum_{s=1}^{K} J_K^{-s} G J_K^s=  \frac{1}{K} \sum_{s=0}^{K-1} J_K^{-s} G J_K^s= \bar G,\]
thereby arriving to the desired claim.
\end{proof}

\vspace{1em}
\noindent
In the next section, we rely on the symmetries of the Gram matrix to gain insights on the shape of a corresponding cycle $(x_k)_{k\in \range{0}{K-1}}$.

\subsubsection{From symmetries on the Gram matrix to symmetric cycle shapes}
\label{subsubsec:cycles_decomp}
Leveraging \Cref{thm:from_gram_to_cycle}, we obtain that any cycle $(x_0, \dots, x_{K-1})$ obtained from a circulant matrix $\bar G$ (i.e.,~such that $\bar  G$ is its Gram matrix), has multiple symmetries. This is formalized in the following  corollary.

\begin{Cor}\label{cor:symmetric_cycles}
If $\eqref{eq:hb}_{\gb}$  has a cycle\footnote{in the sense of  \Cref{def:cycle}, items~\ref{item:2} and \ref{item:3}.} on $\Fml$ then $\eqref{eq:hb}_{\gb}$ cycles on a \emph{symmetric cycle} $(x_i)_{i\in \range{0}{K-1}}$ on $\Fml$. A symmetric cycle   is such that its Gram matrix is symmetric circulant, i.e.:
\begin{enumerate}
    \item for all $t\in \range{0}{K-1}$,  $\|x_t\|^2 = c_0$, i.e.~all iterates are on a sphere,
    \item for all $t\in \range{0}{K-1}$, $\langle x_t, x_{t+1}\rangle = c_1$, i.e.~the inner product between two consecutive iterates is constant along the cycle,
    \item more generally, there exist $(c_s)_{s\in \range{0}{K-1}}$, such that for all $s, t\in \range{0}{K-1}$,  $\langle x_t, x_{t+s}\rangle = c_s$, i.e.~the inner product between two $s$-separated iterates is constant along the cycle.
\end{enumerate}
\end{Cor}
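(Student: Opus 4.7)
The plan is to chain together the previously established results. Specifically, I assume $\eqref{eq:hb}_{\gb}$ has a cycle on $\Fml$, i.e., by \Cref{def:cycle}, there exist a period $K$, points $(x_0,\dots,x_{K-1})\neq(\bar x,\dots,\bar x)$, and $f\in\Fml$ such that $\eqref{eq:hb}_{\gb}(f)$ cycles over these iterates. Then \Cref{thm:cycle_as_SDP} (together with the second item of \Cref{thm:from_gram_to_cycle}) guarantees that the SDP feasibility problem $\eqref{eq:feasibility_problem_lifted}$ admits the solution $(F_0,G_0)$ where $F_0=(f(x_i))_{i\in\rK}$ and $G_0$ is the Gram matrix of $(x_i-\bar x)_{i\in\rK}$.

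Next, I invoke \Cref{thm:circulant_sol}: since $\eqref{eq:feasibility_problem_lifted}$ admits a solution, it admits a solution $(\bar F,\bar G)=(\mathbf{0}_K,\bar G)$ where $\bar G\in\mathcal S_K^+(\R)$ is symmetric circulant, i.e., there exist scalars $(c_s)_{s\in\rK}$ with $c_s=c_{K-s}$ such that $\bar G_{i,j}=c_{(j-i)\mod K}$ for all $i,j$.

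Now I apply item~\ref{item:1page12} of \Cref{thm:from_gram_to_cycle} to this new solution: there exist points $(x_0,\dots,x_{K-1})$ (in dimension at most $K-1$) whose Gram matrix is $\bar G$, and a function $f\in\Fml$ such that $\eqref{eq:hb}_{\gb}(f)$ cycles on $(x_0,\dots,x_{K-1})$. The constraint $\bar G\neq\mathbf{0}_K$ that is part of $\eqref{eq:feasibility_problem_lifted}$ ensures that the obtained cycle is not constant, as required by \Cref{def:cycle}. By construction, $\langle x_i,x_j\rangle=\bar G_{i,j}=c_{(j-i)\mod K}$, from which the three symmetry properties follow directly: taking $j=i$ yields $\|x_t\|^2=c_0$ for all $t$; taking $j=i+1$ yields $\langle x_t,x_{t+1}\rangle=c_1$ for all $t$; and more generally $\langle x_t,x_{t+s}\rangle=c_s$ for all $s,t\in\rK$ (indices understood modulo $K$).

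Since the substantive work has already been done in \Cref{thm:cycle_as_SDP}, \Cref{thm:from_gram_to_cycle} and \Cref{thm:circulant_sol}, this proof is essentially a chain of implications and there is no real obstacle; the only care needed is to remember to re-extract an actual cycle and function from the symmetric Gram matrix via \Cref{thm:from_gram_to_cycle}-\ref{item:1b}, rather than stopping at the SDP level.
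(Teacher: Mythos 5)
Your proposal is correct and follows exactly the route of the paper's own proof: pass to the SDP formulation via \Cref{thm:cycle_as_SDP}, symmetrize via \Cref{thm:circulant_sol}, and re-extract a cycle and a function from the circulant Gram matrix via \Cref{thm:from_gram_to_cycle}. The extra care you take about non-constancy (via the constraint $\bar G\neq\mathbf{0}_K$) and the explicit reading-off of the inner products $\langle x_t,x_{t+s}\rangle=c_s$ are details the paper leaves implicit, but they are handled correctly.
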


\begin{proof}
    If $\eqref{eq:hb}_{\gb}$  has a cycle on $\Fml$ then by \Cref{thm:cycle_as_SDP} then \Cref{thm:circulant_sol}, there exists a circulant solution to \eqref{eq:feasibility_problem_lifted}. A symmetric cycle is obtained from \Cref{thm:from_gram_to_cycle}-\ref{item:1a} on the circulant solution.
\end{proof}

\vspace{1em}

\paragraph{Motivating the roots-of-unity cyclic structure.} An example of such a symmetric cycle is the roots-of-unity cycle in dimension~2 (see \Cref{def:rou_cycle}), that was the focus of \Cref{sec:noaccel}. It corresponds to the arguably simplest solution to obtain the symmetries mentioned above. \Cref{cor:symmetric_cycles} thus supports the idea of looking for simple two-dimensional roots-of-unity cycles: the study of those particular cycles, that are sufficient to demonstrate the main non-acceleration result~\Cref{thm:incompatibility},  takes its roots in this  higher-level analysis of the cycles as an SDP and the inherent symmetries of the problem.

\begin{Ex}
A straightforward application of the symmetrization process given  in the proof of \Cref{thm:circulant_sol} is the symmetrization of the cycle provided by \citet{lessard2016analysis}, which is a one-dimensional cycle over the three iterates: $(x_0, x_1, x_2)= \frac{1}{1225} (792 , -2208 , 2592)$. The Gram matrix of the centered iterates $(x_0-\bar x, x_1-\bar x, x_2-\bar x)$ is $$G_0= \left(\frac{8}{49}\right)^2{\footnotesize\begin{pmatrix}
    4&  -26&   22 \\
    -26&  169& -143\\
    22& -143&  121
\end{pmatrix}}$$
After the circulation process described in the proof on \Cref{thm:circulant_sol}, we obtain that $\bar G = \frac{8^2}{49} {\footnotesize\begin{pmatrix}
    2&  -1&   -1 \\
    -1&  2& -1\\
    -1& -1&  2
\end{pmatrix}}$.
This Gram matrix is (proportional to) the one of the $3^\text{rd}$-roots-of-unity cycle $\TikCircle_3$.
\end{Ex}

Although we {cannot} prove that in full generality, if $\eqref{eq:hb}_{\gb}$  has a cycle on $\Fml$ then $\eqref{eq:hb}_{\gb}$ cycles over a roots-of-unity cycle, the next result provides a generic decomposition, beyond dimension 2, of the symmetric cycles.

\begin{Prop}\label{prop:shape_sym_cycle_generic}
  If $\eqref{eq:hb}_{\gb}$  has a cycle on $\Fml$ then $\eqref{eq:hb}_{\gb}$ cycles on $K$ points $(x_k)_{k\in \range{0}{K-1}}$ on $\Fml$, for which there exist $\tilde \nu_1, \dots, \tilde \nu_{\lfloor K/2 \rfloor} \geq 0$, such  that for all $k\in\range{0}{K-1}$
    \begin{equation}x_k = {\footnotesize \left(\begin{array}{rll}
        \tilde\nu_1 &\begin{pmatrix}
    \cos\left(k\times\frac{2\pi \times 1}{K}\right) \\
    \sin\left(k\times\frac{2\pi \times 1}{K}\right)
\end{pmatrix}& \textcolor{gray}{[\text{block $1$}]} \\
& \qquad \vdots &
\\
\tilde \nu_\ell &\begin{pmatrix}
    \cos\left(k\times\frac{2\pi \times \ell}{K}\right) \\
    \sin\left(k\times\frac{2\pi \times \ell}{K}\right)
\end{pmatrix}& \textcolor{gray}{[\text{block $\ell$}]}\\
& \qquad \vdots &
\\
\tilde \nu_{\left\lfloor \frac{K-1}{2} \right\rfloor} &\begin{pmatrix}
    \cos\left(k\times\frac{2\pi \times \left\lfloor \frac{K-1}{2} \right\rfloor}{K}\right) \\
    \sin\left(k\times\frac{2\pi \times \left\lfloor \frac{K-1}{2} \right\rfloor}{K}\right)
\end{pmatrix}& \textcolor{gray}{[\text{block $\floor {\frac{K-1}{2}}$}]} \\
\textcolor{gray}{\tilde \nu_{\frac{K}{2}}} & \textcolor{gray}{\begin{pmatrix}
  (-1)^k )
\end{pmatrix}}& \textcolor{gray}{[\text{block $ {\frac{K}{2}}$}, \text{only if $K$ is even}]}
    \end{array}\right)} \in \R^{K-1}     \label{eq:sym_cycle}
\tag{Symmetric Cycle}\end{equation}
\end{Prop}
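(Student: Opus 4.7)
The plan is to reduce the statement to a purely spectral computation on symmetric circulant matrices, using the results already accumulated. By \Cref{cor:symmetric_cycles}, if $\eqref{eq:hb}_{\gb}$ has a cycle on $\Fml$ then there is a solution $(\mathbf{0}_K, \bar G)$ of \eqref{eq:feasibility_problem_lifted} where $\bar G \in \mathcal{S}_K^+(\R)$ is symmetric circulant, nonzero, and satisfies $\bar G \mathbf{1}_K = \mathbf{0}_K$. By item~\ref{item:1b} of \Cref{thm:from_gram_to_cycle}, it is enough to exhibit \emph{any} family of points $(x_k)_{k \in \range{0}{K-1}}$ whose Gram matrix is $\bar G$ and which has the form displayed in \eqref{eq:sym_cycle}; the corresponding function $f \in \Fml$ will then be delivered automatically.

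The key step is the eigendecomposition of $\bar G$ in the real Fourier basis. Every circulant matrix is diagonalized by the DFT; symmetry ($c_k = c_{K-k}$) pairs the complex eigenvalues into real ones indexed by $\ell = 0, 1, \dots, \lfloor K/2 \rfloor$, each having a real eigenspace $E_\ell$ that is $2$-dimensional for $1 \leq \ell < K/2$ (spanned by $u_\ell = (\cos(2\pi k\ell/K))_k$ and $v_\ell = (\sin(2\pi k\ell/K))_k$), and $1$-dimensional for $\ell = 0$ (spanned by $\mathbf{1}_K$) and for $\ell = K/2$ when $K$ is even (spanned by $((-1)^k)_k$). On $E_\ell$, $\bar G$ acts as multiplication by some real $\lambda_\ell$; positive semidefiniteness forces $\lambda_\ell \geq 0$, and the constraint $\bar G \mathbf{1}_K = \mathbf{0}_K$ is precisely $\lambda_0 = 0$.

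With this in hand, define
\[
\tilde \nu_\ell \eqdef \sqrt{\tfrac{2 \lambda_\ell}{K}} \quad \text{for } 1 \leq \ell \leq \lfloor (K-1)/2 \rfloor, \qquad \tilde \nu_{K/2} \eqdef \sqrt{\tfrac{\lambda_{K/2}}{K}} \quad \text{if $K$ is even,}
\]
and let $x_k \in \R^{K-1}$ be the vertical stacking of the blocks $\tilde \nu_\ell (\cos(2\pi k\ell/K), \sin(2\pi k\ell/K))^\top$, plus a final coordinate $\tilde \nu_{K/2} (-1)^k$ when $K$ is even. A direct computation using $\cos\alpha\cos\beta + \sin\alpha\sin\beta = \cos(\alpha - \beta)$ gives
\[
\langle x_k, x_{k'}\rangle = \tfrac{2}{K} \sum_{\ell=1}^{\lfloor (K-1)/2 \rfloor} \lambda_\ell \cos\bigl(\tfrac{2\pi(k-k')\ell}{K}\bigr) + \bigl[\tfrac{1}{K}\lambda_{K/2}(-1)^{k-k'} \text{ if $K$ even}\bigr],
\]
which is exactly the $(k,k')$-entry of $\bar G$ obtained by plugging the eigendecomposition into $\bar G_{k,k'} = \tfrac{1}{K}\sum_\ell \lambda_\ell e^{2\pi i (k-k')\ell/K}$ and using $\lambda_0 = 0$ and $\lambda_\ell = \lambda_{K-\ell}$.

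The only delicate aspect is the bookkeeping of normalizations and the parity of $K$: the eigenvectors $u_\ell, v_\ell$ have squared norm $K/2$ for generic $\ell$ but the $\ell = K/2$ eigenvector has squared norm $K$, which produces the different formula for $\tilde \nu_{K/2}$, and the final coordinate appears only in the even case. Beyond this, the argument is a textbook Fourier decomposition of a real symmetric circulant matrix; no new optimization or interpolation content is needed, since all the non-trivial structure has already been extracted in \Cref{thm:cycle_as_SDP} and \Cref{thm:circulant_sol}. Finally, item~\ref{item:1b} of \Cref{thm:from_gram_to_cycle} provides the function $f \in \Fml$ on which $\eqref{eq:hb}_{\gb}(f)$ cycles through the constructed $(x_k)$, completing the proof.
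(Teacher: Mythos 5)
Your proposal is correct and follows essentially the same route as the paper: reduce to a symmetric circulant PSD solution $\bar G$ with $\bar G\mathbf{1}_K=\mathbf{0}_K$ via \Cref{thm:cycle_as_SDP} and \Cref{thm:circulant_sol}, decompose it in the real Fourier basis into non-negative combinations of the rank-$\le 2$ blocks $H_\ell$, read off the points block by block, and invoke \Cref{thm:from_gram_to_cycle} to recover the function. The only difference is presentational: you inline the spectral decomposition of the symmetric circulant matrix, whereas the paper isolates it as \Cref{lem:circulant_dec} and cites it; your normalization bookkeeping ($\tilde\nu_\ell^2 = 2\lambda_\ell/K$, with the special case at $\ell=K/2$) matches the computation in the paper's appendix proof of that lemma.
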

All points $(x_k)_{k\in \range{0}{K-1}}$ are in dimension $K-1$, as mentioned in \Cref{thm:from_gram_to_cycle}-\ref{item:1page12}. The  points $(x_k)_{k\in \range{0}{K-1}}$  are decomposed over a Cartesian product of $\floor{\frac{K-1}{2}}$ independent two-dimensional spaces: on each block,  the cycle is perfectly regular. For example, on block 1, one recognizes the roots-of-unity shape. If $K-1 $ is even, there are exactly $(K-1)/2$ blocks of dimension 2, and if $K $ is even there are exactly $(K-2)/2$ blocks of dimension 2, and one block  of dimension 1.

\begin{proof}
First, recall that  by \Cref{thm:cycle_as_SDP} then \Cref{thm:circulant_sol},  if $\eqref{eq:hb}_{\gb}$  has a cycle on $\Fml$, then  \eqref{eq:feasibility_problem_lifted} admits a solution $(0, \bar G)$ with $\bar G$ a (symmetric) circulant matrix.

Second, circulant matrices constitute a long standing object of interest in linear algebra, and their reduction properties are well understood~\citep{gray2006toeplitz}. We use the following lemma.
\begin{restatable}{Lemma}{circulantreduction}\label{lem:circulant_dec}
   A matrix $\bar  G$ is symmetric and circulant such that $\bar   G\mathbf{1}_K=0$, if and only is there exist non-negative $ \nu_1, \dots, \nu_{\floor{K/2}}$ such that $\bar  G = \sum_{\ell=1}^{\floor{K/2}}\nu_\ell H_\ell$, with $H_\ell \eqdef \left(\cos\left(\frac{2\pi \ell}{K}|i-j|\right)\right)_{i, j}$.
\end{restatable}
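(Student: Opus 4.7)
The plan is to leverage the fact that every $K \times K$ circulant matrix is simultaneously diagonalized by the discrete Fourier basis $v_\ell = (1, \omega^\ell, \omega^{2\ell}, \ldots, \omega^{(K-1)\ell})^\top$ with $\omega = e^{2i\pi/K}$. For real symmetric circulant matrices, the eigenvalues at indices $\ell$ and $K-\ell$ must coincide, so the two complex conjugate eigenlines consolidate into a single real $2$-dimensional eigenspace spanned by $c_\ell \eqdef (\cos(2\pi\ell k/K))_{k=0}^{K-1}$ and $s_\ell \eqdef (\sin(2\pi\ell k/K))_{k=0}^{K-1}$ for $1 \le \ell < K/2$. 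The index $\ell=0$ yields the $1$-dimensional eigenspace $\Span(\mathbf{1}_K)$, and, when $K$ is even, $\ell = K/2$ yields another $1$-dimensional eigenspace spanned by $((-1)^k)_k$. Together these real eigenspaces orthogonally decompose $\R^K$.

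Next I would identify each $H_\ell$ with the $\ell$-th real spectral projector (up to a positive scalar): the trigonometric identity $\cos\bigl(\tfrac{2\pi\ell}{K}(i-j)\bigr) = \cos\bigl(\tfrac{2\pi\ell i}{K}\bigr)\cos\bigl(\tfrac{2\pi\ell j}{K}\bigr) + \sin\bigl(\tfrac{2\pi\ell i}{K}\bigr)\sin\bigl(\tfrac{2\pi\ell j}{K}\bigr)$ rewrites $H_\ell = c_\ell c_\ell^\top + s_\ell s_\ell^\top$ (with $s_\ell = 0$ when $K$ is even and $\ell = K/2$), exhibiting $H_\ell$ as symmetric, circulant, and positive semi-definite with range equal to the $\ell$-th real eigenspace. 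Standard orthogonality of sine and cosine vectors yields $H_\ell H_{\ell'} = 0$ for distinct $\ell, \ell' \in \range{1}{\floor{K/2}}$, while the well-known identity $\sum_k \cos(2\pi\ell k/K) = 0$ for $\ell \not\equiv 0 \pmod{K}$ gives $H_\ell \mathbf{1}_K = 0$ for every $\ell \ge 1$.

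With these ingredients the forward direction reduces to a direct application of the spectral theorem in the real Fourier basis: $\bar G$ (which is implicitly positive semi-definite in context, as it arises as a Gram matrix) admits a unique expansion $\bar G = \sum_{\ell=0}^{\floor{K/2}} \nu_\ell H_\ell$ in which each $\nu_\ell$ is a positive multiple of the corresponding eigenvalue of $\bar G$; PSD-ness then forces $\nu_\ell \ge 0$, while $\bar G \mathbf{1}_K = 0$ forces $\nu_0 = 0$. The reverse direction is immediate: a non-negative combination $\sum_{\ell \ge 1} \nu_\ell H_\ell$ is symmetric, circulant, and annihilates $\mathbf{1}_K$ term-by-term. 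The only real subtlety I anticipate is bookkeeping the parity of $K$, in particular the collapse to a one-dimensional eigenspace at $\ell = K/2$ when $K$ is even; the underlying claim that the Fourier basis simultaneously diagonalizes every circulant matrix is a classical fact from, e.g.,~\cite{gray2006toeplitz}.
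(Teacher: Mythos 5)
Your proposal is correct and follows essentially the same route as the paper's proof: both rest on the simultaneous diagonalization of circulant matrices in the discrete Fourier basis, the pairing of the conjugate eigenvalues $\nu_\ell = \bar\nu_{K-\ell}$ forced by symmetry, the identification of $H_\ell$ with the resulting real two-dimensional spectral blocks, and the observation that $\bar G\mathbf{1}_K=0$ kills the $\ell=0$ term. You also correctly flag the implicit positive semi-definiteness assumption needed for $\nu_\ell\geq 0$, which the paper's own proof likewise invokes (its concluding equivalence is stated for symmetric PSD circulant matrices, consistent with the Gram-matrix context).
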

This is a classical result, whose proof is recalled for completeness in \Cref{app:circulant_proof}.
For each $\ell\in\range{0}{\floor{\frac{K}{2}}}$, the matrix $H_\ell$ is a rank 2 matrix and is the Gram matrix of the family of vectors $\begin{pmatrix}
    \cos\left(k\times\frac{2\pi \ell}{K}\right) \\
    \sin\left(k\times\frac{2\pi \ell}{K}\right)
\end{pmatrix}_{k\in\range{0}{K-1}}$ that corresponds to the $\ell^{\mathrm{th}}$ block.

Overall, considering $\nu_1, \dots, \nu_{\floor{K/2}}$  that provide a  decomposition $\bar G$ as in   \Cref{lem:circulant_dec},  and defining $(x_k)_{k\in \range{0}{K-1}}$ by \eqref{eq:sym_cycle} with $\tilde \nu_\ell  = \sqrt{\nu_\ell}$, we obtain that the Gram matrix of $(x_k)_{k\in \range{0}{K-1}}$ is~$\bar G$. Finally,~\Cref{thm:from_gram_to_cycle}-\ref{item:1b} provides the desired claim.
\end{proof}

\vspace{1em} \noindent
Overall, this provides  a complete picture of shape of all cycles of period $K$.  There is no apparent reason for the~\eqref{eq:sym_cycle} to further reduce to dimension 2, and the proof of such a result is left as an open question. In~\Cref{sec:num}, we display a numerical comparison of the cycles obtained analytically as roots-of-unity cycles in dimension 2 and the ones obtained numerically by solving directly~\eqref{eq:feasibility_problem_lifted}. Before turning to this numerical study, we underline that our analysis enables to rewrite the existence of a cycle as a linear feasibility problem.

\subsubsection{Casting the problem of finding cycles as a linear feasibility problem}
\label{subsubsec:LP}
A notable byproduct of \Cref{lem:circulant_dec} is the decomposition of any circulant matrix as a positive linear combination of elementary matrices. This enables to reparametrize \eqref{eq:feasibility_problem_lifted} as a linear problem:
\begin{Th}\label{thm:LP}
  \eqref{eq:cycle_feasibility} and \eqref{eq:feasibility_problem_lifted} are equivalent to the following linear problem:
    \begin{equation}
   \exists \nu \in \mathbb{R}_{\geq 0}^{\floor{K/2}}, \nu\neq \mathbf{0}_{\floor{K/2}}, ~  \forall  i  \in \llbracket0, K-1\rrbracket, 0 \geq  \sum_{\ell=1}^{\floor{K/2}}\nu_\ell \left< H_\ell, M_{i, j=0}\right>  \label{eq:feasibility_problem_as_LP} \tag{$\mathcal{P}_{K-\mathrm{LP}}$}
\end{equation}
Equivalently, introducing the matrix $ P =\left(\left< M_{i, j=0}, H_\ell\right>_{i \in \range{0}{K-1}, \ell \in \range{1}{\floor{K/2}} }\right)  \in \R^{(K-1) \times \floor{K/2}}$, \eqref{eq:feasibility_problem_as_LP} writes, for $P \nu\in \R^{K-1}$, as:
\[ \exists \nu \in \mathbb{R}_{\geq 0}^{\floor{K/2}},  \quad P \nu \leq  0. \]
\end{Th}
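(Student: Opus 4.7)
The plan is to chain three previously established structural results. First, Theorem \ref{thm:circulant_sol} reduces \eqref{eq:feasibility_problem_lifted} to the search for a solution $(F,G) = (\mathbf{0}_K, \bar G)$ with $\bar G$ symmetric circulant, $\bar G \mathbf{1}_K = 0$, and $\bar G \neq 0$. Under $F = \mathbf{0}_K$, the constraints $\langle F, e_i - e_j\rangle \geq \langle G, M_{i,j}\rangle$ collapse to the homogeneous inequalities $\langle \bar G, M_{i,j}\rangle \leq 0$ for all $(i,j) \in \llbracket 0, K-1\rrbracket^2$. Second, Lemma \ref{lem:circulant_dec} parametrizes such a $\bar G$ as $\bar G = \sum_{\ell=1}^{\floor{K/2}} \nu_\ell H_\ell$ with $\nu_\ell \geq 0$; the non-triviality condition $\bar G \neq 0$ translates into $\nu \neq \mathbf{0}_{\floor{K/2}}$ since the matrices $H_\ell$ are linearly independent. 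Substituting in the inequalities yields $\sum_\ell \nu_\ell \langle H_\ell, M_{i,j}\rangle \leq 0$ for all $(i,j)$.

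Third, I would exploit the cyclic (stationary) structure to collapse the $K^2$ constraints down to the $K$ constraints indexed by $j=0$. Concretely, because \eqref{eq:hb} is stationary and the indices in \eqref{eq:hb_inverse} are extended $K$-periodically, the interpolation-derived matrices satisfy the shift relation $M_{i+s,\, j+s} = J_K^{s}\, M_{i,j}\, J_K^{-s}$ (indices mod $K$), where $J_K$ is the cyclic permutation matrix. Combining this with the circulant identity $J_K^{-s}\,\bar G\, J_K^{s} = \bar G$ gives
\[ \langle \bar G, M_{i,j}\rangle = \langle J_K^{-j}\,\bar G\, J_K^{j},\, M_{i,j}\rangle = \langle \bar G,\, J_K^{j}\, M_{i,j}\, J_K^{-j}\rangle = \langle \bar G, M_{i-j,\, 0}\rangle, \]
so each of the $K^2$ constraints coincides with one of the $K$ constraints having $j=0$. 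Applied to each $H_\ell$ (which is itself circulant), this reduces the inequalities to precisely those appearing in \eqref{eq:feasibility_problem_as_LP}.

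Finally, I would address the converse. Given any $\nu \geq 0$, $\nu \neq 0$, solving the linear program, the matrix $\bar G \eqdef \sum_\ell \nu_\ell H_\ell$ is symmetric positive semi-definite (as a non-negative combination of PSD rank-two matrices), is nonzero, and satisfies $\bar G \mathbf{1}_K = 0$ (since each $H_\ell$ does, by \Cref{lem:circulant_dec}). Together with $F = \mathbf{0}_K$, it satisfies the constraints of \eqref{eq:feasibility_problem_lifted} thanks to the shift relation applied in reverse, and \Cref{thm:from_gram_to_cycle} then produces a bona-fide cycle, giving a solution to \eqref{eq:cycle_feasibility}. The matrix form $P\nu \leq 0$ is just a rewriting of the $K$ inequalities indexed by $i$.

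The main obstacle is establishing the shift relation $M_{i+s,j+s} = J_K^{s}\, M_{i,j}\, J_K^{-s}$ cleanly. It is intuitively clear because the interpolation condition IC$_{i,j}$ depends only on the six iterates $x_{i\pm 1}, x_i, x_{j\pm 1}, x_j$, and relabeling $t \mapsto t + s$ corresponds exactly to permuting the coordinates of the Gram matrix by $J_K^{s}$; but writing the argument rigorously requires unpacking the explicit expressions of $M_{i,j}$ in \eqref{eq:ineg_ij} and verifying the interplay with the $K$-periodic extension at the boundary indices $i = 0$ or $i = K-1$. Once this is in place, the rest of the proof is a direct linear-algebraic substitution.
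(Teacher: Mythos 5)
Your proposal is correct and follows essentially the same route as the paper's proof: invoke \Cref{thm:circulant_sol} to reduce to a circulant solution with $F=\mathbf{0}_K$, parametrize it via \Cref{lem:circulant_dec} as $\bar G=\sum_\ell \nu_\ell H_\ell$, and collapse the $K^2$ constraints to those with $j=0$ using the circulance of $\bar G$. You are in fact slightly more explicit than the paper on the one point it glosses over, namely that the reduction $\langle \bar G, M_{i,j}\rangle = \langle \bar G, M_{i-j,0}\rangle$ needs both the circulance of $\bar G$ and the shift-equivariance $M_{i+s,j+s}=J_K^{s}M_{i,j}J_K^{-s}$ coming from stationarity of the method.
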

\begin{proof}
We use \Cref{thm:circulant_sol} to obtain a circulant symmetric solution from \eqref{eq:feasibility_problem_lifted}, then by \Cref{lem:circulant_dec} all circulant symmetric matrices $\bar G$ are written as a linear combination   $\sum_{\ell=1}^{\floor{K/2}}\nu_\ell H_\ell$, with $(\nu_\ell)_{\ell\in \range{1}{\floor{K/2}}} \in \mathbb{R}_{\geq 0}^{\floor{K/2}} $. We obtain \eqref{eq:feasibility_problem_as_LP} by parametrizing the problem by $\nu$, as for any $(i,j)$,  $ \left<\bar G, M_{i, j}\right>  = \sum_{\ell=1}^{\floor{K/2}}\nu_\ell \left< H_\ell, M_{i, j}\right>  $, and by observing that for any $(i,j)$,  $\left<\bar G, M_{i, j}\right> \le 0$ if  and only if $\left<\bar G, M_{i-j, 0}\right> \le 0$, as $\bar G$ is circulant.
\end{proof}  \vspace{1em} \noindent

In conclusion, we observe that a cycle can either be parametrized by
\begin{enumerate}[itemsep=0.3pt,leftmargin=*]
    \item Initially, in \Cref{def:cycle} and \eqref{eq:cycle_feasibility}, by $(x_k)_{k\in \range {0}{K-1}} \in (\R^d)^K$ and $f\in \Fml$.
    \item Second, by $G\in \mathcal S_K^+(\R)$  in \eqref{eq:feasibility_problem_lifted} and \Cref{thm:cycle_as_SDP}, with $(K-1)^2$ constraints.
    \item Then, using  \Cref{thm:circulant_sol}, by $(c_k)_{k\in \range{0}{K-1}}$ the first row of the circulant matrix $\bar G$ . But the constraints on $c_0, \dots c_{K-1}$ correspond to an SDP  type constraint.
    \item Finally,  in \Cref{prop:shape_sym_cycle_generic} and \eqref{eq:feasibility_problem_as_LP} ,  by $(\tilde \nu_\ell)_{\ell\in \range{0}{\floor{K/2}}}$, onto which only $K-$ \textit{linear} constraints hold.
\end{enumerate}
The latest parametrization, as~\eqref{eq:feasibility_problem_as_LP}, naturally  provides the best numerical results, that are given in the next section.

\subsection{Numerical results on \texorpdfstring{\eqref{eq:hb}}{(HB)}}
\label{sec:num}

\begin{figure}[t]
   \begin{subfigure}[t]{.49\linewidth}
    \centering
    \includegraphics[width=\linewidth]
       {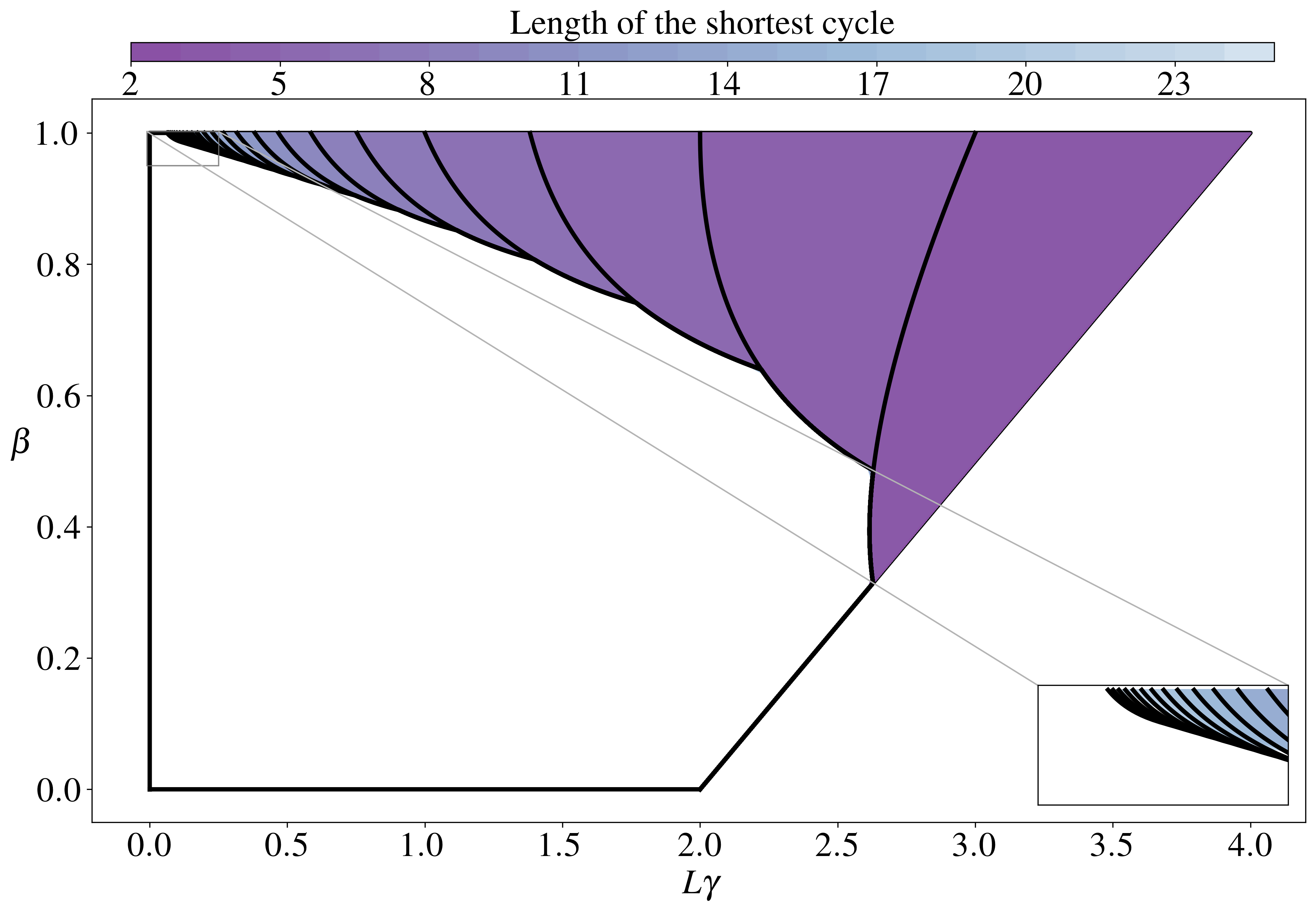}
   \caption{\label{fig:numeric_cycle} For $K\in \range{2}{25}$, comparison between $\OKrouml$ (analytically obtained in \Cref{sec:noaccel}), which borders are represented as black lines,  and $\Ocyml$, represented as the set of purple points, obtained by solving~\Cref{eq:feasibility_problem_lifted}.}
  \end{subfigure}
\hfill
\begin{subfigure}[t]{.49\linewidth}
    \centering
         \includegraphics[width=\linewidth]{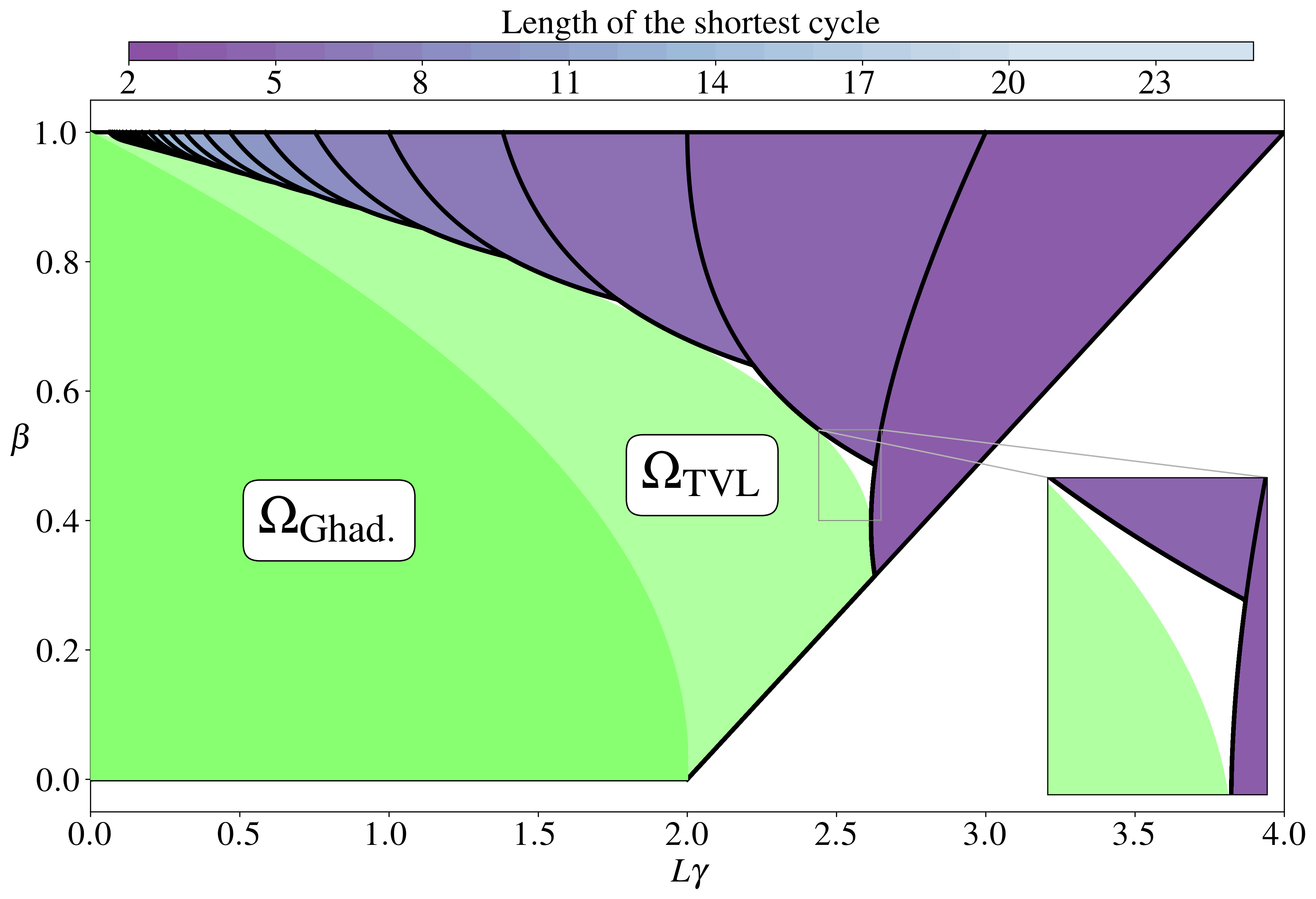}
         \caption{\label{fig:all_regions} Comparison between the hyper-parameter regions $\Ocyml$ for which we obtain cycles, $\Otaml$ for which a Lyapunov is found numerically, and $\Oghml$ for which a Lyapunov is known analytically}
\end{subfigure}
    \caption{\label{fig:numerical_results} Numerical results on the behavior of \eqref{eq:hb} as a function of $(\gb)$.}
\end{figure}

In this section, we provide a comparison between the roots-of-unity cycling region $\Orouml$ obtained analytically in \Cref{sec:noaccel} and $\Ocyml$ obtained numerically by solving \eqref{eq:feasibility_problem_lifted}. On \Cref{fig:numeric_cycle}, we observe that the two sets appear to be identical, i.e.,~numerically  $\Orouml = \Ocyml$.

Secondly, we compare in \Cref{fig:all_regions} the hyper-parameter regions $\Ocyml$ for which we obtain cycles, $\Otaml$ for which a Lyapunov is found numerically in \cite{goujaud2023counter} using the approach of \cite{taylor2018lyapunov}, and $\Oghml$ for which a Lyapunov is known analytically (see \Cref{lem:ghad}). We observe that numerically,  $(\Ocyml)^c $ and $ \Otaml$ are nearly similar, apart from some small neighborhoods (left white on \Cref{fig:all_regions}). These observations can be summarized as follows.
\begin{Fact}For any $0<\mu\le  L$:
\begin{equation*}
     \Oghml \subseteq
     \Otaml \subseteq
     \Ocvml \subseteq
     (\Ocyml)^c \subseteq
     (\Orouml)^c
     \subseteq\Oqml.
\end{equation*}
\end{Fact}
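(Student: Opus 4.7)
The statement is a chain of five set inclusions, so my plan is to verify each inclusion in turn, relying on the definitions gathered earlier in the paper together with \Cref{fact:Cv_in_cycle_compl}. None of the links is expected to be deep individually; the whole statement functions as a bookkeeping summary of the various regions studied across \Cref{sec:preliminary-materials,sec:noaccel,sec:cycles}. The only delicate step is the leftmost one, which depends on comparing two Lyapunov-based regions whose definitions come from different sources.

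For the rightmost three inclusions, I would argue purely from definitions. The inclusion $(\Orouml)^c \subseteq \Oqml$ is immediate because $(\Orouml)^c$ was defined in \Cref{def:rou_cycle_region} as the complement of $\Orouml$ \emph{inside} $\Oqml$. Next, since $\Orouml = \bigcup_{K\geq 2}\OKrouml \subseteq \Ocyml$ (any roots-of-unity cycle is, in particular, a cycle in the sense of \Cref{def:cycle}), taking complements in $\Oqml$ yields $(\Ocyml)^c \subseteq (\Orouml)^c$. Finally, $\Ocvml \subseteq (\Ocyml)^c$ is exactly \Cref{fact:Cv_in_cycle_compl}: if $(\gb) \in \Ocyml$ then by \Cref{def:cycle} there exists $f\in\Fml$ on which $\HB_{\gb}(f)$ cycles, hence $(\|x_t-\xs\|)_t$ does not go to zero, ruling out a worst-case asymptotic rate strictly below $1$; contraposing gives the inclusion.

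It remains to handle $\Oghml \subseteq \Otaml \subseteq \Ocvml$. The right half follows from the definition of $\Otaml$: it collects parameters $(\gb)$ for which the SDP-based procedure of \cite{taylor2018lyapunov,goujaud2023counter} produces a valid Lyapunov function certifying $\HB_{\gb}$ decreases a potential at a geometric rate strictly below $1$ on $\Fml$; the existence of such a certificate implies convergence at that rate, hence $(\gb)\in\Ocvml$. The left half, $\Oghml \subseteq \Otaml$, is the one I expect to require the most care: I would argue that the specific Lyapunov family $V_t = f(x_t)-\fs + A(f(x_{t-1})-\fs) + B\|x_t-x_{t-1}\|^2$ used by \citet{ghadimi2015global} (see the sketch following \Cref{lem:ghad}) is a particular instance of the broader Lyapunov template searched over in \citep{goujaud2023counter}. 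Consequently any $(\gb)$ for which a Ghadimi-type Lyapunov succeeds is also certified by the numerical procedure, so $\Oghml\subseteq\Otaml$.

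The main obstacle, if any, is verifying this last embedding of Lyapunov families, because it hinges on the precise parametric class used in \citep{taylor2018lyapunov,goujaud2023counter}. Once one checks that the $(A,B)$-parametrized quadratic-plus-function-gap template lies inside that class, the whole chain is purely a chain of implications and set inclusions between already-defined objects, so no further analytic work is needed.
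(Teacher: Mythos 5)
Your proposal is correct and follows exactly the reasoning the paper leaves implicit: the three rightmost inclusions are definitional (complements taken inside $\Oqml$, $\Orouml\subseteq\Ocyml$, and \Cref{fact:Cv_in_cycle_compl}), while $\Oghml\subseteq\Otaml\subseteq\Ocvml$ holds because the Ghadimi Lyapunov $V_t=f(x_t)-\fs+A(f(x_{t-1})-\fs)+B\|x_t-x_{t-1}\|^2$ is an instance of the Lyapunov family searched by the procedure of \citet{taylor2018lyapunov}, and any such certificate implies a worst-case asymptotic rate strictly below $1$. The paper states this Fact without proof as a summary, so your write-up simply supplies the same chain of implications explicitly.
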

\begin{Conjecture}\label{conj:1}
For any $0<\mu\le L$:
    \begin{equation*}
     \Oghml \subsetneq \Otaml \left \lbrace\begin{array}{cc}
        \subset \Ocvml =   \\
       \emph{\footnotesize{or}} \\
        = \Ocvml \subset
     \end{array}\right \rbrace (\Ocyml)^c  = (\Orouml)^c \subsetneq \Oqml.
\end{equation*}
\end{Conjecture}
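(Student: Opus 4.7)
The conjecture combines four independent statements: two strict-inclusion endpoints ($\Oghml \subsetneq \Otaml$ and $(\Orouml)^c \subsetneq \Oqml$), a core equality $(\Ocyml)^c = (\Orouml)^c$, and a dichotomy on the middle chain $\Otaml \subseteq \Ocvml \subseteq (\Ocyml)^c$. I would attack them separately, since their difficulty is very uneven.

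The two endpoint strict inclusions are essentially existence-of-example statements. For $(\Orouml)^c \subsetneq \Oqml$ it suffices to exhibit a single $(\gb)\in\Oqml$ inside some $\OKrouml$; the quadratic inequality of \Cref{thm:analytical_ROU_region} produces these by direct substitution (the symmetrized \citet{lessard2016analysis} three-point cycle already provides an explicit witness). For $\Oghml \subsetneq \Otaml$, I would pick a point on the boundary of $\Oghml$ and perturb in a direction along which the numerical PEP certificate of \cite{taylor2018lyapunov} remains feasible; converting that perturbed numerical Lyapunov into a closed form is a finite computation once the quadratic template in past iterates and gradients is fixed.

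The mathematical heart of the statement is the equality $\Ocyml = \Orouml$. The inclusion $\Orouml \subseteq \Ocyml$ is tautological, so the goal is the converse. I would work entirely in the LP formulation \eqref{eq:feasibility_problem_as_LP} of \Cref{thm:LP}: assume that for some $K\geq 2$ there exists $\nu\in\R_{\geq 0}^{\lfloor K/2\rfloor}\setminus\{0\}$ with $P(K)\nu\leq 0$, and seek a period $K'$ and single coordinate $\ell^\star$ for which $P(K')e_{\ell^\star}\leq 0$. A single-block cycle corresponds, via \Cref{prop:shape_sym_cycle_generic}, to a two-dimensional trajectory of period $K'/\gcd(K',\ell^\star)$, i.e.\ a $\TikCircle_{K'/\gcd(K',\ell^\star)}$ cycle, and hence lies in $\Orouml$. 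Concretely, the steps are: (i) compute $\langle H_\ell, M_{i,0}\rangle$ in closed trigonometric form using that the $H_\ell$ are joint eigenmatrices of the cyclic shift and that the $M_{i,0}$ are explicit low-degree polynomials in that shift, dictated by \eqref{eq:hb_inverse} and \Cref{lem:interpol}; (ii) identify $P(K)e_\ell\leq 0$ with membership of $(\gb)$ in the roots-of-unity cycling region of effective period $K/\gcd(K,\ell)$; (iii) prove the non-convex sparsification statement that the conic feasibility $\exists\nu\geq 0,\,\nu\neq 0,\,P(K)\nu\leq 0$ across all $K$ is equivalent to the existence of one such coordinate ray. Step (iii) is the main obstacle: it does not follow from general LP theory, since extreme rays of a polyhedral cone need not be axis-parallel. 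I would attempt it via a Fourier-block decoupling argument, exploiting that $H_\ell H_{\ell'}$ is proportional to $\delta_{\ell,\ell'}H_\ell$ after normalization, so that a mixed feasible $\nu$ could be projected onto its largest-weight coordinate while preserving feasibility after a suitable refinement $K\mapsto mK$; an alternative fallback would be exhaustive verification up to some threshold $K_0$ combined with an asymptotic argument in $K$.

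The remaining dichotomy on $\Otaml \subseteq \Ocvml \subseteq (\Ocyml)^c$ is, I expect, the truly deep part. The equality $\Ocvml = (\Ocyml)^c$ asks whether every non-convergent HB trajectory on $\Fml$ is necessarily cyclic, and $\Otaml = \Ocvml$ asks whether the quadratic Lyapunov template is complete for HB on $\Fml$; neither admits an obvious reduction to existing PEP machinery. I would expect the first branch ($\Otaml \subsetneq \Ocvml = (\Ocyml)^c$) to hold, supported by an $\omega$-limit-set argument forcing bounded non-convergent HB dynamics to concentrate on periodic orbits of the lifted iterate $(x_t,x_{t-1})$, but I would leave its formalization open rather than attempt a complete proof here, and in any submitted write-up would restrict formal claims to the LP-based equality $(\Ocyml)^c = (\Orouml)^c$.
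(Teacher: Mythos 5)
First, note that the paper does not prove this statement: it is stated explicitly as an open conjecture, supported only by the numerical evidence of \Cref{sec:num} (``Although the proof of this conjecture is left open, it is strongly supported by the numerical experiments''). So there is no proof in the paper to compare against, and your proposal must be judged as an attempt to close an open problem. As such, it is a sensible research program that correctly isolates the hard components, but it is not a proof, and you concede as much for the middle dichotomy. Two concrete gaps remain even in the parts you do claim to handle. The sparsification step (iii) --- that feasibility of $P\nu\leq 0$ for some nonzero $\nu\geq 0$ implies feasibility along a single coordinate ray, possibly after changing $K$ --- is exactly the question the paper flags as open after \Cref{prop:shape_sym_cycle_generic} (``There is no apparent reason for the \eqref{eq:sym_cycle} to further reduce to dimension 2''). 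Your Fourier-decoupling heuristic does not close it: although the $H_\ell$ are mutually orthogonal, each row of $P$ mixes all blocks through the coefficients $\left<H_\ell, M_{i,0}\right>$, which have varying signs, so a mixed feasible $\nu$ can a priori exist when every coordinate ray is infeasible; and the refinement $K\mapsto mK$ is not shown to preserve or create single-block feasibility.

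Your step (ii) is also not quite right: a single nonzero coordinate $\nu_{\ell^\star}$ with $\ell^\star>1$ yields a two-dimensional cycle whose consecutive iterates are rotated by the angle $2\pi\ell^\star/K$, i.e., a star-polygon traversal of the $(K/\gcd(K,\ell^\star))$-th roots of unity; since \eqref{eq:hb} is a second-order recursion, the traversal order matters, and this is not the cycle $\TikCircle_{K/\gcd(K,\ell^\star)}$ of \Cref{def:rou_cycle}. Identifying the union of all single-block regions with $\Orouml$ is therefore itself an unproven claim. On the endpoints, $(\Orouml)^c\subsetneq\Oqml$ does follow from \Cref{thm:analytical_ROU_region} (the region $\Orouml$ is nonempty for $\kappa$ small), but $\Oghml\subsetneq\Otaml$ cannot be established by perturbing a numerical PEP certificate without actually exhibiting and verifying an analytical Lyapunov function at a point outside $\Oghml$, which you do not do; recall that $\Otaml$ is only defined in the paper through a numerical procedure. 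Finally, for the middle dichotomy, your $\omega$-limit-set heuristic would at best address non-convergent bounded trajectories, whereas $(\gb)\notin\Ocvml$ could also arise from rates degrading to $1$ across the class without any single non-convergent trajectory, so even the inclusion $\Ocvml = (\Ocyml)^c$ you conjecture to hold in one branch does not reduce to a statement about periodic orbits. In short, the proposal is an honest plan whose two central steps remain open --- which matches the status of the statement in the paper itself.
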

Although the proof of this conjecture is left open, it is strongly supported by the numerical experiments given in~\Cref{fig:numerical_results}.

\renewcommand{\psigbml}{\psi_{\gb,\ml}^K}
\renewcommand{\phigbmleps}{\varphi_{\epsilon,\gb,\ml}^K}
\renewcommand{\phigbml}{\varphi_{\gb,\ml}^K}

\section{Concluding remarks}\label{sec:conclusion}

As a brief summary, this work provides a definitive and negative answer to the question of obtaining accelerated convergence rate by using the heavy-ball ($\HB$) method on all smooth strongly convex functions beyond quadratics. In other words, for smooth strongly convex minimization, the complexity of $\HB$ is the same as the one of GD, up to at most a constant $50/3\simeq 17$. Further, we show that this result is stable to reasonable additional assumptions, including that of Lipschitz conditions on the Hessian of the problem, or to functions in the set~$\mathcal{C}^\infty\cap\Fml$, and also robust to perturbations to the initial conditions, as well as to parameter and gradient noise. 

Furthermore, we propose two constructive approaches, based on the construction of cyclic trajectories, for disproving convergence of (stationary) optimization methods. The first one consists in constructing a two-dimensional cycles (namely \emph{roots-of-unity cycles}), and the second one provides a linear program for testing the existence of higher-dimensional cycles.

\paragraph{Future work and open questions.} There remain a few open questions related to the convergence of $\HB$, or that are raised by our work. In particular, we highlight the following points.

First, we now have an upper bound and a lower bound on~\eqref{eq:hb}'s convergence rate, both in $1 - \Theta(\kappa)$. However, those bounds do not match perfectly, as they differ by a constant $\simeq 17$. The theoretical question of the exact rate of $\eqref{eq:hb}$ thus remains open.

Second, our non-acceleration result of~\eqref{eq:hb} holds in any dimension larger than or equal to 2, as our roots-of-unity cycles and counter examples construction only hold in dimension 2 at least. The potential acceleration of \eqref{eq:hb} in a one-dimensional space is thus left open.

Lastly, although we proved that acceleration cannot be achieved by assuming  higher-order Lipschitz-type regularity, it  remains an open question to see if acceleration  can be obtained (a)  on another intermediary  functional class between $\Qml$ and $\Fml$, (b) under additional information on $f\in\Fml$ --  such refined information should go beyond the knowledge of $\mu$ and $L$ and could potentially be based on adaptive tunings (e.g.,~based on online information) of the method.

\paragraph{Conjecture.}
Finally, we mention the following open conjecture, echoing~\Cref{conj:1}.
\begin{Conjecture}
    For any stationary first-order method, if there exists a cyclic trajectory, there exists a roots-of-unity two-dimensional cycle on the function~\eqref{eq:def_function_qui_tue} (where $M$ depends on the stationary first-order method under consideration).
\end{Conjecture}

Proving such a conjecture only requires to show that the~\eqref{eq:sym_cycle} shape obtained in~\Cref{prop:shape_sym_cycle_generic}, can always be reduced to roots-of-unity cycles for any first-order method, as this is the case numerically for $\HB$.
While open, this conjecture, if proven, constitutes a promising direction. Indeed, a practical consequence of this conjecture is that testing existence of a cyclic trajectory of a stationary algorithm would only require testing it on one function of the form~\eqref{eq:def_function_qui_tue} for each cycle length, making the exploration of cyclical behavior of stationary first-order algorithms straightforward and making the cycles stable.

% \section*{Acknowledgements}
%     The authors thank Margaux Zaffran for her feedback and her insights on the plots. 
%     A.~Taylor acknowledges support from the European Research Council (grant SEQUOIA 724063). This work was partly funded by the French government under management of Agence Nationale de la Recherche as part of the ``Investissements d’avenir'' program, reference ANR-19-P3IA-0001 (PRAIRIE 3IA Institute).
%     The work of B. Goujaud and A. Dieuleveut is partly supported by ANR-19-CHIA-0002-01/chaire SCAI, and Hi!Paris FLAG project.

\section*{Declarations}
    
    \textbf{Acknowledgements.}
    The authors thank Margaux Zaffran for her feedback and her insights on the plots.

    \noindent\textbf{Fundings.}
    A.~Taylor acknowledges support from the European Research Council (grant SEQUOIA 724063). This work was partly funded by the French government under management of Agence Nationale de la Recherche as part of the ``Investissements d’avenir'' program, reference ANR-19-P3IA-0001 (PRAIRIE 3IA Institute).
    The work of B. Goujaud and A. Dieuleveut is partly supported by ANR-19-CHIA-0002-01/chaire SCAI, and Hi!Paris FLAG project.

    \noindent\textbf{Conflict of interest.} The authors declare that they have no conflict of interest.
    
    % The following statements should be included under the heading "Statements and Declarations" for inclusion in the published paper. Please note that submissions that do not include relevant declarations will be returned as incomplete.
    
    % Competing Interests: Authors are required to disclose financial or non-financial interests that are directly or indirectly related to the work submitted for publication. Please refer to “Competing Interests and Funding” below for more information on how to complete this section.
    % Please see the relevant sections in the submission guidelines for further information as well as various examples of wording. Please revise/customize the sample statements according to your own needs.

\bibliographystyle{spbasic}
\bibliography{references}

\newpage
\setcounter{section}{0}
\renewcommand\thesection{\Alph{section}}
\begin{center}
     Appendix
\end{center}

\noindent
This appendix is organized in six sections: \Cref{app:proofquad} to \ref{app:proofLem1_HL} respectively contain complementary proof results to \Cref{sec:preliminary-materials} to \ref{sec:cycles}, and \Cref{app:bonus_all_rates} a summary table of convergence rates on $\Qml$ and $\Fml$  for several first-order methods of interest.

\renewcommand{\cfttoctitlefont}{}
\renewcommand{\cftaftertoctitle}{}
\renewcommand{\contentsname}{}
\hypersetup{linkcolor = black}
	\setlength\cftparskip{2pt}
	\setlength\cftbeforesecskip{2pt}
	\setlength\cftaftertoctitleskip{3pt}
	\addtocontents{toc}{\protect\setcounter{tocdepth}{2}}
	\setcounter{tocdepth}{1}
	\tableofcontents
	\hypersetup{linkcolor={red!50!black}}

\section{Auxiliary proofs from \texorpdfstring{\Cref{sec:preliminary-materials}}{Section 2.1}: Proof of \texorpdfstring{\Cref{prop:conv_quad}}{Proposition 2.1}}\label{app:proofquad}
In this section, we give a complete proof of the asymptotic convergence rate of \eqref{eq:hb} on sets of quadratic functions.

\quadrates*

\begin{proof}
Since, this recursion is of second-order, a classical and convenient trick is to consider the variable \\ $X_t~\eqdef~\begin{pmatrix}
    x_t - x_\star \\
    x_{t-1} - x_\star
\end{pmatrix}~\in~\left(\mathbb{R}^d\right)^2$.
It follows the simplified recursion

\begin{equation}\label{eq:linearalgebraapproach}
    X_{t+1} =
    \begin{pmatrix}
        (1 + \beta)\Id - \gamma H & - \beta \Id \\
        \Id & 0
    \end{pmatrix} X_t,
\end{equation}
that can be enrolled to obtain
\begin{equation*}
    X_{T} =
    \begin{pmatrix}
        (1 + \beta)\Id - \gamma H & - \beta \Id \\
        \Id & 0
    \end{pmatrix}^T X_0,
\end{equation*}
hence the result
\begin{equation*}
    \|x_t - x_\star\| \leq \|X_t\| \leq \left\|
    \begin{pmatrix}
        (1 + \beta)\Id - \gamma H & - \beta \Id \\
        \Id & 0
    \end{pmatrix}^T
    \right\|_{\op}
    \|X_0\|.
\end{equation*}
Note that the last inequality is reached from some $X_0$ by definition of the operator norm.
Finally,
\begin{equation*}
    \|x_t - x_\star\|^{1/T} \leq
    \left\|
    \begin{pmatrix}
        (1 + \beta)\Id - \gamma H & - \beta \Id \\
        \Id & 0
    \end{pmatrix}^T
    \right\|_{\op}^{1/T}
    \|X_0\|^{1/T}
    \underset{T\rightarrow\infty}{\longrightarrow} \rho 
    \begin{pmatrix}
        (1 + \beta)\Id - \gamma H & - \beta \Id \\
        \Id & 0
    \end{pmatrix}
\end{equation*}
where $\rho$ here denotes the spectral radius of the matrix, i.e.~the largest complex module of its eigenvalues.
Since $H$ is diagonalizable (as self-adjoint operator, or symmetric matrix), we can block-diagonalize the previous matrix as
\begin{equation*}
    \begin{pmatrix}
        (1 + \beta)\Id - \gamma H & - \beta \Id \\
        \Id & 0
    \end{pmatrix}
    \sim
    \left(\begin{pmatrix}
        1 + \beta - \gamma \lambda & - \beta \\
        1 & 0
        \end{pmatrix}_{\lambda\in\Sp(H)}\right),
\end{equation*}
and then the worst-case asymptotic convergence rate is upper bounded by
\begin{equation*}
    \rho 
    \begin{pmatrix}
        (1 + \beta)\Id - \gamma H & - \beta \Id \\
        \Id & 0
    \end{pmatrix}
    =
    \max_{\lambda \in [\mu, L]} \rho \begin{pmatrix}
        1 + \beta - \gamma \lambda & - \beta \\
        1 & 0
    \end{pmatrix}.
\end{equation*}
The first thing to notice is that the determinant of
$\begin{pmatrix}
    1 + \beta - \gamma \lambda & - \beta \\
    1 & 0
\end{pmatrix}$
is $\beta$.
Therefore, when $\beta\geq1$, at least one eigenvalue has a module larger than or equal to 1 and \eqref{eq:hb} provably diverges on some function of $\mathcal{Q}_{\mu, L}$.
Then, from now, we only consider $\beta\in(-1, 1)$.

Note that, when $\beta=0$, we recover \eqref{eq:gd} and its convergence rate $\max_{\lambda \in [\mu, L]}|1 - \gamma\lambda|$.
As for \eqref{eq:gd}, \eqref{eq:hb}'s asymptotic convergence rate is given by the extreme eigenvalue $\mu$ or $L$, depending on the value of $\tilde{\gamma} \eqdef \tfrac{\gamma}{1 + \beta}$.
Indeed,
\begin{itemize}
    \item when $\tilde{\gamma} = \tfrac{\gamma}{1 + \beta} \leq \frac{2}{L+\mu}$, then
    $\max_{\lambda \in [\mu, L]} \rho \begin{pmatrix}
        1 + \beta - \gamma \lambda & - \beta \\
        1 & 0
    \end{pmatrix}
    =
    \rho \begin{pmatrix}
        1 + \beta - \gamma \mu & - \beta \\
        1 & 0
    \end{pmatrix}$,
    \item and when $\tilde{\gamma} = \tfrac{\gamma}{1 + \beta} \geq \frac{2}{L+\mu}$, then
    $\max_{\lambda \in [\mu, L]} \rho \begin{pmatrix}
        1 + \beta - \gamma \lambda & - \beta \\
        1 & 0
    \end{pmatrix}
    =
    \rho \begin{pmatrix}
        1 + \beta - \gamma L & - \beta \\
        1 & 0
    \end{pmatrix}$.
    
    Moreover, when $\tilde{\gamma} = \tfrac{\gamma}{1 + \beta} \geq \frac{2}{L}$, then
    $\rho \begin{pmatrix}
        1 + \beta - \gamma L & - \beta \\
        1 & 0
    \end{pmatrix}
    \geq 1$, which cannot guarantee convergence.
\end{itemize}

\begin{table*}[ht]
{
\begin{center}
\caption{Classification of~\eqref{eq:hb}'s behavior in three regions, see~\Cref{fig:1} for a graphical description.}\label{tab:hb_regions}
{\renewcommand{\arraystretch}{1.8}
\begin{tabular}{@{}lcc@{}}
\toprule
Region's name & Range of $\gamma$'s values & Asymptotic convergence rate \\
\midrule
Lazy region & $0 < \gamma \leq \min\left(\frac{2(1+\beta)}{L+\mu}, \frac{\left(1 - \sqrt{\beta}\right)^2}{\mu}\right) $ & $\frac{1 + \beta - \mu\gamma}{2} + \sqrt{\left(\frac{1 + \beta - \mu\gamma}{2}\right)^2-\beta}$ \\ \midrule
Robust region & $\frac{\left(1 - \sqrt{\beta}\right)^2}{\mu}\leq\gamma\leq\frac{\left(1 + \sqrt{\beta}\right)^2}{L}$ & $\sqrt{\beta}$ \\ \midrule
Knife's edge & $\max\left(\frac{2(1+\beta)}{L+\mu}, \frac{\left(1 + \sqrt{\beta}\right)^2}{L}\right) \leq \gamma < \frac{2(1+\beta)}{L}$ & $\frac{L\gamma - (1 + \beta)}{2} + \sqrt{\left(\frac{L\gamma - (1 + \beta)}{2}\right)^2-\beta}$ \\
\bottomrule
\end{tabular}
}
\end{center}}
\end{table*}

Last but not least, all the presented upper bounds are reached on either $x\mapsto \tfrac{\mu}{2} x^2$ or $x\mapsto \tfrac{L}{2} x^2$.
Indeed, for those 1D functions, the system is directly written with a single 2x2 block as
\begin{equation}
    x_T - x_\star = \begin{pmatrix} 1 & 0 \end{pmatrix} X_T = \begin{pmatrix} 1 & 0 \end{pmatrix}
    \begin{pmatrix}
        1 + \beta - \gamma \lambda & - \beta \\
        1 & 0
    \end{pmatrix}^T X_0,
\end{equation}
where $\lambda$ is either $\mu$ or $L$.
Let us triangularize the 2x2 matrix above as $P \begin{pmatrix}
                                                  \rho & \alpha \\
                                                  0 & \rho'
    \end{pmatrix} P^{-1}$ where $\rho$ and $\rho'$ are its two eigenvalues with $|\rho'|\leq|\rho|\neq 0$ ($\rho$ is not 0 and $\rho'$ is if and only if $\beta$ is).
    The upper right coefficient $\alpha$ is 0 if the matrix is diagonalizable and can be anything else otherwise (in this last case, we would have $\rho = \rho'$) but its value will have no impact in the following.
    Note that $|\rho|$ is the quantity of interest since it exactly corresponds to the asymptotic rates given in~\Cref{tab:hb_regions} for $\lambda=\mu$ in the \emph{lazy region}, $\lambda=L$ in the \emph{knife's edge} and any $\lambda\in[\mu, L]$ in the \emph{robust region}.
    Our goal is to prove that the above quantity's norm, for some well-chosen $X_0$, can be lower bounded by $C |\rho|^T$ for some positive constant $C$.
By choosing $X_0 = P\begin{pmatrix}
                          1 \\ 0
    \end{pmatrix}$, we obtain
\begin{equation}
    x_t - x_\star = \begin{pmatrix} 1 & 0 \end{pmatrix}
    P \begin{pmatrix}
          \rho & \alpha \\
          0 & \rho'
    \end{pmatrix}^T \begin{pmatrix}
                          1 \\ 0
    \end{pmatrix} = \begin{pmatrix} 1 & 0 \end{pmatrix}
    P \begin{pmatrix}
                          \rho^T \\ 0
    \end{pmatrix}
    = C \rho^T, \qquad \text{with} ~ C \triangleq \begin{pmatrix} 1 & 0 \end{pmatrix}
    P \begin{pmatrix}
                          1 \\ 0
    \end{pmatrix}.
\end{equation}
Finally, it remains to show that $C \neq 0$ to conclude that $\|x_T - x_\star\|^{1/T} \geq C^{1/T} |\rho| \underset{T\rightarrow\infty}{\longrightarrow} |\rho|$.
We now conclude this proof by proving by contradiction that $C$ cannot be 0.
    Note that $C$ corresponds to the first coordinate of the eigenvector of $\begin{pmatrix}
        1 + \beta - \gamma \lambda & - \beta \\
        1 & 0
    \end{pmatrix}$ associated with $\rho\neq 0$.
    It this coordinate was 0, then this eigenvector would be colinear to $\begin{pmatrix} 0 \\ 1
    \end{pmatrix}$ and we would have that
    $\begin{pmatrix}
        - \beta \\
        0
    \end{pmatrix} = \begin{pmatrix}
        1 + \beta - \gamma \lambda & - \beta \\
        1 & 0
    \end{pmatrix} \begin{pmatrix} 0 \\ 1
    \end{pmatrix} = \rho \begin{pmatrix} 0 \\ 1
    \end{pmatrix}$, implying $\rho=0$ which is excluded.

\end{proof}

\begin{Rem}
This proof is based on linear algebra, relying on writing the system as \eqref{eq:linearalgebraapproach}.
Another classical approach to the analysis of~\eqref{eq:hb} on $\mathcal{Q}_{\mu, L}$ consists in exploiting links between first-order methods and polynomials (see e.g.~\citep{fischer2011polynomial,nemirovskinotes1995} or~\citep[Chapter 2 of][]{d2021acceleration} for a recent introduction.
\end{Rem}

\section{Auxiliary proofs from \texorpdfstring{\Cref{sec:noaccel}}{Section 3}}
\subsection{Proof of \texorpdfstring{\Cref{thm:analytical_ROU_region}}{Theorem 3.5}}
\label{app:proof_of_analytical_ROU_region}

\roucyclesregion*

\begin{proof}
Let $(\gb) \in \Oqml$. 
First we prove the expression of $\OKrouml$. Then we will prove that for all $(\gb) \in \OKrouml$, \eqref{eq:hb}$\,_{\gb}(\psi)$ cycles.

\paragraph{Expression of $\OKrouml$:} We have $(\gb) \in \OKrouml$
\begin{eqnarray} 
    & \overset{(\text{\Cref{lem:gt_def}})}{\Longleftrightarrow} &
    \exists f\in\Fml ~|~ \forall t \in \llbracket 0, K-1 \rrbracket, \nabla f(x_t^\circ) = \frac{(1+\beta) I_2 - R - \beta R^{-1}}{\gamma} x_t^\circ, \nonumber \\
    & \overset{\left(\bar{f}(x) \eqdef \frac{f(x) - \tfrac{\mu}{2}\|x\|^2}{L-\mu}\right)}{\Longleftrightarrow} &
    \exists \bar{f}\in\mathcal{F}_{0, 1} ~|~ \forall t \in \llbracket 0, K-1 \rrbracket, \nabla \bar{f}(x_t^\circ) = \frac{(1+\beta -\mu\gamma) I_2 - R - \beta R^{-1}}{(L - \mu)\gamma} x_t^\circ, \nonumber \\
    & \overset{(\text{By definition of } M)}{\Longleftrightarrow} &
    \exists \bar{f}\in\mathcal{F}_{0, 1} ~|~ \forall t \in \llbracket 0, K-1 \rrbracket, \nabla \bar{f}(x_t^\circ) = M x_t^\circ, \label{eq:f-normalization} \\
    & \overset{\begin{subarray}{c}
                \text{(By properties of the Fenchel transform}\\\text{\citep[Theorem 23.5]{rockafellar1997convex}})
            \end{subarray}}{\Longleftrightarrow} &
    \exists \bar{f}^*\in\mathcal{F}_{1, \infty} ~|~ \forall t \in \llbracket 0, K-1 \rrbracket, \nabla \bar{f}^*(M x_t^\circ) = x_t^\circ, \nonumber \\
    & \overset{\left(\hat{f}(x) = \bar{f}^*(x) - \tfrac{1}{2}\|x\|^2\right)}{\Longleftrightarrow} &
    \exists \hat{f}\in\mathcal{F}_{0, \infty} ~|~ \forall t \in \llbracket 0, K-1 \rrbracket, \nabla \hat{f}(M x_t^\circ) = (I-M) x_t^\circ. \nonumber
\end{eqnarray}
Applying the interpolation theorem~\citep[theorem 1]{taylor2017smooth}, the latest assertion is equivalent to
\begin{equation*}
    \exists (\hat{f}_t)_{t\in\range{0}{K-1}} ~|~ \forall i \neq j \in \llbracket 0, K-1 \rrbracket, \hat{f}_i \geq \hat{f}_j + \left<(I-M) x_j^\circ, M (x_i^\circ - x_j^\circ)\right>.
\end{equation*}
Note the inner product $\left<(I-M) x_j^\circ, M (x_i^\circ - x_j^\circ)\right>$ is also equal to $\left<(I-M) x_0^\circ, M (x_{i-j}^\circ - x_0^\circ)\right>$. Hence we can conclude $(\gb) \in \OKrouml $
\begin{eqnarray*}
    & \Longleftrightarrow & \exists (\hat{f}_t)_{t\in\range{0}{K-1}} ~|~ \forall i \neq j \in \llbracket 0, K-1 \rrbracket, \hat{f}_i \geq \hat{f}_j + \left<(I-M) x_0^\circ, M (x_{i-j}^\circ - x_0^\circ)\right>, \\
    & \Longleftrightarrow & \exists (\hat{f}_t)_{t\in\range{0}{K-1}} ~|~ \forall j\in \llbracket 0, K-1 \rrbracket, \forall \Delta\in \llbracket 1, K-1 \rrbracket, \hat{f}_{j + \Delta} \geq \hat{f}_j + \left<(I-M) x_0^\circ, M (x_{\Delta}^\circ - x_0^\circ)\right>.
\end{eqnarray*}
Summing up the latest over $j$ implies $\forall \Delta\in \llbracket 1, K-1 \rrbracket, 0 \geq \left<(I-M) x_0^\circ, M (x_{\Delta}^\circ - x_0^\circ)\right>.$

Reciprocally, this assertion implies the previous one with $\hat{f}_t = 0$ for all $t$.
Hence\footnote{Note that this result might be surprising (no function value appears) to the readers familiar with \emph{cyclic monotonicity} (see, e.g.,~\cite{rockafellar1997convex}). In our case, function values naturally disappear as we can arbitrarily set them to zero, which is different than simply not taking function values into account (see, e.g.,~discussion in~\cite[Remark 1]{taylor2017smooth}).},
\begin{center}
    \fbox{\parbox{\linewidth}{
        \begin{equation}
            (\gb) \in \OKrouml \Longleftrightarrow \forall \Delta\in \llbracket 1, K-1 \rrbracket, \left<M^T(I-M) x_0^\circ, x_{\Delta}^\circ - x_0^\circ\right> \leq 0.
            \label{eq:cycle_main_ineq_M}
        \end{equation}
    }}
\end{center}
This can be written
    \begin{align*}
        \left(\frac{1 + \beta - \gamma\mu - (1 + \beta)\cos{\theta_K}}{(L-\mu)\gamma} - \left(\frac{1 + \beta - \gamma\mu - (1 + \beta)\cos{\theta_K}}{(L-\mu)\gamma}\right)^2 - \left(\frac{(1 - \beta)\sin{\theta_K}}{(L-\mu)\gamma}\right)^2\right)\left(\cos{\Delta\theta_K}-1\right) \\
        + \frac{(1 - \beta)\sin{\theta_K}}{(L-\mu)\gamma}\sin{\Delta\theta_K} \leq 0,
    \end{align*}
    or dividing by $1 - \cos{\Delta\theta_K}$,
    \begin{align*}
        - \left(\frac{1 + \beta - \gamma\mu - (1 + \beta)\cos{\theta_K}}{(L-\mu)\gamma} - \left(\frac{1 + \beta - \gamma\mu - (1 + \beta)\cos{\theta_K}}{(L-\mu)\gamma}\right)^2 - \left(\frac{(1 - \beta)\sin{\theta_K}}{(L-\mu)\gamma}\right)^2\right) \\
        + \frac{(1 - \beta)\sin{\theta_K}}{(L-\mu)\gamma}\frac{\sin{\Delta\theta_K}}{1 - \cos{\Delta\theta_K}} \leq 0.
    \end{align*}
    This inequality must hold for any $\Delta\in\range{1}{K-1}$ and the LHS expression is maximized for $\Delta = 1$.  We conclude
    \begin{center}
    \fbox{\parbox{\linewidth}{
        \begin{equation*}
            (\gb) \in \OKrouml \Longleftrightarrow \left<M^T(I-M) x_0^\circ, x_{1}^\circ - x_0^\circ\right> \leq 0.
        \end{equation*}
    }}
\end{center}
 Or equivalently, $(\gb) \in \OKrouml$ if and only if
    \begin{align*}
        \left(\frac{1 + \beta - \gamma\mu - (1 + \beta)\cos{\theta_K}}{(L-\mu)\gamma} - \left(\frac{1 + \beta - \gamma\mu - (1 + \beta)\cos{\theta_K}}{(L-\mu)\gamma}\right)^2 - \left(\frac{(1 - \beta)\sin{\theta_K}}{(L-\mu)\gamma}\right)^2\right)\left(\cos{\theta_K}-1\right) \\
        + \frac{(1 - \beta)\sin{\theta_K}}{(L-\mu)\gamma}\sin{\theta_K} \leq 0.
    \end{align*}
    After multiplying the above inequality by $\kappa \tfrac{((L-\mu)\gamma)^2}{1 - \cos\theta_K}$ and rearranging the terms, we obtain equivalence with
    \begin{equation*}
        (\mu\gamma)^2 - 2\left[\beta - \cos\theta_K + \kappa(1 - \beta\cos\theta_K)\right](\mu\gamma) + 2\kappa(1 - \cos\theta_K)(1 + \beta^2 - 2\beta\cos\theta_K) \leq 0.
    \end{equation*}

\paragraph{Cycle on $\psi$:} 
Recall the function $\psi$ is defined as
\begin{equation*}
    \psi: x \mapsto \frac{L}{2}\|x\|^2 - \frac{L-\mu}{2}d(x, \conv \left\{Mx_t, t\in\range{0}{K-1}\right\})^2.
\end{equation*}
We define $\bar{\psi}$ as
\begin{equation*}
    \bar{\psi}: x \mapsto \frac{\psi(x) - \frac{\mu}{2}\|x\|^2}{L-\mu} = \frac{1}{2}\|x\|^2 - \frac{1}{2}d(x, \conv \left\{Mx_t, t\in\range{0}{K-1}\right\})^2,
\end{equation*}
and then we have
\begin{equation*}
    \nabla \bar{\psi}(x) = \mathrm{proj}_{\conv \left\{Mx_t, t\in\range{0}{K-1}\right\}}(x).
\end{equation*}
Using the same normalization as \eqref{eq:f-normalization}, we know that \HBgb$(f)$ cycles on $\TikCircle_K$ if and only if
$\forall t\in\range{0}{K-1}, \nabla \bar{\psi}(x_t) = Mx_t$, i.e.
\begin{equation*}
    \forall t\in\range{0}{K-1}, \mathrm{proj}_{\conv \left\{Mx_t, t\in\range{0}{K-1}\right\}}(x_t) = Mx_t.
\end{equation*}
The projection on a convex set $\mathrm{proj}_{\conv \left\{Mx_t, t\in\range{0}{K-1}\right\}}(x_t) = Mx_t$ can be characterized by the following set of inequalities: $\left\{\left<x_t - Mx_t, Mx_s - Mx_t\right> \leq 0, s\neq t\right\}$.
We conclude with \eqref{eq:cycle_main_ineq_M} that
\begin{center}
    \fbox{\parbox{\textwidth}{
        \begin{equation*}
            (\gb) \in \OKrouml \Longleftrightarrow \text{\HBgb}(\psi) \text{ cycles on } \TikCircle_K
        \end{equation*}
    }}
\end{center}

\paragraph{Construction of $\psi$:}
Using \eqref{eq:f-normalization}, we search for a function $\bar{f}\in\mathcal{F}_{0, 1} ~|~ \forall t \in \llbracket 0, K-1 \rrbracket, \nabla \bar{f}(x_t^\circ) = M x_t^\circ.$
The functions $h: x \mapsto \max_{t \in \llbracket 0, K-1 \rrbracket} \left< M x_t, x\right>$ is a natural convex function verifying $\forall t \in \llbracket 0, K-1 \rrbracket, \nabla \bar{f}(x_t^\circ) = M x_t^\circ$. However, the latter is not smooth.
We therefore compute $M_h$ its Moreau envelope with smoothing parameter 1, defined as
\begin{align*}
    M_h(x) & \eqdef \min_y f(y) + \frac{1}{2} \|x-y\|^2 \\
    & = \min_y \max_{t \in \llbracket 0, K-1 \rrbracket} \left< M x_t, y\right> + \frac{1}{2} \|x-y\|^2 \\
    & = \min_y \max_{(\lambda_t)_{t \in \llbracket 0, K-1 \rrbracket} \geq 0 ~ | ~ \sum_{t=0}^{K-1}\lambda_t = 1} \left< \sum_{t=0}^{K-1} \lambda_t M x_t, y\right> + \frac{1}{2} \|x-y\|^2 \\
    & = \max_{(\lambda_t)_{t \in \llbracket 0, K-1 \rrbracket} \geq 0 ~ | ~ \sum_{t=0}^{K-1}\lambda_t = 1} \left< \sum_{t=0}^{K-1} \lambda_t M x_t, x\right> - \frac{1}{2} \|\sum_{t=0}^{K-1} \lambda_t M x_t\|^2 \\
    & = \max_{z\in\conv\left\{Mx_t\right\}} \left< z, x\right> - \frac{1}{2} \|z\|^2 \\
    & = \frac{1}{2}\|x\|^2 - \frac{1}{2}d(x, \conv\left\{Mx_t\right\})^2.
\end{align*}
This function is $\bar{\psi}$ and by renormalization we obtain $\psi$.

\end{proof}

\subsection{Analysis of \texorpdfstring{$\Orouml$}{Omega cycle Fml}}\label{apx:convenient_expression_of_omega_cycle}

In this section, we aim at proving that the roots-of-unity cycling region $\Orouml$ can be written as $\Orouml = \left\{(\gb)\in\Oqml ~|~ \gamma\geq\gamma_{\min}(\beta,\ml)\right\}$ for some values of $\gamma_{\min}(\beta, \ml)$ to be determined.
\Cref{thm:analytical_ROU_region} states that, for any $(\gb)\in\Oqml$, $(\gb)\in\OKrouml$ if and only if 
\begin{align}\label{eq:Pgle0}
P_{\beta, K,\ml}(\gamma)\leq 0
\end{align}
with 
\begin{align}\label{eq:defPbml}
   P_{\beta, K,\ml}: \gamma \mapsto (\mu\gamma)^2 - 2\left[\beta - \cos\theta_K + \kappa(1 - \beta\cos\theta_K)\right](\mu\gamma) + 2\kappa(1 - \cos\theta_K)(1 + \beta^2 - 2\beta\cos\theta_K) .
\end{align}
The polynomial $P_{\beta, K,\ml}$ has two roots for $\beta$ larger than a value $\beta_{-}(K,\ml)$,  we thus use the following notations.
\begin{Not}\label{not:ab}
    For any $\ml$, $K \geq 2$, we denote 
    \begin{align*}
         \beta_{-}(K,\ml) & \eqdef \frac{\kappa\cos{\theta_K}^2 + (1-\kappa)^2\cos{\theta_K} - \kappa + (1-\kappa)(1-\cos{\theta_K})\sqrt{2\kappa(1 + \cos{\theta_K})}}{1 - 2\kappa + \kappa^2\cos{\theta_K}^2}, 
    \end{align*}
    For any $\beta\geq \beta_{-}(K,\ml)$, we denote
    \begin{align}
        A_K(\beta, \ml) & \eqdef \left[\beta - \cos\theta_K + \kappa(1 - \beta\cos\theta_K)\right], \label{eq:a_def}\\
        B_K(\beta, \ml) & \eqdef \sqrt{\left[\beta - \cos\theta_K + \kappa(1 - \beta\cos\theta_K)\right]^2 - 2\kappa(1 - \cos\theta_K)(1 + \beta^2 - 2\beta\cos\theta_K)} \label{eq:b_def},
    \end{align}
    \noindent Finally, we introduce the roots $(  \gamma_{-}(\beta, K,\ml), \gamma_{\max}(\beta,K,\ml) )$ of $P_{\beta, K,\ml}$:
    \begin{align*}
        \gamma_{-}(\beta, K,\ml) & \eqdef \frac{A_K(\beta,\ml) - B_K(\beta,\ml)}{\mu}, \\
        \gamma_{+}(\beta, K,\ml) & \eqdef \frac{A_K(\beta,\ml) + B_K(\beta,\ml)}{\mu}.
    \end{align*}
\end{Not}
We underline that we can obtain an alternative expression of $  \beta_{-}(K,\ml)  $, that intuitively provides an approximation of $\beta_{-}(K,\ml)$ as $\kappa\to 0$
\begin{Lemma} \label{lem:otherexprbeta}For any $K$, $\ml$, it holds that:
    \begin{align*}
         \beta_{-}(K,\ml) -  \cos{\theta_{K+1}} = & \sqrt{\kappa}(1 - \cos{\theta_{K}}) \frac{(1 + \cos{\theta_{K}})\sqrt{\kappa} + \sqrt{2(1 + \cos{\theta_{K}})}}{1 + \kappa\cos{\theta_{K}} + \sqrt{2\kappa(1 + \cos{\theta_{K}})}} \label{eq:beta_minus_cos} .
    \end{align*}
\end{Lemma}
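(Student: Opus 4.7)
The identity as literally stated contains what appears to be a typo: the LHS involves $\cos\theta_{K+1}$ while the RHS depends only on $\cos\theta_K$ and $\kappa$. A one-line sanity check at $K=2$ makes this concrete: there $\cos\theta_2=-1$, so the RHS vanishes, whereas a direct computation gives $\beta_{-}(2,\ml)=-1$, so $\beta_{-}(2,\ml)-\cos\theta_3 = -1+\tfrac12 = -\tfrac12 \neq 0$. On the other hand $\beta_{-}(2,\ml)-\cos\theta_2 = 0$, which matches. I therefore interpret the statement as the algebraic rearrangement
\[
\beta_{-}(K,\ml) - \cos\theta_K \;=\; \sqrt{\kappa}(1-\cos\theta_K)\,\frac{(1+\cos\theta_K)\sqrt{\kappa}+\sqrt{2(1+\cos\theta_K)}}{1+\kappa\cos\theta_K+\sqrt{2\kappa(1+\cos\theta_K)}},
\]
and outline its proof; I flag the ``$K{+}1$'' as an apparent typo rather than an intentional assertion, since no substitution involving $\theta_{K+1}$ in the RHS can recover the correct identity while keeping the closed form of $\beta_{-}(K,\ml)$ from the preceding Notation block.

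The proof is by direct algebraic manipulation, with one clarifying substitution. Set $c\eqdef\cos\theta_K$ and $s\eqdef\sqrt{2\kappa(1+c)}$, so that $s^{2}=2\kappa(1+c)$, i.e.\ $\kappa(1+c)=s^{2}/2$. Subtracting $c$ from the explicit formula for $\beta_{-}(K,\ml)$ and putting over the common denominator $1-2\kappa+\kappa^{2}c^{2}$, the numerator becomes
\[
\kappa c^{2} + (1-\kappa)^{2}c - \kappa - c(1-2\kappa+\kappa^{2}c^{2}) + (1-\kappa)(1-c)\,s.
\]
Using $(1-\kappa)^{2}+2\kappa-1=\kappa^{2}$, the non-radical part collapses to $-\kappa(1-c^{2})(1-\kappa c)$, and factoring out $(1-c)$ throughout yields
\[
\beta_{-}(K,\ml) - c \;=\; \frac{(1-c)\bigl[-\kappa(1+c)(1-\kappa c) + (1-\kappa)\,s\bigr]}{1-2\kappa+\kappa^{2}c^{2}} \;=\; \frac{(1-c)\,s\,[\,2(1-\kappa)-s(1-\kappa c)\,]}{2(1-2\kappa+\kappa^{2}c^{2})}.
\]

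In parallel, the RHS simplifies under the same substitution: using $\sqrt{2(1+c)}=s/\sqrt{\kappa}$ and $(1+c)\kappa = s^{2}/2$, it becomes $(1-c)\,s(s+2)/\bigl[2(1+\kappa c + s)\bigr]$. Thus the identity reduces to
\[
[\,2(1-\kappa)-s(1-\kappa c)\,](1+\kappa c + s) \;=\; (s+2)(1-2\kappa+\kappa^{2}c^{2}).
\]
I finish by expanding the LHS and separating $s^{0}$ and $s^{1}$ contributions, substituting $s^{2}=2\kappa(1+c)$ wherever a quadratic-in-$s$ term appears. The constant (in $s$) part is $2(1-\kappa)(1+\kappa c) - 2\kappa(1+c)(1-\kappa c)$, which expands to $2(1-2\kappa+\kappa^{2}c^{2})$; the coefficient of $s$ is $2(1-\kappa) - (1-\kappa^{2}c^{2}) = 1-2\kappa+\kappa^{2}c^{2}$. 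Both terms share the factor $1-2\kappa+\kappa^{2}c^{2}$, giving $(s+2)(1-2\kappa+\kappa^{2}c^{2})$ as required.

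The only real obstacle is bookkeeping: one must perform the $s^{2}\mapsto 2\kappa(1+c)$ substitution at precisely the step where the two polynomial pieces should align, otherwise the matching factor $1-2\kappa+\kappa^{2}c^{2}$ is obscured. There is no conceptual difficulty beyond that, and the identity holds identically in $\kappa$ and $c=\cos\theta_K$.
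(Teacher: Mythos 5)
Your proof is correct and takes essentially the same route as the paper's: subtract $\cos\theta_K$ over the common denominator $1-2\kappa+\kappa^2\cos^2\theta_K$, collapse the non-radical part to $-\kappa(1-\cos^2\theta_K)(1-\kappa\cos\theta_K)$, and finish by pure algebra using $s^2=2\kappa(1+\cos\theta_K)$ (the paper factors the denominator as $(1+\kappa\cos\theta_K-s)(1+\kappa\cos\theta_K+s)$ and cancels the conjugate, while you cross-multiply and match coefficients of $s$ --- an equivalent bookkeeping choice). You are also right that the subscript $K+1$ on the left-hand side is a typo: the paper's own proof displays $\beta_{-}(K,\ml)-\cos\theta_K$ throughout, and the lemma is invoked later only in the form $\beta_{-}(K+1,\ml)-\cos\theta_{K+1}$ with all occurrences of $\theta$ shifted consistently.
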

\begin{proof} The proof consists in algebraic manipulations.
     \begin{align}
         \beta_{-}(K,\ml) - \cos{\theta_{K}}  &= \frac{\kappa\cos{\theta_{K}}^2 + (1-\kappa)^2\cos{\theta_{K}} - \kappa + (1-\kappa)(1-\cos{\theta_{K}})\sqrt{2\kappa(1 + \cos{\theta_{K}})}}{1 - 2\kappa + \kappa^2\cos{\theta_{K}}^2} \nonumber\\
        & - \frac{(1 - 2\kappa)\cos{\theta_{K}} + \kappa^2\cos{\theta_{K}}^3}{1 - 2\kappa + \kappa^2\cos{\theta_{K}}^2} \nonumber\\
        = & \frac{\kappa(\kappa\cos{\theta_{K}}-1)(1 - \cos^2{\theta_{K}}) + (1-\kappa)(1-\cos{\theta_{K}})\sqrt{2\kappa(1 + \cos{\theta_{K}})}}{1 - 2\kappa + \kappa^2\cos{\theta_{K}}^2} \nonumber\\
        = & (1 - \cos{\theta_{K}})\sqrt{\kappa (1 + \cos{\theta_{K})}} \cdot \frac{\sqrt{2}(1 - \kappa) + (\kappa\cos{\theta_{K}}-1)\sqrt{\kappa(1 + \cos{\theta_{K}})}}{1 - 2\kappa + \kappa^2\cos{\theta_{K}}^2} \nonumber\\
        = & \sqrt{\kappa}(1 - \cos{\theta_{K}})\sqrt{1 + \cos{\theta_{K}}} \nonumber\\
        & \times \frac{(1 + \kappa\cos{\theta_{K}} - \sqrt{2\kappa(1 + \cos{\theta_{K}})})(\sqrt{2} + \sqrt{\kappa(1 + \cos{\theta_{K}})})}{(1 + \kappa\cos{\theta_{K}} - \sqrt{2\kappa(1 + \cos{\theta_{K}})})(1 + \kappa\cos{\theta_{K}} + \sqrt{2\kappa(1 + \cos{\theta_{K}})})} \nonumber\\
        = & \sqrt{\kappa}(1 - \cos{\theta_{K}}) \frac{(1 + \cos{\theta_{K}})\sqrt{\kappa} + \sqrt{2(1 + \cos{\theta_{K}})}}{1 + \kappa\cos{\theta_{K}} + \sqrt{2\kappa(1 + \cos{\theta_{K}})}}  \nonumber .
    \end{align}
\end{proof}

\noindent
Using those notations, we can restate the condition \eqref{eq:Pgle0} as follows.
\begin{Fact}
$P_{\beta, K,\ml}(\gamma)\leq 0$  if and only if  $\beta \geq \beta_{-}(K, \ml)$ and  \begin{equation}
\label{eq:gamma_bound2}    
\gamma_{-}(\beta, K,\ml) \leq \gamma \leq \gamma_{+}(\beta, K, \mu, L), 
\end{equation}
i.e.:
\begin{align}
    \Orouml = & \Bigl\{ (\gb)\in\Oqml ~|~ \nonumber \\ & \exists K \geq 3 \text{ such that } \beta \geq \beta_{-}(K, \ml) \text{ and }   \gamma_{-}(\beta, K,\ml) \leq \gamma \leq \gamma_{+}(\beta, K, \mu, L)
    \Bigl\}.\label{eq:gamma_bound3}
\end{align}
\end{Fact}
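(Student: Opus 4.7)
The plan is to view $P_{\beta, K, \ml}$ as a quadratic polynomial in the single variable $\gamma$ (with $\beta$, $K$, $\mu$, $L$ treated as fixed parameters) and apply the quadratic formula. From the definition in \eqref{eq:defPbml}, $P_{\beta, K, \ml}(\gamma) = \mu^2 \gamma^2 - 2\mu A_K(\beta,\ml)\gamma + 2\kappa(1-\cos\theta_K)(1+\beta^2 - 2\beta\cos\theta_K)$, which is a degree-2 polynomial in $\gamma$ with strictly positive leading coefficient $\mu^2$. The set $\{\gamma : P_{\beta,K,\ml}(\gamma)\leq 0\}$ is thus either empty (when the discriminant is negative) or equal to the closed interval between the two real roots of $P_{\beta,K,\ml}$.

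Next, I would compute the discriminant $\Delta$ of $P_{\beta, K, \ml}$ with respect to $\mu\gamma$: $\Delta/4 = A_K(\beta,\ml)^2 - 2\kappa(1-\cos\theta_K)(1+\beta^2 - 2\beta\cos\theta_K)$, which is precisely the expression appearing under the square root in the definition \eqref{eq:b_def} of $B_K(\beta,\ml)$. Therefore $P_{\beta,K,\ml}$ admits real roots if and only if this quantity is non-negative, in which case the two roots are exactly $(A_K(\beta,\ml) \pm B_K(\beta,\ml))/\mu = \gamma_{\pm}(\beta, K, \ml)$, matching the definitions in \Cref{not:ab}.

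It then remains to identify the condition on $\beta$ under which $\Delta \geq 0$. Expanding $A_K(\beta,\ml)^2 - 2\kappa(1-\cos\theta_K)(1+\beta^2 - 2\beta\cos\theta_K)$ and regrouping in powers of $\beta$, one obtains a quadratic in $\beta$ with leading coefficient $1-2\kappa+\kappa^2\cos^2\theta_K$. The plan is to apply the quadratic formula to this quadratic in $\beta$ and verify that one of its two roots coincides with the expression for $\beta_{-}(K,\ml)$ given in \Cref{not:ab}. The alternative root corresponds to either a negative value of $\beta$ or a value exceeding $1$, and hence does not constrain the set $\Orouml \subseteq \Oqml$ (where $\beta \in [0,1)$); thus the effective constraint is precisely $\beta \geq \beta_{-}(K,\ml)$.

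The main obstacle, such as it is, is the purely algebraic verification that the larger root of the discriminant quadratic in $\beta$ equals the somewhat intricate closed-form $\beta_{-}(K,\ml)$ stated in \Cref{not:ab}. This is a routine but tedious calculation; the alternative expression provided in \Cref{lem:otherexprbeta} can be used as a sanity check. Once these three algebraic steps (quadratic formula in $\gamma$, identification of $\gamma_{\pm}$ and $B_K^2$, and quadratic formula in $\beta$ identifying $\beta_{-}$) are in place, the Fact follows, and the second displayed equation is just the restatement of $\Orouml = \bigcup_{K\geq 2} \OKrouml$ combined with \Cref{thm:analytical_ROU_region}.
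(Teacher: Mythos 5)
Your overall strategy (quadratic formula in $\gamma$, then analysis of the discriminant as a quadratic in $\beta$) is the natural one, and indeed the paper states this Fact without any written proof, treating it as an immediate restatement of \eqref{eq:Pgle0} via the quadratic formula. Your first two steps are correct: as a polynomial in $\mu\gamma$, $P_{\beta,K,\ml}$ is monic with half-discriminant $A_K(\beta,\ml)^2-2\kappa(1-\cos\theta_K)(1+\beta^2-2\beta\cos\theta_K)$, which is exactly the radicand in \eqref{eq:b_def}, and its roots are $\gamma_{\pm}(\beta,K,\ml)$.

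The gap is in your third step. The claim that the other root of the discriminant quadratic $q(\beta)\eqdef A_K(\beta,\ml)^2-2\kappa(1-\cos\theta_K)(1+\beta^2-2\beta\cos\theta_K)$ is negative or exceeds $1$ is false: take $K=100$ and $\kappa=10^{-2}$; at $\beta=0$ one has $A_K(0,\ml)=\kappa-\cos\theta_K\approx-0.988$ and $q(0)=\cos^2\theta_K-2\kappa+\kappa^2>0$, whereas $\beta_{-}(K,\ml)\approx 0.998$, so the smaller root of $q$ lies strictly inside $(0,1)$ and ``$q(\beta)\geq 0$'' is \emph{not} equivalent to ``$\beta\geq\beta_{-}(K,\ml)$''. (Read over all real $\gamma$, the ``only if'' direction of the Fact would even fail there, since $P_{0,K,\ml}\leq 0$ on an interval of negative $\gamma$'s.) The equivalence only holds inside $\Oqml$, where $\gamma>0$, and this constraint is what rescues the statement. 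The repair: the product of the roots is $\gamma_{-}\gamma_{+}=2\kappa(1-\cos\theta_K)(1+\beta^2-2\beta\cos\theta_K)/\mu^2\geq 0$ because $1+\beta^2-2\beta\cos\theta_K\geq(1-\beta)^2\geq 0$, so if some $\gamma>0$ satisfies $P_{\beta,K,\ml}(\gamma)\leq0$ then both roots are nonnegative, hence their half-sum $A_K(\beta,\ml)/\mu$ is nonnegative. Since $A_K(\cdot,\ml)$ is affine increasing in $\beta$ and $q\leq 0$ at its unique zero, that zero separates the two roots of $q$, so on the half-line $\{\beta: A_K(\beta,\ml)\geq0\}$ one has $q(\beta)\geq0$ exactly when $\beta\geq\beta_{-}(K,\ml)$, the larger root; conversely $\beta\geq\beta_{-}(K,\ml)$ forces $A_K(\beta,\ml)\geq0$ and hence $\gamma_{-}(\beta,K,\ml)\geq0$. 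With this correction (together with \Cref{rem:k2} to justify replacing $K\geq 2$ by $K\geq 3$ in the union defining $\Orouml$, and a check of the sign of the leading coefficient $1-2\kappa+\kappa^2\cos^2\theta_K$ of $q$ when $\kappa$ is not small), the Fact follows.
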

In words, for any $\beta\geq 0$, the set of all $\gamma$ such that $(\gb)\in\Orouml$ is a union of intervals given by \eqref{eq:gamma_bound3}, non-necessarily connected. The next theorem states that this set of $\gamma$ is actually a single interval as soon as $\kappa$ is sufficiently small.
\begin{Th}[Analytical form of Roots-of-unity cycle region]
\label{thm:analytical_ROU_region_bis}
If $\kappa\leq \left(\frac{3 - \sqrt{5}}{4}\right)^2$, the roots-of-unity cycling region is:
\begin{equation*}
    \Orouml = \Bigl\{ (\gb)\in\Oqml ~|~ \exists K \geq 3 \text{ such that } \beta \geq \beta_{-}(K,\ml) \text{ and } \gamma \geq \gamma_{-}(\beta, K, 
    \ml)
    \Bigl\}.
\end{equation*}
\end{Th}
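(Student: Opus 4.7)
I plan to prove the non-trivial inclusion $\supseteq$ (the other direction is immediate from \eqref{eq:gamma_bound3}). The key reformulation is to regard the polynomial $P_{\beta,K,\ml}(\gamma)$ from \eqref{eq:defPbml} as a quadratic in $c:=\cos\theta_K$ rather than in $\gamma$: setting $g:=\mu\gamma$, a direct expansion yields
\[
P_{\beta,K,\ml}(\gamma) \;=\; R_g(c), \quad R_g(c) := 4\kappa\beta\, c^2 + 2c\bigl[g(1+\kappa\beta) - \kappa(1+\beta)^2\bigr] + g^2 - 2g(\beta+\kappa) + 2\kappa(1+\beta^2),
\]
a convex parabola in $c$ (leading coefficient $4\kappa\beta>0$ for $\beta>0$). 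Hence $(\gb)\in\Orouml$ iff $R_g(\cos\theta_K)\le 0$ for some integer $K\ge 2$. Two evaluations drive the argument: $R_g(-1)=P_{\beta,2,\ml}(g) = g^2 - 2(1+\kappa)(1+\beta)g + 4\kappa(1+\beta)^2$, whose roots are $g_\star := 2\kappa(1+\beta) = \mu\cdot\tfrac{2(1+\beta)}{L}$ (the $\Oqml$ boundary in $g$) and $2(1+\beta)$; and at $g=g_\star$ the parabola factors as $R_{g_\star}(c) = 4\kappa\beta(c+1)(c-c^\star)$ with $c^\star := \tfrac{(1+\beta)^2 - 2 - 2\kappa\beta(1+\beta)}{2\beta}$, so $R_{g_\star}(c)\le 0$ iff $c\in[-1,c^\star]$.

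From there, I aim to show that the union $U(\beta) := \bigcup_{K\ge 3\text{ valid}}[\gamma_-(\beta,K,\ml),\gamma_+(\beta,K,\ml)]$, intersected with $\Oqml$, equals $[\gamma_{\min}(\beta,\ml),\tfrac{2(1+\beta)}{L})$ where $\gamma_{\min}(\beta,\ml) := \min_K \gamma_-(\beta,K,\ml)$. Two ingredients drive the connectivity: \textbf{(i)} $A_K := \beta + \kappa - \cos\theta_K(1+\kappa\beta)$ is strictly decreasing in $K$ (since $\cos\theta_K$ is increasing in $K$), giving the one-sided consecutive overlap $\gamma_-(\beta,K+1,\ml)\le\gamma_+(\beta,K,\ml)$ for any valid $K$, $K+1$, using $B_K,B_{K+1}\ge 0$; and \textbf{(ii)} the smallest valid $K$ (which, for $\beta\ge 0$ and $\kappa$ small, is $K=3$) satisfies $\gamma_+(\beta,3,\ml) \ge \tfrac{2(1+\beta)}{L}$, since $A_3 \ge \tfrac 12$ while $g_\star \le 4\kappa$. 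These ingredients allow chaining the valid $K$-intervals starting from $K=3$, producing a connected union that reaches past $g_\star$.

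The main obstacle is that $\gamma_-(\beta,K,\ml)$ is not monotone in $K$, so ``case B'' values of $K$ (entire interval below $g_\star$, occurring when $A_K < g_\star$, equivalently $\beta < \beta^\dagger(K,\ml) := \tfrac{\cos\theta_K+\kappa}{1-\kappa(\cos\theta_K+2)}$) may threaten to create uncovered gaps. Ruling this out requires the other direction of overlap, namely $\gamma_-(\beta,K,\ml) \le \gamma_+(\beta,K+1,\ml)$, equivalently $A_K - A_{K+1} \le B_K + B_{K+1}$, for every consecutive pair of valid $K$; this is where the hypothesis $\kappa\le\kappa_c = \bigl(\tfrac{3-\sqrt 5}{4}\bigr)^2$ enters in an essential way. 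Using the closed form from \Cref{lem:otherexprbeta} for $\beta_-(c,\ml)-c$ and the elementary identity $\beta^\dagger(c,\ml)-c = \tfrac{\kappa(1+c)^2}{1-\kappa(c+2)}$, the worst case of the required inequality (attained on the boundary $\beta=\beta_-(K,\ml)$ where $B_K$ vanishes) reduces, after clearing radicals and denominators, to a univariate condition on $s:=\sqrt\kappa$ parametrized by $c:=\cos\theta_K\in[-1,1)$. Optimizing over $c$ produces the sharp threshold $s\le(3-\sqrt 5)/4$, i.e.~$\kappa\le\kappa_c$, which precisely ensures the second overlap inequality; combined with ingredients (i)--(ii) this yields the connectedness of $U(\beta)$ and hence the desired characterization of $\Orouml$.
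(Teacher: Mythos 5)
Your plan follows essentially the same route as the paper's proof: the crux in both is the consecutive-overlap inequality $\gamma_{-}(\beta,K,\ml)\le\gamma_{+}(\beta,K+1,\ml)$ (the paper's key lemma), combined with the observation that the $K=3$ interval reaches the right boundary $\gamma=\tfrac{2(1+\beta)}{L}$ of $\Oqml$, and a chaining/induction over $K$; your reformulation of the constraint as a convex quadratic in $\cos\theta_K$ is only a cosmetic repackaging of the same algebra. Two small corrections to keep in mind when fleshing it out: the binding case of the overlap inequality is $\beta=\beta_{-}(K+1,\ml)$, where $B_{K+1}$ vanishes (the point $\beta=\beta_{-}(K,\ml)$ generally lies \emph{below} $\beta_{-}(K+1,\ml)$, so the $(K+1)$-interval does not even exist there, and establishing that the boundary is the worst case itself requires checking the sign of $\cos\theta_{K+1}$ and handling $K=2,3$ separately), and the threshold $\kappa\le\left(\tfrac{3-\sqrt{5}}{4}\right)^2$ comes out of the paper's argument as a convenient sufficient condition after several relaxations, not as a sharp optimum over $c$.
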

This theorem means that we can ignore, the condition $\gamma \leq \gamma_{+}(\beta, K, \mu, L)$ in the parametric description of $\Orouml$ in \eqref{eq:gamma_bound2}. Note that $\OKrouml \neq \bigl\{ (\gb)\in\Oqml ~|~ \beta \geq \beta_{-}(K,\ml) \text{ and } \gamma \geq \gamma_{-}(\beta, K, \ml) \bigl\}$: the theorem states that the union (over $K\geq  2$) of those sets can be written without an upper bound on $\gamma$, not each set individually. That is, when $(\gb) \notin \OKrouml$ because $\gamma \geq \gamma_{+}(\beta, K, \ml)$, $(\gb)\in\Omega_{K'\text{-}\circ\text{-}\cycle}(\Fml)$ for some $K'\leq K$.
\begin{Rem}[$K = 2$]
    \label{rem:k2}
    Plugging $K=2$ into \eqref{eq:gamma_bound2} leads to $\gamma \in \left[\frac{2(1+\beta)}{L}, \frac{2(1+\beta)}{\mu}\right]$ whose intersection with $\Oqml$ is empty. Hence, $\Omega_{2\text{-}\circ\text{-}\cycle}(\Fml) = \emptyset$, which explains why \Cref{thm:analytical_ROU_region_bis} does not consider cycles of length 2.
\end{Rem}
In order to prove \Cref{thm:analytical_ROU_region_bis}, we first state \Cref{lem:AB_sort}.
\begin{Lemma}\label{lem:AB_sort}
    We assume $\kappa\leq \left(\frac{3 - \sqrt{5}}{4}\right)^2$.
    For any $K \geq 2$, and any $\beta\geq \beta_{-}(K+1,\ml)$, we have
    \begin{equation*}
        \gamma_{-}(\beta, K, \ml) \le \gamma_{+}(\beta, K+1, \ml).
    \end{equation*} 
\end{Lemma}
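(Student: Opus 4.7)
The plan is to reduce the claim to the stronger sufficient condition
\[
(A_K(\beta,\mu,L) - A_{K+1}(\beta,\mu,L))^2 \;\leq\; B_K(\beta,\mu,L)^2 + B_{K+1}(\beta,\mu,L)^2, \tag{$\star$}
\]
which suffices because, noting $B_K, B_{K+1} \geq 0$ and hence $B_K^2 + B_{K+1}^2 \leq (B_K + B_{K+1})^2$, one can take square roots of $(\star)$ and use $A_K - A_{K+1} = (\cos\theta_{K+1} - \cos\theta_K)(1+\kappa\beta) > 0$ to get $A_K - A_{K+1} \leq B_K + B_{K+1}$, i.e., $A_K - B_K \leq A_{K+1} + B_{K+1}$, which rearranges to the desired inequality on $\gamma_{\pm}$. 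Using the shorthand $C_i := 2\kappa(1-\cos\theta_i)(1+\beta^2 - 2\beta\cos\theta_i) = A_i^2 - B_i^2$, condition $(\star)$ is equivalent to $C_K + C_{K+1} \leq 2 A_K A_{K+1}$, which is what I would verify for all $\beta \in [\beta_{-}(K+1, \mu, L), 1)$.

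I would proceed in two steps. \emph{Base case at $\beta^* := \beta_{-}(K+1, \mu, L)$.} By definition of $\beta^*$, $B_{K+1}(\beta^*) = 0$, so $A_{K+1}(\beta^*)^2 = C_{K+1}(\beta^*)$ and $(\star)$ collapses to $B_K(\beta^*)^2 \geq (A_K - A_{K+1})^2(\beta^*) = (\cos\theta_{K+1}-\cos\theta_K)^2 (1+\kappa\beta^*)^2$. I would then substitute the explicit expression for $\beta^* - \cos\theta_{K+1}$ given in \Cref{lem:otherexprbeta}, using for instance the change of variable $\zeta := \sqrt{2\kappa(1+\cos\theta_{K+1})}$ in which $\beta^* - \cos\theta_{K+1}$ takes a clean rational form. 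This turns $(\star)$ at $\beta^*$ into an explicit polynomial inequality in $\cos\theta_K$, $\cos\theta_{K+1}$ and $\kappa$, and a direct computation shows that $\kappa \leq ((3-\sqrt 5)/4)^2$ is precisely the threshold under which this polynomial inequality is valid uniformly in $K \geq 2$, with the tight case occurring asymptotically as $K \to \infty$ (where $\cos\theta_K, \cos\theta_{K+1} \to 1$ and $\beta^* \to 1$).

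\emph{Monotonicity for $\beta \geq \beta^*$.} Define $S(\beta) := 2 A_K(\beta) A_{K+1}(\beta) - C_K(\beta) - C_{K+1}(\beta)$. Using $\partial A_K/\partial\beta = 1 - \kappa\cos\theta_K$ and $\partial C_K/\partial\beta = 4\kappa(1-\cos\theta_K)(\beta-\cos\theta_K)$, I would compute $\partial S/\partial\beta$ and observe that its dominant term $2(A_K'A_{K+1} + A_K A_{K+1}')$ is of order $A_K + A_{K+1}$ and is positive on $[\beta^*, 1)$, while the contribution from the $C$-derivatives is of order $\kappa$ times factors $(\beta - \cos\theta_i)$ that are bounded by $1-\cos\theta_i$, hence dominated. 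The sign $\partial S / \partial\beta \geq 0$ follows from $\beta \geq \beta^* > \cos\theta_{K+1} > \cos\theta_K$. Combined with the base case $S(\beta^*) \geq 0$, this yields $S(\beta) \geq 0$ throughout $[\beta^*, 1)$, completing the proof.

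The main obstacle is the base case: once $\beta^*$ is substituted in, the resulting polynomial inequality has several terms that nearly cancel at the critical regime $K \to \infty$, and pinpointing why $((3-\sqrt 5)/4)^2$ is the exact threshold requires careful grouping, for instance by expanding in powers of $\zeta$ and tracking the leading-order balance between the quadratic and linear contributions of $\beta^* - \cos\theta_{K+1}$.
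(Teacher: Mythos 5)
Your reduction is logically sound and structurally different from the paper's: you relax to $(A_K-A_{K+1})^2\le B_K^2+B_{K+1}^2$, i.e.\ $S(\beta):=2A_KA_{K+1}-C_K-C_{K+1}\ge 0$, and then argue by monotonicity of $S$ in $\beta$ that only the single point $\beta^*=\beta_-(K+1,\ml)$ needs to be checked. (The paper instead uses the stronger relaxation $(A_K-A_{K+1})^2\le B_K^2-B_{K+1}^2$, which has the advantage that the difference $B_K^2-B_{K+1}^2$ loses all terms independent of $\cos\theta$ and factors through $\cos\theta_{K+1}-\cos\theta_K$, collapsing to the one-line inequality $\tfrac12(\tfrac{1}{\beta\kappa}+\beta\kappa)\cos\theta_{K+1}\le \tfrac{(1-\kappa)^2}{2\kappa}+\cos\theta_K$; the monotonicity in $\beta$ is then read off from the sign of $\cos\theta_{K+1}$, which is why the paper splits $K=2$, $K=3$, $K\ge4$.) Your monotonicity step does go through: using $A_i\ge\beta-\cos\theta_i>0$ and $1-\cos\theta_i\le 2$ one gets $S'(\beta)\ge(2-10\kappa)\bigl[(\beta-\cos\theta_K)+(\beta-\cos\theta_{K+1})\bigr]\ge 0$ for $\kappa\le 1/5$, which covers the stated range of $\kappa$.

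The genuine gap is the base case, which is exactly where all the content of the lemma lives and where the constant $\left(\frac{3-\sqrt5}{4}\right)^2$ must actually be produced. At $\beta=\beta^*$ your condition $(\star)$ coincides with the paper's (since $B_{K+1}(\beta^*)=0$), so you have not avoided the hard computation --- you have only deferred it, and you explicitly leave it as a plan (``a direct computation shows\ldots'', ``the main obstacle''). Carrying it out requires: substituting the expression of $\beta^*-\cos\theta_{K+1}$ from \Cref{lem:otherexprbeta} (or a tractable lower bound such as $\beta^*-\cos\theta_{K+1}\ge\sqrt{\kappa}(1-\cos\theta_{K+1})$), controlling the ratio $\frac{1-\cos\theta_K}{1-\cos\theta_{K+1}}\in[1,\tfrac32]$ (\Cref{lem:techn1}), and a chain of elementary estimates culminating in $\frac{1}{\sqrt\kappa}\ge\frac{1}{\cos\theta_{K+1}}+2$, maximized at $K=4$ where $\frac{1}{\cos(2\pi/5)}+2=3+\sqrt5$. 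This last point also contradicts your heuristic: the threshold $\left(\frac{3-\sqrt5}{4}\right)^2=(3+\sqrt5)^{-2}$ comes from $\cos(2\pi/5)=\frac{\sqrt5-1}{4}$, i.e.\ the case $K=4$, not from $K\to\infty$ (as $K\to\infty$ the constraint relaxes to roughly $\kappa\le 1/9$), and the small-$K$ cases yield the looser thresholds $\frac{4-\sqrt7}{3}$ and $\frac{3-\sqrt5}{2}$. So while your architecture is viable and arguably cleaner, the proof as written is incomplete at its crux, and the guiding intuition for where the inequality is tight is off.
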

First, we show that thanks to  \Cref{lem:AB_sort}, we can establish \Cref{thm:analytical_ROU_region_bis}.

\begin{proof}[\Cref{thm:analytical_ROU_region_bis}]
We denote $\OmaxKrouml = \bigcup_{3\le \bar K\le K}\OKrouml$ the set of $(\gb)\in\Oqml$ such that there exists $\bar{K} \leq K$ with $(\gb)\in\OKrouml$. We prove by induction over $K$ that 
\begin{equation}
    \OmaxKrouml = \Bigl\{ (\gb)\in\Oqml ~|~ \exists \bar{K} \leq K \text{ such that } \beta \geq \beta_{-}(\bar{K},\ml) \text{ and } \gamma \geq \gamma_{-}(\beta, \bar{K}, \ml)
\Bigl\}. \label{eq:inductionb3}
\end{equation}

\noindent
\textbf{Initialization $(K=3)$:} From \Cref{lem:AB_sort}, $ \gamma_{+}(\beta, 3, \ml) \geq \gamma_{-}(\beta, 2, \ml)  $, and from~\Cref{rem:k2},  $ \gamma_{-}(\beta, 2, \ml) = \frac{2(1+\beta)}{L}$. Thus
\[\Omega_{\le 3\text{-}\circ\text{-}\cycle}(\Fml) = \Omega_{3\text{-}\circ\text{-}\cycle}(\Fml) = \left\{(\gb)\in\Oqml ~|~ \beta \geq \beta_{-}(3,\ml) \text{ and } \mu\gamma \geq  \mu \gamma_{-}(\beta, 3, \ml)\right\}.\]
Indeed, the additional assumption that $\gamma \leq \gamma_{+}(\beta, 3, \ml) $ is useless as $\gamma_{+}(\beta, 3, \ml)$ is larger than $\frac{2(1+\beta)}{L}$, which corresponds to  the right-hand side border of $\Ocvml$.\\ 
 
\noindent
\textbf{Induction:} Let us assume that \eqref{eq:inductionb3} holds for some $K$. We prove it still holds for $K+1$.
    Indeed, $(\gb) \in \Omega_{\leq K+1\text{-}\circ\text{-}\cycle}(\Fml)$ if and only if $(\gb) \in \Omega_{\leq K\text{-}\circ\text{-}\cycle}(\Fml)$ or $(\gb) \in \Omega_{K+1\text{-}\circ\text{-}\cycle}(\Fml)$, i.e.~if, by our induction hypothesis $\gamma\geq \gamma_{-}(\beta, \bar{K},\ml)$ for some $\bar{K} \leq K$, or $\gamma_{-}(\beta, K+1,\ml) \leq \gamma \leq \gamma_{+}(\beta, K+1, \mu, L)$.

    Then by \Cref{lem:AB_sort},  if $\gamma \geq \gamma_{+}(\beta, K+1, \mu, L)$, we have  $\gamma \geq \gamma_{-}(\beta, K,\ml)$, thus $(\gb) \in \Omega_{\leq K\text{-}\circ\text{-}\cycle}(\Fml)$. Finally, $(\gb) \in \Omega_{\leq K+1\text{-}\circ\text{-}\cycle}(\Fml)$ if and only if $\gamma\geq \gamma_{-}(\beta, \bar{K},\ml)$ for some $\bar{K} \leq K$, or $\gamma \geq \gamma_{-}(\beta, K+1,\ml)$, i.e.~if and only if $\gamma\geq \gamma_{-}(\beta, \bar{K},\ml)$ for some $\bar{K} \leq K+1$, which concludes the proof.
\end{proof}
\vspace{1em}

\noindent
We now prove \Cref{lem:AB_sort}.
\begin{proof}[\Cref{lem:AB_sort}]
    Since for any $K$, $\gamma_{-}(\beta, K, \ml)$ is defined as $\frac{A_K(\beta,\ml) - B_K(\beta,\ml)}{\mu}$, we show the equivalent formulation
    \begin{equation}
        A_K(\beta,\ml) - B_K(\beta,\ml) \leq A_{K+1}(\beta,\ml) + B_{K+1}(\beta,\ml). \label{eq:AB_sort}
    \end{equation}
First, we rely  on the following  sequence of implication, that shows that proving \eqref{eq:AB_relaxation_pp} hereafter is sufficient to establish \eqref{eq:AB_sort}.
    \begin{align}
        & \left(A_K(\beta,\ml) - A_{K+1}(\beta,\ml)\right)^2 \leq B_K(\beta,\ml)^2 - B_{K+1}(\beta,\ml)^2 \label{eq:AB_relaxation_pp} \\
        & \quad \overset{B_{K+1}(\beta,\ml)^2 \geq 0}{\Longrightarrow}
        \left(A_K(\beta,\ml) - A_{K+1}(\beta,\ml)\right)^2 \leq B_K(\beta,\ml)^2 \label{eq:AB_relaxation} \\
        & \quad  \overset{B_K(\beta,\ml) \geq 0}{\Longrightarrow}
        A_K(\beta,\ml) - A_{K+1}(\beta,\ml) \leq B_K(\beta,\ml) \nonumber \\
        & \quad \overset{\text{Reordering terms}}{\Longleftrightarrow}
        A_K(\beta,\ml) - B_K(\beta,\ml) \leq A_{K+1}(\beta,\ml) \nonumber \\
        & \quad \overset{B_{K+1}(\beta,\ml) \geq 0}{\Longrightarrow}
        A_K(\beta,\ml) - B_K(\beta,\ml) \leq A_{K+1}(\beta,\ml) + B_{K+1}(\beta,\ml). \nonumber
    \end{align}
    \Cref{eq:AB_relaxation} has the advantage of isolating $B_K(\beta,\ml)$ so that we get rid of the square-root appearing in its definition. \Cref{eq:AB_relaxation_pp} has 
    the additional advantage of getting rid of the terms that are independent of the cosines and of making a factor $\cos{\theta_{K+1}} - \cos{\theta_{K}}$ appear on both sides of the inequality.  Indeed, first we have
    \begin{align}
        A_K(\beta,\ml) - A_{K+1}(\beta,\ml)
        & \overset{\eqref{eq:a_def}}{=} \left[\beta - \cos\theta_K + \kappa(1 - \beta\cos\theta_K)\right] - \left[\beta - \cos\theta_{K+1} + \kappa(1 - \beta\cos\theta_{K+1})\right] \nonumber \\
        & = (1 + \beta\kappa)(\cos{\theta_{K+1}} - \cos{\theta_{K}}) \nonumber \\
        (A_K(\beta,\ml) - A_{K+1}(\beta,\ml))^2
        & = (1 + \beta\kappa)^2(\cos{\theta_{K+1}} - \cos{\theta_{K}})^2. \label{eq:diffdesA}
    \end{align}
    Also, $B_K(\beta,\ml)^2$ can be simplified, expanding and reordering terms, as 
    \begin{align*}
        B_K(\beta,\ml)^2
        \overset{\eqref{eq:b_def}}{=} &
        \left[\beta - \cos\theta_K + \kappa(1 - \beta\cos\theta_K)\right]^2 - 2\kappa(1 - \cos\theta_K)(1 + \beta^2 - 2\beta\cos\theta_K) \\
        = &
        \left[ \beta + \kappa - (1 + \beta\kappa) \cos\theta_K \right]^2 - 2\kappa(1 - \cos\theta_K)(1 + \beta^2 - 2\beta\cos\theta_K) \\
        = &
        (\beta + \kappa)^2 - 2(\beta + \kappa)(1 + \beta\kappa) \cos\theta_K + (1 + \beta\kappa)^2 \cos^2\theta_K \\
        & - 2\kappa (1 + \beta^2 - 2\beta\cos\theta_K - (1 + \beta^2)\cos\theta_K + 2\beta \cos^2\theta_K) \\
        = & 
        \left[(\beta + \kappa)^2 - 2\kappa(1 + \beta^2)\right]  -2 \left[(\beta + \kappa)(1 + \beta\kappa) - \kappa(1+\beta)^2\right]\cos\theta_K  \\
        &+ \left[(1 + \beta\kappa)^2 - 4\beta\kappa \right] \cos^2\theta_K \\
        = & 
       \underbrace{ \left[(\beta + \kappa)^2 - 2\kappa(1 + \beta^2)\right] }_{\text{independent of $K$}} -2 \beta (1-\kappa)^2 \cos\theta_K + (1 - \beta\kappa)^2 \cos^2\theta_K.
    \end{align*}
    While inequality \eqref{eq:AB_relaxation} thus writes
    \begin{equation*}
        (1 + \beta\kappa)^2(\cos{\theta_{K+1}} - \cos{\theta_{K}})^2 \leq \left[(\beta + \kappa)^2 - 2\kappa(1 + \beta^2)\right] -2 \beta (1-\kappa)^2 \cos\theta_K + (1 - \beta\kappa)^2 \cos^2\theta_K,
    \end{equation*}
    and is thus not easy to  handle, $B_K(\beta,\ml)^2 - B_{K+1}(\beta,\ml)^2$ simplifies as
    \begin{align}
        B_K(\beta,\ml)^2 - B_{K+1}(\beta,\ml)^2 =
        & -2 \beta (1-\kappa)^2 (\cos\theta_K - \cos\theta_{K+1})  + (1 - \beta\kappa)^2 (\cos^2\theta_K - \cos^2\theta_{K+1}), \label{eq:diffdesB}
    \end{align}
    and using \eqref{eq:diffdesA} and \eqref{eq:diffdesB},  inequality \eqref{eq:AB_relaxation_pp} thus writes
    \begin{align*}
        (1 + \beta\kappa)^2(\cos{\theta_{K+1}} - \cos{\theta_{K}})^2 & \leq 2 \beta (1-\kappa)^2 (\cos{\theta_{K+1}} - \cos{\theta_{K}}) - (1 - \beta\kappa)^2 (\cos^2\theta_{K+1} - \cos^2\theta_K) \\
        & = \left[2 \beta (1-\kappa)^2  - (1 - \beta\kappa)^2 (\cos\theta_{K+1} + \cos\theta_K)\right] (\cos{\theta_{K+1}} - \cos{\theta_{K}}).
    \end{align*}
    Dividing  by the positive term $\cos{\theta_{K+1}} - \cos{\theta_{K}}$\footnote{This simplification was the motivation to prove the relaxation \eqref{eq:AB_relaxation_pp} instead of \eqref{eq:AB_relaxation}. Interestingly, this relaxation is tight when $B_{K+1}(\beta,\ml)=0$, i.e.~for $\beta = \beta_{-}(K+1, \ml)$. As we will see later,  this value of $\beta$ is the only one that actually matters (if $K\neq 3$).}, we get that \eqref{eq:AB_relaxation_pp} is equivalent to
    \begin{equation*}
        (1 + \beta\kappa)^2(\cos{\theta_{K+1}} - \cos{\theta_{K}}) \leq \left[2 \beta (1-\kappa)^2  - (1 - \beta\kappa)^2 (\cos\theta_{K+1} + \cos\theta_K)\right].
    \end{equation*}
    By ordering terms, this is also equivalent to
    \begin{equation*}
        (1 + \beta\kappa)^2(\cos{\theta_{K+1}} - \cos{\theta_{K}}) + (1 - \beta\kappa)^2 (\cos\theta_{K+1} + \cos\theta_K) \leq 2 \beta (1-\kappa)^2,
    \end{equation*}
    or again, dividing by $4\beta\kappa$,
    \begin{equation}
        \frac{1}{2}\left(\frac{1}{\beta\kappa} + \beta\kappa\right)\cos{\theta_{K+1}} \leq \frac{(1-\kappa)^2}{2\kappa} + \cos{\theta_K}. \label{eq:super_reduced}
    \end{equation}
    Note that we need this last equation to hold for all $\beta \geq \beta_{-}(K+1,\ml)$.
    Moreover, while the RHS does not depend on $\beta$, the LHS increases or decreases with respect to $\beta$, according to the sign of $\cos{\theta_{K+1}}$. We thus have to distinguish several  cases.
    \begin{itemize}[leftmargin=* ]
        \item When $K=2$, $\cos{\theta_{K+1}} = \cos{\theta_{3}} = -1/2 <0$. Then the LHS increases and it is sufficient to verify \eqref{eq:super_reduced} for $\beta=1$. We have: if $-\frac{1}{4}\left(\frac{1}{\kappa} + \kappa\right) \leq \frac{(1-\kappa)^2}{2\kappa} - 1$, then \eqref{eq:AB_sort} holds for $K=2$. A simple technical computation gives that this condition holds if and only if $\kappa \leq \frac{4-\sqrt{7}}{3}$.
        \item When $K=3$, $\cos{\theta_{K+1}} = \cos{\theta_{4}} = 0$. Then the LHS does not depend on $\beta$ neither. \Cref{eq:super_reduced} is independent of $\beta$ and is written $0 \leq \frac{(1-\kappa)^2}{2\kappa} -\frac{1}{2}$. We conclude that, if $\kappa \leq \frac{3-\sqrt{5}}{2}$, then \eqref{lem:AB_sort} holds for $K=3$.
        \item When $K\geq 4$, $\cos{\theta_{K+1}} \geq \cos{\theta_{5}} > 0$, hence the LHS decreases with respect to $\beta$. It is sufficient to verify~\eqref{eq:super_reduced} for $\beta = \beta_{-}(K+1,\ml)$, or even for any  value smaller  than $\beta_{-}(K+1,\ml)$  to prove the lemma. 
    \end{itemize}
    We use the following lower bound on $\beta_{-}(K+1,\ml)$, obtained from  \Cref{lem:otherexprbeta}
      \begin{align}
        \beta_{-}(K+1,\ml) -  \cos{\theta_{K+1}}  
        \overset{\text{\ref{lem:otherexprbeta}}}{=} & \sqrt{\kappa}(1 - \cos{\theta_{K+1}}) \frac{(1 + \cos{\theta_{K+1}})\sqrt{\kappa} + \sqrt{2(1 + \cos{\theta_{K+1}})}}{1 + \kappa\cos{\theta_{K+1}} + \sqrt{2\kappa(1 + \cos{\theta_{K+1}})}}\nonumber \\
        \geq & \sqrt{\kappa}(1 - \cos{\theta_{K+1}})\nonumber. \\
        = & \cos{\theta_{K+1}}(1 + \sqrt{\kappa}\xi_{K+1}), \nonumber
    \end{align}
  with $\xi_{K+1} \eqdef \frac{1}{\cos{\theta_{K+1}}}-1$. Overall, for $K\geq 4$, \eqref{eq:super_reduced} is valid  for all $\beta \geq \beta_{-}(K+1,\ml)$ if it is valid at $\cos{\theta_{K+1}}(1 + \sqrt{\kappa}\xi_{K+1})$. Plugging this value into \eqref{eq:super_reduced} it is thus sufficient to prove that
    \begin{equation*}
        \frac{1}{2}\left(\frac{1}{\kappa(1 + \sqrt{\kappa}\xi_{K+1})} + \kappa\cos^2{\theta_{K+1}}(1 + \sqrt{\kappa}\xi_{K+1})\right) \leq \frac{(1-\kappa)^2}{2\kappa} + \cos{\theta_K}.
    \end{equation*}
    Multiplying by $2\kappa$ and reordering terms, the latter is equivalent to
    \begin{equation}
        \frac{1}{1 + \sqrt{\kappa}\xi_{K+1}} - (1-\kappa)^2 \leq 2\kappa\cos{\theta_K} - \kappa^2 \underbrace{\cos^2{\theta_{K+1}}}_{\leq \cos{\theta_{K+1}}} \underbrace{(1 + \sqrt{\kappa}\xi_{K+1})}_{\leq 1 + \xi_{K+1}}. \label{eq:auxaux}
    \end{equation}
    First, 
    as for any $\tau \geq 0$, $\frac{1}{1 + \tau} \leq 1 - \tau+ \tau^2$, the LHS of \eqref{eq:auxaux} is upper bounded by $1 - \sqrt{\kappa}\xi_{K+1} + \kappa \xi_{K+1}^2 - (1-\kappa)^2 $. Second, the RHS of \eqref{eq:auxaux} is lower bounded by $2\kappa\cos{\theta_K} - \kappa^2$.  It  is  thus sufficient to prove
    \begin{equation*}
        1 - \sqrt{\kappa}\xi_{K+1} + \kappa \xi_{K+1}^2 - (1 - 2\kappa + \kappa^2) \leq 2\kappa\cos{\theta_K} - \kappa^2.
    \end{equation*}
    Simplifying and reordering terms, this is equivalent to
    \begin{equation*}
         \sqrt{\kappa}\xi_{K+1} \geq \kappa ( \xi_{K+1}^2 + 2 - 2\cos{\theta_K}),
    \end{equation*}
    or again
    \begin{equation*}
         \frac{1}{\sqrt{\kappa}} \geq \left( \xi_{K+1} + \frac{2(1 - \cos{\theta_K})}{\xi_{K+1}}\right) = \left( \frac{1}{\cos{\theta_{K+1}}}-1 + \frac{2\cos{\theta_{K+1}}(1 - \cos{\theta_K})}{(1 - \cos{\theta_{K+1}})}\right).
    \end{equation*}
    Finally,
    \begin{eqnarray*}
        \frac{1}{\cos{\theta_{K+1}}}-1 + \frac{2\cos{\theta_{K+1}}(1 - \cos{\theta_K})}{(1 - \cos{\theta_{K+1}})} & \overset{\text{\Cref{lem:techn1}}}{\leq} & \frac{1}{\cos{\theta_{K+1}}} - 1 + 2 \times 1 \times \frac{3}{2} = \frac{1}{\cos{\theta_{K+1}}} + 2 \\
        & \overset{K \geq 4}{\leq} & \frac{1}{\cos{\theta_{5}}} + 2 = \frac{1}{\cos{\frac{2\pi}{5}}} + 2 \\
        & = & 3 + \sqrt{5}.
    \end{eqnarray*}
    Ultimately,  $\kappa \leq \left(\frac{1}{3 + \sqrt{5}}\right)^2 = \left(\frac{3 - \sqrt{5}}{4}\right)^2$ is a sufficient condition for \Cref{eq:AB_sort} to hold for any $K \geq 4$.
    As a summary, we proved the following sufficient conditions so that \eqref{eq:AB_sort} holds:
    \begin{itemize}
        \item For $K=2$, \eqref{eq:AB_sort} holds as soon as $\kappa\leq \frac{4 - \sqrt{7}}{3}$.
        \item For $K=3$, \eqref{eq:AB_sort} holds as soon as $\kappa\leq \frac{3 - \sqrt{5}}{2}$.
        \item For $K\geq 4$, \eqref{eq:AB_sort} holds as soon as $\kappa\leq \left(\frac{3 - \sqrt{5}}{4}\right)^2$.
    \end{itemize}
    Among those 3 values, $\left(\frac{3 - \sqrt{5}}{4}\right)^2$ is the smallest. Hence, we conclude that \eqref{eq:AB_sort} holds for any $K\geq 2$ if $\kappa\leq \left(\frac{3 - \sqrt{5}}{4}\right)^2$.
    
\end{proof}

\subsection{Proof of \texorpdfstring{\Cref{thm:incompatibility}}{Theorem 3.6}}
\label{apx:main-result}
In order to prove \Cref{thm:incompatibility}, we first establish technical results in \Cref{subsubapp:tech}, then prove the result in \Cref{subsubapp:proof}

\subsubsection{Technical lemmas}\label{subsubapp:tech}
We start by proving the following technical lemmas.
\begin{Lemma}\label{lem:techn1}
 For any integer $K \geq 2, ~ 1 \leq \frac{1 - \cos\theta_K}{1 - \cos\theta_{K+1}} \leq \frac{3}{2}$.
\end{Lemma}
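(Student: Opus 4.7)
\textbf{Proof plan for \Cref{lem:techn1}.} The plan is to rewrite both numerator and denominator using the half-angle identity $1 - \cos\theta_K = 2\sin^2(\pi/K)$ (since $\theta_K = 2\pi/K$), so that the inequality becomes
\[
1 \;\leq\; \frac{\sin^2(\pi/K)}{\sin^2(\pi/(K+1))} \;\leq\; \frac{3}{2},
\]
and then handle the two bounds separately.

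For the lower bound, I would observe that for every integer $K \geq 2$ both $\pi/K$ and $\pi/(K+1)$ lie in $(0, \pi/2]$, where $\sin$ is increasing; since $\pi/(K+1) < \pi/K$, the ratio of the sines is at least $1$.

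For the upper bound, which is tight at $K=3$, I would split the analysis into small cases and an asymptotic regime. For $K\in\{2,3,4\}$, I would evaluate the ratio directly (obtaining $4/3$, $3/2$, and $1+1/\sqrt{5}$ respectively, all at most $3/2$). For $K \geq 5$, I would invoke the classical monotonicity of $x \mapsto x\sin(\pi/x)$ on $[2,\infty)$, which follows from a short computation of its derivative $\cos(\pi/x)(\tan(\pi/x) - \pi/x) \geq 0$. This yields $K\sin(\pi/K) \leq (K+1)\sin(\pi/(K+1))$, hence
\[
\frac{\sin(\pi/K)}{\sin(\pi/(K+1))} \;\leq\; \frac{K+1}{K},
\]
and squaring gives a bound of $(1+1/K)^2 \leq (6/5)^2 = 36/25 \leq 3/2$ for $K \geq 5$.

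The main obstacle is that the upper bound is attained with equality at $K=3$, so no uniform continuous estimate (such as Jordan's inequality) will be sharp enough across all $K$; one really needs to inspect the small cases by hand and only switch to the asymptotic monotonicity argument once $K$ is large enough that $(1+1/K)^2 \leq 3/2$, which happens precisely for $K \geq 5$. Everything else is routine bookkeeping.
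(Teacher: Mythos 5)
Your proposal is correct and follows essentially the same route as the paper: the half-angle rewrite, the monotonicity argument for the lower bound, direct evaluation at $K=2,3,4$, and the bound $\sin(\pi/K)/\sin(\pi/(K+1))\leq 1+1/K$ for $K\geq 5$. The only (immaterial) difference is that the paper derives this last bound via the angle-addition formula and $\mathrm{cotan}\,u\leq 1/u$, whereas you obtain it from the monotonicity of $x\mapsto x\sin(\pi/x)$; both yield the same estimate.
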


\begin{proof}
    First of all, let's note that
    \begin{equation*}
        \frac{1 - \cos\frac{2\pi}{K}}{1 - \cos\frac{2\pi}{K+1}} = \frac{\sin^2\frac{\pi}{K}}{\sin^2\frac{\pi}{K+1}}.
    \end{equation*}
    Since $\sin$ is positive and increasing on $[0, \pi/2]$, $\sin^2\frac{\pi}{K} \geq \sin^2\frac{\pi}{K+1}$, hence for all $K \geq 2,$  $
         ~ 1 \leq \frac{1 - \cos\frac{2\pi}{K}}{1 - \cos\frac{2\pi}{K+1}}$.
    Note moreover this bound is tight as $\frac{1 - \cos\frac{2\pi}{K}}{1 - \cos\frac{2\pi}{K+1}} \underset{K\rightarrow \infty}{\rightarrow} 1$.
It remains to show that for all integer $K \geq 2, \frac{\sin\frac{\pi}{K}}{\sin\frac{\pi}{K+1}} \leq \sqrt{\frac{3}{2}}$. We study the four following cases:
    \begin{itemize}
        \item for $K=2$, $\frac{\sin\frac{\pi}{K}}{\sin\frac{\pi}{K+1}} = \frac{1}{\sqrt{3}/2} < \sqrt{\frac{3}{2}}$,
        \item for $K=3$, $\frac{\sin\frac{\pi}{K}}{\sin\frac{\pi}{K+1}} = \frac{\sqrt{3}/2}{\sqrt{2}/2} = \sqrt{\frac{3}{2}}$,
        \item for $K=4$, $\frac{\sin\frac{\pi}{K}}{\sin\frac{\pi}{K+1}} = \frac{\sqrt{2}/2}{\sqrt{10-2\sqrt{5}}/4} < \sqrt{\frac{3}{2}}$,
        \item for $K\geq 5$, 
        \begin{align*}
            \frac{\sin\frac{\pi}{K}}{\sin\frac{\pi}{K+1}} & ~ = \frac{\sin(\frac{\pi}{K+1} + \frac{\pi}{K(K+1)})}{\sin\frac{\pi}{K+1}} = \frac{\sin\frac{\pi}{K+1} \cos\frac{\pi}{K(K+1)} + \cos\frac{\pi}{K+1} \sin\frac{\pi}{K(K+1)}}{\sin\frac{\pi}{K+1}} \\
            & ~ = \cos\frac{\pi}{K(K+1)} + \rm{cotan}\frac{\pi}{K+1} \sin\frac{\pi}{K(K+1)} \\
            & ~ \leq 1 + \frac{K+1}{\pi}\frac{\pi}{K(K+1)}  = 1 + \frac{1}{K} \leq \frac{6}{5} < \sqrt{\frac{3}{2}}.
        \end{align*}
        This shows that 
        \begin{equation*}
            \forall \text{ integer } K \geq 2, ~ \frac{1 - \cos\frac{2\pi}{K}}{1 - \cos\frac{2\pi}{K+1}} \leq \frac{3}{2}.
        \end{equation*}
        Note furthermore that this bound is also tight as reached for $K=3$.
    \end{itemize}
\end{proof}  \vspace{1em} 

\noindent
Next, we establish a second technical result.
\begin{Lemma}\label{lem:techn2}
    For any $ \beta \in [0, 1],$ there exists $ K\geq 2 $ such that $ \frac{2}{3} \leq \frac{\beta - \cos\theta{K}}{1-\beta} \leq \frac{3}{2}$.
\end{Lemma}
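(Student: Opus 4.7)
\textbf{Proof plan for \Cref{lem:techn2}.} The plan is to reparametrize the quantity of interest so that \Cref{lem:techn1} applies directly. Observe that
\[
\frac{\beta - \cos\theta_K}{1-\beta} = \frac{(1-\cos\theta_K) - (1-\beta)}{1-\beta} = s_K - 1, \qquad \text{where } s_K \eqdef \frac{1-\cos\theta_K}{1-\beta}.
\]
Thus the conclusion $\frac{2}{3} \leq \frac{\beta - \cos\theta_K}{1-\beta} \leq \frac{3}{2}$ is equivalent to $s_K \in [\frac{5}{3}, \frac{5}{2}]$. The task therefore reduces to showing that, for every $\beta \in [0,1)$, some value of $K\geq 2$ places $s_K$ in the interval $[\frac{5}{3}, \frac{5}{2}]$.

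The key observation is that $(s_K)_{K\geq 2}$ decreases from $s_2 = \frac{2}{1-\beta} \geq 2$ down to $0$ as $K \to \infty$ (since $\cos\theta_K \to 1$), and that \Cref{lem:techn1} controls consecutive jumps: for all $K\geq 2$,
\[
\frac{s_K}{s_{K+1}} = \frac{1-\cos\theta_K}{1-\cos\theta_{K+1}} \in \left[1,\tfrac{3}{2}\right],\qquad \text{so } s_{K+1} \geq \tfrac{2}{3} s_K.
\]
Hence the sequence $(s_K)$ cannot decrease by a factor larger than $3/2$ in one step.

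Now let $K^* \eqdef \min\{K\geq 2 : s_K \leq \frac{5}{2}\}$, which is well-defined since $s_K \to 0$. Two cases are distinguished. If $K^* = 2$, then $s_2 \leq \frac{5}{2}$ and moreover $s_2 = \frac{2}{1-\beta} \geq 2 \geq \frac{5}{3}$ because $\beta \geq 0$, so $K=2$ works. If $K^* \geq 3$, the minimality of $K^*$ gives $s_{K^*-1} > \frac{5}{2}$, and the bound above yields $s_{K^*} \geq \frac{2}{3}s_{K^*-1} > \frac{2}{3}\cdot\frac{5}{2} = \frac{5}{3}$, while by definition $s_{K^*} \leq \frac{5}{2}$; hence $K = K^*$ works.

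The only subtlety is ensuring the constants $\frac{5}{3}$ and $\frac{5}{2}$ in the reparametrized interval are compatible with the factor $\frac{3}{2}$ coming from \Cref{lem:techn1}: this requires $\frac{5}{2}\cdot\frac{2}{3} \geq \frac{5}{3}$, which is an equality and is precisely why the numerical constants $\frac{2}{3}$ and $\frac{3}{2}$ appear in the statement. This is the step to watch carefully, but no further obstacle is anticipated.
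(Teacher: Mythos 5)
Your proof is correct and rests on exactly the same key ingredient as the paper's, namely \Cref{lem:techn1} controlling the ratio $\frac{1-\cos\theta_K}{1-\cos\theta_{K+1}}\in[1,\tfrac32]$; the only difference is bookkeeping — the paper parametrizes $\beta$ by a continuous $Z$ with $1-\beta=\tfrac{1-\cos(2\pi/Z)}{2}$ and rounds $Z$ up or down, whereas you run a discrete first-passage argument on the decreasing sequence $s_K=\tfrac{1-\cos\theta_K}{1-\beta}$, which is arguably a cleaner write-up of the same idea. (Both arguments implicitly require $\beta<1$, as the quotient is undefined at $\beta=1$; this is harmless since $\beta<1$ everywhere the lemma is used.)
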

\begin{proof}
    Let $\beta \in [0, 1]$.
    And let $Z \geq 2$ a real number such that $\beta = \frac{1 + \cos\frac{2\pi}{Z}}{2}$.
    We note that $\frac{1 + \cos\frac{2\pi}{\left\lfloor{Z}\right\rfloor}}{2} \leq \beta < \frac{1 + \cos\frac{2\pi}{\left\lfloor{Z}\right\rfloor+1}}{2}$. Besides, from \Cref{lem:techn1},
    $1 \leq \frac{1 - \cos\frac{2\pi}{\left\lfloor{Z}\right\rfloor}}{1 - \cos\frac{2\pi}{\left\lfloor{Z}+1\right\rfloor}} \leq \frac{3}{2}$, which implies 
    $$1 \leq \frac{1 - \cos\frac{2\pi}{\left\lfloor{Z}\right\rfloor}}{1 - \cos\frac{2\pi}{Z}} \leq \frac{5}{4} 
    \quad\text{ or }\quad
    1 \leq \frac{1 - \cos\frac{2\pi}{Z}}{1 - \cos\frac{2\pi}{\left\lfloor{Z}+1\right\rfloor}} \leq \frac{6}{5}.$$
    In the first case, we define $K = \left\lfloor{Z}\right\rfloor$ while in the second case, we define $K = \left\lfloor{Z}+1\right\rfloor$. In any case, $\frac{5}{6} \leq \frac{1 - \cos\frac{2\pi}{K}}{1 - \cos\frac{2\pi}{Z}} \leq \frac{5}{4}$.
    Moreover, we have $\frac{\beta - \cos\frac{2\pi}{K}}{1-\beta} = -1 + 2 \frac{1 - \cos\frac{2\pi}{K}}{1 - \cos\frac{2\pi}{Z}}$, and we conclude
    $\frac{2}{3} \leq \frac{\beta - \cos\frac{2\pi}{K}}{1-\beta} \leq \frac{3}{2}$.
\end{proof}
\vspace{1em}

\noindent
Third, we establish a final technical result.
\begin{Lemma}\label{lem:techn3}
    Assume that $\kappa \leq \frac{1}{16}$.
    For any $ \beta\in [0, 1], $ for any $ K\geq 2 $ such that $ \frac{2}{3} \leq \frac{\beta - \cos\theta_{K}}{1-\beta} \leq \frac{3}{2}, $ we have $ \beta \geq \beta_{-}(K,\ml)$.
\end{Lemma}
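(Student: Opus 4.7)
The goal is to compare $\beta$ to $\beta_{-}(K,\mu,L)$, and the most convenient way is to compare $\beta - \cos\theta_K$ to $\beta_{-}(K,\mu,L) - \cos\theta_K$, because the hypothesis directly controls the former and \Cref{lem:otherexprbeta} provides a clean closed form for the latter. I would first dispose of the degenerate case $K=2$ (where $\cos\theta_2 = -1$), in which a direct computation gives $\beta_{-}(2,\mu,L) = \frac{\kappa - (1-\kappa)^2 - \kappa}{1 - 2\kappa + \kappa^2} = -1$, so the conclusion is trivial for any $\beta \in [0,1]$. Hence only $K \geq 3$ matters, in which case $1 - \cos\theta_K > 0$ and division by this quantity is legal.

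For the lower bound on $\beta - \cos\theta_K$, the hypothesis $\frac{\beta - \cos\theta_K}{1-\beta} \geq \frac{2}{3}$ rewrites as $\beta \geq \frac{2 + 3\cos\theta_K}{5}$, and substracting $\cos\theta_K$ gives
\[
\beta - \cos\theta_K \;\geq\; \tfrac{2}{5}(1 - \cos\theta_K).
\]
The hypothesis $\frac{\beta - \cos\theta_K}{1-\beta} \leq \frac{3}{2}$ will not be used in this direction.

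For the upper bound, \Cref{lem:otherexprbeta} yields, with $c := \cos\theta_K$ and $w := \sqrt{2\kappa(1+c)}$,
\[
\beta_{-}(K,\mu,L) - c \;=\; \sqrt{\kappa}\,(1-c)\,\frac{(1+c)\sqrt{\kappa} + \sqrt{2(1+c)}}{1 + \kappa c + \sqrt{2\kappa(1+c)}}.
\]
A direct rewriting in terms of $w$ (using $(1+c)\sqrt{\kappa} = \tfrac{w^2}{2\sqrt{\kappa}}$ and $\sqrt{2(1+c)} = \tfrac{w}{\sqrt{\kappa}}$, and $1 + \kappa c = 1 - \kappa + \tfrac{w^2}{2}$) simplifies the right-hand side to $(1-c) \cdot \frac{w(w/2 + 1)}{1 - \kappa + w^2/2 + w}$. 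So the desired inequality $\beta_{-}(K,\mu,L) - c \leq \frac{2}{5}(1-c)$ is equivalent, after dividing by $1-c$ and clearing denominators, to
\[
3w\!\left(\tfrac{w}{2} + 1\right) \;\leq\; 2(1-\kappa).
\]

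The main (only) obstacle is verifying that $\kappa \leq \tfrac{1}{16}$ is exactly what is needed to make this algebraic inequality hold. Since $w \leq 2\sqrt{\kappa}$, we have $3w(w/2 + 1) \leq 6\kappa + 6\sqrt{\kappa}$, and the inequality reduces to $4\kappa + 3\sqrt{\kappa} \leq 1$. The left-hand side, viewed as a quadratic in $\sqrt{\kappa}$, equals exactly $1$ at $\sqrt{\kappa} = 1/4$, i.e.~at $\kappa = 1/16$, and is increasing in $\sqrt{\kappa}$, so the bound $\kappa \leq 1/16$ is tight and suffices. Combining the two bounds gives $\beta_{-}(K,\mu,L) - \cos\theta_K \leq \tfrac{2}{5}(1-\cos\theta_K) \leq \beta - \cos\theta_K$, hence $\beta \geq \beta_{-}(K,\mu,L)$, as claimed.
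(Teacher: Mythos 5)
Your proof is correct and follows essentially the same route as the paper's: both arguments use the reformulation of $\beta_{-}(K,\ml)-\cos\theta_K$ from \Cref{lem:otherexprbeta}, reduce the hypothesis to the single bound $\tfrac{\beta-\cos\theta_K}{1-\beta}\geq\tfrac{2}{3}$ (equivalently $\tfrac{1-\beta}{1-\cos\theta_K}\leq\tfrac{3}{5}$, which is your $\beta-\cos\theta_K\geq\tfrac{2}{5}(1-\cos\theta_K)$), and conclude with the same elementary inequality $6\sqrt{\kappa}+8\kappa\leq 2$, tight at $\kappa=1/16$. The separate treatment of $K=2$ and the substitution $w=\sqrt{2\kappa(1+\cos\theta_K)}$ are presentational differences only.
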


\begin{proof}
    The proof consists in proving that
    \begin{equation*}
        \frac{1 - \beta_{-}(K,\ml)}{1 - \cos{\theta_K}} \geq \frac{3}{5}.
    \end{equation*}
    Then, since $\frac{\beta - \cos{\theta_K}}{1-\beta}\geq \frac{2}{3}$, we have $\frac{1 - \cos{\theta_K}}{1-\beta}\geq \frac{5}{3}$, and finally $\frac{1-\beta}{1 - \cos{\theta_K}} \leq \frac{3}{5} \leq \frac{1 - \beta_{-}(K,\ml)}{1 - \cos{\theta_K}}$, concluding that $\beta \geq \beta_{-}(K,\ml)$.

    By \Cref{lem:otherexprbeta}, we have
    \begin{equation*}
        \frac{\beta_{-}(K,\ml) - \cos{\theta_{K}}}{1 - \cos{\theta_{K}}} = \frac{\kappa + \kappa\cos{\theta_{K}} + \sqrt{2\kappa(1 + \cos{\theta_{K}})}}{1 + \kappa\cos{\theta_{K}} + \sqrt{2\kappa(1 + \cos{\theta_{K}})}}.
    \end{equation*}
    Applying the mapping $x\mapsto 1-x$ to the previous equation, we get
    \begin{equation*}
        \frac{1-\beta_{-}(K,\ml)}{1 - \cos{\theta_{K}}} = \frac{1-\kappa}{1 + \kappa\cos{\theta_{K}} + \sqrt{2\kappa(1 + \cos{\theta_{K}})}}.
    \end{equation*}
    We now need to prove that whatever $\theta_K$ is, $\frac{1-\kappa}{1 + \kappa\cos{\theta_{K}} + \sqrt{2\kappa(1 + \cos{\theta_{K}})}} \geq \frac{3}{5}$.
    We note that the LHS converges to 1 when $\kappa$ goes to 0, hence we know that this inequality holds for $\kappa$ small enough. We make the precise computation below.
    \begin{align*}
        \frac{1-\kappa}{1 + \kappa\cos{\theta_{K}} + \sqrt{2\kappa(1 + \cos{\theta_{K}})}} \geq \frac{3}{5} 
        & \Longleftrightarrow &
        3(1 + \kappa\cos{\theta_{K}} + \sqrt{2\kappa(1 + \cos{\theta_{K}})}) \leq 5(1-\kappa) \\
        & \Longleftrightarrow &
        3 \sqrt{2\kappa(1 + \cos{\theta_{K}})} + \kappa (5 + 3\cos{\theta_K}) \leq 2 \\
        & \Longleftarrow &
        6 \sqrt{\kappa} + 8\kappa \leq 2 \\
        & \Longleftrightarrow &
        1 - 3\sqrt{\kappa} - 4\kappa \geq 0 \\
        & \Longleftrightarrow &
        (1 - 4\sqrt{\kappa})(1 + \sqrt{\kappa}) \geq 0 \\
        & \Longleftrightarrow &
        \sqrt{\kappa} \leq \frac{1}{4}.
    \end{align*}
    Hence, the desired result.
    
\end{proof}

\vspace{1em} \noindent
\noindent
\subsubsection{Proof of \texorpdfstring{\Cref{thm:incompatibility}}{Theorem 3.6}}
\label{subsubapp:proof}
Finally, we prove \Cref{thm:incompatibility} :
\incompatibility*

\begin{proof}
    First, we assume that $\kappa\leq \left(\frac{3 - \sqrt{5}}{4}\right)^2$.
    Note that in the opposite case, $\sqrt{\kappa} \leq (3 + \sqrt{5})\kappa \leq \frac{50}{3}\kappa$, hence the result would still hold.
    
    Let $(\gb)\in (\Orouml)^c = (\bigcup_{K=2}^\infty \OKrouml)^c = \bigcap_{K=2}^\infty (\OKrouml)^c  $. 
    By \Cref{thm:analytical_ROU_region_bis} (since $\kappa\leq\left(\frac{3 - \sqrt{5}}{4}\right)^2$),  for all $K\geq 2$, if $\beta \geq \beta_{-}(K,\ml)$ (see \Cref{not:ab}),
    \begin{align*}
        \mu\gamma & \leq \mu \gamma_{-}(\beta,K,\ml)\\
        & =
        \left[\beta - \cos{\theta_{K}} + \kappa(1 - \beta\cos{\theta_{K}})\right]\left( 1 - \sqrt{1 - \frac{2\kappa(1 - \cos{\theta_{K}})(1 + \beta^2 - 2\beta\cos{\theta_{K}})}{\left[\beta - \cos{\theta_{K}} + \kappa(1 - \beta\cos{\theta_{K}})\right]^2}}\right) \\
        & \leq
        \left[\beta - \cos{\theta_{K}} + \kappa(1 - \beta\cos{\theta_{K}})\right]\left( 1 - \left(1 - \frac{2\kappa(1 - \cos{\theta_{K}})(1 + \beta^2 - 2\beta\cos{\theta_{K}})}{\left[\beta - \cos{\theta_{K}} + \kappa(1 - \beta\cos{\theta_{K}})\right]^2}\right)\right) \\
        & =
        \left(\frac{2\kappa(1 - \cos{\theta_{K}})(1 + \beta^2 - 2\beta\cos{\theta_{K}})}{\beta - \cos{\theta_{K}} + \kappa(1 - \beta\cos{\theta_{K}})}\right) \\
        & \leq
        \left(\frac{2\kappa(1 - \cos{\theta_{K}})(1 + \beta^2 - 2\beta\cos{\theta_{K}})}{\beta - \cos{\theta_{K}}}\right).
    \end{align*}
    Finally,
    \begin{equation*}
        \mu\gamma \leq \min_{K\geq 2} \left(\frac{2\kappa(1 - \cos{\theta_{K}})(1 + \beta^2 - 2\beta\cos{\theta_{K}})}{\beta - \cos{\theta_{K}}}\right).
    \end{equation*}
    We introduce $C_\beta \eqdef \frac{\beta - \cos \tfrac{2\pi}{K_{\beta}}}{1 - \beta}$ with $K$ chosen so that \Cref{lem:techn2} holds.
    First we know by \Cref{lem:techn3} (since $\kappa\leq\left(\frac{3 - \sqrt{5}}{4}\right)^2\leq\frac{1}{16}$) that $\beta\geq\beta_{-}(K_{\beta}, \ml)$ and then that the previous calculus is valid.
    Second, we have
    \begin{align}
        \mu\gamma & \leq \left(2\left(2\beta C_\beta + 1 + 3\beta + \tfrac{1+\beta}{C_\beta}\right)\right)\kappa(1 - \beta) \nonumber \\
        & \leq 4\left(C_\beta + \frac{1}{C_\beta} + 2\right)\kappa(1 - \beta) \nonumber\\
        & \leq 4\left(\frac{3}{2} + \frac{2}{3} + 2\right)\kappa(1 - \beta) \nonumber \\
        & = \frac{50}{3}\kappa(1 - \beta) \label{eq:toto}
    \end{align}
    which proves that $ \mu\gamma \leq \frac{50}{3}\kappa(1 - \beta)$. Set $C=50/3$.
    By contradiction, we assume that $\rho < \frac{1 - C\kappa}{1 + C\kappa}$, and additionally $(\gb) \in \SLS_{\ml}\left(\rho\right)$,
    From \Cref{lem:level_sets} we know
    \begin{align*}
        \frac{\tfrac{1-\kappa}{1+\kappa}-\rho}{\tfrac{1}{\rho} - \tfrac{1-\kappa}{1+\kappa}} \leq \beta \leq \rho^2 \\
        \mu\gamma \geq (1 - \rho)(1 - \tfrac{\beta}{\rho}).
    \end{align*}
    And from \eqref{eq:toto}, we know that
    \begin{equation*}
        \mu\gamma \leq C\kappa (1 - \beta).
    \end{equation*}
    Combining those inequalities, we get
    $(1 - \rho)(1 - \tfrac{\beta}{\rho}) \leq C\kappa (1 - \beta)$.  Rearanging the terms leads to
    \begin{equation*}
        \left(\frac{1-\rho}{\rho} - C\kappa\right)\beta \geq 1 - \rho - C\kappa.
    \end{equation*}
    Besides, $\frac{1-\rho}{\rho} - C\kappa \geq 0$ (since $\rho < \tfrac{1 - C\kappa}{1 + C\kappa} \leq \tfrac{1}{1 + C\kappa}$) and moreover $\beta \in \left[\frac{\tfrac{1-\kappa}{1+\kappa}-\rho}{\tfrac{1}{\rho} - \tfrac{1-\kappa}{1+\kappa}}, \rho^2\right]$, thus $\beta < \rho^2$ we get:
    \begin{equation*}
        \left(\frac{1-\rho}{\rho} - C\kappa\right)\rho^2 \geq 1 - \rho - C\kappa.
    \end{equation*} 
    Equivalently $(1 - \rho)^2 \leq C\kappa(1 - \rho^2)$, that is $\rho \geq \tfrac{1 - C\kappa}{1 + C\kappa}$. Which is in contradiction with our initial assumption $\rho < \frac{1 - C\kappa}{1 + C\kappa}$. As a conclusion,
    \begin{equation*}
        \rho_{\HB_{\gb}} \leq \rho \Longrightarrow \rho \geq \tfrac{1 - C\kappa}{1 + C\kappa}.
    \end{equation*}
\end{proof}

\section{Auxiliary proofs  from \texorpdfstring{\Cref{sec:robustness}}{Section 4}}
\subsection{Proof of \texorpdfstring{\Cref{thm:robustness}}{Theorem 4.3}}
\label{app:robustess}

\robutness*

\begin{proof}
    We introduce matrices $P$ and $D$ verifying $PDP^{-1} = \begin{pmatrix}
        (1+\beta)\Id_2 - \mu\gamma\Id_2 & -\beta\Id_2 \\ \Id_2 & 0
    \end{pmatrix}$, and such that  the operator norm   $\rho_D$ of the matrix $D$, $\rho_D = \|D\|_{\text{op}}$ is smaller than 1, and set $\kappa_P = \frac{1}{\|P\|_{\text{op}}\|P^{-1}\|_{\text{op}}} \leq 1$.  The existence of such matrices is guaranteed as $(\gb)\in \Oqml$, and  we discuss the choice of this reduction  in \Cref{apx:pdp-1_discussion}.
    
We prove by induction that  $\forall t\geq 1, \left\|P^{-1}\begin{pmatrix}
        \delta_{t} \\ \delta_{t-1}
    \end{pmatrix}\right\| \leq \frac{r_{\max}}{\|P\|_{\text{op}}}$.
which implies that  $\forall t\geq 1, \left\| \begin{pmatrix}
        \delta_{t} \\ \delta_{t-1}
    \end{pmatrix}\right\| \leq {r_{\max}}$, thus the  result of the theorem, i.e.,~$z_t\in B(x^\circ_{t\mod{K}}, r_{\max}) \subseteq \mathcal V_{t\mod{K}}$. 

In the  proof, we extend again $(x^\circ_k)$ to $k\in \mathbb N$ by periodicity, i.e.,~$(x^\circ_k)=(x^\circ_{k\mod{K}})$.

    \noindent
 \paragraph{Initialization:} $\left\|\begin{pmatrix}
        \delta_{1} \\ \delta_{0}
    \end{pmatrix}\right\| = \sqrt{\|\delta_{0}\|^2 + \|\delta_{1}\|^2} \leq \kappa_P r_{\max}= \frac{r_{\max}}{\|P\|_{\text{op}}\|P^{-1}\|_{\text{op}}}$   implies $\left\|P^{-1}\begin{pmatrix}
        \delta_{1} \\ \delta_{0}
    \end{pmatrix}\right\| \leq \frac{r_{\max}}{\|P\|_{\text{op}}}$. \\

 \noindent

 \noindent
 \paragraph{Induction:}  By induction hypothesis $\left\|P^{-1}\begin{pmatrix}
        \delta_{t} \\ \delta_{t-1}
    \end{pmatrix}\right\| \leq \frac{r_{\max}}{\|P\|_{\text{op}}}$, thus $z_t\in \mathcal V_{t\mod{K}}$. 
    We now write \eqref{eq:hb}'s $(t+1)^{\mathrm{th}}$ step:
    \begin{align*}
        z_{t+1} & = z_{t}  - \gamma_t \hat g_t(x^\circ_t + \delta_{t}) +  \beta_t  (z_{t} -z_{t-1} ) = z_{t}  - \gamma_t (\nabla \psi(x^\circ_t + \delta_{t}) +\delta_{g_t}) +  \beta_t  (z_{t} -z_{t-1} ).
    \end{align*}
    Using that for all $t\geq 0$, $z_{t+1}=  x^\circ_{t+1} + \delta_{t+1}$.
    \begin{align*}
        x^\circ_{t+1} + \delta_{t+1}& = x^\circ_{t} + \delta_{t} + (\beta + \delta_{\beta_t}) (x^\circ_{t} + \delta_{t} - x^\circ_{t-1} - \delta_{t-1}) - (\gamma + \delta_{\gamma_t}) \nabla \psi(x^\circ_t + \delta_{t}) - \gamma_t \delta_{g_t}.
    \end{align*}
And as $\eqref{eq:hb}_{\gb}(\psi)$ cycles on $\TikCircle_K$, we have that $x^\circ_{t+1} = {x^\circ_{t}} + {\beta}  (x^\circ_{t} - x^\circ_{t-1}) - {\gamma} \nabla \psi(x^\circ_t)$, thus
    \begin{align*}
        \delta_{t+1} & =  \delta_{t} + \delta_{\beta_t} (x^\circ_{t} - x^\circ_{t-1}) + (\beta + \delta_{\beta_t}) (\delta_{t} - \delta_{t-1}) - (  \delta_{\gamma_t}) \nabla \psi(x^\circ_t) - (\gamma + \delta_{\gamma_t}) \mu \delta_{t}- \gamma_t \delta_{g_t}, \\
        \delta_{t+1}
        & = \delta_{t} + \beta (\delta_{t} - \delta_{t-1}) - \gamma \mu \delta_{t} + \delta_{\beta_t} (x^\circ_{t} - x^\circ_{t-1} + \delta_{t} - \delta_{t-1}) - \delta_{\gamma_t} (\nabla \psi(x^\circ_t) + \mu \delta_{t})- \gamma_t \delta_{g_t}.
    \end{align*}
    This can be written in an augmented space as
    \begin{equation*}
        \begin{pmatrix}
            \delta_{t+1} \\ \delta_{t}
        \end{pmatrix}
        =
        \begin{pmatrix}
            1 + \beta - \mu\gamma & -\beta \\ 1 & 0
        \end{pmatrix}
        \begin{pmatrix}
            \delta_{t} \\ \delta_{t-1}
        \end{pmatrix}
        +
        \begin{pmatrix}
            \delta_{\beta_t} (x^\circ_{t} - x^\circ_{t-1} + \delta_{t} - \delta_{t-1}) - \delta_{\gamma_t} (\nabla \psi(x^\circ_t) + \mu \delta_{t}) - \gamma_t \delta_{g_t} \\ 0
        \end{pmatrix}.
    \end{equation*}
    The second and third assumptions corresponds on the perturbations enable to write that
    \begin{align*}
     &   \left\|\begin{pmatrix}
            \delta_{\beta_t} (x^\circ_{t} - x^\circ_{t-1} + \delta_{t} - \delta_{t-1}) - \delta_{\gamma_t} (\nabla \psi(x^\circ_t) + \mu \delta_{t} - \gamma_t \delta_{g_t}) \\ 0
        \end{pmatrix}\right\| \\
        & \qquad = 
        \|\delta_{\beta_t} (x^\circ_{t} - x^\circ_{t-1} + \delta_{t} - \delta_{t-1}) - \delta_{\gamma_t} (\nabla \psi(x^\circ_t) + \mu \delta_{t})- \gamma_t \delta_{g_t}\| \\
        & \qquad \leq |\delta_{\beta_t}|\|x^\circ_{t} - x^\circ_{t-1} + \delta_{t} - \delta_{t-1}\| + |\delta_{\gamma_t}| \|\nabla \psi(x^\circ_t) + \mu \delta_{t}\| + \gamma_t \|\delta_{g_t}\|\\
        & \qquad = \left(\frac{\sqrt{(1+\beta)^2(1 - \cos(\theta_K))^2 + (1-\beta)^2\sin(\theta_K)^2}}{\gamma} + \mu\kappa_P r_{\max}\right)|\delta_{\gamma_t}|  \\ 
        & \qquad\qquad +
        \left(\sqrt{(1 - \cos(\theta_K))^2 + \sin(\theta_K)^2} + 2\kappa_P r_{\max}\right) |\delta_{\beta_t}| + \frac{4}{L} \|\delta_{g_t}\|\\
        & \qquad \le \left(\frac{4}{\gamma} + \mu\kappa_P r_{\max}\right)|\delta_{\gamma_t}| +
        \left({2} + 2\kappa_P r_{\max}\right) |\delta_{\beta_t}| + \frac{4}{L} \|\delta_{g_t}\|\\
        & \qquad \leq \frac{1}{2}(1-\rho_D)\kappa_P r_{\max}+ \frac{1}{2}(1-\rho_D)\kappa_P r_{\max} = (1-\rho_D)\kappa_P r_{\max}.
    \end{align*}
    We now have
    \begin{align*}
        \begin{pmatrix}
            \delta_{t+1} \\ \delta_{t}
        \end{pmatrix}
        & =
        \begin{pmatrix}
            1 + \beta - \mu\gamma & -\beta \\ 1 & 0
        \end{pmatrix}
        \begin{pmatrix}
            \delta_{t} \\ \delta_{t-1}
        \end{pmatrix}
        +
        \begin{pmatrix}
            \delta_{\beta_t} (x^\circ_{t} - x^\circ_{t-1} + \delta_{t} - \delta_{t-1}) - \delta_{\gamma_t} (\nabla \psi(x^\circ_t) + \mu \delta_{t}) \\ 0
        \end{pmatrix} \\
        & =
        PDP^{-1}
        \begin{pmatrix}
            \delta_{t} \\ \delta_{t-1}
        \end{pmatrix}
        +
        \begin{pmatrix}
            \delta_{\beta_t} (x^\circ_{t} - x^\circ_{t-1} + \delta_{t} - \delta_{t-1}) - \delta_{\gamma_t} (\nabla \psi(x^\circ_t) + \mu \delta_{t}) \\ 0
        \end{pmatrix} \\
        P^{-1}
        \begin{pmatrix}
            \delta_{t+1} \\ \delta_{t}
        \end{pmatrix}
        & = 
        DP^{-1}
        \begin{pmatrix}
            \delta_{t} \\ \delta_{t-1}
        \end{pmatrix}
        +
        P^{-1}
        \begin{pmatrix}
            \delta_{\beta_t} (x^\circ_{t} - x^\circ_{t-1} + \delta_{t} - \delta_{t-1}) - \delta_{\gamma_t} (\nabla \psi(x^\circ_t) + \mu \delta_{t}) \\ 0
        \end{pmatrix} \\
        \left\|
        P^{-1}
        \begin{pmatrix}
            \delta_{t+1} \\ \delta_{t}
        \end{pmatrix}
        \right\|
        & \leq 
        \|D\| \left\|
        P^{-1}
        \begin{pmatrix}
            \delta_{t} \\ \delta_{t-1}
        \end{pmatrix}
        \right\|
        +
        \left\|P^{-1}
        \begin{pmatrix}
            \delta_{\beta_t} (x^\circ_{t} - x^\circ_{t-1} + \delta_{t} - \delta_{t-1}) - \delta_{\gamma_t} (\nabla \psi(x^\circ_t) + \mu \delta_{t}) \\ 0
        \end{pmatrix} \right\| \\
        & \leq 
        \rho_D \left\|
        P^{-1}
        \begin{pmatrix}
            \delta_{t} \\ \delta_{t-1}
        \end{pmatrix}
        \right\|
        +
        \|P^{-1}\|_{\text{op}}
        (1-\rho_D)\kappa_P r_{\max} \\
        & \overset{\text{by induction}}{\leq} \rho_D\frac{r_{\max}}{\|P\|_{\text{op}}} + (1-\rho_D)\frac{r_{\max}}{\|P\|_{\text{op}}} \\
        & = \frac{r_{\max}}{\|P\|_{\text{op}}},
    \end{align*}
    thereby reaching the desired claim.
\end{proof}

\subsection{Discussion about the reduction made in the proof of \texorpdfstring{\Cref{thm:robustness}}{Theorem 4.3}}
\label{apx:pdp-1_discussion}

In \Cref{thm:robustness}, we decompose the matrix $\begin{pmatrix}
    1+\beta - \mu\gamma & -\beta \\ 1 & 0
\end{pmatrix}$ into the form $PDP^{-1}$ with $\|D\| < 1$.
In this section, we discuss a possible way to do it  depending on the region in which $(\gb)$ lies (see \Cref{prop:conv_quad}).
In the 3 possible cases, we provide a decomposition $PDP^{-1}$ and $\rho_D = \|D\|<1$.
To compute $\kappa_P = \frac{1}{\|P\|_{\text{op}} \|P^{-1}\|_{\text{op}}}$, we can use the fact that $\kappa_P^2$ is the ratio of the 2 eigenvalues of the matrix $P^T P$:
{\footnotesize\begin{align*}
    \kappa_P
    & = 
    \left(
    \frac{
    \frac{\mathrm{Tr}(P^T P)}{2} - \sqrt{\left(\frac{\mathrm{Tr}(P^T P)}{2}\right)^2 - \mathrm{Det}(P^T P)}
    }
    {
    \frac{\mathrm{Tr}(P^T P)}{2} + \sqrt{\left(\frac{\mathrm{Tr}(P^T P)}{2}\right)^2 - \mathrm{Det}(P^T P)}
    }
    \right)^{1/2}  = 
    \left(
    \frac{
    \left(\frac{\mathrm{Tr}(P^T P)}{2} - \sqrt{\left(\frac{\mathrm{Tr}(P^T P)}{2}\right)^2 - \mathrm{Det}(P^T P)}\right)^2
    }
    {
    \mathrm{Det}(P^T P)
    }
    \right)^{1/2} \\
    & = 
    \frac{\frac{\mathrm{Tr}(P^T P)}{2} - \sqrt{\left(\frac{\mathrm{Tr}(P^T P)}{2}\right)^2 - \mathrm{Det}(P)^2}
    }
    {
    |\mathrm{Det}(P)|
    } = 
    \frac{\mathrm{Tr}(P^T P)}{2|\mathrm{Det}(P)|} - \sqrt{\left(\frac{\mathrm{Tr}(P^T P)}{2|\mathrm{Det}(P)|}\right)^2 - 1}.
\end{align*}}

\noindent\paragraph{Lazy region ($\gamma < \frac{(1-\sqrt{\beta})^2}{\mu}$).}

In this region, 
$$\begin{pmatrix}
    1+\beta - \mu\gamma & -\beta \\ 1 & 0
\end{pmatrix}
\sim
D = \begin{pmatrix}
    \frac{1+\beta - \mu\gamma}{2} + \sqrt{\left(\frac{1+\beta - \mu\gamma}{2}\right)^2 - \beta} & 0 \\ 0 & \frac{1+\beta - \mu\gamma}{2} - \sqrt{\left(\frac{1+\beta - \mu\gamma}{2}\right)^2 - \beta}
\end{pmatrix}$$

with the transition matrix
$P = \begin{pmatrix}
    \frac{1+\beta - \mu\gamma}{2} + \sqrt{\left(\frac{1+\beta - \mu\gamma}{2}\right)^2 - \beta} \quad & \frac{1+\beta - \mu\gamma}{2} - \sqrt{\left(\frac{1+\beta - \mu\gamma}{2}\right)^2 - \beta} \\ 1 & 1
\end{pmatrix}$.

We then obtain $\rho_D = \|D\|_{\op} = \frac{1+\beta - \mu\gamma}{2} + \sqrt{\left(\frac{1+\beta - \mu\gamma}{2}\right)^2 - \beta}< 1$.

\noindent\paragraph{Robust region ($\gamma > \frac{(1-\sqrt{\beta})^2}{\mu}$).}

In this region, 
$$\begin{pmatrix}
    1+\beta - \mu\gamma & -\beta \\ 1 & 0
\end{pmatrix}
\sim
D = \begin{pmatrix}
    \frac{1+\beta - \mu\gamma}{2} + i \sqrt{\beta - \left(\frac{1+\beta - \mu\gamma}{2}\right)^2} & 0 \\ 0 & \frac{1+\beta - \mu\gamma}{2} - i \sqrt{\beta - \left(\frac{1+\beta - \mu\gamma}{2}\right)^2}
\end{pmatrix}$$

with the transition matrix
$P = \begin{pmatrix}
    \frac{1+\beta - \mu\gamma}{2} + i \sqrt{\beta - \left(\frac{1+\beta - \mu\gamma}{2}\right)^2} \quad & \frac{1+\beta - \mu\gamma}{2} - i \sqrt{\beta - \left(\frac{1+\beta - \mu\gamma}{2}\right)^2} \\ 1 & 1
\end{pmatrix}$.

We then obtain $\rho_D = \|D\|_{\op} = \sqrt{\beta} < 1$.

\noindent\paragraph{Boundary ($\gamma = \frac{(1-\sqrt{\beta})^2}{\mu}$).}

On the boundary between the lazy and the robust regions, 
$\begin{pmatrix}
    1+\beta - \mu\gamma & -\beta \\ 1 & 0
\end{pmatrix}$
is not diagonalisable.

However, we can write
$\begin{pmatrix}
    1+\beta - \mu\gamma & -\beta \\ 1 & 0
\end{pmatrix}
\sim
D = \sqrt{\beta}\begin{pmatrix}
    1 & \varepsilon \\ 0 & 1
\end{pmatrix}$, with any $\varepsilon>0$
using the transition matrix
$P = \begin{pmatrix}
    \sqrt{\beta} & \frac{\varepsilon\sqrt{\beta}}{1 + \beta} \\ 1 & -\frac{\beta\varepsilon}{1 + \beta}
\end{pmatrix}$.
We then obtain $\rho_D = \|D\| = \sqrt{\beta}\sqrt{1 + \frac{\varepsilon^2}{2} + \sqrt{\left(1 + \frac{\varepsilon^2}{2}\right)^2 - 1}} = \sqrt{\beta}\left(\frac{\varepsilon}{2} + \sqrt{1 + \frac{\varepsilon^2}{4}}\right)$.
Note $\rho_D < 1$ if and only if $\varepsilon < \frac{1-\beta}{\sqrt{\beta}}$.

\section{Auxiliary proofs from \texorpdfstring{\Cref{sec:HL}}{Section 5}}
\label{app:proofLem1_HL}

\begin{Def}[Mollifier $u_\epsilon$, $\epsilon>0$]
    We define, for any  $\varepsilon>0.$
\begin{align*}
    u(x) & \eqdef \frac{1}{Z} e^{-\frac{1}{1 - \|x\|^2}}\mathbf{1}_{\|x\| < 1}, & \text{with } Z \eqdef \int_{\|x\|\leq 1} e^{-\frac{1}{1 - \|x\|^2}}\mathrm{dx},  \quad u_{\varepsilon}(x) \eqdef \frac{1}{\varepsilon^2}u\left(\frac{1}{\varepsilon}x\right).
\end{align*}
\end{Def}
We first recall some classical properties of $u_\varepsilon$, see for example \citep[Section 4.4 of ][on \textit{mollifiers}]{brezis2011functional}.
\begin{Lemma}
    \label{lem:proba}
    $\forall \varepsilon>0, u_\varepsilon$ is the probability density function (pdf) of an $L^\infty$ and centered random variable.
\end{Lemma}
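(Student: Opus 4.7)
The plan is to verify the three defining properties in turn, namely that $u_\varepsilon$ is nonnegative with unit integral (pdf), has bounded essential support ($L^\infty$), and is symmetric around $0$ (centered). Each property will follow rather directly from the definition of $u$ and the scaling $u_\varepsilon(x) = \varepsilon^{-2} u(x/\varepsilon)$, so the proof is essentially a verification; the main item that deserves a written justification is the normalizing constant and the change of variables that transfers the properties from $u$ to $u_\varepsilon$.

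First, I would argue nonnegativity: since $e^{-1/(1-\|x\|^2)} \geq 0$ on $\{\|x\|<1\}$ and $Z>0$ (the integrand is positive on a nonempty open set, hence $Z$ is a strictly positive finite real number because the integrand is also bounded by $1$ on the unit ball), $u \geq 0$. The scaling by $\varepsilon^{-2} > 0$ preserves nonnegativity, so $u_\varepsilon \geq 0$. For the unit mass, $\int u = 1$ holds by the very definition of $Z$, and a two-dimensional change of variables $y = x/\varepsilon$ gives
\[
\int_{\R^2} u_\varepsilon(x)\,dx \;=\; \int_{\R^2} \frac{1}{\varepsilon^2} u\!\left(\frac{x}{\varepsilon}\right) dx \;=\; \int_{\R^2} u(y)\,dy \;=\; 1.
\]

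Second, for the $L^\infty$ (bounded support) claim: by construction $u$ is supported on the closed unit ball, so after rescaling $u_\varepsilon$ is supported on $B(0,\varepsilon)$. Any random variable with density $u_\varepsilon$ thus takes values almost surely in $B(0,\varepsilon)$ and is bounded by $\varepsilon$ in norm, which is precisely the $L^\infty$ property.

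Finally, for centering: the function $u$ depends on $x$ only through $\|x\|$, hence $u(-x) = u(x)$, and the same symmetry is inherited by $u_\varepsilon$. Since $u_\varepsilon \in L^1$ with bounded support, $\int \|x\| u_\varepsilon(x)\,dx < \infty$, so Fubini applies; integrating the odd function $x \mapsto x\, u_\varepsilon(x)$ componentwise and performing the change of variables $x \leftrightarrow -x$ yields $\int x\, u_\varepsilon(x)\,dx = 0$, as required. There is no serious obstacle here; the only point that might look delicate is ensuring that $Z$ is finite and strictly positive, which is immediate because the integrand is bounded, continuous, and strictly positive on the open unit ball.
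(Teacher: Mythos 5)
Your verification is correct and complete: nonnegativity and unit mass via the definition of $Z$ and the change of variables $y=x/\varepsilon$ (with the Jacobian $\varepsilon^2$ matching the $\varepsilon^{-2}$ scaling in dimension $2$), compact support in $B(0,\varepsilon)$ giving the $L^\infty$ property, and radial symmetry giving zero mean. The paper does not spell out a proof at all --- it simply recalls this as a classical property of mollifiers with a citation to Brezis --- and your argument is exactly the standard verification that reference would supply.
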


\begin{Lemma}[$C^{\infty}$ with compact support]
    \label{lem:c_inf_with_compact_support}
    For any $\varepsilon > 0$, $u_\varepsilon \in C^{\infty}$ and its support is $B(0, \varepsilon)$.
\end{Lemma}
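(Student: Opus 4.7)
The plan is to reduce the claim to the classical smoothness of the scalar bump-builder $g(t) = e^{-1/t}\mathbf{1}_{t>0}$, since the function $u_\varepsilon$ is built from $g$ by composition with smooth maps and a rescaling. Concretely, I would write
\[
u_\varepsilon(x) = \frac{1}{Z\varepsilon^2}\, g\!\left(1 - \|x/\varepsilon\|^2\right),
\]
so that both claims (smoothness and support) will follow immediately from properties of $g$ together with the fact that $x \mapsto 1-\|x/\varepsilon\|^2$ is $C^\infty$ on $\mathbb{R}^d$, strictly positive on $B(0,\varepsilon)$, and strictly negative on $\mathbb{R}^d\setminus \overline{B(0,\varepsilon)}$.

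For the support, I would simply observe that $g(s) = 0$ for $s \leq 0$ and $g(s) > 0$ for $s > 0$. Hence $u_\varepsilon(x) > 0$ iff $\|x\| < \varepsilon$, and $u_\varepsilon(x) = 0$ otherwise, so $\mathrm{supp}(u_\varepsilon) = \overline{\{u_\varepsilon \neq 0\}} = \overline{B(0,\varepsilon)}$, which is the intended $B(0,\varepsilon)$ (closed ball, consistent with the statement).

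The main obstacle is the $C^\infty$ smoothness of $g$ on all of $\mathbb{R}$, and in particular at $t=0$. On $(-\infty,0)$ smoothness is trivial ($g\equiv 0$) and on $(0,\infty)$ it follows from composition of smooth maps. The sole issue is continuity of all derivatives across $t=0$. I would establish this by induction: show that there exist polynomials $(P_n)_{n\geq 0}$ with $P_0=1$ such that for $t>0$,
\[
g^{(n)}(t) = P_n(1/t)\, e^{-1/t},
\]
which is immediate by differentiating once (yielding $P_{n+1}(u) = u^2 P_n(u) - P_n'(u)\cdot u^2$ from the substitution $u=1/t$). Then using $\lim_{t\to 0^+} t^{-k} e^{-1/t} = 0$ for every $k\in\mathbb{N}$, one deduces $\lim_{t\to 0^+} g^{(n)}(t) = 0 = g^{(n)}(0^-)$ for every $n$, so a standard mean-value argument (or iterated application of the fact that if $g$ is continuous, differentiable off a point, and $\lim g'$ exists there, then $g$ is differentiable at that point with that value) gives $g \in C^\infty(\mathbb{R})$.

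Once $g\in C^\infty(\mathbb{R})$ is in hand, the conclusion $u_\varepsilon \in C^\infty(\mathbb{R}^d)$ is just the chain rule applied to the composition with the polynomial map $x\mapsto 1-\|x/\varepsilon\|^2$, and the support statement is as above, completing the proof.
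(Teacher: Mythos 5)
Your proof is correct: the reduction to the scalar bump $g(t)=e^{-1/t}\mathbf{1}_{t>0}$, the induction $g^{(n)}(t)=P_n(1/t)e^{-1/t}$ with the recursion $P_{n+1}(u)=u^2\bigl(P_n(u)-P_n'(u)\bigr)$, and the composition/chain-rule step are all accurate, as is the identification of the support with the closed ball. The paper does not prove this lemma itself but simply cites a standard reference on mollifiers; your argument is exactly the classical one that citation points to.
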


\begin{Lemma}
[Bounded derivatives]
    \label{cor:bounded_derivatives}
    For any $\varepsilon > 0$ and any $r \geq 0$,
    \begin{equation*}
        M^{(r)}_{\varepsilon} \eqdef \sup_{x\in\mathbb{R}^2} \left\|\nabla^r u_{\varepsilon}(x)\right\| = \sup_{x\in B(0, \varepsilon)} \left\|\nabla^r u_{\varepsilon}(x)\right\| < \infty.
    \end{equation*}
\end{Lemma}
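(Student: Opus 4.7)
The plan is to combine the smoothness of $u_\varepsilon$ with the compactness of its support, both of which are given by \Cref{lem:c_inf_with_compact_support}. First, I would observe that if $\|x\| > \varepsilon$, then $u_\varepsilon$ vanishes identically on an open neighborhood of $x$ (since its support is contained in $B(0,\varepsilon)$), so every partial derivative of $u_\varepsilon$ of every order vanishes at such a point. This already gives the first equality in the statement,
\[
\sup_{x\in\mathbb{R}^2} \left\|\nabla^r u_{\varepsilon}(x)\right\| = \sup_{x\in B(0, \varepsilon)} \left\|\nabla^r u_{\varepsilon}(x)\right\|.
\]

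Second, I would upgrade this to a supremum over the closed ball $\overline{B(0,\varepsilon)}$, which is legitimate because $\nabla^r u_\varepsilon$ is continuous on $\mathbb{R}^2$ (by $u_\varepsilon \in C^\infty$) and vanishes on $\partial B(0,\varepsilon)$, so
\[
\sup_{x\in B(0, \varepsilon)} \left\|\nabla^r u_{\varepsilon}(x)\right\| = \sup_{x\in \overline{B(0,\varepsilon)}} \left\|\nabla^r u_{\varepsilon}(x)\right\|.
\]
The right-hand side is the supremum of a continuous function on a compact set, hence finite (in fact attained), yielding $M^{(r)}_\varepsilon < \infty$.

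There is no real obstacle here: the result is a direct consequence of \Cref{lem:c_inf_with_compact_support}, which provides smoothness and compact support simultaneously. The only subtle point worth stating explicitly is the vanishing of all derivatives on the complement of the support, which follows from $u_\varepsilon \equiv 0$ on an open set combined with the standard fact that derivatives of a locally zero smooth function are zero.
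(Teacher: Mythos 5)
Your argument is correct and is exactly the standard one: smoothness plus compact support (from \Cref{lem:c_inf_with_compact_support}) gives vanishing of all derivatives off the support, and continuity on the compact closed ball gives finiteness of the supremum. The paper does not actually spell out a proof of this lemma --- it recalls it as a classical mollifier property with a citation to Brezis --- so your write-up simply supplies the expected argument, with no gap.
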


Let $\psi\in \Fml$. We consider $\varphi_\varepsilon = \psi\ast u_\epsilon$.  We recall the following properties of $\varphi_\varepsilon$.

\begin{Lemma}[Higher-order derivatives are bounded]
    \label{lem:prop_varphi}
    Let $\psi\in \Fml$ and $\varphi_\varepsilon = \psi\ast u_\epsilon$. Then for any $\varepsilon>0$,  
    \begin{enumerate}
        \item  $\varphi_{\varepsilon} \in C^{\infty}$
    \item for any $r\geq 2$  $\left\|\nabla^r (\varphi_{\varepsilon})\right\|$ is bounded.
    \item  $\varphi_{\varepsilon} \in \Fml$.
    \end{enumerate}
\end{Lemma}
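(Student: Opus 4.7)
The plan is to dispatch the three items in order, each following from standard convolution identities together with the fact that $\psi\in\Fml$ has a second derivative bounded by $L$ in the almost-everywhere sense, while $u_\varepsilon \in C_c^\infty(\R^2)$.

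\textbf{Part 1.} I would invoke the classical fact that convolution of a locally integrable function with a $C_c^\infty$ function is $C^\infty$. Since $\psi$ is continuous (being $L$-smooth) and $u_\varepsilon$ is $C^\infty$ with compact support (\Cref{lem:c_inf_with_compact_support}), differentiation under the integral sign applied to $\varphi_\varepsilon(x) = \int \psi(x-y) u_\varepsilon(y)\,dy$ is justified and gives $\nabla^r \varphi_\varepsilon = \psi\ast \nabla^r u_\varepsilon$ for every $r\geq 0$, hence $\varphi_\varepsilon \in C^\infty$.

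\textbf{Part 2.} The key idea is to split the $r$ derivatives as $2+(r-2)$, placing two on $\psi$ and the remaining $r-2$ on $u_\varepsilon$. Because $\psi$ is $L$-smooth, $\nabla\psi$ is $L$-Lipschitz, and by Rademacher's theorem $\nabla^2\psi$ exists almost everywhere with $\|\nabla^2\psi(z)\|_{\mathrm{op}}\leq L$. A dominated-convergence argument on the difference quotients of $\nabla\varphi_\varepsilon = (\nabla\psi) \ast u_\varepsilon$ (the $L$-Lipschitz bound on $\nabla\psi$ providing the required domination), followed by the change of variables $z = x-y$ to move the remaining derivatives onto $u_\varepsilon$, yields for $r\geq 2$
\[
\nabla^r \varphi_\varepsilon(x) = \int_{B(x,\varepsilon)} \nabla^2\psi(z)\,\nabla^{r-2} u_\varepsilon(x-z)\,dz.
\]
Bounding the integrand pointwise by $L\cdot M_\varepsilon^{(r-2)}$ via \Cref{cor:bounded_derivatives} and noting that the integration domain has area $\pi\varepsilon^2$ yields the uniform estimate $\|\nabla^r\varphi_\varepsilon(x)\| \leq L \pi \varepsilon^2 M_\varepsilon^{(r-2)}$, independent of $x$.

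\textbf{Part 3.} I would directly test the defining inequalities of $\Fml$ at arbitrary $x,y\in \R^2$. Since $u_\varepsilon$ is a probability density (\Cref{lem:proba}), interchanging expectations with the linear operations on the left-hand side gives
\[
\varphi_\varepsilon(x) - \varphi_\varepsilon(y) - \langle \nabla\varphi_\varepsilon(y), x-y\rangle = \int \bigl[\psi(x-z) - \psi(y-z) - \langle \nabla\psi(y-z), x-y\rangle\bigr] u_\varepsilon(z)\,dz.
\]
Pointwise in $z$, the $\mu$-strong convexity and $L$-smoothness of $\psi$ bound the bracket from below by $\tfrac{\mu}{2}\|x-y\|^2$ and from above by $\tfrac{L}{2}\|x-y\|^2$; integrating against the unit-mass density $u_\varepsilon$ preserves both inequalities, which is precisely $\varphi_\varepsilon \in \Fml$. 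The only mild subtlety in the full write-up is the almost-everywhere identification of $\nabla^2\psi$ used in part 2 and the justification of passing the second derivative to $\psi$; this is standard but merits an explicit sentence given that $\psi$ is not assumed to be $C^2$.
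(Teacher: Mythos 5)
Your proof is correct and fills in exactly the ingredients the paper's own (one-sentence) proof sketch alludes to: standard smoothness of convolutions with a mollifier for item 1, the splitting $\nabla^r\varphi_\varepsilon=(\nabla^2\psi)\ast\nabla^{r-2}u_\varepsilon$ together with $\|\nabla^2\psi\|\le L$ for item 2, and the unit-mass property of $u_\varepsilon$ combined with the pointwise $\Fml$ inequalities for item 3. Your explicit handling of the almost-everywhere Hessian of the merely $C^1$ function $\psi$ (via the Lipschitz gradient and dominated convergence on difference quotients) is a welcome precision that the paper leaves implicit.
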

The proof of point 1 is standard, the one of point 2 uses that $\forall x, \|\nabla^2 \psi(x)\|\le L$ and the properties of the convolution, and finally, the proof of point 3 relies on the fact that $\forall \ x, \mu \le \|\nabla^2 \psi(x)\|\le L$ and that $u_\epsilon$ is a probability density function.

\section{Auxiliary proofs from \texorpdfstring{\Cref{sec:cycles}}{Section 6}: Proof of \texorpdfstring{\Cref{lem:circulant_dec}}{Lemma 6.12}}
\label{app:circulant_proof}

\circulantreduction*

\begin{proof}{\Cref{lem:circulant_dec}}
    The minimal polynomial of $J_K$ is $X^K - 1$, a split polynomial with simple roots in $\mathbb{C}$.
    Therefore, $J_K$ diagonalizes on $\mathbb{C}$ with eigenvalues $\omega^\ell = e^{2i\pi \ell/K}$, $\ell\in\llbracket 0, K-1 \rrbracket$, and $G$ diagonalizes in the same basis with eigenvalues $\nu_\ell \eqdef \sum_{j=0}^{K-1} c_j \omega^{j \ell}$.
    The sequence $(\nu_\ell)_{\ell\in\llbracket 0, K-1 \rrbracket}$ is the discrete Fourier transform of $(c_j)_{j\in\llbracket 0, K-1 \rrbracket}$.
    Therefore, $(c_j)_{j\in\llbracket 0, K-1 \rrbracket}$ is the inverse discrete Fourier transform of $(\nu_\ell)_{\ell\in\llbracket 0, K-1 \rrbracket}$:
    \begin{equation}
        \forall j\in\llbracket 0, K-1 \rrbracket, c_j = \frac{1}{K}\sum_{\ell=0}^{K-1} \nu_\ell \omega^{-j \ell}. \label{eq:dft}
    \end{equation}
    Moreover,
    \begin{equation*}
        \nu_{K-\ell} = \sum_{j=0}^{K-1} c_j \omega^{j (K-\ell)} = \sum_{j=0}^{K-1} c_j \omega^{- j \ell} = \bar{\nu_\ell},
    \end{equation*}
    and by symmetry of $G$, $\bar{\nu_\ell} = \nu_\ell$.
    Hence, the eigenvalues almost all have an even multiplicity.
    This excludes $\nu_0$ and $\nu_{K/2}$ if $K$ is even.
    We can therefore reorder and group terms in \eqref{eq:dft} as
    \begin{align*}
        \forall j\in\llbracket 0, K-1 \rrbracket, c_j & = \frac{1}{K}\left[\nu_0 + \sum_{\ell=1}^{\lfloor \frac{K-1}{2} \rfloor} \nu_\ell (\omega^{-j \ell} + \omega^{j \ell}) + \nu_{\frac{K}{2}} \omega^{-j \frac{K}{2}} \right] \tag{where the last term drops if $K=1\mod{2}$} \\
        & = \frac{1}{K}\left[\nu_0 + \sum_{\ell=1}^{\lfloor \frac{K-1}{2} \rfloor} 2 \nu_\ell \cos\left(\frac{2\pi jl}{K}\right) + \nu_{\frac{K}{2}} (-1)^{j} \right]
        \label{eq:dft_grouped}
    \end{align*}
    Finally, $G$ is symmetric positive semi-definite and circulant if and only if there exists non-negative $(\nu_l)_{l \in \range{0}{\lfloor \frac{K}{2} \rfloor}}$ such that
    \begin{align*}
        G = & \sum_{j=0}^{K-1} \frac{1}{K}\left[\nu_0 + \sum_{\ell=1}^{\lfloor \frac{K-1}{2} \rfloor} 2 \nu_\ell \cos\left(\frac{2\pi jl}{K}\right) + \nu_{\frac{K}{2}} (-1)^{j} \right] J_K^j \\
        = & \frac{\nu_0}{K} \underbrace{\sum_{j=0}^{K-1} J_K^j}_{\text{Matrix full of 1s}} + \frac{\nu_{\frac{K}{2}}}{K} \underbrace{\sum_{j=0}^{K-1} (-1)^{j} J_K^j}_{\text{Checkerboard $\left((-1)^{|i-j|}\right)_{i, j}$}} + \sum_{\ell=1}^{\lfloor \frac{K-1}{2} \rfloor} \frac{2 \nu_\ell}{K} \underbrace{\sum_{j=0}^{K-1} \cos\left(\frac{2\pi jl}{K}\right) J_K^j}_{\left(\cos\left(\frac{2\pi \ell}{K}|i-j|\right)\right)_{i, j}}.
    \end{align*}
    The condition $G\mathbf{1}_K=0$ implies $\nu_0=0$.
\end{proof}

\section{A summary of convergence rates on \texorpdfstring{$\Fml$}{Fml} and \texorpdfstring{$\Qml$}{Qml}}
\label{app:bonus_all_rates}
For completeness and reference purposes, we summarize the convergence rates of the algorithms under consideration of this work, on the functional classes $\Fml$ and $\Qml$. \Cref{tab:rates_fml} provides the algorithms and their respective worst-case converge rates in the sense of~\Cref{def:awcc} expressed uniformly over $\kappa$, and approximately as $\kappa\to 0$.
\begin{table*}[ht]
{
\begin{center}
{\renewcommand{\arraystretch}{1.8}
\caption{Asymptotic convergence rates (on $\|x_t-x_\star\|$) for standard algorithms. \framebox{Optimal convergence rates} (lower and upper complexity bounds) are highlighted in {boxes}.}
\begin{tabular}{lp{2.3cm}p{2.3cm}p{2.3cm}p{2.3cm}}
\toprule
Algorithm    & \multicolumn{2}{c}{Known convergence rate} & \multicolumn{2}{c}{\parbox{5cm}{\centering Approximate rate as $\kappa\rightarrow 0$}}  \\
\cmidrule(l){2-3} \cmidrule(l){4-5}
   &  on $\Fml$ &on $\Qml$&  on $\Fml$ & on $\Qml$\\
\cmidrule(l){2-2} \cmidrule(l){3-3}\cmidrule(l){4-4} \cmidrule(l){5-5}
GD$(\gamma = 1/L)$ & $1-\kappa$ \newline(see~\cite{Nest03a})& $1-\kappa$ \newline(see~\cite{Nest03a})& $1-\kappa$ & $1-\kappa$ \\
\midrule
GD$(\gamma = 2/(L+\mu))$ &$\frac{1-\kappa}{1+\kappa}$ \newline(see~\cite{Nest03a}) & $\frac{1-\kappa}{1+\kappa}$ \newline(see~\cite{Nest03a})& $1-2\kappa$ & $1-2\kappa$\\\midrule
Chebyshev's method & ? &  \framebox{${\frac{1-\sqrt{\kappa}}{1+\sqrt{\kappa}}}$} (see~\cite{nemirovskinotes1995})&  ? &  \framebox{${1-2\sqrt{\kappa}}$}\\
\midrule
$\HB(\gamma^\star(\Qml), \beta^\star(\Qml))$ & none (cycles) \newline(see~\cite{lessard2016analysis})  &  \framebox{${\frac{1-\sqrt{\kappa}}{1+\sqrt{\kappa}}}$} (see~\cite{nemirovskinotes1995})&  none (cycles) & \framebox{${1-2\sqrt{\kappa}}$}\\
\midrule
$\HB(\gamma^\star(\Fml), \beta^\star(\Fml))$ & $ 1-\Theta(\kappa)$ \newline(\Cref{cor:noaccel}) & $ 1-\Theta(\kappa)$ \newline(\Cref{thm:incompatibility}) & $ 1-\Theta(\kappa)$ &  $1-\Theta(\kappa)$\\
\midrule
NAG $(\gamma =1/L, \beta=\frac{1-\sqrt{\kappa}}{1+\sqrt{\kappa}})$ & $(1-\sqrt{\kappa})^{1/2}$ (see~\cite{Nest03a}) & $1-\sqrt{\kappa}$ \newline(see~\cite{hagedorn2023iteration})&   $1-\tfrac{1}{2}\sqrt{\kappa}$& $1-\sqrt{\kappa} $\\
\midrule
Information-theoretic exact method   & \framebox{${1-\sqrt{\kappa}}$}\newline(see~\citep{taylor2023optimal}) & $1-\sqrt{\kappa}$\newline(see~\citep{taylor2023optimal}) & \framebox{${1-\sqrt{\kappa}}$} & $1-\sqrt{\kappa}$ \\
\midrule
Triple momentum method & \framebox{${1-\sqrt{\kappa}}$}\newline(see~\citep{van2017fastest}) & $1-\sqrt{\kappa}$\newline(see~\citep{van2017fastest}) & \framebox{${1-\sqrt{\kappa}}$} & $1-\sqrt{\kappa}$  \\
\midrule
Lower complexity bounds & \framebox{${1-\sqrt{\kappa}}$} (see~\cite{drori2022oracle}) & \framebox{${\frac{1-\sqrt{\kappa}}{1+\sqrt{\kappa}}}$} (see~\cite{nemirovskinotes1995}) & \framebox{${1-\sqrt{\kappa}}$} & \framebox{${1-2\sqrt{\kappa}}$}\\
\bottomrule
\end{tabular}}
\end{center}}\label{tab:rates_fml}
\end{table*}

\end{document}